 \theoremstyle{plain}
\newtheorem{theo}{Theorem}[subsection]
\newtheorem{pr}[theo]{Proposition}
 \newtheorem{lem}[theo]{Lemma}
 \newtheorem{coro}[theo]{Corollary}
\theoremstyle{remark}
\newtheorem{rema}[theo]{Remark}
\theoremstyle{definition}
\newtheorem{defi}[theo]{Definition}
\newtheorem*{notat}{Notation}
 \newcommand\lan{\langle}
\newcommand\ra{\rangle}
\newcommand\gd{\mathfrak{D}}
\newcommand\gdp{\mathfrak{D}'}
\newcommand\gds{\mathfrak{D}_s}
\newcommand\gdn{\mathfrak{D}^{naive}}
\newcommand\gdg{\mathfrak{D}^{gen}}
\newcommand\gdbr{\mathfrak{D}_{bir}}
\newcommand\gdsbr{\mathfrak{D}_{s,bir}}
\newcommand\ob{^{-1}}
\newcommand\smc{{SmCor}}
\newcommand\ssc{{Shv(SmCor)}}
\newcommand\dmk{{D^-(Shv(SmCor))}}
\newcommand\psc{{PreShv(SmCor)}}
\newcommand\dmge{DM^{eff}_{gm}{}}
\newcommand\dmgm{DM_{gm}}
\newcommand\dme{DM_-^{eff}{}}
\newcommand\dgm{\operatorname{DG-Mod}}
\newcommand\cha{\operatorname{char}}
\newcommand\mg{M_{gm}}
\newcommand\mgc{M_{gm}^c{}}
\newcommand\obj{Obj}
\newcommand\mo{Mor}
\newcommand\id{id}
\newcommand\cu{\underline{C}}
\newcommand\du{\underline{D}}
\newcommand\au{\underline{A}}
\newcommand\eu{\underline{E}}
\newcommand\cupr{\underline{C}'}
\newcommand\z{{\mathbb{Z}}}
\newcommand\q{{\mathbb{Q}}}
\newcommand\af{\mathbb{A}}
\newcommand\hhi{\mathbb{H}^i}
\newcommand\dd{\mathcal{D}}
\newcommand\dc{\mathcal{C}}
\newcommand\dk{\mathcal{K}}
\newcommand\pt{pt}
\newcommand\al{\alpha}
\newcommand\be{\beta}
\newcommand\de{\delta}
\newcommand\ns{\{0\}}
\DeclareMathOperator\prli{\varprojlim}
\DeclareMathOperator\inli{\varinjlim}
\DeclareMathOperator\res{\operatorname{Res}}
\newcommand\cihom{Co-{\underline{Hom}}}
\newcommand\chow{Chow}
\newcommand\chowe{Chow^{eff}}
\newcommand\ab{Ab}
\newcommand\var{Var}
\newcommand\sv{SmVar}
\newcommand\spv{SmPrVar}
\newcommand\spe{\operatorname{Spec}\,}
 \DeclareMathOperator\ke{\operatorname{Ker}}
 \DeclareMathOperator\cok{\operatorname{Coker}}
\DeclareMathOperator\imm{\operatorname{Im}}
\DeclareMathOperator\co{\operatorname{Cone}}
\DeclareMathOperator\exts{\operatorname{Ext}}
\DeclareMathOperator\adfu{\operatorname{AddFun}}
\newcommand\hrt{{\underline{Ht}}}
\newcommand\hw{{\underline{Hw}}}
\begin{document}

 \title{
 Motivically functorial coniveau spectral sequences for cohomology;
 direct summands of 
 cohomology of function fields}
 \author{M.V. Bondarko
   \thanks{ The author gratefully acknowledges the support from Deligne 2004
Balzan prize in mathematics. The work is also supported by RFBR
(grants no.
 08-01-00777a and 10-01-00287) and INTAS (grant no.
 05-1000008-8118).}}
 \maketitle

\begin{abstract}
The goal of this paper is to prove that coniveau spectral sequences
are motivically functorial for all cohomology theories that can  be
factorized through motives. To this end the motif of a smooth
variety over a countable field $k$
 is decomposed
(in the sense of Postnikov towers) into twisted (co)motives of its
points;
 this is generalized to arbitrary Voevodsky's motives.
 In order to study the functoriality of this construction, we use
 the formalism of weight structures (introduced in the previous paper).
 We also develop this  formalism (for general triangulated categories) further,
    and relate
  it with a  new notion of  {\it nice duality}  of
   triangulated categories (that's a sort of a pairing for two distinct categories); this piece
  of homological algebra could be interesting for itself. 

We construct a certain {\it Gersten} weight structure for a 
triangulated category of {\it comotives} that contains $DM^{eff}_{gm}$ as well as
(co)motives of function fields over $k$.
 It turns out that the corresponding {\it weight spectral
  sequences} generalize the classical coniveau ones
  (to cohomology of arbitrary motives). When a cohomological functor
   is represented
 by a $Y\in \operatorname{Obj} DM^{eff}_-$,  the corresponding coniveau spectral
 sequences can be expressed in  terms of the (homotopy)
$t$-truncations of $Y$; this extends to motives the seminal coniveau
spectral sequence computations of Bloch and Ogus.

 We also obtain
that the comotif of a smooth connected semi-local scheme is a direct summand
of the comotif of its generic point; comotives of function
fields contain twisted comotives of their residue fields (for all
geometric valuations). Hence similar results hold for any cohomology
of (semi-local) schemes mentioned.

\end{abstract}

\tableofcontents

 \section*{Introduction}

 Let $k$ be our perfect base field.

 We recall two very important statements concerning coniveau
 spectral sequences. 
The first one is the calculation
 of  $E_2$ of the coniveau spectral sequence for
cohomological theories
 that satisfy certain
 conditions; see \cite{blog} and \cite{suger}. It was
proved by Voevodsky that
 these conditions are fulfilled by any theory $H$ represented
by a motivic complex $C$
 (i.e. an object of $\dme$; see  \cite{1});
then the  $E_2$-terms of the spectral sequence
 could be calculated in terms of the (homotopy $t$-structure)
cohomology of $C$.
  This result implies the second one: $H$-cohomology
 of a smooth connected semi-local scheme (in the sense of \S4.4 of \cite{3}, i.e. actually an affine essentially smooth one) injects into the cohomology
of its generic point;
 the latter statement was extended to  all (smooth connected)
primitive schemes by M. Walker.

 The main goal of the present paper is to construct
   (motivically) functorial coniveau spectral sequences converging
  to  cohomology of arbitrary motives; there should
exist a description of these
  spectral sequences (starting from $E_2$) that is similar
 to the description
  for the case of cohomology of smooth varieties (mentioned above).

   A related objective is to clarify the nature of the injectivity  result
   mentioned; it turned our that  (in the case of a countable $k$) the
    cohomology of a smooth connected semi-local
   (more generally,  primitive) scheme is actually a direct summand of the cohomology
   of its generic point. Moreover, the (twisted) cohomology of
a residue
    field of a function field $K/k$
    (for any geometric valuation of $K$) is a direct summand of the
cohomology of $K$. We actually prove more in \S\ref{sext}.

Our main homological algebra tool is the theory of
 {\it weight structures} (in triangulated categories; we usually denote a weight structure by $w$)
 introduced in the previous paper \cite{bws}. In this article
 we develop it further; this part of the paper  could be interesting
  also to readers not acquainted with  motives (and could be read
  independently from the rest of the paper). 
  In particular, we study  {\it nice dualities} (certain pairings) of (two distinct)
   triangulated categories; it seems that this subject was not
    previously considered in the literature at all.
This allows us to  generalize the concept  of {\it adjacent} weight
and $t$-structures ($t$) in a triangulated category (developed in
\S4.4 of \cite{bws}): we introduce the notion  of {\it orthogonal
structures} in (two possibly distinct) triangulated categories. If
$\Phi$ is a  nice duality of triangulated $\cu,\du$, $X\in \obj
\cu,\ Y\in \obj \du$,  $t$ is orthogonal to $w$,  then the spectral
sequence $S$ converging to  $\Phi(X,Y)$ that comes from the
$t$-truncations of $Y$ is naturally isomorphic (starting from $E_2$)
to the {\it weight spectral sequence} $T$ for the functor $\Phi(-,Y)$.
$T$ comes from {\it weight truncations} of $X$
(note that those generalize stupid truncations 
for complexes). 
Our approach
yields an abstract alternative to the method of comparing similar
spectral sequences using filtered complexes (developed by Deligne
and Paranjape, and used in \cite{paran}, \cite{ndegl}, and
\cite{bws}). Note also that we relate $t$-truncations in $\du$ with
{\it virtual $t$-truncations} of cohomological functors on $\cu$.
Virtual $t$-truncations for
cohomological functors are defined for any $(\cu,w)$ 
(we do not need any triangulated 'categories of functors'
 or $t$-structures for them here); this notion was introduced
 in \S2.5 of \cite{bws} and is studied further in the current paper.

Now, we explain why we really need a certain new category of
{\it comotives} (containing Voevodsky's $\dmge$), and so
the theory of adjacent structures (i.e. orthogonal structures
 in the case $\cu=\du$, 
$\Phi=\cu(-,-)$) is not sufficient for our purposes.
It was already proved in \cite{bws} that  weight structures provide
   a powerful tool for constructing spectral sequences; they
also  relate
    the cohomology of objects of triangulated categories with
      $t$-structures adjacent to them.  Unfortunately, a
weight structure
      corresponding to  coniveau spectral sequences cannot exist on
      $\dme\supset \dmge$
            since these categories do not contain (any) motives
for function fields over $k$
      (as well as motives of other schemes not of finite
      type over $k$; still cf. Remark \ref{rdualn}(5)).
       Yet these motives should (co)generate the {\it heart}
      of this weight structure (since the objects of this heart
      should corepresent covariant exact functors from the category
      of homotopy invariant sheaves with transfers to $\ab$).

      So, we need a category that would contain
      certain  homotopy limits
      of objects of $\dmge$. We succeed in constructing a
triangulated category
       $\gd$ (of {\it comotives}) that allows us to reach
       the objectives listed.
 Unfortunately,
       in order to control morphisms between the homotopy
       limits mentioned we have to
       assume
       $k$ to be countable. In this case there exists a
       large enough
triangulated category
        $\gds$ ($\dmge\subset \gds\subset \gd$) 
        endowed with a
        certain {\it Gersten weight structure} $w$; its
        heart is 'cogenerated' by comotives of function fields. $w$ is
(left) orthogonal to the homotopy $t$-structure on $\dme$ and
(so) is closely connected
        with coniveau spectral sequences and Gersten
resolutions for sheaves.
Note still: we need $k$ to be countable only in order to construct the Gersten
weight structure. So those readers who would just want to have a category
that contains reasonable homotopy limits of geometric motives
(including comotives of function fields and of smooth semi-local schemes),
and consider cohomology theories for this category, may freely ignore
 this restriction. 
Moreover, for an arbitrary $k$ one can still pass to a countable
homotopy limit in the Gysin distinguished triangle (as in
Proposition \ref{pinfgy}). Yet for an uncountable $k$ countable
homotopy limits don't seem to be interesting; in particular, they
definitely do not allow to construct a Gersten weight structure
(in this case).
        
 So, we consider a certain triangulated category
  $\gd\supset \dmge$ that (roughly!) 'consists of' (covariant)
 homological functors
  $\dmge\to \ab$. In particular, objects of $\gd$ define
covariant functors
  $\sv\to \ab$ (whereas another 'big' motivic category $\dme$
 defined by Voevodsky is constructed
  from certain sheaves i.e.
  contravariant functors $\sv\to\ab$; this is also true for
all motivic homotopy categories of Voevodsky and Morel).
Besides, $\dmge$ yields a family of (weak) cocompact
cogenerators for $\gd$.
 This is why we call objects of
  $\gd$  comotives. 
 Yet note that the embedding $\dmge\to \gd$
 is covariant (actually, we invert the arrows in the corresponding 'category of functors' in order to make the
   Yoneda embedding functor covariant),
  as well as the functor that sends a smooth scheme $U$
(not necessarily of finite
  type over $k$) to its comotif (which coincides with
 its  motif if $U$
  is a smooth variety).

We also recall the {\it} Chow weight structure $w'_{\chow}$
introduced in \cite{bws}; the corresponding {\it Chow-weight}
spectral sequences are isomorphic to the classical (i.e.
Deligne's) weight spectral sequences when the latter are defined. $w'_{\chow}$ could be naturally extended to a weight structure $w_{\chow}$ for $\gd$.
We
always have a natural comparison morphism from the Chow-weight spectral
sequence for $(H,X)$ to the corresponding coniveau one; it is an isomorphism for
any birational cohomology theory. We consider the category of
birational comotives $\gdbr$  i.e. the localization of $\gd$ by
$\gd(1)$ (that contains the category of birational geometric motives introduced in \cite{kabir}; though some of the results of this unpublished preprint are erroneous, this makes no difference for the current paper). It turns our that $w$ and $w_{\chow}$ induce the same
weight structure $w'_{bir}$ on $\gdbr$. Conversely, starting from
$w'_{bir}$ one can 'glue' (from {\it slices}) the weight structures induced by
  $w$ and $w_{\chow}$ on $\gd/\gd(n)$ for all $n>0$.
 Moreover, these structures belong to an interesting family of weight
 structures indexed by a single integral parameter! It
 could be interesting to consider other members of this family.
We relate briefly these observations with those of A. Beilinson
(in \cite{beilnmot} he proposed a 'geometric' characterization
of the conjectural motivic $t$-structure).

 Now we describe the connection of our results with
related results
  of F. Deglise (see \cite{deggenmot}, \cite{de2}, and \cite{ndegl}).
  He considers a certain category of pro-motives whose
objects are  naive
  inverse limits of objects
   of $\dmge$ (this category is not triangulated, though
it is {\it pro-triangulated}
   in a certain sense).  This approach allows to obtain
(in a universal way) classical coniveau spectral
    sequences for
    cohomology of motives of smooth varieties; Deglise
also 
proves their relation with the homotopy
    $t$-truncations for cohomology represented by an object of
    $\dme$. Yet for cohomology theories not coming from motivic
    complexes,
    this method does not seem to extend
to (spectral sequences for
    cohomology of)  arbitrary motives; motivic
    functoriality is not obvious also. Moreover, Deglise
didn't prove that the pro-motif
    of a (smooth connected) semi-local scheme is a
    direct summand of the pro-motif of its generic point
(though this is true, at
    least in the case of a countable $k$). We will tell
much more about our strategy and on the relation of our
results with those of Deglise in \S\ref{sprom} below. Note also
that our methods are much more convenient for studying
functoriality (of coniveau spectral sequences) than the methods
 applied by M. Rost in the related context of cycle modules
 (see \cite{rostc} and \S4 of \cite{de2}).


The author would like to indicate the interdependencies of
the parts of this text (in order to simplify reading for
those who are not interested in all of it). Those readers
who are not (very much) interested in (coniveau) spectral
sequences, may avoid most of section \ref{swnew}
and read only \S\S\ref{srws}
--\ref{sbpws}
(Remark \ref{rfunct} could also be ignored).
Moreover, in order to prove our direct summands results
(i.e. Theorem \ref{tds1n}, Corollary \ref{tds2}, and Proposition
 \ref{cdscoh}) one needs only a small portion of the theory
 of weight structures; so a reader very reluctant to study this
  theory may try to derive them from the results of \S\ref{scomot}
   'by hand' without reading \S\ref{swnew} at all. Still,
   for motivic functoriality of coniveau spectral sequences
   and filtrations (see Proposition \ref{rwss} and 
   Remark \ref{rrwss}) one needs more of weight structures.
 On the other hand, those readers who are more interested in
 the (general) theory of triangulated categories may restrict
 their attention to \S\S\ref{dtst}-- \ref{sextkrau} and \S\ref{swnew};
 yet note that the rest of the paper describes in detail an
 important (and quite non-trivial) example of a weight
structure which is orthogonal to a $t$-structure with respect
to a nice duality (of triangulated categories).
Moreover, much of section \S\ref{sapcoh} could also be extended to
 a  general setting of a triangulated category satisfying properties
 similar to those listed in Proposition
\ref{pprop}; yet the author chose not to do this in order to make
the paper somewhat less abstract.

 Now we list the contents of the paper. More details could
 be found at the beginnings of 
 sections.

 We start \S1 with the recollection of
$t$-structures,  idempotent
  completions, and Postnikov towers for triangulated categories.
  We describe a method for extending cohomological functors from
  a full triangulated subcategory to the whole $\cu$
  (after H. Krause).
  Next we recall some results and definitions for
  Voevodsky's motives (this includes certain properties
of Tate twists for motives and
  cohomological functors).
  Lastly, we define pro-motives (following Deglise)
and compare them with
   our triangulated category $\gd$ of comotives. This
allows to explain our strategy
    step by step.

  \S\ref{swnew} is dedicated to weight structures.
  First we remind the basics of this theory
  (developed in \S\cite{bws}). Next we recall
 that a cohomological functor $H$ from an (arbitrary
 triangulated category) $\cu$ endowed with a weight
 structure $w$ could be 'truncated' as if it belonged
  to some triangulated category of functors (from $\cu$)
  that is endowed with a $t$-structure; we call the
   corresponding pieces of $H$ its {\it virtual $t$-truncations}.
 We recall the notion of   weight spectral sequence
 (introduces in ibid.). We prove that the derived exact
 couple for a weight spectral sequence  could be described
 in terms of virtual $t$-truncations.
 Next  we introduce the definition of
a {\it (nice) duality} $\Phi:\cu^{op}\times \du\to \au$
(here $\du$ is triangulated, $\au$ is abelian), and
of {\it orthogonal} weight and
 $t$-structures (with respect to $\Phi$). 
 If $w$ is  orthogonal to $t$, then the virtual
 $t$-truncations (corresponding to $w$)  of functors
 of the type $\Phi(-,Y),\ Y\in \obj\du$, are exactly the
 functors 'represented via $\Phi$' by the actual
 $t$-truncations of $Y$ (corresponding to $t$).
 Hence if $w$ and $t$ are orthogonal with respect to
  a nice duality, the weight spectral sequence converging
  to $\Phi(X,Y)$ (for $X\in \obj \cu,\ Y\in \obj \du$)
  is naturally isomorphic (starting from $E_2$) to the one
  coming from $t$-truncations of $Y$.  We also mention
  some alternatives and predecessors of  our results.
 Lastly we  compare weight decompositions, virtual
$t$-truncations, and weight spectral sequences corresponding to
distinct weight structures (in possibly distinct triangulated
categories).

In \S\ref{scomot} we describe the main properties of
$\gd\supset \dmge$.
The exact choice of $\gd$ is not important for
 most of this paper; so
 we just
list the main properties of $\gd$ (and its certain
{\it enhancement} $\gdp$) in \S\ref{comot}. We construct
 $\gd$ using the formalism of differential graded modules
  in \S\ref{rdg}
later. Next we define comotives for (certain) schemes and
ind-schemes of infinite type over $k$ (we call them pro-schemes). We
recall the notion of  primitive scheme. All (smooth) semi-local
pro-schemes are primitive; primitive schemes  have all nice
'motivic' properties of semi-local pro-schemes. We prove that there
are no $\gd$-morphisms
 of positive degrees between the comotives of primitive schemes
(and also between certain Tate
 twists of those). In \S\ref{scgersten} we prove that the Gysin distinguished triangle for motives of smooth varieties (in $\dmge$) could be naturally extended to comotives of pro-schemes. This allows to construct
certain
 Postnikov towers for the comotives of pro-schemes; these towers
are closely related
 with classical
 coniveau spectral sequences for cohomology.

\S\ref{sapcoh} is central in this paper.
 We introduce a certain {\it Gersten weight structure}
 for a certain triangulated category $\gds$
 ($\dmge\subset \gds\subset \gd$).  We prove that
 Postnikov towers constructed
 in \S\ref{scgersten} are actually
 {\it weight Postnikov towers} with respect to $w$.
We deduce our (interesting) results on
direct summands of the comotives of function fields. 
   We translate these results  to cohomology in the obvious way.

 Next we  prove that weight spectral sequences for
 the cohomology of $X$
(corresponding to
 the Gersten weight structure)
    are naturally isomorphic (starting from $E_2$) to  the
    classical coniveau spectral sequences
    if $X$
is the motif of a smooth variety; so we call these spectral sequence
coniveau ones in the general case also.
 We also prove that
 the Gersten weight structure $w$ (on $\gds$) is  orthogonal to
 the homotopy $t$-structure $t$ on $\dme$ (with respect to a
  certain  $\Phi$). It follows that for an
 arbitrary $X\in\obj \gds$, for a cohomology theory represented
 by $Y\in \obj \dme$ (any choice of)
 the coniveau spectral sequence that converges to $\Phi(X,Y)$
  could be described
in terms of the
 $t$-truncations of $Y$ (starting from $E_2$).

 We also define coniveau spectral sequences
for cohomology of motives over uncountable  base fields  as the limits of the corresponding coniveau spectral sequences over countable perfect subfields of definition.
This definition is compatible with the classical one; so we establish motivic functoriality of coniveau spectral sequences in this case also.

We also prove that the {\it Chow weight structure} for $\dmge$
(introduced in \S6 of \cite{bws}) could be extended to a weight structure
 $w_{\chow}$ on $\gd$. The corresponding {\it Chow-weight} spectral sequences are isomorphic to the
 classical (i.e. Deligne's) ones when the latter are defined
 (this was proved in \cite{bws} and \cite{mymot}).
 We compare Chow-weight spectral sequences with coniveau ones:
  we always have a comparison morphism; it is an isomorphism for a {\it birational} cohomology theory.
  We consider the category of birational comotives $\gdbr$  i.e. the localization of $\gd$ by $\gd(1)$. $w$ and $w_{\chow}$ induce the same weight structure $w'_{bir}$ on $\gdbr$; one almost can glue   $w$ and $w_{\chow}$ from copies of $w'_{bir}$ (one may say that these weight structures could almost be glued from the same slices with distinct shifts).

\S\ref{rdg} is dedicated to the construction of $\gd$ and
the proof of its
properties. We apply the formalism of differential graded
categories, modules
over them, and of the corresponding derived categories.
A reader not
interested in these details may 
skip (most of) this section. In fact, the author is not sure that
 there exists
only one $\gd$ suitable for our purposes; yet the choice of $\gd$
does not affect cohomology  of (the comotives of) pro-schemes
and 
of Voevodsky's motives.

 We also explain  how the differential graded modules
formalism can
 be used to define base
change  (extension and restriction of scalars) for comotives. This
allows to extend our results on direct summands of the comotives (and
cohomology) of function fields  to pro-schemes obtained from them
via base change. We also define tensoring of comotives by motives
(in particular, this yields Tate twist for $\gd$), as well as a
certain
 cointernal Hom (i.e. the corresponding left adjoint functor).

 \S\ref{ssupl}
 is dedicated to properties of comotives that are not (directly)
 related with the main
 results of the paper; we also make several comments.
 We recall the definition of the additive category $\gdg$
 of generic motives (studied in \cite{deggenmot}).  We  prove
that the exact conservative {\it weight complex}
 functor corresponding to $w$ (that exists by the general
theory of weight structures)
 could be modified to an exact conservative $WC:\gds\to K^b(\gdg)$.
Next we prove  that  a
cofunctor $\hw\to \ab$ is representable by a homotopy invariant
sheaf with transfers whenever is converts all products into direct
sums.

We also note that our theory could be easily extended to (co)motives
with coefficients in an arbitrary ring. Next we note (after B. Kahn)
that reasonable motives of pro-schemes with compact support
 do exist
in $\dme$; this observation could be used for the construction of an
alternative model for  $\gd$. Lastly we describe which parts of
our argument do not work (and which do work) in the case of an
uncountable $k$.

A caution: the notion of  weight structure is  quite a general
formalism for triangulated categories. In particular, one
triangulated category can support several distinct weight structures
(note that there is a similar situation with $t$-structures). In
fact, we  construct an example
 for such a situation
in this paper (certainly, much simpler examples exist): we define the
Gersten weight structure $w$ for $\gds$ and a Chow weight structure
$w_{\chow}$ for $\gd$. Moreover, we show in \S\ref{sgdbr} that these weight structures are compatible with certain weight structures defined on the localizations $\gd/\gd(n)$ (for all $n>0$). These two series of weight structures
are definitely distinct: note that $w$ yields coniveau spectral
sequences, whereas $w_{\chow}$ yields Chow-weight spectral
sequences, that generalize Deligne's weight spectral sequences for
\'etale and mixed Hodge cohomology (see \cite{bws} and
\cite{mymot}). Also, the weight complex functor constructed in
\cite{mymot} and \cite{bws}
 is quite distinct from the one considered in \S\ref{scompdegl} below
  (even the targets of the functors mentioned are completely distinct).

The author is deeply grateful to  prof. F. Deglise, prof. D. H\'ebert, prof. B. Kahn,  prof. M. Rovinsky, prof. A. Suslin,  prof. V. Voevodsky, and to the referee
for their interesting remarks.  The author gratefully acknowledges
the support from
   Deligne 2004
Balzan prize in mathematics. The work is also supported by RFBR
(grants no.
 08-01-00777a and 10-01-00287a). 
\begin{notat}

 For a category $C,\ A,B\in\obj C$, we denote by
$C(A,B)$ the set of  $C$-morphisms from  $A$ to $B$.
For categories $C,D$ we write 
$D\subset C$ if $D$ is a full 
subcategory of $C$.

 For additive $C,D$ we denote by $\adfu(C,D)$ the
category of additive functors from $C$ to $D$
(we will ignore set-theoretic difficulties here since
 they do not affect our arguments seriously). 

$\ab$ is the category of abelian groups.
For an additive $B$ we will
denote by $B^*$ the category $\adfu(B,\ab)$ and by $B_*$ the
category $\adfu(B^{op},\ab)$. Note that both of these are abelian.
Besides, Yoneda's lemma gives full embeddings of $B$ into $B_*$ and
of $B^{op}$ into $B^*$ (these send $X\in\obj B$ to $X_*=B(-,X)$ and
to $X^*=B(X,-)$, respectively).

For a category $C,\ X,Y\in\obj C$, we say that $X$ is a {\it
retract} of $Y$ if $\id_X$ can be factorized through $Y$. Note
that when $C$ is triangulated or abelian then $X$ is a  retract of
$Y$ if and only if $X$ is its direct summand. For any $D\subset C$
the subcategory $D$ is called {\it Karoubi-closed} in $C$ if it
contains all retracts of its objects in $C$. We will call the
smallest Karoubi-closed subcategory of $C$ containing $D$  the {\it
Karoubi-closure} of $D$ in $C$; sometimes we will use the same term
for the class of objects of the Karoubi-closure of a full subcategory
of $C$ (corresponding to some subclass of $\obj C$).

For a category $C$ we denote by $C^{op}$ its opposite category.

 For an additive $\cu$ an object $X\in \obj\cu$ is called cocompact if
$\cu(\prod_{i\in I} Y_i,X)=\bigoplus_{i\in I} \cu(Y_i,X)$ for any
set $I$ and any $Y_i\in\obj \cu$ such that the product exists (here
we don't need to demand all products to exist, though they actually
will exist below).

For $X,Y\in \obj \cu$ we will write $X\perp Y$ if $\cu(X,Y)=\ns$.
For $D,E\subset \obj \cu$ we will write $D\perp E$ if $X\perp Y$
 for all $X\in D,\ Y\in E$.
For $D\subset \cu$ we will denote by $D^\perp$ the class
$$\{Y\in \obj \cu:\ X\perp Y\ \forall X\in D\}.$$
Sometimes we will denote by $D^\perp$ the corresponding
 full subcategory of $\cu$. Dually, ${}^\perp{}D$ is the class
$\{Y\in \obj \cu:\ Y\perp X\ \forall X\in D\}$. This convention is opposite to the one of \S9.1 of \cite{neebook}.

In this paper all complexes will be cohomological i.e. the degree of
all differentials is $+1$; respectively, we will use cohomological
notation for their terms.

For an additive category $B$ we denote by $C(B)$ the category of
(unbounded) complexes over it. $K(B)$ will denote the homotopy
category of complexes. If $B$ is also abelian, we will denote by
$D(B)$ the derived category of $B$. We will also need certain bounded
analogues of these categories (i.e. $C^b(B)$, $K^b(B)$, $D^-(B)$).

$\cu$ and $\du$ will usually denote some triangulated categories.
 We will use the
term 'exact functor' for a functor of triangulated categories (i.e.
for a for a functor that preserves the structures of triangulated
categories).

$\au$ will usually denote some abelian category.
We will call a covariant additive functor $\cu\to \au$
for an abelian $\au$ {\it homological} if it converts distinguished
triangles into long exact sequences; homological functors
$\cu^{op}\to \au$ will be called {\it cohomological} when considered
as contravariant functors $\cu\to \au$.

$H:\cu^{op}\to \au$ will always be additive; it will usually
be cohomological.

For $f\in\cu (X,Y)$, $X,Y\in\obj\cu$, we will call the third vertex
of (any) distinguished triangle $X\stackrel{f}{\to}Y\to Z$ a cone of
$f$. Note that different choices of cones are connected by
non-unique isomorphisms, cf. IV.1.7 of \cite{gelman}. Besides, in
$C(B)$ we have canonical cones of morphisms (see section \S III.3 of
ibid.).

We will often specify a distinguished triangle by two of its
morphisms.

When dealing with triangulated categories we (mostly) use
conventions and auxiliary statements of \cite{gelman}. For a set of
objects $C_i\in\obj\cu$, $i\in I$, we will denote by $\lan C_i\ra$
the smallest strictly full triangulated subcategory containing all $C_i$; for
$D\subset \cu$ we will write $\lan D\ra$ instead of $\lan C:\ C\in\obj
D\ra$.  

We will say that  $C_i$ generate $\cu$ if $\cu$ equals $\lan
C_i\ra$. We will say that $C_i$ {\it weakly cogenerate} $\cu$ if for
$X\in\obj\cu$ we have $\cu(X,C_i[j])=\ns\ \forall i\in I,\
j\in\z\implies X=0$ (i.e. if ${}^\perp\{C_i[j]\}$ contains only
 zero objects).


We will call a partially ordered set $L$ a (filtered)
{\it projective system}
 if for any $x,y\in L$ there exists some maximum i.e. a $z\in L$
 such that $z\ge x$ and $z\ge y$. By abuse of notation, we will identify
  $L$ with the following category $D$: $\obj D=L$; $D(l',l)$ is empty
  whenever $l'< l$, and consists of a single morphism otherwise;
  the composition of morphisms is the only one possible.
If  $L$ is a projective system, $C$ is some category, $X:L\to C$ is
a covariant functor, we will denote $X(l)$ for $l\in L$ by $X_l$. We
will write $X=\prli_{l\in L} X_l$ for the limit of this functor (if it exists); we
will call it the inverse limit of $X_l$. We will denote the colimit
of a contravariant functor $Y:L\to C$ by $\inli_{l\in L}Y_l$ and
call it the direct limit. Besides, we will sometimes call the
categorical image of $L$ with respect to such an $Y$ an {\it
inductive system}.


Below $I,L$ will often be projective systems; we will
usually require $I$ to be countable.

A subsystem $L'$ of $L$ is a partially ordered subset in
which maximums exist
(we will also consider the corresponding full subcategory of $L$).
We will call $L'$ unbounded in $L$ if  for any $l\in L$
there exists an $l'\in L'$
such that $l'\ge l$.

$k$ will be our perfect base field. Below we will usually demand $k$
to be countable. Note: this yields that for any variety the set
of its closed (or open) subschemes is countable.

We also list central definitions and main notation of this paper.

 First we list the main
(general) homological algebra definitions. $t$-structures,
$t$-truncations, and Postnikov towers in triangulated categories are
defined in \S\ref{dtst}; weight structures, weight decompositions,
weight truncations, weight Postnikov towers, and weight complexes
are considered in \S\ref{srws}; 
virtual $t$-truncations and nice exact
complexes of functors are defined in \S\ref{svirtr}; weight spectral
sequences are studied in \S\ref{scovi}; (nice) dualities and
orthogonal weight and $t$-structures are defined in Definition
\ref{ddual}; right and left weight-exact functors are defined in
Definition \ref{dwefun}.

Now we list notation (and some definitions) for motives.
$\dmge\subset\dme $, $HI$ and the homotopy $t$-structure for $\dmge$
are defined in \S\ref{dvoev};  Tate twists are considered in
\S\ref{sttw};  $\gdn$ is defined in \S\ref{sprom}; comotives ($\gd$
and $\gdp$) are defined in \S\ref{comot}; in \S\ref{spsch} we
discuss pro-schemes and their comotives; 
in \S\ref{sprim} we recall the definition of a
primitive scheme;  in \S\ref{sdgersten} we define the Gersten weight
structure $w$ on
 a certain triangulated $\gds$; 
  we consider $w_{\chow}$ in \S\ref{stchow}; $\gdbr$ and $w'_{bir}$ are defined in \S\ref{sgdbr}; several
differential graded constructions (including extension and
restriction of scalars for comotives) are considered in \S\ref{rdg};
 we define
  $\gdg$ and $WC:\gds\to K^b(\gdg)$ in
\S\ref{scompdegl}.

\end{notat}

\section{Some preliminaries on  triangulated categories and
motives} \label{rtriang}

\S\ref{dtst} we
recall the notion of  $t$-structure
(and introduce some notation for it), recall
the
notion of the idempotent completion of an additive category;
we also recall that any small abelian category could be
faithfully embedded into $\ab$ (a well-known result by Mitchell).

In \S\ref{sextkrau} we describe (following H. Krause) a natural
method for extending cohomological
functors from a full triangulated $\cu'\subset\cu$ to $\cu$.

In \S\ref{dvoev}
 we recall some 
 definitions and results  of Voevodsky.

 In \S\ref{sttw} we recall the notion of  Tate twist;
 we study the properties of Tate twists for motives and
homotopy invariant sheaves.

 In \S\ref{sprom} we define pro-motives (following \cite{deggenmot}
and \cite{de2}). These are not necessary for our main result; yet they
allow to explain our methods step by step. We also describe in detail
the relation of our constructions and results with those of Deglise.

\subsection{
$t$-structures, Postnikov towers,
idempotent completions, and an embedding theorem of  Mitchell}
\label{dtst}

To fix the notation we recall the definition of a $t$-structure.

\begin{defi}\label{dtstr}

A pair of subclasses  $\cu^{t\ge 0},\cu^{t\le 0}\subset\obj \cu$
for a triangulated category $\cu$ will be said to define a
$t$-structure $t$ if $(\cu^{t\ge 0},\cu^{t\le 0})$  satisfy the
following conditions:

(i) $\cu^{t\ge 0},\cu^{t\le 0}$ are strict i.e. contain all
objects of $\cu$ isomorphic to their elements.

(ii) $\cu^{t\ge 0}\subset \cu^{t\ge 0}[1]$, $\cu^{t\le
0}[1]\subset \cu^{t\le 0}$.

(iii) {\bf Orthogonality}. $\cu^{t\le 0}[1]\perp
\cu^{t\ge 0}$.

(iv) {\bf $t$-decomposition}. For any $X\in\obj \cu$ there exists
a distinguished triangle
\begin{equation}\label{tdec}
A\to X\to B[-1]{\to} A[1]
\end{equation} such that $A\in \cu^{t\le 0}, B\in \cu^{t\ge 0}$.

\end{defi}

We will need some more notation for $t$-structures.

\begin{defi} \label{dt2}

1. A category $\hrt$ whose objects are $\cu^{t=0}=\cu^{t\ge 0}\cap
\cu^{t\le 0}$, $\hrt(X,Y)=\cu(X,Y)$ for $X,Y\in \cu^{t=0}$,
 will be called the {\it heart} of
$t$. Recall (cf. Theorem 1.3.6 of \cite{BBD}) that $\hrt$ is abelian
(short exact sequences in $\hrt$ come from distinguished triangles in $\cu$).

2. $\cu^{t\ge l}$ (resp. $\cu^{t\le l}$) will denote $\cu^{t\ge
0}[-l]$ (resp. $\cu^{t\le 0}[-l]$).

\end{defi}

\begin{rema}\label{rts}
1. The axiomatics of $t$-structures is self-dual: if $\du=\cu^{op}$
(so $\obj\cu=\obj\du$) then one can define the (opposite)  weight
structure $t'$ on $\du$ by taking $\du^{t'\le 0}=\cu^{t\ge 0}$ and
$\du^{t'\ge 0}=\cu^{t\le 0}$; see part (iii) of Examples 1.3.2 in
\cite{BBD}.

2. Recall (cf. Lemma IV.4.5 in \cite{gelman}) that (\ref{tdec})
defines additive functors $\cu\to \cu^{t\le 0}:X\to A$ and $C\to
\cu^{t\ge 0}:X\to B$. We will denote $A,B$ by $X^{t\le 0}$ and
$X^{t\ge 1}$, respectively.

3. (\ref{tdec}) will be called the {\it t-decomposition} of $X$. If
$X=Y[i]$ for some $Y\in \obj\cu$, $i\in \z$, then we will denote $A$
by $Y^{t\le i}$ (it belongs to $\cu^{t\le 0}$) and $B$ by $Y^{t\ge
i+1}$ (it belongs to  $\cu^{t\ge 0}$), respectively. Sometimes we
will denote $Y^{t^\le i}[-i]$ by $t_{\le i}Y$; $t_{\ge
i+1}Y=Y^{t^\ge i+1}[-i-1]$. Objects of the type $Y^{t^\le i}[j]$ and
$Y^{t^\ge i}[j]$ (for $i,j\in \z$) will be called {\it
$t$-truncations of $Y$}.

4. We denote by $X^{t=i}$ the $i$-th cohomology of $X$ with respect
to $t$ i.e. $(Y^{t\le i})^{t\ge 0}$ (cf. part 10 of \S IV.4 of
\cite{gelman}). 

5.  The following statements are obvious (and well-known): $\cu^{t\le 0}={}^\perp
\cu^{t\ge 1}$; $\cu^{t\ge 0}= \cu^{t\le -1}{}^\perp$.

\end{rema}

Now we recall the notion of idempotent completion.

\begin{defi}
An additive category $B$ is said to be {\it idempotent complete} if
for any $X\in\obj B$ and any idempotent $p\in B(X,X)$ there exists a
decomposition $X=Y\bigoplus Z$ such that $p=i\circ j$, where $i$ is the inclusion $Y\to Y\bigoplus Z$, $j$ is the projection $Y\bigoplus Z\to Y$.\end{defi}

Recall that any additive $B$ can be canonically idempotent
completed. Its idempotent completion is (by definition) the category
$B'$ whose objects are $(X,p)$ for $X\in\obj B$ and $p\in B(X,X):\
p^2=p$; we define
$$A'((X,p),(X',p'))=\{f\in B(X,X'):\ p'f=fp=f \}.$$ It
 can be easily checked
that this category is additive and idempotent complete, and for
any idempotent complete $C\supset B$ we have a natural full
embedding $B'\to C$.

The main result of \cite{ba} (Theorem 1.5) states that an
idempotent completion of a triangulated category $\cu$ has a
natural triangulation (with distinguished triangles being all
retracts of distinguished triangles of $\cu$).

Below we will need the notion of  Postnikov tower in
a triangulated category several times (cf. \S IV2 of \cite{gelman})).

\begin{defi}\label{dpoto}
Let $\cu$ be a triangulated category.

1. Let $l\le m\in \z$.

We will call a bounded Postnikov tower for $X\in\obj\cu$ the
following data: a sequence of $\cu$-morphisms $(0=)Y_l\to
Y_{l+1}\to\dots \to Y_{m}=X$ along with distinguished triangles
\begin{equation}\label{wdeck3}
 Y_{ i} \to Y_{i+1}\to X_{i}
\end{equation}
for some $X_i\in \obj \cu$;
here $l\le i<m$.

2. An unbounded Postnikov tower for $X$ is a collection of $Y_i$ for
$i\in\z$ that is equipped (for all $i\in\z$) with: connecting arrows
$Y_i\to Y_{i+1}$ (for $i\in\z$), morphisms $Y_i\to X$ such that all
the corresponding triangles commute, and distinguished triangles
(\ref{wdeck3}).

In both cases, we will denote  $X_{-p}[p]$ by $X^p$; we will call $X^p$ the {\it factors} of our Postnikov tower.

\end{defi}

\begin{rema}\label{rwcomp}
1. Composing (and shifting) arrows from   triangles (\ref{wdeck3})
for two subsequent $i$ one can construct a complex whose terms are
$X^p$ (it is easily seen that this is a complex indeed,
cf. Proposition 2.2.2 of \cite{bws}).  This observation will be important for us below when we will consider certain weight complex functors. 

2. Certainly, a bounded Postnikov tower could be easily completed to
an unbounded one. For example, one could take $Y_i=0$ for $i<l$,
$Y_i=X$ for $i>m$; then $X^i=0$ if $i<l$ or $i\ge m$.
\end{rema}

Lastly, we recall the following (well-known) result.

\begin{pr}\label{pmth}
For any small abelian category $\au$ there exists an exact
faithful functor $\au\to \ab$.
\end{pr}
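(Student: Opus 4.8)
The statement is Mitchell's embedding theorem (in the weak form that only asserts an exact faithful functor to $\ab$, not a full embedding into a category of modules). The plan is to reduce to the Freyd--Mitchell framework of left exact functors. First I would consider the category $\au_* = \adfu(\au^{op},\ab)$ of additive contravariant functors $\au\to\ab$; by the Notation section this is abelian, and Yoneda gives a full embedding $h\colon\au\to\au_*$, $X\mapsto \au(-,X)$, which is exact in the sense that it carries short exact sequences of $\au$ to short exact sequences of representables (here one uses that $\au(-,X)$ is left exact always, and that $h$ takes the cokernel $X\to Z$ of a mono to an epimorphism of functors precisely because short exact sequences in $\au$ can be tested after applying all $\au(-,W)$). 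Actually the cleaner route is: let $\mathcal{L}$ be the full subcategory of $\au_*$ consisting of \emph{left exact} functors; this is a Grothendieck abelian category (a localization of $\au_*$), the Yoneda functor lands in $\mathcal{L}$, and $h\colon\au\to\mathcal{L}$ is now \emph{exact} and fully faithful. Since $\mathcal{L}$ is Grothendieck it has an injective cogenerator $I$.

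Next I would produce the functor to $\ab$. Given the injective cogenerator $I$ of $\mathcal{L}$, define $F\colon\au\to\ab$ by $F(X) = \mathcal{L}(h(X), I) = \operatorname{Hom}_{\mathcal{L}}(h(X),I)$. Because $I$ is injective, $\operatorname{Hom}_{\mathcal{L}}(-,I)$ is an exact contravariant functor on $\mathcal{L}$; composing with the exact functor $h$ and then dualizing the variance (i.e.\ replacing $\au$ by $\au^{op}$ if one wants covariance, or simply observing the composite $\au\xrightarrow{h}\mathcal{L}\xrightarrow{\operatorname{Hom}(-,I)}\ab^{op}$ is exact) gives an exact functor. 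Faithfulness: if $f\colon X\to Y$ in $\au$ is nonzero, then $h(f)$ is nonzero in $\mathcal{L}$ (Yoneda is faithful), and since $I$ is a \emph{cogenerator} there is a morphism $h(Y)\to I$ not killing the image of $h(f)$, so $F(f)\neq 0$. This handles variance issues by, if necessary, applying the construction to $\au^{op}$ (which is again small abelian) to get a covariant exact faithful functor; exactness is preserved since a functor is exact iff it takes short exact sequences to short exact sequences, a self-dual condition.

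The one genuine technical point — the step I expect to be the main obstacle — is establishing that the category $\mathcal{L}$ of left exact functors $\au^{op}\to\ab$ is a Grothendieck category with the Yoneda embedding exact and fully faithful, and that it has an injective cogenerator. This is the heart of Freyd--Mitchell: one must check that $\mathcal{L}$ is closed under the relevant limits/colimits inside $\au_*$, that the inclusion $\mathcal{L}\hookrightarrow\au_*$ has an exact left adjoint (sheafification-type argument), that $\mathcal{L}$ has a generator (the representables, since $\au$ is small), and that a Grothendieck category always has an injective cogenerator. Rather than reproving all of this, I would cite the standard references (e.g.\ Freyd's \emph{Abelian Categories}, or Gabriel's thesis, or Weibel's book), since for the purposes of this paper only the conclusion is needed. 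An alternative, even shorter route avoiding $\mathcal{L}$ entirely: embed $\au$ fully and exactly into $\operatorname{Mod}\text{-}R$ for a suitable ring $R$ by the full Mitchell theorem, then post-compose with the forgetful functor $\operatorname{Mod}\text{-}R\to\ab$, which is exact and faithful; but this merely relocates the same difficulty into the cited full embedding theorem.
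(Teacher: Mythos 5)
Your proposal is correct, and it takes a genuinely different route from the paper. The paper's proof is exactly the two-line shortcut you mention at the very end: invoke the full Freyd--Mitchell theorem (a fully faithful exact embedding $\au\hookrightarrow R\text{-}\operatorname{mod}$), then post-compose with the forgetful functor to $\ab$; a remark in the paper further notes that a merely faithful embedding into $R\text{-}\operatorname{mod}$ would already suffice. Your main argument, by contrast, is the Freyd-style construction from scratch: pass to the Grothendieck category $\mathcal{L}$ of left exact functors $\au^{op}\to\ab$, use that the Yoneda embedding $h:\au\to\mathcal{L}$ is exact and fully faithful, take an injective cogenerator $I$ of $\mathcal{L}$, and define $F=\operatorname{Hom}_{\mathcal{L}}(h(-),I)$, fixing the variance by running the construction on $\au^{op}$. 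Both are sound. The paper's approach buys brevity at the cost of citing a heavier theorem than is strictly needed (full faithfulness and the module structure are discarded immediately). Your approach is more self-contained in spirit and isolates precisely what is used — exactness of $h$, injectivity and cogeneration of $I$ — but in practice you still outsource the genuine content (that $\mathcal{L}$ is Grothendieck with a generator, hence has an injective cogenerator) to the same standard references, so the difference is one of exposition rather than of logical dependency. One small point worth making explicit in your write-up: you need $\au^{op}$ to again be a small abelian category, which is automatic, and you should state clearly that ``exact'' for a contravariant functor means carrying short exact sequences to short exact sequences, so that the variance flip does not affect exactness — you gesture at this but it deserves one sentence.
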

\begin{proof}
By the Freyd-Mitchell's embedding theorem, any small $\au$ could be
fully faithfully embedded into $R-\operatorname{mod}$ for some
(associative unital) ring $R$. It remains to apply the forgetful
functor $R-\operatorname{mod}\to \ab$.
\end{proof}

\begin{rema}\label{rmit}

 1. We will need this statement below in order to assume that
 objects of $\au$ 'have elements'; this will considerably simplify
 diagram chase. Note that we can assume the existence
 of elements for a not necessarily small $\au$ in the case
 when a reasoning deals only with a finite number of objects
 of $\au$ at a time.

 2. In the proof it suffices to have a faithful
  embedding  $\au\to R-\operatorname{mod}$; this weaker
   assertion was also proved by Mitchell.
\end{rema}

\subsection{Extending cohomological functors from a
triangulated subcategory }\label{sextkrau}

We describe a method for extending cohomological
functors from a full triangulated $\cu'\subset\cu$ to $\cu$
(after H. Krause). Note that below we will
apply some of the results of \cite{krause} in the dual form.
The construction requires $\cu'$ to be
skeletally small i.e. there should exist a 
 subset (not just a subclass!) $D\subset \obj \cu'$ such that any object of $\cu'$ is isomorphic to some element of $D$.
For simplicity, we will sometimes (when writing  sums over
$\obj\cupr$) assume  that $\obj \cupr$ is a set itself.
Since the distinction between small and skeletally small
categories will not affect our  arguments and results,
we will ignore it in the rest of the paper.

If $\au$ is  an abelian category, then $\adfu(\cupr^{op},\au)$
 is abelian also; complexes
in it are exact whenever they are exact 
when applied to any object of $\cu'$.


Suppose that $\au$ satisfies AB5 i.e. it is closed with respect to all
small coproducts, and  filtered direct limits of exact sequences in
$\au$ are exact.

Let $H'\in \adfu(\cupr^{op},\au)$ be an additive functor
(it will usually be cohomological).

\begin{pr}\label{pextc}
I Let $\au,H'$ be fixed.

1. There exists an  extension of $H'$ 
to an additive functor $H:\cu\to \au$. It is cohomological
 whenever $H$ is. The correspondence $H'\to H$ defines an
 additive functor $\adfu(\cupr^{op},\au)\to \adfu(\cu^{op},\au)$.

2. Moreover, suppose that in $\cu$ we have a projective
system $X_l,\ l\in L$, equipped with a compatible system of
morphisms $X\to X_l$, such that the latter system for any
$Y\in \obj \cupr$ induces an isomorphism
$\cu(X,Y)\cong \inli \cu(X_l,Y)$. Then we have $H(X)\cong \inli H(X_l)$.

II Let $X\in \obj \cu$ be fixed.

1. One can choose a family of $X_l\in \obj \cu'$ and
$f_l\in \cu(X,X_l)$ such that $(f_l)$ induce a surjection
$\bigoplus H'(X_l)\to H(X)$ for any $H',\au$, and $H$ as in assertion I1.

2. Let $F'\stackrel{f'}{\to} G' \stackrel{g'}{\to} H'$
be a (three-term) complex in $\adfu(\cupr^{op},\au)$ that is exact in
the middle; suppose that $H'$ is cohomological. Then the
complex $F\stackrel{f}{\to} G \stackrel{g}{\to} H$
(here $F,G,H,f,g$ are the corresponding extensions) is exact
 in the middle also.


\end{pr}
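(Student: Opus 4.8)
The plan is to use the extension construction of Krause (applied in dual form) as described just before the proposition, and to exploit the fact that a cohomological functor $H'$ on $\cupr^{op}$ extends to a functor $H$ on $\cu^{op}$ whose value at $X$ is computed as a direct limit over the comma category of objects $X_l\in\obj\cupr$ under $X$. Concretely, for $X\in\obj\cu$ let $I_X$ be the (filtered, by skeletal smallness of $\cu'$ essentially small) category whose objects are pairs $(X_l, f_l)$ with $X_l\in\obj\cupr$ and $f_l\in\cu(X,X_l)$, and set $H(X)=\inli_{(X_l,f_l)\in I_X} H'(X_l)$; morphisms act by the evident functoriality. Part I1 then records that this is a well-defined additive functor extending $H'$, cohomological when $H'$ is (this is the dual of Krause's extension lemma; it also yields assertion I2 immediately, since the hypothesis on $X_l$ identifies a cofinal subsystem of $I_X$). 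Part II1 is just the statement that $H(X)$, being a filtered colimit of the groups $H'(X_l)$, receives a surjection from $\bigoplus_{(X_l,f_l)} H'(X_l)$; one picks the family $(f_l)$ to be all of $I_X$ (or any cofinal subfamily), and surjectivity is independent of $H',\au$ because it is a formal property of filtered colimits (AB5 guarantees the colimit behaves well, and Proposition~\ref{pmth} lets us argue with elements).

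The real content is II2. Fix $X\in\obj\cu$; we must show that if $F'\xrightarrow{f'}G'\xrightarrow{g'}H'$ is exact in the middle in $\adfu(\cupr^{op},\au)$ with $H'$ cohomological, then $F(X)\xrightarrow{f(X)}G(X)\xrightarrow{g(X)}H(X)$ is exact in the middle. Since all three extensions are computed as filtered colimits over the same index category $I_X$, and filtered colimits are exact in $\au$ by AB5, it would suffice to have middle-exactness of $F'(X_l)\to G'(X_l)\to H'(X_l)$ for each $(X_l,f_l)$ — which is exactly the hypothesis, applied objectwise on $\cu'$. So the argument is: write $F=\inli F'(X_l)$, $G=\inli G'(X_l)$, $H=\inli H'(X_l)$ over $I_X$; the maps $f,g$ are the colimits of the objectwise maps $f',g'$; $g'\circ f'=0$ on each $X_l$ gives $g\circ f=0$; and given $a\in G(X)$ with $g(a)=0$, represent $a$ by some $a_l\in G'(X_l)$; then $g'(a_l)\in H'(X_l)$ dies in the colimit $H(X)$, so (filteredness) there is a morphism $(X_l,f_l)\to(X_m,f_m)$ in $I_X$ along which $g'(a_l)$ maps to $0$ in $H'(X_m)$; replacing $a_l$ by its image $a_m$, we have $g'(a_m)=0$, so by middle-exactness over $X_m$ there is $b_m\in F'(X_m)$ with $f'(b_m)=a_m$; its image in $F(X)$ maps to $a$.

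The step I expect to be the main obstacle — and the only one needing care — is justifying that the colimit defining $H$ really is over a \emph{filtered} category, so that the colimit manipulations above (representing elements, killing elements by passing further in the system) are legitimate; equivalently, that $I_X$ is cofiltered as a diagram of $X_l$'s under $X$. This is where the construction of the extension (and the cohomological hypothesis on $H'$, entering through the use of distinguished triangles to compare two morphisms $X\to X_l,X\to X_l'$ by a common target) does the work; it is precisely the dual of the mechanism in Krause's paper, so I would cite \cite{krause} (in dual form) for filteredness rather than reprove it. Once filteredness is in hand, II2 is the routine element-chase sketched above, and II1 and the statements in part I are formal. A minor point to note: in II2 one uses $H'$ cohomological only to ensure the extension $H$ is computed by the same filtered colimit with good exactness properties; the middle-exactness of $F'\to G'\to H'$ is used only objectwise and needs no triangulated input.
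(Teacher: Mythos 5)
Your strategy -- seeing Krause's extension as the filtered colimit $H(X)=\inli_{(Y,f)\in(i\downarrow X^*)}H'(Y)$ over the comma category of arrows $X\to Y$, $Y\in\obj\cupr$ -- computes the same thing as the paper's cokernel formula (\ref{ekrause}), since the colimit-preserving $G$ carries the co-Yoneda presentation of $X^*$ to exactly this colimit; so I1, I2, II1 match the paper up to presentation. For II2, though, your route is genuinely different. The paper argues from the presentation (\ref{eckrause}): after reducing to $\ab$ and gathering finite subsets by additivity, it produces a single $f_{l_0}\colon X\to X_{l_0}$ representing $a$ and a single $g_{l_0j_0}\colon X_{l_0}\to X_{j_0}$ with $g_{l_0j_0}\circ f_{l_0}=0$ and $g'(c)\in\imm H'(g_{l_0j_0})$, completes $g_{l_0j_0}$ to a triangle $Y\stackrel{\alpha}{\to}X_{l_0}\to X_{j_0}$ with $Y\in\obj\cupr$, lifts $f_{l_0}$ through $\alpha$, and \emph{then uses that $H'$ is cohomological} (giving $\imm H'(g_{l_0j_0})\subset\ke H'(\alpha)$) to obtain $H'(\alpha)(g'(c))=0$; middle-exactness over $Y$ finishes. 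You instead prove that $(i\downarrow X^*)$ is filtered (via fibers of differences of parallel arrows in $\cupr$) and invoke the finite-stage property of filtered colimits, passing from the vanishing of $g'(a_l)$ in $H(X)$ to $g'(a_m)=0$ at some stage $(X_m,f_m)$, then apply middle-exactness at $X_m$. Both routes form a fiber in $\cupr$ and conclude by middle-exactness at an object of $\cupr$; the notable difference is that your argument never uses that $H'$ is cohomological -- filteredness does for you what the cohomological hypothesis does for the paper -- so your variant runs under a formally weaker hypothesis. The cost is that you must actually justify filteredness (which you rightly flag as the crux, and which is a triangulated-category fact, not free) and the finite-stage characterization; the latter, together with the reduction to $\ab$ that both proofs use, is a little delicate because the Mitchell embedding of Proposition \ref{pmth} need not preserve the coproducts and colimits defining $H$. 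The paper's gather-into-finitely-many step is precisely what keeps the element chase confined to finitely many objects of $\au$ at a time (cf.\ Remark \ref{rmit}); you should be equally explicit about this when filling in the finite-stage step.
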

\begin{proof}
I1. Following \S1.2 of \cite{krause} (and dualizing it),  we consider
the abelian category $C=\cupr{}^*=\adfu(\cupr,\ab)$ (this is
$\operatorname{Mod}\,\cupr{}^{op}$ in the notation of Krause).  
The definition easily implies that direct limits in $C$ are exactly
direct limits of functors (computed at each object of $\cu'$).  We have the Yoneda's
functor $i':\cu^{op}\to C$ 
that sends $X\in\obj \cu$ to the
functor $X^*=(Y\mapsto \cu(X,Y),\ Y\in \obj \cupr)$; it is obviously
cohomological.
 We denote by $i$ the restriction of $i'$ to $\cupr$
($i$ is opposite to a full embedding).

By Lemma 2.2 of \cite{krause} (applied to the category $\cupr^{op}$)
we obtain that there exists an exact functor $G:C\to \au$  that
preserves all small coproducts and satisfies $G\circ i=H'$.
It is constructed in the following way: if for $X\in \obj\cu$
we have an exact sequence (in $C$)
 \begin{equation}\label{eckrause} \bigoplus_{j\in J}X_j^*
 \to \bigoplus_{l\in L}X_l^* \to X^* \to 0\end{equation}
 for $X_j,X_l\in C'$,
 then we set \begin{equation}\label{ekrause}G(X)=\cok
 \bigoplus_{j\in J}H'(X_j) \to \bigoplus_{l\in L}H'(X_l).\end{equation}

We define  $H=G\circ i'$; it was proved in loc. cit. that we obtain a well-defined functor this way. 
As was also proved in loc. cit., the correspondence $H'\mapsto H$ yields a functor; $H$ is
cohomological if $H'$ is. 

2. The proof of loc. cit. shows (and mentions) that $G$
respects (small) filtered inverse
limits.
Now note that our assertions imply: $X^*=\inli X_l^*$
in $C$.

II 1. This is immediate from (\ref{ekrause}).

2. Note that the assertion is obviously valid if $X\in \obj \cupr$.
We reduce the general statement to this case.

Applying Yoneda's lemma to (\ref{eckrause}) is we obtain
(canonically) some morphisms $f_l:X \to X_l$ for all
$l\in L$ and $g_{lj}:X_l\to X_j$ for all $l\in L$, $j\in J$, such that:
for any $l\in L$ almost all $g_{lj}$ are $0$; for any $j\in J$
almost all $g_{lj}$ is $0$; for any $j\in J$ we have
$\sum_{l\in L} g_{lj}\circ f_l=0$.

Now, by Proposition \ref{pmth}, we may assume that $\au=\ab$
(see Remark \ref{rmit}).
We should check: if for $a\in G(X)$ we have $g_*(a)=0$,
 then $a=f_*(b)$ for some $b\in F(X)$.

Using additivity of $\cupr$ and $\cu$, we can gather finite sets
of $X_l$ and $X_j$ into single objects. Hence we can assume
that $a=G(f_{l_0})(c)$ for some $c\in G(X_l)\ (=G'(X_l)),\ l_0\in L$
 and that $g_*(c)\in H(g_{l_0j_0})(H(X_{j_0}))$ for some $j_0\in J$,
  whereas $g_{l_0j_0}\circ f_{l_0}=0$. We complete
  $X_{l_0}\to X_{j_0}$ to a distinguished triangle
  $Y\stackrel{\al}{\to}X_{l_0}\stackrel{g_{l_0j_0}}{\to} X_{j_0}$;
   we can assume that $B\in \obj \cupr$.
We obtain that $f_{l_0}$ could be presented as $\al\circ \be$
for some $\be\in \cu(X,Y)$. Since $H'$ is cohomological, we obtain
that $H(\al)(g_*(c))=0$. Since $Y\in \obj \cu$, the complex
$F(Y)\to G(Y)\to H(Y)$ is exact in the middle; hence
$G(\al)(c)=f_*(d)$ for some $d\in F(Y)$. Then we can take $b=F(\be)(d)$.

\end{proof}

\subsection{Some definitions of Voevodsky: reminder}\label{dvoev}

We use much notation from  \cite{1}. We recall (some of) it here for
the convenience of the reader, and introduce some notation of our own.

$\var\supset \sv\supset \spv$ will denote the class of all varieties
over $k$, resp. of smooth varieties, resp. of smooth projective
varieties.

We recall  that for categories of geometric origin (in particular,
 for 
$\smc$ defined below) the addition of objects is defined via
the disjoint union of varieties operation.

 We define the category $\smc$ of smooth
correspondences. $\obj \smc=\sv$, $\smc (X,Y)=\bigoplus_U\z$ for all
integral closed $U\subset X\times Y$  that are finite over $X$ and
dominant over a connected component of $X$; the composition of
correspondences is defined in the usual way via intersections (yet,
we do not need to consider correspondences up to an equivalence
relation).

We will write $\dots \to X^{i-1}\to X^{i}\to X^{i+1}\to \dots$, for
$X^l\in\sv$, for the corresponding complex over $\smc$.

$\psc$ will denote the (abelian) category  of  additive cofunctors
$\smc\to\ab$; its objects are usually called {\it presheaves with
transfers}.

$\ssc=\ssc_{Nis}\subset \psc$ is the abelian category  of  additive
cofunctors $\smc\to\ab$ that are sheaves in the Nisnevich topology
(when restricted to the category of smooth varieties); these sheaves
are usually called {\it sheaves with transfers}.

 $\dmk$ will be the bounded above
derived category of  $\ssc$.

For $Y\in \sv$ (more generally, for $Y\in \var$, see \S4.1 of
\cite{1}) we consider $L(Y)=\smc(-,Y)\in \ssc$. For a bounded
complex $X=(X^i)$ (as above) we will denote by $L(X)$ the complex
$\dots \to L(X^{i-1})\to L(X^{i})\to L(X^{i+1})\to \dots\in
C^b(\ssc)$.

$S\in \ssc$ is called homotopy invariant if for any $X\in \sv$ the
projection $\af^1\times X\to X$ gives an isomorphism $S(X)\to
S(\af^1\times X)$. We will denote the category of homotopy
invariant sheaves (with transfers) by $HI$; it is an exact abelian subcategory of $\smc$ by Proposition 3.1.13 of \cite{1}.

$\dme\subset \dmk$ is the full subcategory of complexes whose cohomology
sheaves are homotopy invariant; it is triangulated by loc. cit.
We will need the {\it homotopy} $t$-structure on $\dme$: it is the
restriction of the canonical $t$-structure on $\dmk$ to $\dme$. 
Below (when dealing with $\dme$) we will denote it by just by $t$.
We have $\hrt= HI$.

We recall the following results
of \cite{1}.

\begin{pr}\label{pvo}
1. There exists an exact functor $RC:\dmk\to \dme$ right adjoint to the
embedding $\dme\to \dmk$.

2. $\dme (\mg(Y)[-i],F)= \hhi(F)(Y)$ (the $i$-th Nisnevich
hypercohomology of $F$ computed in $Y$) for any $Y\in \sv$.

3. Denote  $RC\circ L$ by $\mg$. 
Then the corresponding functor $K^b(\smc)\to \dme$ could be
described as a certain localization of $K^b(\smc)$.

\end{pr}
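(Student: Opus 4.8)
The plan is to deduce all three assertions from Voevodsky's construction of $\dme$ as a Bousfield-type localization of $\dmk=D^-(\ssc)$ by means of the Suslin singular complex functor. For a sheaf with transfers $F$, let $\underline C_*(F)$ be the complex of Nisnevich sheaves with transfers whose term in cohomological degree $-n$ is the sheafification of $U\mapsto F(U\times\Delta^n)$, with the alternating-sum differential, and extend $\underline C_*$ to bounded-above complexes by passing to total complexes. The essential geometric input — Voevodsky's theorem, which uses that $k$ is perfect — is that the cohomology sheaves of $\underline C_*(F^\bullet)$ are homotopy invariant; consequently $\underline C_*$ descends to an exact functor $RC\colon\dmk\to\dme$, and $RC$ restricts to the identity on $\dme$. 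For part 1 it then remains to exhibit, naturally in $B\in\obj\dmk$ and $F\in\obj\dme$, an isomorphism $\dmk(B,F)\cong\dme(RC(B),F)$. The canonical natural transformation $\id\to RC$ (the map $B\to\underline C_*(B)$ obtained by pullback along the projections $\Delta^\bullet\to\pt$) furnishes a map $\dme(RC(B),F)\to\dmk(B,F)$; to see it is bijective it suffices to show that the cone of $B\to RC(B)$ lies in the right orthogonal of $\dme$ inside $\dmk$. By a d\'evissage along Postnikov towers/truncations this reduces to $B=L(X)$, $X\in\sv$, where it is exactly the second hard theorem of Voevodsky: the Nisnevich cohomology of a homotopy invariant sheaf with transfers is again homotopy invariant (and agrees with Zariski cohomology), so $\underline C_*$ does not alter $\homm$-groups into $\dme$.

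For part 2, set $\mg(Y)=RC(L(Y))$; since $RC$ is exact, $\mg(Y)[-i]=RC(L(Y)[-i])$, and the adjunction of part 1 gives
\[
\dme(\mg(Y)[-i],F)\cong\dmk(L(Y)[-i],F)\cong\dmk(L(Y),F[i]).
\]
By Yoneda, $\homm_{\ssc}(L(Y),G)=G(Y)$ for $G\in\ssc$, and since $\ssc$ has enough injectives the last group is the $i$-th hyperext $\exts^i_{\ssc}(L(Y),F)$; by Voevodsky's identification of $\exts^*_{\ssc}(L(Y),-)$ with the Nisnevich cohomology of $Y$ (computed on the small Nisnevich site of $Y$) this equals $\hhi(F)(Y)$. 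Assembling the identifications proves the claim; alternatively one may cite directly that $\dmk(L(Y)[-i],F)=\hhi(F)(Y)$ and observe, via the cone argument of part 1, that $RC$ leaves this group unchanged.

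For part 3, note that $L\colon K^b(\smc)\to\dmk$ sends the homotopy projections $[\af^1\times X]\to[X]$ and the Mayer--Vietoris complexes attached to Nisnevich covers to morphisms whose cones lie in the kernel $W$ of $RC$ (which contains all $\co(L(\af^1\times X)\to L(X))$), and that $RC$ realizes $\dme$ as the Verdier quotient $\dmk/W$. Hence $\mg=RC\circ L$ kills the class $S$ of morphisms of $K^b(\smc)$ generated by these, so it factors through the localization $K^b(\smc)\to K^b(\smc)[S^{-1}]$; Voevodsky's embedding theorem for geometric motives shows the induced functor $K^b(\smc)[S^{-1}]\to\dme$ is fully faithful (its idempotent completion identifying the source with $\dmge$), which is the asserted description of $\mg$ as a localization.

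The step that genuinely requires work is the adjunction in part 1: it is not formal, and rests entirely on Voevodsky's two deep theorems — that $\underline C_*$ of a sheaf with transfers has homotopy invariant cohomology sheaves, and that the Nisnevich (= Zariski) cohomology of homotopy invariant sheaves with transfers is homotopy invariant. Granting these, parts 2 and 3 are bookkeeping with Yoneda's lemma, the quotient $\dmk\to\dmk/W$, and the embedding $\dmge\hookrightarrow\dme$.
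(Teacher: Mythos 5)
The paper's ``proof'' of this proposition is simply the citation ``See \S3 of \cite{1}''; your argument correctly and in the standard way reconstructs exactly the content behind that reference (the Suslin complex $\underline C_*$, Voevodsky's two homotopy-invariance theorems, the Yoneda/hyperext identification for Part 2, and the factorization through the localization of $K^b(\smc)$ for Part 3), so it is the same approach. One small point worth flagging: your adjunction isomorphism $\dmk(B,F)\cong\dme(RC(B),F)$, built from the unit $\id\to\underline C_*$, exhibits $RC$ as a \emph{left} adjoint to the embedding $\dme\hookrightarrow\dmk$ --- which is what Voevodsky proves and exactly what Parts 2 and 3 need --- whereas the proposition as printed says ``right adjoint''; that is a slip in the statement, and your reconstruction uses the correct variance.
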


\begin{proof} See \S3 of \cite{1}.
\end{proof}

\begin{rema}\label{rdmge}
1. In \cite{1} (Definition 2.1.1) the triangulated category $\dmge$
(of {\it effective geometric motives}) was defined as the idempotent
completion of a certain localization of $K^b(\smc)$. This definition
is compatible with a {\it differential graded enhancement} for $\dmge$;
cf. \S\ref{scgd} below. Yet in Theorem 3.2.6 of \cite{1} it was
shown that $\dmge$ is isomorphic to the idempotent completion of
(the categorical image) $\mg(C^b(\smc))$; this description of
$\dmge$ will be sufficient for us till \S\ref{rdg}.

2. In fact, $RC$ could be described in  terms of  so-called
Suslin complexes (see loc. cit.). We will not need this below. Instead, we will just note that $RC$ sends $D^-(\ssc)^{t\le 0}$ to $\dme^{t\le 0}$.
\end{rema}

\subsection{Some properties of Tate twists}\label{sttw}

Tate twisting in $\dme\supset \dmge$ is given by tensoring by the
object $\z(1)$ (it is
often denoted just by $-(1)$). Tate twist  has several descriptions and
nice properties. We will only need a few of
them; our main source is \S3.2  of \cite{1}; a
more detailed exposition could be found in \cite{vbook}
(see \S\S8--9). 

In order to calculate the tensor product of $X,Y\in \obj \dme$ one
should take any preimages $X',Y'$ of $X,Y$ in $\obj\dmk$ with
respect to $RC$ (for example, one could take $X'=X$, $Y'=Y$); next
one should resolve $X,Y$ by direct sums of $L(Z_i)$ for $Z_i\in\sv$;
lastly one should tensor these resolutions using the identity
$L(Z)\otimes L(T)=L(Z\times T)$ for $Z,T\in\sv$, and apply $RC$ to
the result. This tensor product is compatible with the natural
tensor product for $K^b(\smc)$.

We note that any object $\dmk^{t\le 0}$ has a resolution
concentrated in negative degrees (the canonical resolution of the
beginning of \S3.2 of \cite{1}). It follows that $\dme^{t\le
0}\otimes \dme^{t\le 0}\subset \dme^{t\le 0}$ (see Remark \ref{rdmge}(2); in fact, there is
an equality since $\z\in\obj HI$).

Next, we denote $\af^1\setminus \ns$ by $G_m$. The morphisms $\pt\to
G_m\to \pt$ (the point is mapped to $1$ in $G_m$) induce a splitting
$\mg(G_m)=\z\oplus \z(1)[1]$ for a certain ({\it Tate}) motif
$\z(1)$; see Definition 3.1 of \cite{vbook}.
For $X\in \obj \dme$ we denote $X\otimes\z(1)$ by $X(1)$.

One could also present
$\z(1)$ as $\co(\pt\to G_m)[-1]$; hence the Tate twist
functor $X\mapsto X(1)$ is compatible
with the functor  $-\otimes (\co(\pt\to G_m)[-1])$  on $C^b(\smc)$
via $\mg$.
We also obtain that
$\dme^{t\le 0}(1)\subset \dme^{t\le 1}$.

Now we define certain twists for functors.

\begin{defi}\label{dtw}
For an $G\in \adfu (\dmge,\ab)$, $n\ge 0$,  we define
$G_{-n}(X)=G(X(n)[n])$.
\end{defi}

Note that this definition is compatible with those of  \S3.1 of
\cite{3}. 
Indeed, for  $X\in \sv$ we have $G_{-1}(\mg(X))=G(\mg(X\times
G_m))/G(\mg(X))=\ke (G(\mg(X\times G_m))\to G(\mg(X)))$ (with respect
to natural morphisms $X\times\pt \to X\times G_m\to X\times \pt$);
$G_{-n}$ for larger $n$ could be defined by iterating $-_{-1}$.

Below we will extend this definition to (co)motives of pro-schemes.

For $F\in \obj\dme$ we will denote by $F_*$ the functor
$X\mapsto \dme(X,F):\dmge\to \ab$.

\begin{pr} \label{padj}

Let $X\in \sv$,  $n\ge 0$, $i\in\z$.

 1. For
any $F\in\obj\dme$ we have:
$F_{*-n}(\mg(X)[-i])$ is a retract of $\hhi(F)(X\times
G_m^{\times n})$ (which can be described explicitly).

2. There exists a $t$-exact functor $T_n:\dme\to\dme$ such that for
any $F\in\obj\dme$ we have
   $F_{*-n}\cong (T_n(F))_*$.
\end{pr}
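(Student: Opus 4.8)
\textbf{Proof plan for Proposition \ref{padj}.}

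The plan is to reduce both assertions to properties of Tate twists that were recalled just above, together with the adjunction $\dme (\mg(Y)[-i],F)= \hhi(F)(Y)$ of Proposition \ref{pvo}(2). For part 1, I would first treat the case $n=1$. Here $F_{*-1}(\mg(X)[-i]) = \dme(\mg(X)(1)[1][-i],F)$; using the splitting $\mg(G_m)=\z\oplus \z(1)[1]$ (so that $\z(1)[1]$ is a direct summand of $\mg(G_m)$) and the identity $L(Z)\otimes L(T)=L(Z\times T)$, one gets that $\mg(X)(1)[1]=\mg(X)\otimes \z(1)[1]$ is a direct summand of $\mg(X)\otimes\mg(G_m)=\mg(X\times G_m)$. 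Applying the functor $\dme(-[-i],F)$, which sends direct summands to direct summands, yields that $F_{*-1}(\mg(X)[-i])$ is a retract of $\dme(\mg(X\times G_m)[-i],F)=\hhi(F)(X\times G_m)$ by Proposition \ref{pvo}(2). (The explicit description of the retract comes from the idempotent on $\mg(G_m)$ cutting out $\z(1)[1]$, induced by the maps $\pt\to G_m\to\pt$; this is exactly the description already recalled after Definition \ref{dtw}.) For general $n$ one iterates: $\mg(X)(n)[n]$ is a retract of $\mg(X)\otimes\mg(G_m)^{\otimes n}=\mg(X\times G_m^{\times n})$, so $F_{*-n}(\mg(X)[-i])$ is a retract of $\hhi(F)(X\times G_m^{\times n})$, again with an explicit idempotent.

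For part 2, the idea is to find a functor $T_n$ whose effect, after applying $(-)_*$, twists the argument by $\z(n)[n]$. The natural candidate is the left adjoint to $-\otimes\z(n)[n]$ on $\dme$; equivalently, $T_n(F)$ should be characterized by $\dme(Y,T_n(F))\cong \dme(Y\otimes\z(n)[n],F)$ naturally in $Y\in\obj\dme$. Since $\z(n)[n]$ is an invertible object of the tensor category $\dme$ up to the cancellation/duality theorems of Voevodsky — more simply, since tensoring by $\z(1)[1]$ has a left adjoint because $\dme$ is compactly generated by $\mg(X)$'s and $-\otimes\z(1)[1]$ commutes with coproducts — Brown representability (or the explicit cancellation theorem) gives such a $T_n$. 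Then for $X\in\sv$ we compute $(T_n(F))_*(\mg(X)[-i]) = \dme(\mg(X)[-i],T_n(F)) \cong \dme(\mg(X)[-i]\otimes\z(n)[n],F) = \dme(\mg(X)(n)[n][-i],F) = F_{*-n}(\mg(X)[-i])$; since the $\mg(X)[-i]$ generate $\dmge$, this gives the isomorphism of functors on all of $\dmge$.

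It remains to check $t$-exactness of $T_n$. Here I would use the two inclusions recalled above: $\dme^{t\le 0}(1)\subset\dme^{t\le 1}$ and, dually or by the canonical-resolution argument, the fact that $-(1)[1]$ is $t$-exact (this follows from $\dme^{t\le 0}\otimes\dme^{t\le 0}\subset\dme^{t\le 0}$ applied to $\z(1)[1]\in HI$, plus the corresponding statement for $t\ge 0$ via hypercohomology vanishing). Since $T_n$ is left adjoint to the $t$-exact functor $-\otimes\z(n)[n]$, it is right $t$-exact automatically; left $t$-exactness follows because the right adjoint $-\otimes\z(n)[n]$ is also left $t$-exact, so by adjunction $T_n$ preserves $\dme^{t\ge 0}=(\dme^{t\le -1})^\perp$ as well. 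Alternatively, one can simply invoke the Cancellation Theorem to identify $T_n(F)$ with an explicit "$(-n)$-twist" of $F$ built from internal Hom against $\z(n)$, whose $t$-exactness is then the content of the Bloch–Ogus type computations; but the adjunction argument is cleaner.

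\textbf{Main obstacle.} The delicate point is the $t$-exactness of $T_n$ (equivalently, that $-\otimes\z(1)[1]$ is $t$-exact on $\dme$ in both directions): right $t$-exactness is easy from the canonical resolution in negative degrees, but left $t$-exactness — that tensoring with $\z(1)[1]$ does not decrease the $t$-coconnectivity — is precisely where one needs either the Cancellation Theorem or a direct hypercohomology computation, and this is the step I expect to require the most care.
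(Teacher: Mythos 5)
Your proof of part 1 is correct and coincides with the paper's (one-line) argument, just spelled out.

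For part 2, you take a genuinely different route — abstract adjunction and Brown representability — whereas the paper constructs $T_n$ explicitly at the level of complexes of sheaves: for $F$ a complex of sheaves $F^i$, $T_n(F)$ is defined termwise, with $T_n(F^i)(X)$ a functorial retract of $F^i(X\times G_m^n)$; exactness of this operation on $\ssc$ is then imported from Proposition 3.4.3 of D\'eglise, which makes $t$-exactness of $T_n$ on $\dme$ immediate (it is termwise exact), and the isomorphism $F_{*-n}\cong(T_n(F))_*$ is imported from Proposition 4.34 of \cite{3}. The explicit route avoids the adjunction subtleties entirely.

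Your argument, as written, has a genuine gap in the $t$-exactness step, and it is not the one you identify. First, a terminological slip that propagates: the characterization $\dme(Y,T_n(F))\cong\dme(Y\otimes\z(n)[n],F)$ makes $T_n$ the \emph{right} adjoint of $L=-\otimes\z(n)[n]$ (which is also what Brown representability produces), not the left adjoint. Second, and more importantly, your claim that $t$-exactness of $T_n$ is ``equivalently'' $t$-exactness of $L$ is false: for an adjoint pair $L\dashv R$, right $t$-exactness of $L$ (preserving $\dme^{t\le 0}$) is equivalent to \emph{left} $t$-exactness of $R$ (preserving $\dme^{t\ge 0}$), via the orthogonality $\dme^{t\ge 0}=(\dme^{t\le -1})^\perp$. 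This gives you only one half of the $t$-exactness of $T_n$. The other half — $T_n(\dme^{t\le 0})\subset\dme^{t\le 0}$, i.e.\ that $\dme(T_n F,Y)=0$ for $F\in\dme^{t\le 0}$ and $Y\in\dme^{t\ge 1}$ — cannot be read off the adjunction, because the adjunction controls maps \emph{into} $T_n F$, not out of it; knowing that $L$ is $t$-exact and even fully faithful does not, by general nonsense, force its right adjoint to be $t$-exact. So even granting your claim that $-\otimes\z(1)[1]$ is $t$-exact, the argument is incomplete, and that is precisely where the paper's explicit, termwise construction (with D\'eglise's exactness result on sheaves doing the real work) is needed.
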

\begin{proof}

1.
Proposition \ref{pvo} along with
our
description of $\z(1)$ yields the result.

2. For $F$ represented by a complex of $F^i\in \obj \ssc$ ($i\in
\z$) we define $T_n(F)$ as the complex of $T_n(F^i)$, where
$T_n:\psc\to \psc$ is defined similarly to $-_{-n}$ in Definition
\ref{dtw}. $T_n(F^i)$ are sheaves since $T_n(F_i)(X),\ X\in \sv$, is
a functorial retract of $F_i(X\times G_m^n)$.

In order to check that we actually obtain a well-defined a $t$-exact functor this way, it suffices to note that 
the restriction of $T_n$ to $\ssc$ is an exact functor by Proposition 3.4.3 of \cite{deggenmot}. 

Now, it suffices to check that $T_n$ defined satisfies the assertion
for $n=1$. In this case the statement follows easily from
Proposition 4.34 of \cite{3} (note that it is not important whether
we consider Zariski or Nisnevich topology by Theorem 5.7 of
ibid.).

\end{proof}

\subsection{Pro-motives vs. comotives; the description
of our strategy}\label{sprom}

Below we will embed $\dmge$ into a certain triangulated
category $\gd$ of {\it comotives}. Its construction
(and computations in it) is rather
 complicated; in fact, the author is not sure whether the main properties
 of $\gd$ (described below) specify it up to an isomorphism. So,
 before working with co-motives we will (following F. Deglise)
 describe a simpler category of {\it pro-motives}. The latter is
 not needed for our main results
(so the reader may skip this subsection);
yet the comparison of the
 categories mentioned would clarify the nature of our methods.

Following \S3.1 of \cite{deggenmot}, we define the category $\gdn$
as the additive  category of naive i.e. formal
(filtered)  pro-objects of $\dmge$.
This means that for any $X:L\to \dmge$, $Y:J\to \dmge$ we define
\begin{equation}\label{emorgdn}
\gdn (\prli_{l\in L}X_l,\prli_{j\in J} Y_j)
= \prli_{j\in J} (\inli_{l\in L} \dmge(X_l,Y_j)).\end{equation}

The main disadvantage of $\gdn$ is that it is not triangulated.
Still, one has the obvious shift for it; following Deglise, one can
define pro-distinguished triangles as (filtered) inverse limits of
distinguished triangles in $\dmge$.  This allows to construct a certain
motivic coniveau exact couple for a motif of a smooth variety
in \S4.2 of \cite{de2}
(see also \S5.3 of \cite{deggenmot}).
This construction is parallel to the
 classical construction of  coniveau spectral sequences (see \S1 of
 \cite{suger}).  One starts with certain 'geometric'
 Postnikov towers in $\dmge$ (Deglise calls them {\it triangulated
 exact couples}). For
 $Z\in \sv$ we consider filtrations
 $\varnothing=Z_{d+1}\subset Z_d\subset Z_{d-1}\subset
 \dots \subset Z_0=Z$;  $Z_{i}$ is everywhere
of codimension  $\ge i$ in $Z$ for
 all $i$.
  Then we have a system of distinguished
 triangles relating $\mg(Z\setminus Z_i)$ and
 $\mg(Z\setminus Z_i\to Z\setminus
 Z_{i+1})$; this yields a Postnikov tower.
  Then
 one passes to the inverse limit of these towers in $\gdn$
 (here the connecting morphisms
 are induced by the corresponding open embeddings). Lastly, the
 functorial form of the Gysin distinguished triangle for motives
 allows Deglise to identify
$X_i=\prli(\mg(Z\setminus Z_i\to Z\setminus
 Z_{i+1}))$ with the product of shifted Tate twists of
 pro-motives of all points of $Z$ of
 codimension $i$. Using the results of see \S5.2 of \cite{deggenmot}
(the relation of pro-motives with
 cycle modules of M. Rost, see \cite{rostc}) one can also
  compute the morphisms that
 connect $X^i$ with $X^{i+1}$.

 Next, for any cohomological
 $H:\dmge\to \au$, where $\au$ is an abelian category satisfying
 AB5, one can extend $H$ to $\gdn$ via the corresponding direct
 limits. Applying $H$ to the motivic coniveau exact couple one
 gets the classical coniveau spectral sequence (that converges
to the $H$-cohomology of
 $Z$). 
 This allows to extend the seminal results of \S6 of \cite{blog}
  to a comprehensive description of the coniveau spectral sequence
  in the case when $H$ is represented by  $Y\in \obj\dme$
  (in  terms of the homotopy $t$-truncations of $Y$; see
  Theorem 6.4 of \cite{ndegl}).

 Now suppose that one wants to apply a similar procedure for an
 arbitrary $X\in \obj \dmge$; say, $X=\mg(Z^1\stackrel{f}{\to} Z^2)$ for
 $Z^1,Z^2\in \sv$, $f\in \smc(Z^1,Z^2)$. One would expect that the
 desired exact couple for $X$  could be constructed from those for
 $Z^j$, $j=1,2$. This is indeed the case when
 $f$ satisfies certain codimension restrictions; cf. \S7.4 of
 \cite{bws}. Yet for a general $f$ it seems to be quite difficult to
 relate the filtrations of distinct $Z^j$ (by the corresponding
 $Z^j_i$).
On the other hand, the formalism of weight structures and
 weight spectral
 sequences (developed in \cite{bws}) allows to
 'glue' certain {\it weight} Postnikov
 towers for objects of a triangulated categories equipped with
a weight structure; see  Remark \ref{rcger}(3) below.

So, we construct a certain triangulated category
 $\gd$
 that is somewhat similar to $\gdn$.
 Certainly, we want distinguished triangles in $\gd$ to be
 compatible with inverse limits that come from 'geometry'. A
 well-known recipe for this is: one should consider some
  category $\gdp$ where
 (certain) cones of morphisms are functorial and pass to (inverse)
 limits in $\gdp$; $\gd$ should be a localization of $\gdp$.
 In fact,
$\gdp$ constructed in \S\ref{scgd} below could be endowed with  a
certain (Quillen) model structure
such that $\gd$ is its homotopy category. We will never use this
fact below; yet we will sometimes call inverse
 limits coming from $\gdp$
 homotopy limits (in $\gd$).

Now, in
Proposition \ref{cdscoh} below we  will prove that cohomological
functors $H:\dmge\to \au$ 
could be
extended to $\gd$
   in a  way that is compatible with homotopy limits
(those coming from  $\gdp$).
So one may say that objects of $\gd$ have the same
cohomology as those of $\gdn$.
On the other hand, we have to pay the price for $\gd$
being triangulated:
(\ref{emorgdn}) does not compute morphisms between
homotopy limits in $\gd$.
The 'difference' could be described in terms of certain
higher projective limits
(of the corresponding morphism groups in $\dmge$).

Unfortunately, the author does not know how to control the
corresponding
 $\prli^2$ (and higher ones) in the general case; this does
not allow to construct a weight structure  on a sufficiently large
triangulated subcategory of $\gd$ if $k$ is uncountable (yet see
\S\ref{unck}, especially the last paragraph of it). In the case of a
countable $k$ only $\prli^1$ is non-zero.
 In this case the morphisms between homotopy limits in $\gd$
are expressed
 by the formula (\ref{lmor}) below. This allows to prove that
there are no
morphisms of positive degrees between certain Tate twists of
the comotives of
function fields (over $k$). This immediately yields that one
can construct
a certain weight structure on the triangulated subcategory
$\gds$ of $\gd$
 generated by products of Tate twists of the comotives of
function fields
(in fact, we also idempotent complete $\gds$). Now, in
order to prove that
 $\gds$ contains $\dmge$ it suffices to prove that the
motif of any smooth
variety $X$ belongs to $\gds$. To this end it clearly suffices to
decompose $\mg(X)$ into a Postnikov tower whose factors
are products
 of Tate twists of the comotives of function fields. So, we lift
 the motivic
coniveau exact couple (constructed in \cite{de2}) from $\gdn$ to $\gd$.
Since cones in $\gdp$ are compatible with inverse limits, we
can construct
a tower whose terms are the homotopy limits of the
corresponding terms
of the geometric towers mentioned. In fact, this could
be done for an
uncountable $k$ also; the difficulty is to identify the
analogues of $X_i$
in $\gd$. If $k$ is countable, the homotopy limits corresponding to our
tower are countable also. Hence (by an easy well-known result)
the isomorphism
classes of these homotopy limits could be computed in terms
 of the corresponding
objects and morphisms in $\dmge$. This means: it suffices
to compute $X^i$
in $\gdn$ (as was done in \cite{de2}); this yields the result needed.
Note that we cannot (completely) compute the $\gd$-morphisms $X^i\to X^{i+1}$; yet we know how they act on cohomology.

The most interesting application of the results described is
the following one.
 We prove that there are no positive $\gd$-morphisms between
(certain) Tate twists
of the comotives of smooth semi-local schemes
(or {\it primitive schemes}, see below);
this generalizes the corresponding result for function fields.
 It follows that
these twists belong to the {\it heart} of the weight structure
 on $\gds$ mentioned.
 Therefore the comotives of (connected) primitive schemes are retracts of
the comotives of
their generic points. Hence the same is true for the
 cohomology of the comotives
mentioned and also for the corresponding pro-motives.
Also, the comotif of a function
field contains as retracts  the twisted comotives of its
 residue fields (for all geometric valuations); this
also implies the corresponding results for cohomology and pro-motives.

\begin{rema}\label{rdeg1}
 In fact, Deglise mostly considers pro-objects for
 Voevodsky's $\dmgm$ and of $\dme$; yet the distinctions
are not important since
 the full embeddings $\dmge\to \dmgm$ and $\dmge\to \dme$
obviously extend to  full
 embedding of the corresponding categories of pro-objects.
 Still, the embeddings mentioned allow Deglise to extend several
nice results for Voevodsky's motives to pro-motives.

 2. One of the advantages of the results of Deglise is
that he never requires
  $k$ to be countable. Besides, our construction of 
weight Postnikov towers
mentioned heavily relies on the functoriality of the
Gysin distinguished triangle
for motives (proved in \cite{de2}; see also Proposition 2.4.5 of \cite{deggenmot}).

\end{rema}

\section{Weight 
structures: reminder, truncations,  weight spectral sequences, and
duality with $t$-structures}\label{swnew}

In \S\ref{srws} we recall  basic definitions  of
 the theory of weight structures (it was developed in
\cite{bws}; the concept was also
 independently introduced in \cite{konk}).
 Note here that weight structures (usually denoted by  $w$)
 are natural counterparts of  $t$-structures. 
 Weight structures yield
 weight truncations; those (vastly) generalize stupid truncations
  in $K(B)$: in particular, they are not canonical,
  yet any morphism of objects could be extended (non-canonically)
  to a morphism of their weight truncations. 
 We recall several properties
of weight structures in \S\ref{sbpws}.

We recall {\it virtual $t$-truncations} for a (cohomological)
 functor  $H:\cu\to \au$ (for $\cu$ endowed with a weight structure)
in \S\ref{svirtr} (these truncations are defined in terms of  weight
truncations). Virtual $t$-truncations were introduced in \S2.5 of
\cite{bws}; they yield a way to present $H$ (canonically) as an extension of  a cohomological functor that is positive in a certain sense by a
'negative' one  (as if $H$ belonged to some
triangulated category of functors
 $\cu\to \au$ endowed with a $t$-structure).
 We study this notion further here, and prove that virtual
 $t$-truncations for a cohomological $H$ could be  characterized
   up to a unique isomorphism by their properties
   (see    Theorem \ref{trfun}(III4)). In order to give some
    characterization also for the 'dimension shift' (connecting
    the positive and the negative virtual $t$-truncations of
    $H$), we introduce the notion of  {\it nice (strongly exact)}
complex of functors. We prove that complexes of representable functors
coming from distinguished triangles in $\cu$ are nice, as well
as those complexes that could be obtained from  nice strongly exact
complexes of functors $\cupr\to \au$ for some small
triangulated $\cupr\subset \cu$ (via the extension procedure
given by Proposition \ref{pextc}).

In \S\ref{scovi} we consider weight spectral sequences
(introduced in \S\S2.3--2.4 of \cite{bws}). We prove that the derived
 exact couple for the weight spectral sequence $T(H)$ (for $H:\cu\to\au$)
  could be naturally described in terms of virtual $t$-truncations
  of $H$. So, one can express $T(H)$ starting from $E_2$
  (as well as the corresponding filtration of $H^*$) in these
  terms also. 
   This is an important result,
   since the basic definition of $T(H)$  is given in terms of
    {\it weight Postnikov towers} for objects of $\cu$, whereas
   the latter are not canonical. 
   In particular, this result  yields canonical functorial spectral
    sequences in classical situations (considered by Deligne;
    cf. Remark 2.4.3 of \cite{bws}; note that we do not need rational
    coefficients here).

In \S\ref{sdortstr} we introduce the definition
a {\it (nice) duality} $\Phi:\cu^{op}\times \du\to \au$, and
of (left) {\it orthogonal} weight and
 $t$-structures (with respect to $\Phi$). The latter definition
 generalizes the notion of
{\it adjacent} structures introduced in \S4.4 of \cite{bws}
(this is the case $\cu=\du$, $\au=\ab$, $\Phi=\cu(-,)$). If $w$ is
  orthogonal to $t$ then the virtual $t$-truncations
  (corresponding to $w$)  of functors of the type
  $\Phi(-,Y),\ Y\in \obj\du$, are exactly the functors
  'represented via $\Phi$' by the actual $t$-truncations of
   $Y$ (corresponding to $t$). We also prove that (nice) dualities
    could be extended from $\cu'$ to $\cu$ (using Proposition
    \ref{pextc}).
Note here that (to the knowledge of the author) this paper is the
first one which considers 'pairings' of triangulated categories.

In \S\ref{sconin} we prove: if $w$ and $t$ are orthogonal with
respect to a nice duality, the weight spectral sequence converging
to $\Phi(X,Y)$ (for $X\in \obj \cu,\ Y\in \obj \du$) is naturally
isomorphic (starting from $E_2$) to the one coming from
$t$-truncations of $Y$. Moreover even when the duality is not nice,
 all $E_r^{pq}$ for $r\ge 2$ and the filtrations corresponding to
 these spectral sequences
are still canonically isomorphic. Here niceness of a duality
(defined in \S\ref{sdortstr}) is a somewhat technical condition
(defined in terms of nice complexes of functors). Niceness
 generalizes to pairings ($\cu\times \du\to \au$) the axiom
TR3 (of triangulated categories: any commutative square in
 $\cu$ could be completed to a morphism of distinguished  triangles;
    note that this axiom could be described in
    terms of the functor $\cu(-,-): \cu\times \cu\to \ab$).
  We also discuss some
alternatives and predecessors of  our methods and results.

In \S\ref{schws}
 we compare weight decompositions, virtual $t$-truncations, and
weight spectral sequences corresponding to distinct weight
structures (in possibly distinct triangulated categories, connected by an exact functor).


\subsection{Weight structures: basic definitions}\label{srws}

We recall the definition of a weight structure (see \cite{bws};
in \cite{konk}
D. Pauksztello
introduced weight structures independently and called them
co-t-structures).

\begin{defi}[Definition of a weight structure]\label{dwstr}

A pair of subclasses $\cu^{w\le 0},\cu^{w\ge 0}\subset\obj \cu$ for
a triangulated category $\cu$ will be said to define a weight
structure $w$ for $\cu$ if 
they  satisfy the following conditions:

(i) $\cu^{w\ge 0},\cu^{w\le 0}$ are additive and Karoubi-closed
(i.e. contain all retracts of their objects that belong to
$\obj\cu$).

(ii) {\bf Semi-invariance with respect to translations.}

$\cu^{w\ge 0}\subset \cu^{w\ge 0}[1]$; $\cu^{w\le 0}[1]\subset
\cu^{w\le 0}$.

(iii) {\bf Orthogonality.}

$\cu^{w\ge 0}\perp \cu^{w\le 0}[1]$.

(iv) {\bf Weight decomposition}.

 For any $X\in\obj \cu$ there
exists a distinguished triangle

\begin{equation}\label{wd}
B[-1]\to X\to A\stackrel{f}{\to} B
\end{equation} such that $A\in \cu^{w\le 0}, B\in \cu^{w\ge 0}$.

\end{defi}

A simple  example of a category with a weight structure is $K(B)$
 for  any additive $B$: positive objects are complexes
 that are homotopy equivalent to those concentrated in positive degrees;
 negative
objects are complexes
that are homotopy equivalent to those concentrated in negative degrees.
 Here one could also consider  the subcategories of  complexes that
are bounded from above, below, or from both sides.

The triangle (\ref{wd}) will be called a {\it weight decomposition}
of $X$.  A weight decomposition is (almost) never canonical;
still we will sometimes denote any pair $(A,B)$ as in (\ref{wd}) by $X^{w\le 0}$
and $X^{w\ge 1}$. Besides, we will call objects of
 the type $(X[i])^{w\le 0}[j]$ and
$(X[i])^{w\ge 0}[j]$ (for $i,j\in\z$) {\it weight truncations} of $X$.
 A shift 
 of the distinguished triangle (\ref{wd}) by $[i]$ for any $i\in\z,\ X\in \obj \cu$ (as well as any its rotation) will sometimes be called a {\it shifted weight decomposition}.

In $K(B)$ (shifted) weight decompositions come from stupid
truncations of complexes.

We will also need the following definitions and notation.

\begin{defi}\label{d2} 
Let $X\in \obj \cu$.

\begin{enumerate}
    \item
 The category $\hw\subset \cu$ whose objects are
$\cu^{w=0}=\cu^{w\ge 0}\cap \cu^{w\le 0}$, $\hw(Z,T)=\cu(Z,T)$ for
$Z,T\in \cu^{w=0}$,
 will be called the {\it heart} of the weight structure
$w$.

    \item  $\cu^{w\ge l}$ (resp. $\cu^{w\le l}$, resp.
$\cu^{w= l}$) will denote $\cu^{w\ge
0}[-l]$ (resp. $\cu^{w\le 0}[-l]$, resp. $\cu^{w= 0}[-l]$).

\item We denote $\cu^{w\ge l}\cap \cu^{w\le i}$
by $\cu^{[l,i]}$. 

\item  $X^{w\le l}$ (resp. $X^{w\ge l}$) will denote $(X[l])^{w\le
0}$ (resp. $(X[l-1])^{w\ge 1}$).

\item $w_{\le i}X$ (resp. $w_{\ge i}X$) will denote $X^{w\le i}[-i]$
(resp.  $X^{w\ge i}[-i]$).

    \item  $w$ will be called {\it non-degenerate} if $$\cap_l \cu^{w\ge
l}=\cap_l \cu^{w\le l}=\ns.$$

    \item  We consider $\cu^b=(\cup_{i\in \z} \cu^{w\le i})\cap(\cup_{i\in \z} \cu^{w\ge
i})$ and call it  the class of {\it bounded} objects
of $\cu$.

For $X\in \cu^b$ we will usually take $w_{\le i}X=0$ for $i$ small
enough, $w_{\ge i}X=0$ for $i$ large enough.

We will also denote by $\cu^b$ the corresponding
 full subcategory of $\cu$.

\item We will say that $(\cu,w)$ is bounded if $\cu^b= \cu$.

\item \label{idwpost} We will call a Postnikov tower for $X$
(see Definition \ref{dpoto})
 a {\it weight Postnikov tower} if
all $Y_i$ are some choices for $w_{\ge 1-i}X$.
In this case we will call the complex whose terms are $X^p$
(see Remark \ref{rwcomp})
a {\it weight complex} for $X$.

We will call a weight Postnikov tower for $X$ {\it negative} if $X\in \cu^{w\le 0}$ and we choose
$w_{\ge j}X$ to be $0$ for all $j>0$ here.

    \item \label{idextst} $D\subset \obj \cu$ will be
called extension-stable
    if for any distinguished triangle $A\to B\to C$
in $\cu$ we have: $A,C\in D\implies
B\in D$.

We will also say that the corresponding full subcategory
is extension-stable.

    \item $D\subset \obj \cu$ will be called {\it negative}
if for any $i>0$ we have $D\perp D[i]$.

\end{enumerate}

\end{defi}

\begin{rema}\label{rpost}


1. One could also dualize our definition of a weight Postnikov tower
 i.e.  build a tower from $w_{\le l}X$ instead of $w_{\ge l}X$.
  Our definition of a weight Postnikov tower is more convenient for our
purposes since
in \S\ref{scgersten} below we will consider $Y_i=j(Z_0\setminus
               Z_i)$ instead of $=j(Z_0\setminus
               Z_i\to Z_0)[-1]$.
 Yet this does not make much difference; see \S1.5 of \cite{bws} and Theorem \ref{tbw}(\ref{ioct}) below.
In particular, our definition of the weight complex for $X$ coincides
with Definition 2.2.1 of ibid.  Note also, that
Definition 1.5.8 of ibid (of a weight Postnikov tower)
contained both 'our' part of the data and the dual part.

2. Weight Postnikov towers for objects of $\cu$ are far from
 being unique;
 their morphisms
(provided by   Theorem \ref{tbw}(\ref{impost}) below)
are not unique also
(cf. Remark 1.5.9 of \cite{bws}).
Yet the corresponding weight spectral sequences
 for cohomology are unique and functorial starting from $E_2$;
see Theorem 2.4.2 of
  ibid. and Theorem \ref{pssw} below for more detail.
In particular, all possible choices of a weight complex for $X$ are homotopy equivalent (see  Theorem 3.2.2(II) and  Remark 3.1.7(3) in  \cite{bws}).

\end{rema}

\subsection{Basic properties of weight structures}\label{sbpws}

Now we list some basic properties of notions defined. In the
theorem below we  will assume that $\cu$ is endowed with a fixed
weight structure $w$ everywhere except in assertions
\ref{igen} -- \ref{iwgene}.

\begin{theo} \label{tbw}

\begin{enumerate}

\item \label{idual}
The axiomatics of weight structures is self-dual: if $\du=\cu^{op}$
(so $\obj\cu=\obj\du$) then one can define the (opposite)  weight
structure $w'$ on $\du$ by taking $\du^{w'\le 0}=\cu^{w\ge 0}$ and
$\du^{w'\ge 0}=\cu^{w\le 0}$.

\item We have \label{iort} \begin{equation}\label{ewort}
\cu^{w\le 0}=\cu^{w\ge 1}{}^{\perp}\end{equation}
and \begin{equation}\label{ewort2}\cu^{w\ge 0}
={}^\perp\cu^{w\le -1}.\end{equation}

\item For any $i\in\z$, $X\in \obj \cu$ we have a distinguished triangle
 $w_{\ge i+1}X\to X\to w_{\le i}X$ (given by a shifted weight decomposition).

    \item\label{iext} $\cu^{w\le 0}$, $\cu^{w\ge 0}$, and $\cu^{w=0}$
are extension-stable.

    \item  All  $\cu^{w\le i}$ are closed with respect to
arbitrary (small)
 products (those, which exist in $\cu$); all $\cu^{w\ge i}$ and
$\cu^{w=i}$ are additive.

    \item For any weight decomposition of $X\in \cu^{w\ge 0}$ (see
(\ref{wd})) we have $A\in \cu^{w=0}$. 

    \item \label{isum}
    If $A\to B\to C\to A[1]$ is a distinguished triangle and
$A,C\in \cu^{w= 0}$, then $B\cong A\oplus C$.


\item \label{isump}
If we have a distinguished triangle $A\to B\to C$ for $B\in \cu^{w=0}$, $C\in \cu^{w\le -1}$, then $A\cong B\bigoplus C[-1]$.

    \item \label{idec0}
If $X\in \cu^{w=0}$, $X[-1]\to A\stackrel{f}{\to} B$ is a
    weight decomposition (of $X[-1]$), then $B\in \cu^{w=0}$;
 $B\cong A\oplus X$.

        \item\label{icompl} Let $l\le m\in \z$, $X,X'\in \obj \cu$; let
 weight decompositions
        of $X[m]$ and $X'[l]$ be fixed. Then  any morphism
$g:X\to X'$ can be
        completed to a morphism of  distinguished triangles
\begin{equation}\begin{CD} \label{d2x3}
w_{\ge m+1}X@>{}>>X @>{c}>> w_{\le m}X \\
@VV{a}V@VV{g}V @VV{b}V\\
w_{\ge l+1}X'@>{}>>X' @>{d}>> w_{\le l}X'
\end{CD}\end{equation}
This completion is unique if $l<m$.

\item\label{idowe}
Consider some completion of a commutative triangle $w_{\ge m+1}X\to w_{\ge l+1}X\to X$ (that is uniquely determined by the morphisms $w_{\ge m+1}X\to X$ and $w_{\ge l+1}X\to X$ coming from the corresponding shifted weight decompositions; see the previous assertion)  to an octahedral diagram:
$$\displaylines{
\label{doct} \xymatrix{ w_{\le l}X \ar[rd]^{[1]}\ar[dd]^{[1]} & & X \ar[ll] \\ & w_{\ge l+1}X \ar[ru]\ar[ld] & \\ w_{[l+1,m]}X\ar[rr]^{[1]} & & w_{\ge m+1}X\ar[lu]\ar[uu] }\cr
\xymatrix{ w_{\le l}X  \ar[dd]^{[1]} & & X \ar[ld] \ar[ll] \\ & w_{\le
    m}X \ar[lu] \ar[rd]^{[1]}   & \\ w_{[l+1,m]}X \ar[ru]
  \ar[rr]^{[1]} & & w_{\ge m+1}X\ar[uu] } 
}$$

 Then $w_{[l+1,m]}X\in \cu^{[l+1,m]}$; all the  distinguished triangles of this octahedron are  shifted  weight decompositions.

\item\label{ioct} For $X,X'\in \obj \cu$, $l,l',m,m'\in \z$, $l<m$, $l'<m'$, $l>l'$, $m>m'$,  consider two octahedral diagrams: (\ref{doct}) and a similar one corresponding to the commutative triangle  $w_{\ge m+1}X\to w_{\ge l+1}X\to X$ and $w_{\ge m'+1}X'\to w_{\ge l'+1}X\to X$ (i.e. we fix some choices of these diagrams). Then any $g\in \cu(X,X')$ could be uniquely 
 extended to a morphism of these diagrams. The corresponding morphism $h: w_{[l+1,m]}X\to w_{[l'+1,m']}X'$ is characterized uniquely by any of the following conditions: 
 
 (i) there exists a $\cu$-morphism $i$ that makes the squares  
\begin{equation}\label{ecd1}
\begin{CD}
w_{\ge l+1}X@>{}>>X\\
@VV{i}V@VV{g}V \\
w_{\ge l'+1}X'@>{}>>X'
\end{CD}\end{equation}
and 
\begin{equation}\label{ecd2}
\begin{CD}
w_{\ge l+1}X@>{}>>w_{[l+1,m]}X\\
@VV{i}V@VV{h}V \\
w_{\ge l'+1}X'@>{}>>w_{[l'+1,m']}X'
\end{CD}\end{equation}
commutative.

 (ii) there exists a $\cu$-morphism $j$ that makes the squares  
\begin{equation}\label{ecd3}
\begin{CD}
X @>{}>>w_{\le m}X\\
@VV{g}V@VV{j}V \\
X'@>{}>>w_{\le m'}X'
\end{CD}\end{equation}
and 
\begin{equation}\label{ecd4}
\begin{CD}
w_{[l+1,m]}X@>{}>>w_{\le m}X\\
@VV{h}V@VV{j}V \\
w_{[l'+1,m']}X'@>{}>>w_{\le m'}X'
\end{CD}\end{equation}
commutative.

    \item\label{iwpost} For any choice of $w_{\ge i}X$
    there exists  a weight Postnikov tower for $X$
(see  Definition \ref{d2}(\ref{idwpost})). For any weight Postnikov
tower we have
     $\co(Y_i\to X)\in \cu^{w\le -i}$;
    $X^i\in \cu^{w=0}$.

\item\label{iwpostc}
    Conversely, any bounded Postnikov tower (for $X$)
with $X^i\in \cu^{w=0}$ is a weight Postnikov
    tower for it.

    \item\label{impost} For $X,X'\in \obj \cu$ and arbitrary weight
    Postnikov towers for them, any $g\in \cu(X,X')$
can be extended to a morphism
     of Postnikov towers (i.e. there exist morphisms
  $Y_i\to Y'_i,\ X^i\to X'^i$,
     such that the corresponding squares commute).

\item\label{impostp} 
 For $X,X'\in \cu^{w\le 0}$, suppose that
 $f\in\cu(X,X')$ can be extended to a morphism of
 (some of) their negative Postnikov towers  that establishes an isomorphism
$X^0\to X'{}^0$. Suppose also that $X'\in \cu^{w=0}$.
Then $f$ yields a projection of $X$ onto $X'$ (i.e. $X'$ is a retract of $X$ via $f$).

    \item
    $\cu^b$ is a
 Karoubi-closed triangulated subcategory of $\cu$. $w$ induces
a non-degenerate  weight structure for it, whose
heart equals $\hw$. 

    \item \label{igen}

For a triangulated idempotent complete $\cu$ let  $D\subset \obj \cu$ be negative. Then
there exists a unique weight structure $w$ on the Karoubi-closure $T$
of $\lan D\ra$ in $\cu$ such that $D\subset T^{w=0}$. Its heart is
the Karoubi-closure of the closure of $D$ in $\cu$ with respect to
(finite) direct sums.

\item \label{iwgen} For the weight structure mentioned in the
    previous assertion, $T^{w\le 0}$ is the 
    smallest Karoubi-closed extension-stable subclass of $\obj\cu$
containing $\cup_{i\ge 0}D[i]$; $T^{w\ge 0}$ is the smallest Karoubi-closed extension-stable subclass of $\obj\cu$
containing $\cup_{i\le 0}D[i]$.

     \item \label{iwgene} For the weight structure mentioned in two
    previous assertions we also have $$T^{w\le 0}
=(\cup_{i<0}D[i])^{\perp};\ 
T^{w\ge 0}={}^\perp(\cup_{i>0}D[i]).$$

\end{enumerate}

\end{theo}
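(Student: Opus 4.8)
The plan is to read Theorem~\ref{tbw} as a catalogue and to sort its assertions by difficulty: for the bulk of them the proof consists of recalling (the short) argument of \cite{bws}, while the genuine work concentrates in the octahedral refinements \ref{idowe}, \ref{ioct}, in \ref{impostp}, and in the generation block \ref{igen}--\ref{iwgene}. I would start with the self-duality \ref{idual}, which is checked by inspecting the four conditions of Definition~\ref{dwstr}: each of (i)--(iv) is visibly carried to its dual on $\cu^{op}$. From then on it suffices to prove only one of the two symmetric halves of every subsequent statement (so in \ref{iort} only $\cu^{w\le 0}=\cu^{w\ge 1}{}^{\perp}$, the other identity following by duality), and the unlabelled assertion producing the triangle $w_{\ge i+1}X\to X\to w_{\le i}X$ is just a rotation of a shifted weight decomposition.

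For \ref{iort}, the inclusion $\cu^{w\le 0}\subset\cu^{w\ge 1}{}^{\perp}$ is orthogonality axiom (iii) shifted by $[-1]$; conversely, given $Y\in\cu^{w\ge 1}{}^{\perp}$ I would take a weight decomposition triangle $w_{\ge 1}Y\to Y\to w_{\le 0}Y\to w_{\ge 1}Y[1]$, note that its first arrow lies in $\cu(w_{\ge 1}Y,Y)=\ns$, so the triangle splits and $Y$ becomes a retract of $w_{\le 0}Y\in\cu^{w\le 0}$, hence $Y\in\cu^{w\le 0}$ by the Karoubi-closedness in (i). With \ref{iort} in hand the unlabelled product assertion is automatic, since $\cu^{w\le i}=\cu^{w\ge i+1}{}^{\perp}$ is cut out by an orthogonality condition and so is closed under all products existing in $\cu$; the additivity of $\cu^{w\ge i},\ \cu^{w=i}$, the extension-stability \ref{iext}, the splitting assertions \ref{isum}, \ref{isump}, \ref{idec0}, and the unlabelled assertion on weight decompositions of objects of $\cu^{w\ge 0}$ are then all two-line diagram chases: apply $\cu(-,-)$ to the relevant triangles and invoke orthogonality, exactly as in \S1.3--\S1.5 of \cite{bws}.

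The completion/tower block rests on \ref{icompl}: a morphism $g\colon X\to X'$ lifts to a morphism of two chosen shifted weight decomposition triangles because both the obstruction and the ambiguity live in $\cu(w_{\ge m+1}X,w_{\le l}X')$, which is $\ns$ when $l<m$ by orthogonality. Feeding the composable pair $w_{\ge m+1}X\to w_{\ge l+1}X\to X$ into the octahedral axiom yields \ref{idowe}: the new vertex lies in $\cu^{[l+1,m]}$ by \ref{iext} applied to the triangle it occupies, and each face of the octahedron is checked to be a shifted weight decomposition via the orthogonality identities. The delicate point is the uniqueness in \ref{ioct} of the induced morphism $h$ on the $[l+1,m]$-truncations and its characterisation by the two commutative squares: here one shows the difference of any two candidates factors as a composite $P\to X\to Q$ (resp. through a $w_{\le\cdot}$-truncation) lying in $\cu(P,Q)$ with $P\in\cu^{w\ge a}$, $Q\in\cu^{w\le b}$, $a>b$, a group that vanishes. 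The Postnikov-tower statements \ref{iwpost}, \ref{iwpostc}, \ref{impost} then come from iterating weight decompositions and \ref{icompl} (with \ref{iort} giving the converse \ref{iwpostc}); \ref{impostp} follows because the induced isomorphism $X^0\to X'^0$, together with $X'\in\cu^{w=0}$ and the negativity of the tower, forces $X\to X'\to X$ to be the identity modulo the vanishing Hom-groups; and the unlabelled assertion on $\cu^b$ is routine given \ref{iext} and the tower statements.

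It remains to treat the generation assertions, which contain the main obstacle. For \ref{igen} I would \emph{define} $T^{w\le 0}$ (resp. $T^{w\ge 0}$) to be the smallest Karoubi-closed extension-stable subclass of $\obj\cu$ containing $\cup_{i\ge 0}D[i]$ (resp. $\cup_{i\le 0}D[i]$) --- this already proves \ref{iwgen} by fiat --- and then verify axioms (i)--(iii), orthogonality using the negativity of $D$ and the fact that, for fixed $Y$, the class $\{Z:\cu(Z,Y)=\ns\}$ is extension-stable and Karoubi-closed. The hard part --- and the only point that requires a real construction rather than a diagram chase --- is axiom (iv): producing a weight decomposition of \emph{every} object of $T$. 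I would argue by induction on the length of a presentation of an object of $\lan D\ra$ as an iterated cone of shifts of objects of $D$, splicing at each step the decompositions of the two outer terms by the octahedral axiom, and finally passing to retracts using that $\cu$ is idempotent complete; uniqueness of $w$ then follows from \ref{iort}. (This is essentially Theorem 4.3.2 of \cite{bws}, so in the write-up I would recall it rather than redo it.) Given \ref{igen} and \ref{iwgen}, assertion \ref{iwgene} is formal: $D\subset T^{w=0}$ gives $\cup_{i<0}D[i]\subset T^{w\ge 1}$, whence $(\cup_{i<0}D[i])^{\perp}\supset (T^{w\ge 1})^{\perp}=T^{w\le 0}$ by \ref{iort}; conversely, by the shift of \ref{iwgen} the class $T^{w\ge 1}$ is the smallest Karoubi-closed extension-stable class containing $\cup_{i<0}D[i]$, and for any $Y\in(\cup_{i<0}D[i])^{\perp}$ the class $\{Z:\cu(Z,Y)=\ns\}$ is Karoubi-closed, extension-stable and contains $\cup_{i<0}D[i]$, hence contains $T^{w\ge 1}$, i.e. $Y\in(T^{w\ge 1})^{\perp}=T^{w\le 0}$; the second identity of \ref{iwgene} follows by duality.
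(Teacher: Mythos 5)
Your overall plan --- treating the theorem as a catalogue, cutting the work in half by the self-duality of the axioms, and recalling the short proofs from \cite{bws} for the routine assertions --- is exactly the paper's approach, and your arguments for \ref{iort}, \ref{iext}, \ref{isump}, \ref{icompl}, \ref{idowe}, and \ref{iwgene} essentially reproduce the paper's.

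There is, however, a gap at \ref{impostp}. You claim the isomorphism $X^0\to X'{}^0$ ``forces $X\to X'\to X$ to be the identity modulo the vanishing Hom-groups''; besides being stated in the wrong direction (one needs a section $X'\to X$ with $X'\to X\to X'$ the identity, not the other composition), this misses the step that makes the argument work. The paper instead reduces to proving $\co f\in\cu^{w\le -1}$: applying the octahedral axiom to the commutative triangle $X^{w\le -1}\to X'^{w\le -1}\to X^0(=X'{}^0)$ furnished by the morphism of negative Postnikov towers identifies $\co f\cong\co(X^{w\le -1}\to X'^{w\le -1})[1]$, which lies in $\cu^{w\le -1}$; since $X'\in\cu^{w\ge 0}$, orthogonality gives $\cu(X',\co f)=\ns$, so the triangle $X\xrightarrow{f}X'\to\co f$ splits and exhibits $X'$ as a retract of $X$ via $f$. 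Your sketch never names $\co f$ and never invokes the octahedral axiom, so as written it does not actually produce the splitting. A smaller point: in \ref{ioct} you correctly identify the vanishing-Hom strategy for uniqueness, but the paper's proof is mostly devoted to checking that the two caps of the octahedra are mapped compatibly (commutativity of the squares through $w_{\le m}X$, $w_{\le l}X$, and $w_{[l+1,m]}X$), a verification that uses the uniqueness clause of \ref{icompl} repeatedly and should not be waved away as routine.
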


\begin{proof}
\begin{enumerate}

    \item
    Obvious; cf.  Remark \ref{rts} of \cite{bws}
(and Remark 1.1.2 of ibid.
for more detail).

\item These are parts 1 and 2 of Proposition 1.3.3 of ibid.

    \item Obvious (since $[i]$ is exact up to change of signs of morphisms); cf. Remark 1.2.2 of ibid.

    \item This is part 3 of Proposition 1.3.3 of ibid.

    \item Obvious  from the definition and parts 4 of loc. cit.

    \item This is part 6 of Proposition 1.3.3 of ibid.

    \item This is part 7  of loc. cit.

\item It suffices to note that $\cu(B,C)=\ns$, hence the triangle splits.

    \item This is part 8 of loc. cit.

    \item This is Lemma 1.5.1 of ibid.

\item The only non-trivial statement here is that $w_{[l+1,m]}X\in \cu^{[l+1,m]}$ (it easily implies: the left hand side of the lower cap in (\ref{doct}) also yields a shifted weight decomposition).
(\ref{doct}) yields distinguished triangles: $T_1=(w_{\ge l+1}X\to w_{[l+1,m]}X\to w_{\ge m+1}X[1])$ and $T_2=(w_{\le l}X\to w_{[l+1,m]}X[1]\to w_{\le m}X[1])$. Hence assertion \ref{iext} yields the result.


\item By assertion \ref{icompl}, $g$ extends uniquely to a morphism of the following distinguished triangles: from $T_3=(w_{\ge m+1}X\to X\to w_{\le m}X)$ to $T_3'=(w_{\ge m'+1}X'\to X'\to w_{\le m'}X)$, and from $T_4=(w_{\ge l+1}X\to X\to w_{\le l}X)$ to $T_4'=(w_{\ge l'+1}X'\to X'\to w_{\le l'}X)$; next we also obtain a unique morphism from $T_1$ (as defined in the proof of the previous assertion) to its analogue $T'_1$.   Putting all of this together: we obtain unique morphisms of all of the vertices of our octahedra, which are compatible with all the edges of the octahedra except (possibly) 
those that belong to $T_2$ (as defined above). 
We also obtain that there exists unique $i$ and $h$ that complete (\ref{ecd1}) and (\ref{ecd2}) to commutative squares.

Now, the morphism $w_{\le l}X\to w_{[l+1,m]}X$ could be decomposed into the composition of morphisms belonging to $T_1$ and $T_3$. 
Hence in order to verify that we have actually constructed a morphism of octahedral diagrams, it remains to verify the commutativity of the squares 
\begin{equation}\label{ecd5}
\begin{CD}
w_{\le m}X @>{}>>w_{\le l}X\\
@VV{g}V@VV{j}V \\
w_{\le m'}X'@>{}>>w_{\le l'}X'
\end{CD}\end{equation}
and (\ref{ecd4})
i.e. we should check that the two possible compositions of arrows for each of the squares are equal. Now,  assertion \ref{icompl}
implies: the compositions in question for  (\ref{ecd5}) both equal the only morphism $q$
that makes the square $$\begin{CD}X @>{}>>w_{\le m}X\\
@VV{g}V@VV{q}V \\
X'@>{}>>w_{\le l'}X'
\end{CD}$$ commutative.  
Similarly, the compositions  for (\ref{ecd4}) both equal the only morphism $r$
that makes the square $$\begin{CD}
w_{\ge l+1}X@>{}>>w_{[l+1,m]}X\\
@VV{
}V@VV{r}V \\X' @>{}>>w_{\le m'}X'
\end{CD}$$ commutative. Here we use the part of the octahedral axiom that says that the square $$\begin{CD}
w_{\ge l+1}X@>{}>>w_{[l+1,m]}X\\
@VV{}V@VV{}V \\X @>{}>>w_{\le m}X
\end{CD}$$
is commutative (as well as the corresponding square for $(X',l',m')$).

Lastly, as we have already noted, the condition (i) characterizes $h$ uniquely; for similar (actually, exactly dual) reasons the same is true for (ii). Since the morphism $w_{[l+1,m]}X\to w_{[l'+1,m']}X'$ coming from the morphism of the octahedra constructed satisfies both of these conditions, it is characterized by any of them uniquely.



\item Immediate from part 2 of (Proposition 1.5.6) of loc. cit. (and also from assertion \ref{idowe}).

\item Immediate  from 
 Remark 1.5.9(2) of ibid.

    \item Immediate from part 1 (of Remark 1.5.9)
of loc. cit.

    \item It suffices to prove that $\co f\in \cu^{w\le -1}$.
    Indeed, then the distinguished triangle $X\stackrel{f}{\to} X'\to \co f$ necessarily splits.
    
    We complete the commutative triangle $X^{w\le -1}\to X'^{w\le -1}\to X^0(=X'{}^0)$ to an octahedral diagram. Then we obtain $\co f\cong \co (X^{w\le -1}\to X'^{w\le -1})[1]$; hence  
  $\co f\in \cu^{w\le -1}$ indeed.

\item This is Proposition 1.3.6 of ibid.

    \item By    Theorem 4.3.2(II1) of 
    ibid., there exists a unique weight structure on $\lan D\ra$
such that
    $D\subset \lan D\ra^{w=0}$. Next, Proposition 5.2.2 of ibid. yields
that $w$ can be extended to the whole $T$; along with part
Theorem 4.3.2(II2) of loc. cit. it also allows to calculate $T^{w=0}$
 in this case.

\item Immediate from Proposition 5.2.2 of ibid. and the description of $\lan H \ra ^{w\le 0}$ and $\lan H \ra ^{w\ge 0}$  in the proof of  Theorem 4.3.2(II1) of ibid.

\item
If $X\in T^{w\le 0}$ then the orthogonality condition for $w$
immediately yields: $Y\perp X$ for any $Y\in \cup_{i<0}D[i]$.

Conversely, suppose that for some $X\in \obj T$ we have
$Y\perp X$ for all $Y\in \cup_{i<0}D[i]$. Then $Y\perp X$
also for all $Y$ belonging to the
    smallest extension-stable subclass of $\obj\cu$ containing
$\cup_{i<0}D[i]$.
 Hence this is also true for all
    $Y\in T^{w\ge 1}$ (see the previous assertion). Hence
(\ref{ewort}) yields:
$X\in T^{w\le 0}$.
We obtain the first part of the assertion.

The 
second part of the assertion is dual to 
the first one (and easy from (\ref{ewort2})).

\end{enumerate}

\end{proof}

\begin{rema}\label{rfunct}


\begin{enumerate}

\item\label{itriv} 
In the notation of assertion \ref{icompl},   
for any  $a$ (resp. $b$) such that the left (resp. right) hand square in (\ref{d2x3}) commutes
there exists some $b$  (resp. some $a$)  that makes (\ref{d2x3}) a morphism of distinguished triangles (this is just axiom TR3 of triangulated categories). Hence for  $l<m$ the left (resp. right) hand side of (\ref{d2x3}) characterizes $a$ (resp. $b$) uniquely.

\item\label{i1}\label{i2}\label{i3}  Assertions \ref{icompl} and \ref{ioct} 
yield  mighty tools for proving that a
 construction described in terms of weight decompositions is
 functorial (in a certain sense).  
 In particular, the proofs of functoriality of weight filtration and virtual $t$-truncations 
for cohomology (we will consider these notions below) in \cite{bws} were based on assertion \ref{icompl}.

 Now we explain what kind of functoriality could be obtained using  assertion loc. cit.  Actually, such an argument was already used in the proof of assertion \ref{ioct}.

In the notation of assertion \ref{icompl} we will say that $a$ and $b$ are compatible with $g$ (with respect to the corresponding weight decompositions). 
 Now suppose that 
for some $X''\in \obj\cu$, some $n\le l$, $g'\in \cu(X',X'')$, and a distinguished triangle 
$w_{\ge n+1}X''\to X' \to w_{\le n}X'$ we have morphisms $a':  w_{\ge l+1}X'\to w_{\ge n+1}X''$ and $b':w_{\le l}X'\to w_{\le n}X''$ compatible with $g'$. Then $a'\circ a$ and $b'\circ b$ are compatible with $g'\circ g$ (with respect to the corresponding weight decompositions)! Moreover, if $n<m$ then $(a'\circ a,b'\circ b)$ is exactly the (unique!) pair of morphisms compatible with $g'\circ g$. 

\item \label{idofunct} 
In the notation of assertion \ref{ioct} we will (also) say that $h:w_{[l+1,m]}X\to w_{[l'+1,m']}X'$ is compatible with $g$. Note that $h$ is uniquely characterized by  (i) (or (ii)) of loc. cit.; hence 
in order to characterize it uniquely it suffices to fix $g$ and all the 
rows in (\ref{ecd1}) and (\ref{ecd2}) (or in (\ref{ecd3}) and (\ref{ecd4})). 
Besides, we obtain that $h$ is functorial in a certain sense (cf. the reasoning above).

\item\label{i5} \label{i4} Assertion \ref{idowe} immediately implies: for any $l< m$
the class of all possible $w_{\le l}X$ coincides with the class of possible $w_{\le l}(w_{\le m}X)$, whereas the class of possible
 $w_{\ge m}X$ coincides with those of $w_{\ge m}(w_{\ge l}X)$.

Besides, assertion \ref{idowe} also allows to construct weight Postnikov towers (cf. \S1.5 of  \cite{bws}). Hence $w_{[i,i]}X$ is just $X^i[-i]$
 (for any $i\in \z,\ X\in \obj\cu$), and a weight complex for
 any $w_{[l+1,m]}X$ can be assumed to be the corresponding stupid
  truncation of the weight complex of $X$.

  \item\label{i7} Assertions \ref{icompl} and \ref{impost} will be generalized
  in \S\ref{schws} below to the situation when there are two
  distinct weight structures; this will also clarify the proofs
  of these statements. Besides, note  that our remarks on functoriality 
are also actual for this setting.

Some of the proofs in \S\ref{schws} may also help to understand the
concept of virtual $t$-truncations (that we will start to study
 just now) better.
\end{enumerate}


\end{rema}

\subsection{Virtual $t$-truncations of
(cohomological) functors}\label{svirtr}

Till the end of the section $\cu$ will be endowed with a fixed
weight structure $w$; $H:\cu\to \au$ ($\au$ is an abelian category) will be a contravariant
(usually, cohomological) functor. We will not
consider covariant (homological) functors here; yet certainly,
dualization is absolutely no problem.

Now we recall the results of \S2.5 of \cite{bws} and develop the theory
 further.

\begin{theo}\label{trfun}

 Let $H:\cu\to \au$ be a contravariant functor, $k\in\z$, $j>0$.

I The assignments
$H_1=H^{kj}_1:X\mapsto \imm (H(w_{\le k}X)\to H(w_{\le
k+j}X))$ and $H_2=H_2^{kj}:X\mapsto \imm (H(w_{\ge k}X)\to H(w_{\ge
k+j}X))$  define contravariant functors $\cu\to\au$ that do not
depend (up to a canonical isomorphism) from the choice of weight
decompositions. We have natural transformations $H_1\to H\to H_2$.

II Let $k'\in\z$, $j'>0$. Then there exist the following natural
 isomorphisms.

1. $(H_1^{kj})_1^{k'j'}\cong H_1^{\min(k,k'),\max(k+j,k'+j')-\min(k,k')}$.

2. $(H_2^{kj})_2^{k'j'}\cong H_2^{\min(k,k'),\max(k+j,k'+j')-\min(k,k')}$.

3. $(H_1^{kj})_2^{k'j'}
\cong (H_2^{k'j'})_1^{kj} \cong
\imm(H(w_{[k,k']}X)
\to H(w_{[k+j,k'+j']}X))$.
Here the last term is defined using the connection morphism $w_{[k+j,k'+j']}X\to w_{[k,k']}X$ that is compatible with $\id_X$ in the sense of Remark \ref{rfunct}(\ref{idofunct}); 
the last isomorphism is functorial in the  sense  described in loc. cit.

III Let $H$ be cohomological, $j=1$; let $k$ be fixed.

1.  $H_l$ ($l=1,2$) are also cohomological;
the transformations  $H_1\to H\to H_2$ extend canonically to a long
exact sequence of functors
\begin{equation}\label{elovir}\dots\to H_2\circ [1]\to H_1\to H\to H_2\to H_1\circ [-1]\to \dots\end{equation} 
(i.e. the sequence is exact when applied
 to any $X\in\obj\cu$).

2. $H_1\cong H$ whenever $H$ vanishes on $\cu^{w\ge k+1}$.

3. $H\cong H_2$ whenever $H$ vanishes on $\cu^{w\le k}$.

4. Let $H'\stackrel{f}{\to} H\stackrel{g}{\to} H''$ be a
(three-term) complex of functors exact in the middle such that:

(i) $H',H''$ are cohomological.

(ii) for any $X\in \obj\cu$  we have $\cok g(X)\cong \ke f(X[-1])$
(we do not fix these isomorphisms).

(iii) $H'$ vanishes on $\cu^{w\ge k+1}$; $H''$ vanishes on $\cu^{w\le k}$.

Then $H'\stackrel{f}{\to} H$ is canonically isomorphic to
$H_1\to H$; $H\stackrel{g}{\to} H''$ is canonically isomorphic
 to $H\to H_2$.

\end{theo}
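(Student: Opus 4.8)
The statements in Parts I and III.1--3 largely recall \S2.5 of \cite{bws}, so the plan is to reprove them in the present notation and then build the new characterization III.4 on top of them. For \textbf{Part I}, fix $X$ and let $\phi_X\colon w_{\le k+j}X\to w_{\le k}X$ and $a_X\colon w_{\ge k+j}X\to w_{\ge k}X$ be the morphisms obtained by completing $\id_X$ to a morphism of shifted weight decompositions (Theorem \ref{tbw}(\ref{icompl})); both are \emph{unique} since $k<k+j$. One sets $H_1(X)=\imm H(\phi_X)$ and $H_2(X)=\imm H(a_X)$; independence from the weight decompositions chosen follows from their essential uniqueness. For $g\colon X\to X'$ there is a unique ``diagonal'' morphism $\psi_g\colon w_{\le k+j}X\to w_{\le k}X'$ completing $g$, and by the composition rule of Remark \ref{rfunct}(\ref{i2}) it factors both through $\phi_X$ and through $\phi_{X'}$; this forces $H(w_{\le k+j}(g))$ to send $H_1(X')$ into $H_1(X)$ and makes $H_1(g)$ well defined, and dually for $H_2$. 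The transformation $H_1\to H$ is induced by $H(c_k)\colon H(w_{\le k}X)\to H(X)$, which kills $\ke H(\phi_X)$ because $c_k=\phi_X\circ c_{k+j}$; the transformation $H\to H_2$ is the corestriction of $H(d_{k+j})$, which lands in $\imm H(a_X)=H_2(X)$ since $d_{k+j}=d_k\circ a_X$.

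\textbf{Part II} is a collection of image computations: rewriting the iterated expressions via the identities for iterated truncations in Remark \ref{rfunct}(\ref{i5}) (so that $w_{\le l}w_{\le m}X\simeq w_{\le\min(l,m)}X$, $w_{\ge l}w_{\ge m}X\simeq w_{\ge\max(l,m)}X$, and $w_{\le m}w_{\ge l}X\simeq w_{\ge l}w_{\le m}X\simeq w_{[l,m]}X$ for $l\le m$), each of $(H_1^{kj})_1^{k'j'}(X)$, $(H_2^{kj})_2^{k'j'}(X)$ and the mixed term becomes an iterated image of a chain of maps among the groups $H(w_?X)$, and the elementary fact that in such a chain the image ``through an intermediate vertex'' equals the image computed directly, together with $\imm(g\circ f)=g(\imm f)$, collapses these as asserted; for II.3 one additionally matches the two models of $w_{[k,k']}X$ using the octahedron of Theorem \ref{tbw}(\ref{idowe}) and invokes Remark \ref{rfunct}(\ref{idofunct}) for functoriality of the last isomorphism. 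For \textbf{Part III.1--3} one uses that for $j=1$ one has $w_{[k+1,k+1]}X\simeq X^{k+1}[-(k+1)]\in\cu^{w=k+1}$, and the octahedron of Theorem \ref{tbw}(\ref{idowe}) supplies distinguished triangles $w_{[k+1,k+1]}X\to w_{\le k+1}X\xrightarrow{\phi_X}w_{\le k}X\to$ and, dually, $w_{\ge k+1}X\xrightarrow{a_X}w_{\ge k}X\to w_{[k,k]}X\to$. Applying the cohomological $H$ gives $H_1(X)=\ke(H(w_{\le k+1}X)\to H(w_{[k+1,k+1]}X))$ and a dual description of $H_2$; splicing these with the long exact sequences of $H$ applied to the weight decompositions of $X$ (and arranging the morphisms of weight decompositions of the three vertices of a distinguished triangle into octahedral diagrams) yields cohomologicality of $H_1,H_2$ and the exact sequence \eqref{elovir}. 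For III.2 the hypothesis $H|_{\cu^{w\ge k+1}}=0$ kills $H(w_{[k+1,k+1]}X)$ and $H(w_{\ge k+2}X[1])$, so $H(\phi_X)$ is epic and $H(w_{\le k+1}X)\cong H(X)$; hence $H_1(X)\cong H(X)$, and $\phi_X\circ c_{k+1}=c_k$ identifies this with the transformation of Part I. III.3 is dual.

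\textbf{Part III.4} is the main new point. I would apply the operations $(-)_1=(-)_1^{k1}$ and $(-)_2=(-)_2^{k1}$ to the complex $H'\xrightarrow{f}H\xrightarrow{g}H''$. By III.2 and hypothesis (iii), $H'_1\to H'$ is an isomorphism; dually $H''_2\cong H''$. Moreover $H''_1=0$, since $H''_1(X)$ is an image of $H''(w_{\le k}X)=0$, and dually $H'_2=0$. Using the description $H_1(X)=\ke(H(w_{\le k+1}X)\to H(w_{[k+1,k+1]}X))$ together with $H'(w_{[k+1,k+1]}X)=0$ (so the relevant left vertical is injective), a short diagram chase shows $(-)_1$ sends our exact-in-the-middle complex to an exact-in-the-middle complex $H'_1\to H_1\to 0$; hence $H'_1\to H_1$ is a pointwise epimorphism, and composing with $H'\cong H'_1$ and the natural $H_1\to H$ gives an epimorphism $H'\twoheadrightarrow H_1$ over $H$; dually one gets a monomorphism $H_2\hookrightarrow H''$ over $H$. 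Finally, hypothesis (ii) (its isomorphisms $\cok g(X)\cong\ke f(X[-1])$ used to define connecting maps) promotes $H'\to H\to H''$ to a long exact sequence of functors, which one compares term by term with \eqref{elovir} for $(H_1,H,H_2)$; since both $H',H_1$ annihilate $\cu^{w\ge k+1}$ and both $H'',H_2$ annihilate $\cu^{w\le k}$, a five-lemma argument of the kind used for uniqueness of $t$-decompositions upgrades $H'\twoheadrightarrow H_1$ to an isomorphism and pins it down canonically, and dually for $H''\cong H_2$.

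The hard part will be III.4: extracting the \emph{canonical} comparison isomorphism purely from the two long exact sequences — the natural transformation $H_1\to H$ need not be monic, so the uniqueness of the comparison cannot simply be read off and must be forced by the exact sequences themselves — and, as input to this, the verification in III.1 that weight decompositions of the vertices of a distinguished triangle can be chosen compatibly enough for the octahedral diagrams to deliver \eqref{elovir} naturally in $X$.
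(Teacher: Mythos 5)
Your Parts I, II, and III.1--3 follow essentially the same route as the paper (the paper outsources I and III.1 to Proposition 2.5.1 of \cite{bws}, and your III.2 via $H(w_{\le k+1}X)\cong H(X)$ is a perfectly valid variant of the paper's argument through $H_2=0$). The issue is your III.4.

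For III.4 you correctly obtain (via $l=f(w_{\le k}X)$ being iso because $H''((w_{\le k}X)[1])=H''(w_{\le k}X)=0$) that $H'_1(X)\to H_1(X)$ is epic, and that $H'\cong H'_1$. But your passage from epi to iso, ``a five-lemma argument of the kind used for uniqueness of $t$-decompositions,'' does not work as stated. A five-lemma comparison of the long exact sequence \eqref{elovir} with the long exact sequence extending $H'\to H\to H''$ requires the ladder to commute \emph{including} the squares involving the boundary maps; but hypothesis (ii) explicitly does not fix the isomorphisms $\cok g(X)\cong\ke f(X[-1])$, so you have no canonical connecting morphisms for the second sequence and no way to verify commutativity with the boundaries of \eqref{elovir}. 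This compatibility of boundaries is precisely what the paper's niceness condition (Definition \ref{dnicesfun} and Theorem \ref{tvinice}(1)) is introduced to control later; Theorem \ref{trfun}(III4) deliberately asserts only a term-by-term identification, not an identification of the full exact couples, exactly because it has no niceness hypothesis.

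The missing ingredient is simple and requires no five-lemma: $H'_1(X)\to H_1(X)$ is the restriction of $m=f(w_{\le k+1}X)\colon H'(w_{\le k+1}X)\to H(w_{\le k+1}X)$ to the subobject $H'_1(X)=\imm a$. Since $(w_{\le k+1}X)[1]\in\cu^{w\le k}$ (as $\cu^{w\le k+1}[1]=\cu^{w\le k}$), hypothesis (iii) gives $H''((w_{\le k+1}X)[1])=0$, hence $m$ is a monomorphism. A monomorphism restricted to a subobject is still a monomorphism, so $H'_1(X)\to H_1(X)$ is mono, and together with your surjectivity step it is an iso. The canonical comparison $H'\cong H_1$ is then the composite of the canonical $H'\cong H'_1$ (from III.2) with this map, and the half for $H''\cong H_2$ is dual. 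Also, a minor slip: the injectivity of $m$ needs the vanishing of $H''$ (on $\cu^{w\le k}$), not the vanishing of $H'(w_{[k+1,k+1]}X)$ which you cite; the latter is what makes $H'_1\cong H'$, a separate point.
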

\begin{proof}

I This is   Proposition 2.5.1(III1) of \cite{bws}. 

II Easily follows from Theorem \ref{tbw}, parts \ref{idowe} and \ref{ioct}; see Remark \ref{rfunct}. 

III1. This is Proposition 2.5.1(III2) of \cite{bws}.

2. If $H$ vanishes on $\cu^{w\ge k+1}$ then for any $X$ we have
$w_{\ge k+1}X=0$; hence $H_2$ vanishes.
 Therefore in the long exact sequence
 $\dots\to H_2(X [1])\to H_1\to H\to H_2(X)\to \dots $
 given by assertion II1
  we have $H_2(X [1])\cong 0\cong H_2(X)$; we obtain $H_1\cong H$.

Conversely, suppose that $H_1\cong H$. Let  $X\in \obj\cu^{w\ge k+1}$;
 we can assume that $w_{\le k}X=0$. Then we have
 $H(X)\cong H_1(X)=\imm H(w_{\le k}X)\to H(w_{\le
k+1}X))=0$. 

3. It suffices to apply assertion II1 to the dual functor
$\cu^{op}\to \au^{op}$; note that the axiomatics of abelian
 categories, triangulated categories, and weight structures
 are self-dual (see Remark \ref{rts}(1) and  Theorem \ref{tbw}(\ref{idual})).

4. We should check that in the diagram
$$\begin{CD} 
H'_1@>{g}>> H_1 \\
@VV{h}V@VV{}V \\
H'@>{}>>H
\end{CD}$$
$g$ and $h$ are isomorphisms. Then $g\circ h\ob$ will yield the
 first isomorphism desired, whereas  dualization will yield the
 remaining half of the statement.

Now, assertion III2 yields that $g$ in isomorphism.

Next, for an $X\in \obj \cu$ we choose some weight decompositions
 for $X[k]$ and $X[k+1]$ and consider the diagram
{\footnotesize
 $$\begin{CD} 
H''((w_{\le k}X)[1]) @>{}>> H'(w_{\le k}X) @>{l}>> H(w_{\le k}X) @>{}>> H''(w_{\le k}X)
 \\@.
@VV{a}V@VV{b}V@. \\ H''((w_{\le k+1}X)[1]) @>{}>> H'(w_{\le k+1}X) @>{m}>> H(w_{\le k+1}X) @>{}>> H''(w_{\le k+1}X).
 \end{CD}$$
}

 By our assumptions, $H''((w_{\le k}X)[1])\cong H''(w_{\le k}X)\cong H''((w_{\le k+1}X)[1])\cong 0$; hence $l$ is an isomorphism and $m$ is a monomorphism.  Hence the induced map $\imm a\to \imm b$ is an isomorphism; so $h$ is an isomorphism (since its application to any $X\in \obj \cu$ is an isomorphism).

\end{proof}

\begin{defi}\label{rtrfun}[virtual $t$-truncations of $H$]

Let $k,m\in \z$.
 For a (co)homological $H$ we will call $H^{k1}_l$, $l=1,2,\ k\in\z$,
{\it virtual $t$-truncations} of $H$. We will often denote them
simply by $H_l$; in this case we will assume $k=0$ unless    $k$
is specified explicitly.

We denote the following functors $\cu\to\au$: $H_1^{k1}$, $H_2^{k-1,1}$, $(H^{m1}_2)^{k1}_1$,  and $X\mapsto (H_1^{01})_2^{-11}(X[k])$  
 by $\tau_{\le k}H$, $\tau_{\ge k}H$, $\tau_{[m+1,k]}H$, and $H^{\tau=k}$,  respectively. Note that all of these functors are  cohomological if $H$ is.

\end{defi}

\begin{rema}\label{rdfunvirt}
1. Note that $H$ often lies in a certain triangulated 'category of
functors' $\du$ (whose objects are certain cohomological functors $\cu\to \au$).
We will axiomatize this below by introducing the notion of  duality
 $\Phi:\cu^{op}\times \du\to \au$: if $\Phi$ is a duality then for
 any $Y\in \obj \du$ we have a cohomological functor
 $\Phi(-,Y):\cu\to \au$. It is also often the case when the
 virtual $t$-truncations
defined are compatible with actual $t$-truncations with respect to  some
$t$-structure $t$ on $\du$ (see below).
Still, it is very amusing that these $t$-truncated functors as well
as their transformations corresponding to $t$-decompositions (see
Definition \ref{dtstr}) can be described without specifying any
$\du$ and $\Phi$!

2. Below we will need an explicit description of the connecting morphisms in (\ref{elovir}).
We give it here (following the proof of Proposition 2.5.1 of \cite{bws}).

The transformation $H_1\to H$ (resp. $H\to H_2$) for any  $k,j$ can be calculated by applying $H$ to any possible choice either of $X\to w_{\le k}X$  or of $X\to w_{\le k+j}X$ (resp. of $w_{\ge k}X\to X$ or of $w_{\ge k+j}X\to X$)
that comes from any possible choice the corresponding weight decomposition. The transformation $H_2\to H_1\circ [-1]$ for $j=1$ is given by applying $H$ to any possible choice 
either of the morphism
$w_{\le k+1}X\to w_{\ge k+2}X[1]$ or of the morphism
$w_{\le k}X\to w_{\ge k+1}X[1]$ that comes from any possible choice of a  weight decomposition of $X[k]$.

Here 
we use the following trivial observation: for  $\au$-morphisms $X_1\stackrel{f_1}{\to}Y_1$ and $X_2\stackrel{f_2}{\to}Y_2$ any $g:X_1\to X_2$ (resp. $h:Y_1\to Y_2$) is compatible with at most one morphism $i:\imm f_1\to \imm f_2$; if such an $i$ exists, we will say that it is induced by $g$ (resp. by $h$). Certainly, here $f_1$ could be equal to $\id_{X_1}$ or $f_2$ could be equal to $\id_{X_2}$.

3. For  any  $k,j$, and any $\cu$-morphism $g:X\to Y$ the morphism $H_1(X)\to H_1(Y)$ (resp. $H_2(X)\to H_2(Y)$) is induced by any choice of either of the morphism $w_{\le k}X\to w_{\le k}Y$ or of $w_{\le k+j}X\to w_{\le k+j}Y$ (resp. of the morphism $w_{\ge k}X\to w_{\ge k}Y$ or of $w_{\ge k+j}X\to w_{\ge k+j}Y$) that is compatible with $g$ with respect to the corresponding weight decomposition (in the sense of Remark \ref{rfunct}(\ref{i1})); see the proof of Proposition 2.5.1 of \cite{bws}.


\end{rema}

We would like to extend assertion III4 of Theorem \ref{trfun}
 to a statement on a (canonical) isomorphism of long exact sequences
 of functors. To this end we  need the following definition.

\begin{defi}\label{dnicesfun}

1. We will call a sequence of functors
 $C=\dots \to H''\circ [1] \stackrel{[1](h)}{\to}  H'\stackrel{f}{\to} H\stackrel{g}{\to} H'' \stackrel{h}{\to} H'\circ [-1]\to\dots $ of
contravariant functors $\cu\to\ab$ a {\it strongly exact complex}
if $H',H,H''$ are cohomological and
$C(X)$ is a long  exact sequence for any $X\in \obj\cu$;
here $[1](h)$ is the transformation
induced by $h$.

2. We will also say that a strongly exact complex $C$ is
 {\it nice  } in $H$ if the following condition is fulfilled:

For any distinguished triangle
$T=A\stackrel{l}{\to} B \stackrel{m}{\to} C\stackrel{n}{\to} A[1]$ in $\cu$  the natural morphism $p$:
\begin{equation}\begin{gathered}\label{ecompat}\ke ((H'(A)\bigoplus H(B) \bigoplus H''(C))\xrightarrow{\begin{pmatrix}
f(A) & -H(l) &0  \\
0& g(B) &-H''(m)  \\
- H'([-1](n)) & 0 &h(C)
\end{pmatrix}}
\\ (H(A) \bigoplus H''(B) \bigoplus H'(C[-1])))
 \stackrel{p}{\to} \ke ((H'(A)\bigoplus H(B))\\ \xrightarrow{f(A)\oplus -H(l)}
 H(A)) \text{ is epimorphic.} \end{gathered}
\end{equation}

\end{defi}

Now we describe the connection of (\ref{ecompat}) with truncated
realizations; our arguments will also somewhat  clarify the meaning of this
condition. 

\begin{theo}\label{tvinice}

1. Let $C$ be a strongly exact complex of functors that is nice
 in $H$; let $H'\stackrel{f}{\to} H\stackrel{g}{\to} H''$
 (a 'piece' of $C$)
 satisfy the conditions of assertion III4 of Theorem \ref{trfun}.
Then $C$ is canonically isomorphic to (\ref{elovir}).

2. Let $X\to Y\to Z$ be a distinguished triangle in $\cu$.
 Then $C=\dots\to \cu(-,X)\to \cu(-,Y) \to \cu(-,Z)\to\dots$
 is a strongly exact complex of functors $\cu\to\ab$; it is nice
 in $\cu(-,Y)$.

3. Let there exist a (skeletally) small full triangulated
$\cu'\subset \cu$ such that the restriction of a strongly exact
complex $C$ to  $\cu'$ is nice in $H$.
For  $D\in \obj \cu$ 
we
consider the projective system $L(D)$
whose elements are $(E,i):\ E\in \obj \cu',\
i\in \cu(D,E)$;
we set $(E,i)\ge (E',i')$ if
$(E,i)=(E'\bigoplus E'',i'\oplus i'')$ for some
$(E'',i'')\in L(D)$.

Suppose  that 
for any $D\in \cu$ and for $G=H'$ and $G=H$
we
 have \begin{equation}\label{eprop} \inli_{L(D)} (\imm G(i):G(E)\to G(D))=G(D); \end{equation} here we also assume that these limits exist. Then $C$ is nice on $\cu$ also.

4. Let $\cu'\subset \cu$ be a (skeletally) small triangulated subcategory,
 let $\au$ satisfy AB5. Let $C'=\dots\to H'\to H\to H''\to\dots$ be a
  strongly exact complex of functors $\cu'\to \au$.
We extend all its terms from $\cu'$ to $\cu$ by the method of
Proposition \ref{pextc} and denote the complex obtained by $C$; we
carry on the notation for the terms and arrows from $C'$ to $C$.
Then $C$ is a strongly exact complex also (and its terms are
cohomological functors).

It is nice in $H$ whenever $C'$ is.
\end{theo}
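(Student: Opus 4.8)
The plan is to establish the four assertions in the order 2, 4, 3, 1: assertion 2 is a direct unwinding, assertion 4 reduces its substantive part to assertion 3, assertion 3 is the technical core, and assertion 1 reuses the resulting bookkeeping. For assertion 2, strong exactness is immediate --- representable functors are cohomological, and evaluating $C$ at any $W\in\obj\cu$ yields the long exact sequence obtained by applying $\cu(W,-)$ to the rotated distinguished triangle $\dots\to X\to Y\to Z\to X[1]\to\dots$, while the identification of $[1](h)$ with the transformation induced by $h$ is the usual shift bookkeeping. For niceness of $C$ in $\cu(-,Y)$, fix a distinguished triangle $T=(A\xrightarrow{l}B\xrightarrow{m}Q\xrightarrow{n}A[1])$ and write $u,v,\de$ for the maps of $X\xrightarrow{u}Y\xrightarrow{v}Z\xrightarrow{\de}X[1]$. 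Unwinding (\ref{ecompat}) shows that an element of the target kernel is exactly a pair $(a,b)\in\cu(A,X)\times\cu(B,Y)$ with $ua=bl$, and that extending it to the source kernel amounts to producing $c\in\cu(Q,Z)$ with $vb=cm$ and $\de c=(a[1])n$, i.e. to completing the commutative square with sides $l,u,a,b$ to a morphism of distinguished triangles $T\to(X\to Y\to Z\to X[1])$ --- which is exactly axiom TR3. Hence $p$ is epimorphic.

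For assertion 4, Proposition \ref{pextc}(I1) gives that the extension $H'\mapsto H$ is an additive functor of functor categories; hence it sends cohomological functors to cohomological functors, carries the relation ``composition equals zero'' to the same relation, and respects the shift bookkeeping, so $C$ is a complex of cohomological functors. Strong exactness then follows by applying Proposition \ref{pextc}(II2) to each consecutive triple $\dots\to C_{i-1}\to C_i\to C_{i+1}\to\dots$ of $C$: over $\cu'$ it is exact in the middle and its right-hand term is cohomological, hence the extension is exact in the middle, i.e. $C$ is exact at every term. Niceness is deduced from assertion 3: the extension manifestly satisfies hypothesis (\ref{eprop}) for each term $G$, because by the construction (\ref{eckrause})--(\ref{ekrause}) every element of $G(D)$ is pulled back along $G(i)$ for some $(E,i)\in L(D)$ with $E$ a finite direct sum of the $X_l$ occurring in a presentation of $D^*$; and $C|_{\cu'}=C'$ is nice in $H$ by hypothesis, so assertion 3 applies.

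Assertion 3 is the main work. Given a distinguished triangle $T=(A\xrightarrow{l}B\xrightarrow{m}Q\xrightarrow{n}A[1])$ in $\cu$ and $(a,b)$ in the target kernel of (\ref{ecompat}), the strategy is to reduce to the case --- covered by the hypothesis --- in which all three vertices lie in $\cu'$. Concretely: using (\ref{eprop}) for $H'$ at $A$ and for $H$ at $B$, pick $(E_A,i_A)\in L(A)$ and $(E_B,i_B)\in L(B)$ with $a=H'(i_A)(a')$ and $b=H(i_B)(b')$; after enlarging within the filtered systems $L(A),L(B)$ (and $L(Q)$), and using (\ref{eprop}) for $H$ at $A$ to absorb the obstruction measuring the failure of $f(E_A)(a')$ and $b'$ to be compatible over $E_A\oplus E_B$ together with the exactness of $C$ on $\cu$, one arranges that $(a',b')$, paired with a morphism built from $i_A,i_B,l$, represents an element of the target kernel of (\ref{ecompat}) for a distinguished triangle $T'$ lying in $\cu'$ and equipped with a morphism of triangles $T\to T'$. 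Niceness of $C|_{\cu'}$ in $H$ then lifts this element to the source kernel over $\cu'$, and the functoriality of (\ref{ecompat}) in the triangle (naturality of $f,g,h$ and of $H$ on the squares of $T\to T'$) transports the lift back along $T\to T'$ to the required $c\in H''(Q)$. The main obstacle is exactly this reduction: the order on $L(D)$ permits enlargement only by direct summands, so one is forced to pass to direct sums of the approximating objects and to complete carefully chosen maps between them to distinguished triangles inside $\cu'$, while simultaneously going far enough out in the systems $L(A),L(B),L(Q)$ for both the chosen elements \emph{and} the relations defining the two kernels of (\ref{ecompat}) to survive; controlling this interplay between the triangulated structure of $\cu'$ and the colimits in (\ref{eprop}) is the delicate point, and it is essentially equivalent to the fact that the direct diagram chase over $\cu$ (via strong exactness of $C$) terminates precisely once it has been pushed into $\cu'$.

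Finally, for assertion 1, Theorem \ref{trfun}(III4) already supplies canonical isomorphisms $H'\xrightarrow{\sim}H_1$ and $H''\xrightarrow{\sim}H_2$ (with $H_l=H^{k1}_l$) compatible with the maps to and from $H$; shifting, these identify all terms of $C$ with the corresponding terms of (\ref{elovir}) and the transformations $f,g$ with $H_1\to H$ and $H\to H_2$. It remains to check compatibility with the connecting transformations $h\colon H''\to H'\circ[-1]$ and that of (\ref{elovir}). One evaluates at an arbitrary $X\in\obj\cu$ and applies the niceness of $C$ in $H$ --- together with the vanishing of $H'$ on $\cu^{w\ge k+1}$ and of $H''$ on $\cu^{w\le k}$ --- to the weight-decomposition triangle $w_{\ge k+1}X\to X\to w_{\le k}X\to w_{\ge k+1}X[1]$ and to the triangles obtained from it by the octahedral calculus of Theorem \ref{tbw}, parts \ref{idowe} and \ref{ioct}; the resulting diagram chase pins $h$ down, under the above identifications, as the transformation induced by $H$ applied to the connecting morphism $w_{\le k}X\to w_{\ge k+1}X[1]$ --- which by the explicit recipe of Remark \ref{rdfunvirt}(2) is exactly the connecting transformation of (\ref{elovir}). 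Canonicity is inherited from Theorem \ref{trfun}(III4); this is the same type of bookkeeping already used in \cite{bws} for the functoriality of virtual $t$-truncations, and once assertion 3 is available it raises no further difficulty.
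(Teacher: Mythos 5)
Your assertions 2 and 4 are essentially the paper's arguments: for 2 the unwinding of (\ref{ecompat}) to TR3 is exactly what is done, and for 4 the reduction (via Proposition \ref{pextc}(I1), (II1), (II2)) to assertion 3 coincides with the paper's proof. The global architecture --- prove 2 and 4 directly, make 3 carry the technical weight, deduce 1 from \ref{trfun}(III4) plus a niceness computation --- also matches.

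However, in assertion 3 you stop precisely at the step that requires an idea, and the idea is missing. You correctly reduce (via Mitchell) to $\au=\ab$ and correctly state the goal: push the pair $(a,b)\in H'(A)\times H(B)$ with $f(A)(a)=H(l)(b)$ down to a compatible pair over a triangle $T'$ with vertices in $\cu'$. But ``after enlarging... and using (\ref{eprop}) for $H$ at $A$ to absorb the obstruction... one arranges that...'' is a placeholder, and the construction you gesture at (``over $E_A\oplus E_B$'', i.e.\ forming a map between the two approximating objects and completing it to a triangle inside $\cu'$) is not how the reduction goes. The actual mechanism is: first use (\ref{eprop}) for $H'$ at $A$ to get $A'\in\obj\cu'$, $\al\in\cu(A,A')$, $a'\in H'(A')$ with $a=H'(\al)(a')$; then complete the \emph{single} map $q=\al\oplus l\colon A\to A'\oplus B$ to a distinguished triangle in $\cu$ with cone $D$ and projections $p_1,p_2$; since $H$ is cohomological on $\cu$ and $H(q)\bigl(-f(A')(a'),\, b\bigr)=0$, there is an $s\in H(D)$ with $H(p_1)(s)=-f(A')(a')$ and $H(p_2)(s)=b$; finally use (\ref{eprop}) for $H$ at $D$ to descend $s$ to some $B'\in\obj\cu'$ via $\de\colon D\to B'$, and set $l'=-\de\circ p_1$, $\be=\de\circ p_2$. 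This cone construction is what makes ``going far enough out in $L(A),L(B),L(Q)$'' unnecessary; without it the enlargement bookkeeping you describe does not close up, since the order on $L(D)$ only permits passing to direct summands and gives no way to produce the triangle $T'$ or the morphism $T\to T'$ on its own.

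In assertion 1 the identification of terms via Theorem \ref{trfun}(III4) is correct, but the compatibility check with the connecting transformation is again a sketch, and the route you propose is not the one the paper needs. After reducing to $\au=\ab$, the paper simply takes the distinguished triangle $w_{\ge k+1}X\to X\to w_{\le k}X$, feeds it into the defining property (\ref{ecompat}) of niceness in $H$ (in the elementwise form (\ref{ecomptr})), lifts $x\in H_2(X)$ to $H(w_{\ge k+1}X)$, and reads off the single element $v\in H'((w_{\le k}X)[-1])$ that simultaneously maps to the two coboundaries --- no octahedral calculus (Theorem \ref{tbw}(\ref{idowe}), (\ref{ioct})) is used or needed, and invoking it here suggests you would be re-deriving the description of the connecting transformation of (\ref{elovir}) rather than exploiting the one given in Remark \ref{rdfunvirt}(2). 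Finally, note that the paper also treats assertion 3 for a general abelian $\au$ satisfying AB5 via a second argument with filtered limits of the subobjects $S(A,B)$ and $T(A,B,C)$; your proposal does not address this at all, though for the applications in the paper the $\ab$-case together with Mitchell's theorem suffices.
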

\begin{proof}

1. It suffices to check that the isomorphism provided by 
 Theorem \ref{trfun}(III4) is compatible with the
coboundaries if (\ref{ecompat}) is fulfilled.
We can assume $\au=\ab$; see Remark \ref{rmit}.
Then (\ref{ecompat}) transfers into: for any 
$(x,y):\ x\in H'(A),\ y\in H(B),\ f(A)(x)=H(l)(y)$ there exists a 
\begin{equation}\label{ecomptr} 
z\in H''(C)\text{ such that }  g(B)(y)=H''(z) \text{ and }
H([-1](n))(x)=h(C)(z).\end{equation}

We should prove: if the images of $x\in H_2(X)$ and of $y\in H''(X)$
in $H''_2(X)$ coincide, $w\in H_1(X[-1])$ and $t= H(X)(y)\in H'(X[-1])$
 are their coboundaries, then   $w$ and $t$ come from some (single)
  $u\in H'_1(X[-1])$.

We lift $x$ to some $x'\in H(w_{\ge k+1}X)$. Then (\ref{ecompat})
(if we substitute $w_{\ge k+1}$ for $A$ and  $X$ for $B$ in it)
implies the existence of some $v\in H'((w_{\le k}X)[-1])$ whose
image in $H'(X[-1])$ (resp. in $H(w_{\le k}X[-1])$)
coincides with 
$t$ (resp. with the coboundary of $x'$). Hence we can take $u$
being the image of $v$ (in $H'_1(X[-1])$).

2. Since the bi-functor $\cu(-,-)$ is (co)homological with  respect
to both arguments, $C$ is a strongly exact complex indeed. It
remains to note: (\ref{ecompat}) in this case just means that any
commutative square can be completed to a morphism of distinguished
triangles; so it follows from the corresponding axiom (TR3) of
triangulated categories.

3. First suppose that $\au=\ab$ (or any other abelian category
equipped with an exact
faithful functor $\au\to\ab$ that respects small direct limits;
note that below we will only need $\au=\ab$). Then we should
check (\ref{ecomptr}).

Now note: it suffices to prove that there exist $A',B'\in \obj
\cu',\ l'\in \cu(A',B')$,  $\al\in \cu(A,A'),\ \be\in \cu (B,B'),\
x'\in H'(A'),\ g'\in H(B')$ such that:
\begin{equation}\label{ebchoi} x=H'(\al)(x'),\ y=H(\be) (y'),\
l'\circ \al=\be\circ l,\ f(A')(x')= H(l')(y').\end{equation}
Indeed, denote $C'=\co(l')$; denote by $\gamma$ some element of
$\cu(C,C')$ that completes $$\begin{CD}
A@>{}>>B \\
@VV{}V@VV{}V \\
A'@>{}>>B'
\end{CD}$$ to a morphism of triangles. Let $z'\in H''(C')$
be some element satisfying the obvious analogue of (\ref{ecomptr}).
Then $h=H''(\gamma)(h')$ is easily seen to satisfy (\ref{ecomptr}).

Now we construct $A',B',\dots$ as desired. Note that 
 in this case the assumption
(\ref{eprop}) is equivalent to: for any $t\in G(D)$ there exist
$E\in \obj\cu'$, $s\in G(D)$, and  $r\in \cu(D,E)$, such that
$t=G(r)(s)$ (since $\cu'$ is additive). So, we can choose $A'\in
\obj\cu'$, $\al\in \cu(A,A')$, $x'\in H'(A')$ such that $x=H'(\al)
(x')$. We complete $q=\al\oplus l\in \cu(A,A'\bigoplus B)$
 to a distinguished triangle
 $A\to A'\bigoplus B\stackrel{p=p_1\oplus p_2}{\to} D$.
 Since $H(q)((- H'(f(A')(x'),y))=0$, there exists an $s\in H(D)$
 such that $H(p) (s)= (- H'(f(A')(x'),y)$ (recall that
 $H$ is cohomological on $\cu$). So, we have $H(p_2)(s)=y$,
  $-H(p_1)(s)=f(A')(X')$, $p_2\circ l=-p_1\circ \al$.

$D$  fits for $B'$ if it lies in $\obj \cu'$. In the general case
using (\ref{eprop}) again, we choose $B'\in \obj \cu'$,
 $\de\in \cu(D,B')$, $g'\in H(Y)$, such that $s=H(\de) (g')$.
 Then it is easily seen that taking $l'=-\de\circ p_1$,
 $\be=\de\circ p_2$, we complete the choice of a set of data
 satisfying (\ref{ebchoi}). 

This argument can be modified to work for a general $\au$.
To this end 
we separate those parts of the reasoning
where we used the fact that $H$ is cohomological from those where
we deal with limits; this allows us to 'work as if $\au=\ab$'.

We denote
$\ke (H'(A)\bigoplus H(B)) {\to} H(A))$ (with respect to the morphism
in (\ref{ecompat})  by $S(A,B)$, and
{\small$\ke (H'(A)\bigoplus H(B) \bigoplus H''(C)) {\to} H(A)
\bigoplus H''(B) \bigoplus H'(C[-1])$}
by $T(A,B,C)$.

Then we have a commutative diagram
$$\begin{CD}
\inli(\imm (T(A',B',C')\to T(A,B,C)))@>{t'}>>\inli(\imm (S(A',B')\to S(A,B)))\\
@VV{}V@VV{i}V \\
T(A,B,C)@>{t}>> S(A,B)
\end{CD}$$
here the first direct limit above is taken with respect to
morphisms of triangles
$(A\to B\to C)\to (A'\to B'\to C')$
for $A',B',C'\in \obj \cu'$ (the ordering is similar to those of
(\ref{eprop})); the second limit is taken similarly with respect
to  morphisms $(A\to B)\to (A'\to B')$
for $A',B'\in \obj \cu'$. Since the restriction of $C$ to $\cu'$
is nice in $H$, for all $A',B',C'$ the morphism
$T(A',B',C')\to S(A',B')$  is epimorphic; hence $t'$ is epimorphic.
Therefore, it suffices to prove that $i$ is epimorphic.

Now let us fix $A'=A_0$ and $\al=\al_0$. We use the notation
introduced above; denote the preimage of
 $\imm(H'(\al):H'(A')\to H'(A))$ with respect to the natural morphism
  $S(A,B)\to H'(A)$ by $J$. Then $J$ equals
$\imm (H'(A')\times H(D)\to S(A,B))$. Indeed, here we can
 apply Proposition
 \ref{pmth}  (see Remark \ref{rmit}) and then apply the reasoning
 'with elements' used above. 

In any $\au$ we obtain: since
 $\Phi(D,Y)=\inli (\imm(\Phi(B',Y)\to\Phi(D,Y)))$, we obtain that
  $G=\inli(\imm (S(A_0,B',X,Y)\to S(A,B,X,Y)))$. Here we use the
  following fact (valid in any abelian $\au$): if
  $J_i\subset J'\in \obj\au$, $\inli J_i=J$ (for some projective system),
   $u:J'\to J$ is an $\au$-epimorphism, then $\inli u(J_i)=J$.

Now, passing to the limit with respect to $(A_0,\al_0)$
(using (\ref{eprop})) finishes the proof.

4. $C$ is a complex indeed since the extension procedure
is functorial. 

By  Proposition \ref{pextc}(I1), all the terms of $C$ are
cohomological on $\cu$. Also, part II2 of loc. cit. immediately
implies that $C$ is  exact (i.e. $C(X)$ is exact for any $X\in \obj
\cu$). Hence $C$ is a strongly exact complex.


Obviously, if $C$ is nice in $H$ then $C'$ also is.

Conversely, let $C'$ be nice in $H$. Then   Proposition
 \ref{pextc}(II1) implies that $H'$ and $H$ satisfy (\ref{eprop})
 (for all $D$). Hence $C$ is nice in $H$  by assertion 3.

\end{proof}

\subsection{Weight spectral sequences and  filtrations;
relation with virtual $t$-truncations }\label{scovi}

\begin{defi}\label{dwfilf}
For an arbitrary $(\cu,w)$ let $H:\cu\to \au$ be a
cohomological
functor ($\au$ is any
abelian category). 

We define $W^i(H):\cu\to\au $ as $X\to \imm(H(w_{\le
i}X)\to H(X))$.
\end{defi}

By  Proposition 2.1.2(2) of \cite{bws}, $W^i(H)(X)$ does
not depend on  the choice of a weight
decomposition for $X[i]$; it also defines a (canonical) subfunctor
of $H(X)$.

Now  recall that Postnikov towers yield spectral sequences for
cohomology.  We will denote $H(X[-i])$ by $H^i(X)$ (for $X\in
\obj\cu$). We will also use the notation of Definition \ref{rtrfun}.

\begin{theo}\label{pssw} 
Let $k,m\in \z$.

I1. For any weight Postnikov tower for $X$
(see  Definition \ref{d2}(\ref{idwpost}))
 there exists a spectral sequence $T=T(H,X)$ with $E_1^{pq}(T)=
H^q(X^{-p})$ such that
the map $E_1^{pq}\to E_1^{p+1q}$ is induced by the
morphism $X^{-p-1}\to X^{-p}$ (coming from the tower).
We have $T(H,X)\implies H^{p+q}(X)$ for any $X\in \cu^b$.

 One can construct it using the following exact couple:
$E_1^{pq}=H^q(X^{-p})$,
$D_1^{pq}=H^q(X^{w\ge 1-p})$.

2. $T$ is (covariantly) functorial in $H$; it is contravariantly
$\cu$-functorial  in $X$ starting from $E_2$.

3.  Denote the step of filtration given by ($E_{\infty}^{l,m-l}:$ $l\ge -k$)
 on $H^{m}(X)$
by $F^{-k}H^{m}(X)$. Then $F^{-k}H^{m}(X)=(W^k H^{m})(X)$.

II 
The derived exact couple for $T(H,X)$ can be naturally calculated
in terms of virtual $t$-truncations of $H$ in the following way: 
$E_2^{pq}\cong E_2'^{pq}=(H^q)^{\tau=-p}(X)$,
$D_2^{pq}=D_2'^{pq}= (\tau_{\ge q}H)(X[1-p])$; 
the connecting morphisms of the couple $((E_2',D_2'))$ come from (\ref{elovir}).

III1. $F^{-k}H^{m}(X)=\imm ((\tau_{\le k}H^m)(X)\to H^m(X))$ (with respect to the connecting morphism mentioned in Theorem \ref{trfun}(I)). 

2. For any $r\ge 2,\ p,q\in\z$ there exists a  functorial
isomorphism $E^{pq}_r\cong (F^{p}(\tau_{[-p+2-r,-p+r-2]}H)^q)^{p}/F^{p+1}(\tau_{[-p+2-r,-p+r-2]}H)^q)^{p}$.

\end{theo}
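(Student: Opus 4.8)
The plan is to proceed part-by-part, with each subsequent part leaning on the previous ones and on Theorem \ref{trfun} together with the virtual $t$-truncation calculus. For part I, the construction of $T(H,X)$ from a weight Postnikov tower is the standard exact-couple-of-a-filtered-object machine: given the distinguished triangles $Y_{i}\to Y_{i+1}\to X_i$ of the weight Postnikov tower (with $Y_i$ a choice of $w_{\ge 1-i}X$), apply $H$ to get long exact sequences, set $E_1^{pq}=H^q(X^{-p})=H^q(X_p[-p])$ and $D_1^{pq}=H^q(Y_{-p})=H^q(X^{w\ge 1-p})$, and read off the exact couple. The map $E_1^{pq}\to E_1^{p+1,q}$ is induced by $X^{-p-1}\to X^{-p}$ by construction (this connecting map is the one coming from Remark \ref{rwcomp}(1)). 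Convergence for $X\in\cu^b$ is immediate since then only finitely many $X^i$ are nonzero. For I2, functoriality in $H$ is clear; $\cu$-functoriality in $X$ from $E_2$ on follows from Theorem \ref{tbw}(\ref{impost}) (any morphism extends to a morphism of weight Postnikov towers) together with Remark \ref{rpost}(2) (the extension is not unique but becomes unique on $E_2$); this is exactly the content of Theorem 2.4.2 of \cite{bws}, which I would simply cite. For I3, by definition $F^{-k}H^m(X)$ is the image of $H^m(w_{\le k}X)\to H^m(X)$ reading off the filtration from the tower, i.e. $W^k H^m(X)$ by Definition \ref{dwfilf} and Proposition 2.1.2(2) of \cite{bws}.

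For part II — the identification of the \emph{derived} exact couple — the key point is that the derived couple has $D_2^{pq}=\imm(D_1^{p+1,q}\to D_1^{pq})$ and $E_2^{pq}=$ the cohomology of $E_1$ at the $(p,q)$ spot. Now $D_1^{p+1,q}\to D_1^{pq}$ is $H^q(w_{\ge -p}X)\to H^q(w_{\ge 1-p}X)$ up to reindexing, and its image is \emph{by definition} $(H^q)_2^{?}$ evaluated appropriately, i.e. $(\tau_{\ge q}H)(X[1-p])$ after matching the shift conventions in Definition \ref{rtrfun}. Similarly, $E_2^{pq}$ is the homology of $H^q(X^{w\ge 2-p})\to H^q(X^{-p})\to H^q(X^{w\ge -p})$; unwinding the weight Postnikov triangles, $X^{-p}=w_{[-p,-p]}X[-p]$ (Remark \ref{rfunct}(\ref{i4})), and this homology is precisely the double virtual $t$-truncation $(H^q)^{\tau=-p}(X)$ as defined via $(H_1^{01})_2^{-11}$ in Definition \ref{rtrfun}. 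The connecting morphisms of $((E_2',D_2'))$ being those of (\ref{elovir}) is then a matching exercise using Remark \ref{rdfunvirt}(2), which gives explicit formulas for the maps in (\ref{elovir}) in terms of weight-decomposition morphisms — the same morphisms that appear in the derived couple. Here the main obstacle is bookkeeping: keeping the shift indices $k$, $j$, $p$, $q$ consistent between the derived-couple formalism and the virtual $t$-truncation indexing of Definition \ref{rtrfun}, and verifying the connecting-map compatibility rather than merely the object-level isomorphism. I expect this to require careful invocation of Theorem \ref{tbw}(\ref{idowe}) and (\ref{ioct}) (the octahedral statements) exactly as in the proof of Theorem \ref{trfun}(II), to know that the identifications are canonical and transformation-compatible.

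For part III, statement 1 follows by combining I3 with Theorem \ref{trfun}(I): $W^kH^m(X)$ is the image of $H^m(w_{\le k}X)\to H^m(X)$, and by definition $\tau_{\le k}H^m=(H^m)_1^{k1}$ has a natural transformation to $H^m$ whose value on $X$ is exactly this image (using that the image $\imm(H(w_{\le k}X)\to H(w_{\le k+1}X))$ maps isomorphically to $\imm(H(w_{\le k}X)\to H(X))$ — the relevant fact here, proved as in the middle of the proof of Theorem \ref{trfun}(III4), is that $H^m(w_{\le k+1}X)\to H^m(X)$ is injective modulo the image of $H^m$ on the higher truncation, hence the two images agree). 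Statement 2 is the analogous description of the $E_r$ terms: by II and I3 applied to the truncated functor $\tau_{[-p+2-r,\,-p+r-2]}H$, the subquotient $F^p/F^{p+1}$ of its degree-$q$ part at spot $p$ recovers $E_r^{pq}$, because truncating the functor to the window $[-p+2-r,-p+r-2]$ is precisely what kills the pieces of the Postnikov tower that have already been divided out or not yet incorporated at page $r$. I would derive this by induction on $r$, the base case $r=2$ being part II (the window $[-p,-p]$ giving $(H^q)^{\tau=-p}$), and the inductive step using Theorem \ref{trfun}(II3) to see how enlarging the truncation window by one on each side corresponds to passing from the derived couple at page $r$ to page $r+1$. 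The functoriality of all these isomorphisms follows from the functoriality assertions already built into Theorem \ref{trfun} and Remark \ref{rfunct}.
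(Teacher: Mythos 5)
Your parts I and III1 are essentially sound, and I3 is of course a restatement of Definition \ref{dwfilf}. For III1 you take a more direct route than the paper: the paper derives the identity from the morphism of spectral sequences $T(H_1,X)\to T(H,X)$ (using part II to see where $E_2$ is an isomorphism and where it vanishes), whereas you combine I3 with the observation that $\imm((\tau_{\le k}H^m)(X)\to H^m(X))=\imm(H^m(w_{\le k}X)\to H^m(X))$. That observation is true — the morphism $X\to w_{\le k}X$ factors through $X\to w_{\le k+1}X$, so the two images in $H^m(X)$ coincide — but your parenthetical justification (that $\imm(H(w_{\le k}X)\to H(w_{\le k+1}X))$ maps \emph{isomorphically} to $\imm(H(w_{\le k}X)\to H(X))$) is stronger than needed and not actually established; you only need equality of images in $H(X)$. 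For III2 your inductive route via Theorem \ref{trfun}(II3) is a plausible alternative to the paper's device (which observes that $E_2^{pq}T(\tau_{[2-r,r-2]}H,X)$ agrees with $E_2^{pq}T(H,X)$ in the window $2-r\le p\le r-2$ and vanishes outside, so that $E_\infty$ of the truncated sequence computes $E_r$ of the original).

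The genuine gap is in part II, which is where the real work of the theorem lies. First, your description of $E_2^{pq}$ as ``the homology of $H^q(X^{w\ge 2-p})\to H^q(X^{-p})\to H^q(X^{w\ge -p})$'' is not the standard formula for the derived couple: $E_2^{pq}$ is the cohomology of the $E_1$-row, i.e.\ of $H^q(X^{1-p})\to H^q(X^{-p})\to H^q(X^{-1-p})$, which is a subquotient of $H(X^{-p})=H(w_{[-p,-p]}X[-p])$, while $(H^q)^{\tau=-p}(X)=\imm\,H(k)$ for a suitable $k\in\cu(w_{[0,1]}X[-p],w_{[-1,0]}X[-p])$ is a subobject of $H(w_{[0,1]}X[-p])$. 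Passing between a subquotient of $H(X^{-p})$ and a subobject of $H(w_{[0,1]}X[-p])$ is precisely the nontrivial step. Second, merely identifying the two objects is not enough: the paper's proof explicitly has to (a) build the isomorphism from a chosen octahedron containing the commutative triangle $w_{[1,1]}X\to w_{[0,1]}X\to w_{[-1,1]}X$, (b) show the result is independent of the choices and natural in $X$ using Theorem \ref{tbw}(\ref{icompl}) and Remark \ref{rfunct}, and (c) verify compatibility of the two exact couples' connecting morphisms $E_2^{00}\to D_2^{10}$ and $D_2^{1,-1}\to E_2^{00}$ against the morphisms from (\ref{elovir}), again by exhibiting appropriate octahedra. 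You correctly anticipate that Theorem \ref{tbw}(\ref{idowe}) and (\ref{ioct}) enter, but you stop at ``I expect this to require careful invocation''; without the construction of the isomorphism, the naturality argument, and the two connecting-morphism checks, part II is only a statement of intent, not a proof.
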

\begin{proof}
I This is  
 Theorem 2.4.2 of \cite{bws};
see also Remark 2.4.1 of ibid. for the discussion of exact couples.

In fact, assertion 1 
follows easily from well known properties of Postnikov towers  and
of related spectral sequences.

II Since virtual $t$-truncations are functorial, the exact couple $(D'_2,E'_2)$ is functorial also.

The definitions of the derived exact couple and of the virtual
 $t$-truncations imply immediately that  $D_2^{pq}$   and
 their connecting maps are exactly $D_2'^{pq}$ (and their connecting
 morphisms) specified in the assertion. 

It remains to 
compare $E_2$ with $E'_2$, and also the connecting maps of exact couples starting and ending in $E_2$ with those for $E_2'$. It suffices to consider $p=q=0$.
Our strategy is the following one. First we construct an isomorphism $E_2^{00}\to E_2'^{00}$; our construction depends on some choices. Then we prove that the isomorphism constructed is actually natural 
(in particular, it does not depend on the choices made). Lastly we verify that the isomorphisms of the terms of the exact couples constructed is compatible with the connecting morphisms of these couples. Note that in this (last) part of the argument we can make those choices (of certain weight decompositions) that we like.  %

By the definition of the derived exact couple we have:
$E_2^{00}$ is the $0$-th cohomology of the complex $(H(X^{-j}))$ (for any choice of the weight complex $(X^i)$).
$E_2'^{00}$ is the image of $H(k)$ where $k\in \cu(w_{[0,1]}X, w_{[-1,0]}X)$ is any morphism that is compatible with $\id_X$ with respect to the corresponding weight decompositions (see see Theorem \ref{trfun}(II3) and Remark \ref{rfunct}(\ref{idofunct})). 
 So, we should compare a subfactor of $H(X^0)$ with a subobject of $H(w_{[0,1]}X)$.

Now suppose that we are given an octahedral diagram containing a commutative triangle $w_{[1,1]}X\to w_{[0,1]}X\to w_{[-1,1]}X$ (see Theorem \ref{tbw}(\ref{idowe})).
We could obtain it as follows:
fix some $w_{[-1,1]}X$; then choose certain $w_{[0,1]}X=w_{\ge 0}(w_{[-1,1]}X)$ and  $w_{[1,1]}X =w_{\ge 1}(w_{[-1,1]}X)$ (see Remark \ref{rfunct}(\ref{i4})). 
For any possible completion of  the commutative triangle $w_{[1,1]}X\to w_{[0,1]}X\to w_{[-1,1]}X$ to an octahedral diagram, the remaining vertices of the octahedron are certain
 $w_{[-1,0]}X$, $w_{[0,0]}X=X^0$, and $w_{[-1,-1]}X=X^{-1}[1]$ (by Theorem \ref{tbw}(\ref{idowe})).  
We 
obtain morphisms
 $w_{[0,1]}X\stackrel{i}{\to} X^0 \stackrel{j}{\to} w_{[-1,0]}X$
 such that  $k=j\circ i$.  
  Moreover, $\imm (H(X^{1})\to H(X^{0}))=\ke H(i)$. Hence $H(i)$ induces some monomorphism $\al: H(X^0)/\imm (H(X^{1})\to H(X^{0}))$ to $H(w_{[0,1]}X)$.
 Besides, $\ke (H(X^{0})\to H(X^{-1}))=\imm H(j)$; therefore the restriction of $\al$ to $\al\ob(\imm H(k))$  yields an
 isomorphism $\be: E_2^{00}\to E_2'^{00}$. 
 
Now we verify  that the isomorphism constructed is natural.

Note that it actually depends only on $w_{[0,1]}X\stackrel{i}{\to} X^0$ and $\imm H(k)$ (we used the remaining data only in order to verify that we actually obtain an isomorphism).
So, suppose that we have $X'\in \obj \cu$, $g\in \cu(X,X')$, and some choice of
$w_{\ge  0}X'$, $w_{\ge  1}X'$, and $w_{\ge  2}X'$. 
We have canonical connecting morphisms $w_{\ge  0}X'\to w_{\ge  1}X'\to w_{\ge  2}X'$
that are compatible with $\id_{X'}$ with respect to the morphisms $w_{\ge  l}X'\to X'$ ($l=0,1,2$). Applying
Theorem \ref{tbw}(\ref{idowe}), we obtain
a choice of $w_{[0,1]}X'\stackrel{i'}{\to} X'{}^0$. 
We also fix some choice of $H(k')$ (in order to do this we fix some choice of $w_{\le -1}X$ and of $w_{[-1,0]}X$). Note that all of these choices are necessarily compatible with some choice of 
the isomorphism $\be':E_2^{00}(X')\to E_2'^{00}(X')$ constructed as above (see \ref{rfunct}(\ref{i1})). 

 Now we choose some morphisms $g_l:w_{\ge  l}X\to w_{\ge  l}X'$, for $-1\le l\le 2$, compatible with $g$ (see Remark \ref{rfunct}(\ref{i3})). These choices could be extended to some morphisms $a: w_{[0,1]}X\to w_{[0,1]}X'$ and $b:X^0{\to} X'{}^0$ (by extending morphisms of arrows to morphism of distinguished triangles). 
 
 Now we verify the commutativity of the diagram
$$\begin{CD}
w_{[0,1]}X @>{i}>>X^0\\
@VV{a}V@VV{b}V \\
w_{[0,1]}X'@>{i'}>>X'{}^0
\end{CD}$$ 
It follows from Theorem \ref{tbw}(\ref{icompl}) applied to the morphism  $g_0: w_{\ge  0}X\to w_{\ge  0}X'$, $l=1$, $m=2$ (since both $b\circ i$ and $i'\circ a$ are compatible with $g_0$). Moreover, Remark \ref{rfunct}(\ref{idofunct}) yields that $H(a)$ sends $H(k)$ to $H(k')$. We obtain a commutative diagram
 $$\begin{CD}
E_2^{00}@>{\be}>>E_2'^{00}\\
@VV{}V@VV{}V \\
E_2^{00}(H,X')@>{\be'}>>E_2'^{00}(H,X')
\end{CD}$$
Since $E_2^{00}(H,-)$ and $E_2'^{00}(H,-)$ are $\cu^{op}$-functorial 
(and the vertical arrows in the diagram are exactly those that yield this functoriality; see Remark \ref{rdfunvirt}(3)), we obtain the naturality in question. 

Now it remains to prove that the isomorphisms of  terms of exact
couples constructed above is compatible with the (two remaining) connecting morphisms
 of these couples.

First consider the morphisms $E_2^{00}\to D_2^{10}$.  Recall (by the definition of the derived exact couple)
 that it is induced by any morphism $w_{\ge 0}X\to X^0$ that extends to a weight decomposition of $w_{\ge 0}X$ (here we consider $E_2^{00}$ as a subfactor of $H(X^0)$). On the other hand,  the morphism
 $E_2'^{00}\to D_2'^{10}=\imm(H(w_{\ge -1}X)\to H(w_{\ge 0}X))$ is induced by 
 any possible choice of a morphism $w_{\ge 0}X\to w_{[0,1]}X$ that yields  a weight decomposition of $w_{\ge 0}X[1]$ (by  Remark \ref{rdfunvirt}(2); see also Remark \ref{rfunct}(\ref{idofunct})). 
 Hence it suffices to note that the triangle $w_{\ge 0} X\to w_{[0,1]}X\stackrel{i}{\to} X^0$ is necessarily commutative by Remark \ref{rfunct}.

It remains consider the morphism $ D_2^{1,-1} \to E_2^{00}$.
It is induced by the 
morphism  $X^0\to w_{\ge 1}X$ (that yields a weight decomposition of $w_{\ge 0}X$). The
morphism $ D_2'^{1,-1}(=\imm (H(w_{\ge 1}X)[1])\to H(w_{\ge 2}X)[1]) ) \to E_2'^{00}$ is induced by 
the morphism $w_{[0,1]}X\to w_{\ge 2}X[1]$. 
 Hence it suffices to construct a  commutative  square
 $$\begin{CD}
w_{[0,1]}X@>{i}>>X^0\\
@VV{}V@VV{}V \\
w_{\ge 2}X[1]@>{}>>w_{\ge 1}X[1]
\end{CD}$$
 By applying Theorem \ref{tbw}(\ref{idowe}) to 
 the commutative triangle $w_{\ge 2}X\to w_{\ge 1}X\to w_{\ge 0}X$
 we obtain that there exists such a commutative square with a certain $i_0$ instead of $i$.
 Note that (by loc. cit.) $i_0$ yields a weight decomposition of $w_{[0,1]}X$. It suffices to verify that  we may take $i_0$ for $i$ i.e. that $i_0$ could be completed to an octahedral diagram one of whose faces yields some choice of the commutative triangle $w_{[1,1]}X\to w_{[0,1]}X\to w_{[-1,1]}X$. We take $w_{[1,1]}X=\co i_0[-1]$, choose some $w_{[-1,1]}X$ (coming from the same $w_{\le 1}X$ as $w_{[0,1]}X$). 
 By Remark \ref{rfunct}(\ref{i1}) we obtain a unique commutative triangle $w_{[1,1]}X\to w_{[0,1]}X\to w_{[-1,1]}X$ that is compatible with $\id_{w_{\le 1}X}$ respect to the corresponding weight decompositions. 
 It remains to apply Theorem \ref{tbw}(\ref{idowe}).

III We can assume $k=m=0$.

1. In the notation of Theorem
\ref{trfun} we consider the morphism of spectral sequences
$M:T(H_1,X)\to T(H,X)$ (induced by $H_1\to H$).
Part II of loc. cit. implies: $M$ is an isomorphism on $E_2^{pq}$ for
$p\ge -k$ and $E_2^{pq}(T(H_1,X))=0$ otherwise. The assertion follows
 immediately.

2. Similarly to  the previous reasoning, we have natural
isomorphisms: $E_2^{pq}(T(\tau_{[2-r,r-2]}H,X)\cong E_2^{pq}(T(H,X))$
for $2-r\le p\le r-2$ and $=0$ otherwise. It easily follows that
  $E^{pq}_{\infty}(T(\tau_{[2-r,r-2]}H,X)\cong E^{pq}_{r}(T(\tau_{[-p+2-r,-p+r-2]}H,X)$.
  The result follows immediately. 

\end{proof}

 \begin{rema}

 1. The dual of assertion II is: if we consider the
 alternative exact couple for our weight spectral
 sequence  (see Remark
  \ref{rpost})
 then the derived exact couple can also be described
  in  terms of virtual $t$-truncations (in a way that is
  dual in an appropriate sense to that of Theorem \ref{pssw}).

 2. Possibly, at least a part of (assertion II of)
  the theorem could be proved by
 studying the functoriality of the derived exact couple (and applying
   Theorem \ref{tvinice}(1)).
\end{rema}

\subsection{Dualities of triangulated categories; orthogonal weight
and $t$-structures}\label{sdortstr}

Let $\cu,\du$  be triangulated categories.
We study certain pairings of triangulated categories
$\cu^{op}\times \du\to \au$.
In the following definition we consider a general $\au$, yet
below we will mainly need $\au=\ab$. 

\begin{defi}\label{ddual}
1. We will call a (covariant) bi-functor
  $\Phi:\cu^{op}\times\du\to
\au$ a {\it duality} if  it is bi-additive, homological with respect
to both arguments; and is equipped with a (bi)natural bi-additive transformation
$\Phi(X,Y)\cong \Phi (X[1],Y[1])$.


2. We will say that $\Phi$ is {\it nice} if for any distinguished
triangle $X\to Y\to Z$ the corresponding (strongly exact)  complex
of functors
 \begin{equation}\label{ecofefu} \dots \to \Phi(-,X) \to \Phi(-,Y)
 \to \Phi(-,Z) \stackrel{f}{\to} \Phi([-1](-),X)\to\dots\end{equation}
  is nice in $\Phi(-,Y)$ (see Definition \ref{dnicesfun}); here $f$ is obtained from the
   natural morphism $\Phi(-,Z) {\to} \Phi(-,X[1])$ by applying the
   (bi)natural transformation mentioned above.

 3. Suppose that $\cu$ is endowed with a weight structure $w$,
 $\du$ is endowed with a $t$-structure $t$. Then we will say that $w$
 is (left) {\it orthogonal} to $t$ with respect to $\Phi$
 if the following
 {\it orthogonality condition} is fulfilled:
 \begin{equation}\label{edort}
 \Phi (X,Y)=0\text{ if: }X\in \cu^{w\le 0}
\text{ and }Y\in \du^{t \ge 1},\text{ or }X\in \cu^{w\ge 0}
\text{ and }Y\in \du^{t \le -1}.
 \end{equation}

4. If $w$ is defined on $\cu^{op}$, $t$ is defined on $\du^{op}$,
$w$ is left orthogonal to $t$ (with respect to some duality); then
we will say that the corresponding opposite weight structure on
$\cu$ is  {\it right orthogonal} to the opposite $t$-structure for
$\du$.
\end{defi}

\begin{rema}
1. The axioms of $\Phi$ immediately imply that (\ref{ecofefu}) is a
strongly exact complex of functors indeed
(whether $\Phi$ is nice or not).

2. Certainly, if $\Phi$ is nice then (\ref{ecofefu}) is nice at
any term (since we can 'rotate' distinguished triangles in $\du$).

\end{rema}


First we prove a statement that will simplify checking
the orthogonality
 of weight and $t$-structures.

\begin{pr}\label{pdual}
 Let $\Phi:\cu^{op}\times\du\to \au$ be some duality; let $(\cu,w)$
be bounded. Then $w$ is (left) orthogonal to $t$ whenever there exists
a $D\subset \cu^{w=0}$ such that any object of $\cu^{w=0}$ is a
retract of a finite direct sum of elements of $D$ and
\begin{equation}\label{orth0}
\Phi(X,Y)=0\ \forall\ X\in D,\  Y\in \du^{t\ge 1}\cup \du^{t\le -1}.
\end{equation}

\end{pr}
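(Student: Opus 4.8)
The plan is to verify the orthogonality condition~(\ref{edort}) directly, bootstrapping from the hypothesis~(\ref{orth0}) along the (finite) weight filtration of objects of $\cu$. Since $(\cu,w)$ is bounded it suffices to prove, by induction on $n\ge 0$, the two statements $(A_n)$: $\Phi(X,Y)=0$ whenever $X\in\cu^{[0,n]}$ and $Y\in\du^{t\le -1}$; and $(B_n)$: $\Phi(X,Y)=0$ whenever $X\in\cu^{[-n,0]}$ and $Y\in\du^{t\ge 1}$. Indeed, every $X\in\cu^{w\ge 0}$ lies in some $\cu^{[0,n]}$ (as $\cu^{w\ge 0}\subset\cu^b$), every $X\in\cu^{w\le 0}$ lies in some $\cu^{[-n,0]}$, and the statements $(A_n)$, $(B_n)$ for all $n$ together give precisely~(\ref{edort}). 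I will only write out $(A_n)$, since $(B_n)$ is entirely symmetric: it uses a weight decomposition triangle $w_{\ge 0}X\to X\to w_{\le -1}X$, the fact that the $\cu^{w\ge 0}$-part of a weight decomposition of an object of $\cu^{w\le 0}$ lies in $\cu^{w=0}$, and the inclusion $\du^{t\ge 1}[-1]\subset\du^{t\ge 1}$ in place of $\du^{t\le -1}[1]\subset\du^{t\le -1}$.

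For the base case $n=0$ we have $X\in\cu^{w=0}$, so $X$ is a retract of a finite direct sum $\bigoplus_j D_j$ with $D_j\in D$; since $\Phi(-,Y)$ is additive, $\Phi(\bigoplus_j D_j,Y)=\bigoplus_j\Phi(D_j,Y)=0$ by~(\ref{orth0}), and hence $\Phi(X,Y)$, being a retract of $0$, vanishes. For the inductive step $n\ge 1$, choose a weight decomposition triangle $w_{\ge 1}X\to X\to w_{\le 0}X\to (w_{\ge 1}X)[1]$ (cf.~(\ref{wd})). Then $w_{\le 0}X\in\cu^{w=0}$ because $X\in\cu^{w\ge 0}$; moreover, rotating the triangle to $(w_{\le 0}X)[-1]\to w_{\ge 1}X\to X$ and using $(w_{\le 0}X)[-1]\in\cu^{w\le 1}\subset\cu^{w\le n}$, $X\in\cu^{w\le n}$ together with the extension-stability of $\cu^{w\le n}$ (Theorem~\ref{tbw}(\ref{iext})), one gets $w_{\ge 1}X\in\cu^{w\ge 1}\cap\cu^{w\le n}=\cu^{[1,n]}$. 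Now apply the cohomological functor $\Phi(-,Y)\colon\cu\to\au$ to the triangle: this yields an exact sequence $\Phi(w_{\le 0}X,Y)\to\Phi(X,Y)\to\Phi(w_{\ge 1}X,Y)$. The left-hand term vanishes by the case $n=0$. For the right-hand term, the structural isomorphism of the duality gives $\Phi(w_{\ge 1}X,Y)\cong\Phi((w_{\ge 1}X)[1],Y[1])$, where $(w_{\ge 1}X)[1]\in\cu^{[0,n-1]}$ and, by the semi-invariance axiom of the $t$-structure, $Y[1]\in\du^{t\le -1}$; so it vanishes by $(A_{n-1})$. Hence $\Phi(X,Y)=0$, which completes the induction and the proof.

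The argument is essentially formal. The points deserving a little care are the identification $w_{\le 0}X\in\cu^{w=0}$ for $X\in\cu^{w\ge 0}$ (a recalled property of weight structures) and the bookkeeping of weight bounds under shifts, e.g.\ $w_{\ge 1}X\in\cu^{[1,n]}$ and $(w_{\ge 1}X)[1]\in\cu^{[0,n-1]}$, together with the trivial inclusions $\du^{t\le -1}[1]\subset\du^{t\le -1}$ and $\du^{t\ge 1}[-1]\subset\du^{t\ge 1}$. No use is made of the triangulated structure of $\du$ beyond these inclusions, nor of niceness of $\Phi$; it is the boundedness of $(\cu,w)$ that powers the inductive reduction all the way down to $D\subset\cu^{w=0}$, and this is the hypothesis one cannot drop (for unbounded $X$ one would need stability under suitable homotopy (co)limits, which is not available here).
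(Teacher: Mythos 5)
Your proof is correct, but it takes a genuinely different route from the paper's. The paper first observes (from the bi-natural isomorphism $\Phi(X,Y)\cong\Phi(X[1],Y[1])$) that the vanishing hypothesis propagates to the shifted classes $\cup_{i\ge 0}D[i]$ and $\cup_{i\le 0}D[i]$, then notes that vanishing of the cohomological functor $\Phi(-,Y)$ is preserved under passage to the smallest Karoubi-closed extension-stable subclass, and finally invokes the abstract description of $\cu^{w\le 0}$ and $\cu^{w\ge 0}$ as exactly those closures (Theorem~\ref{tbw}(\ref{iwgen})); boundedness enters implicitly to identify $\cu$ with the Karoubi-closure of $\langle D\rangle$ so that the cited theorem applies. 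You instead run a direct induction on the weight length $n$, peeling off the top (resp.\ bottom) of a weight decomposition at each step, using only: additivity of $\Phi(-,Y)$ for the base case, cohomologicality of $\Phi(-,Y)$ for the three-term exact sequence, Theorem~\ref{tbw}(\ref{iext}) for the bookkeeping $w_{\ge 1}X\in\cu^{[1,n]}$, the fact that the $w_{\le 0}$-piece of an object of $\cu^{w\ge 0}$ lies in $\cu^{w=0}$, and the elementary inclusions $\du^{t\le -1}[1]\subset\du^{t\le -1}$, $\du^{t\ge 1}[-1]\subset\du^{t\ge 1}$. Your argument is more elementary and self-contained (it avoids Theorem~\ref{tbw}(\ref{igen})--(\ref{iwgen}) entirely and makes the role of boundedness explicit as the thing powering the induction), while the paper's is shorter once the closure description is available and shows more clearly that the only thing one needs from $D$ is that it (weight-)generates $\cu$. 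Both ultimately rest on the same ingredient, namely that $\Phi(-,Y)$ is cohomological.
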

\begin{proof}
 If $w$ is is left orthogonal to $t$, then (\ref{orth0})
for $D=\cu^{w=0}$ follows
immediately from the orthogonality condition.

Conversely, let $D$ satisfy the assumptions of our assertion.
 Hence
we have: $\Phi(X,Y)=0$ if $X\in D[i],\ i\ge 0, Y\in \du^{t\ge 1}$,
or if $X\in D[i],\ i\le 0, Y\in \du^{t\le -1}$.

Now we  note: if for some $E,F\subset\obj\cu$ we have
$\Phi (X,Y)=0$ if
  $X\in E$ and $Y\in \du^{t \ge 1}$ (resp. $X\in F$
and $Y\in \du^{t \le
 -1}$), then we also have $\Phi (X,Y)=0$ if
  $X\in E'$ and $Y\in \du^{t \ge 1}$ (resp. $X\in F'$
and $Y\in \du^{t \le
 -1}$) where $E'$ (resp. $F'$) is the smallest Karoubi-closed extension-stable subclass of $\obj\cu$ containing $E$ (resp. $F$).

Now by Theorem \ref{tbw}(\ref{iwgen}), for  $E=\cup_{i\ge 0} D[i]$, 
$F=\cup_{i\le 0} D[i]$, we have $E'=\cu^{w\le 0}$, $F'=\cu^{w\ge 0}$. Hence we obtain the orthogonality desired.

\end{proof}

When (weight and $t$-) structures are orthogonal, virtual
$t$-truncations of $\Phi(-,Y)$ are given by $t$-truncations in $\du$.
We use the notation of Definition \ref{rtrfun}.

\begin{pr}\label{pvirdu}
1. Let $t$ be orthogonal to $w$ with respect to $\Phi$, $k\in\z$.
For  $Y\in \obj \du$ denote the functor $\Phi(-,Y):\cu\to \au$ by $H$.
Then we have an isomorphism of complexes
$(\tau_{\le k}H\to H\to \tau_{\ge k+1}H)\cong
(\Phi(-,t_{\le k}Y)\to H \to \Phi(-,t_{\ge k+1}Y))$
(where the connecting maps of the second complex are induced
by $t$-truncations); this isomorphism is natural in $Y$.

2. Suppose also that $\Phi$ is nice. Then the (strongly exact) complex
of functors that sends $X$ to
\begin{equation}\label{ettrk}\dots\to \Phi(X, t_{\le k}Y)\to \Phi(X,Y)\to \Phi(X,t_{\ge k+1}Y)\to \Phi(X[-1],t_{\le k}Y)\to\dots\end{equation} (constructed as in the definition of a nice duality) is naturally isomorphic to (\ref{elovir}).
\end{pr}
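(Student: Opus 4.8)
The plan is to obtain both assertions as formal consequences of the characterization of virtual $t$-truncations in Theorem~\ref{trfun}(III4) (for part~1) and of Theorem~\ref{tvinice}(1) (for part~2); the orthogonality of $w$ and $t$ will be used only to verify the vanishing hypotheses of the former.

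For part~1, I would start from the $t$-decomposition triangle of $Y[k]$, shifted by $[-k]$:
$$t_{\le k}Y\to Y\to t_{\ge k+1}Y\to (t_{\le k}Y)[1].$$
Applying $\Phi(-,-)$ and using that $\Phi$ is homological in the second argument, I get a three-term complex $H'\xrightarrow{f}H\xrightarrow{g}H''$ of contravariant functors on $\cu$, with $H'=\Phi(-,t_{\le k}Y)$, $H''=\Phi(-,t_{\ge k+1}Y)$ and $f$, $g$ induced by the two $t$-truncation morphisms; the associated long exact sequence shows it is exact in the middle. I would then check the three hypotheses of Theorem~\ref{trfun}(III4). (i) $H'$ and $H''$ are cohomological because $\Phi$ is homological in the first argument. (ii) The required isomorphism $\cok g(X)\cong\ke f(X[-1])$ is read off from the long exact sequence once $\Phi(X[-1],-)$ is transported to $\Phi(X,(-)[1])$ by the natural shift transformation of $\Phi$: both sides then identify with $\imm(\Phi(X,t_{\ge k+1}Y)\to\Phi(X,(t_{\le k}Y)[1]))$. (iii) Since $t_{\le k}Y\in\du^{t\le k}$ and $t_{\ge k+1}Y\in\du^{t\ge k+1}$, the orthogonality condition~(\ref{edort}) --- in the shifted form obtained by iterating the shift transformation of $\Phi$ --- gives that $H'$ vanishes on $\cu^{w\ge k+1}$ and $H''$ vanishes on $\cu^{w\le k}$. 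Hence Theorem~\ref{trfun}(III4) provides canonical isomorphisms $H'\xrightarrow{\sim}\tau_{\le k}H$ and $H''\xrightarrow{\sim}\tau_{\ge k+1}H$ compatible with the transformations to and from $H$ (recall $\tau_{\le k}H=H_1^{k1}$ and $\tau_{\ge k+1}H=H_2^{k1}$); together with $\id_H$ these assemble into the desired isomorphism of complexes.

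Naturality in $Y$ I would derive from the canonicity asserted in Theorem~\ref{trfun}(III4): the data $H$, $H'$, $H''$, $f$, $g$ and the (unfixed) isomorphisms $\cok g\cong\ke f\circ[-1]$ are all functorial in $Y$ --- because $t_{\le k}$ and $t_{\ge k+1}$ are functors (Remark~\ref{rts}(2)) and $\Phi$ is a bifunctor --- and the isomorphism produced by loc.\ cit.\ is the unique one compatible with the maps to $H$, so the naturality squares are forced to commute. For part~2, the complex of functors sending $X$ to~(\ref{ettrk}) is exactly the complex~(\ref{ecofefu}) attached to the distinguished triangle $t_{\le k}Y\to Y\to t_{\ge k+1}Y$; hence it is strongly exact by the remark following Definition~\ref{ddual}, and it is nice in $H=\Phi(-,Y)$ precisely because $\Phi$ is a nice duality (Definition~\ref{ddual}(2)). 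Its middle `piece' $H'\to H\to H''$ satisfies the hypotheses of Theorem~\ref{trfun}(III4) by the verification above, so Theorem~\ref{tvinice}(1) applies and yields a canonical isomorphism of~(\ref{ettrk}) with~(\ref{elovir}); naturality in $Y$ follows once more from the canonicity together with the $Y$-functoriality of all the data involved.

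The step I expect to require the most care is not conceptual but bookkeeping: keeping the indices straight ($k$ versus $k+1$, the placement of the shifts $[\pm1]$, the directions of $f$ and $g$) and, in particular, the disciplined use of the natural transformation $\Phi(X,Y)\cong\Phi(X[1],Y[1])$ --- both to state the shifted orthogonality condition used in (iii) and to produce the isomorphism $\cok g(X)\cong\ke f(X[-1])$ demanded by hypothesis~(ii) of Theorem~\ref{trfun}(III4). Once these identifications are made correctly, both assertions drop out of the two cited theorems.
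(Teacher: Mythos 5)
Your proposal is correct and follows essentially the same route as the paper: both set up $H'=\Phi(-,t_{\le k}Y)$, $H''=\Phi(-,t_{\ge k+1}Y)$ from the $t$-decomposition, verify the three hypotheses of Theorem~\ref{trfun}(III4) (with orthogonality giving the vanishing in (iii) and the long exact sequence giving (ii)), and then deduce part~2 from Theorem~\ref{tvinice}(1). Your version merely spells out the shift-bookkeeping that the paper leaves implicit.
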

\begin{proof}
1. Since $t$ and $w$ orthogonal, $\Phi(-,t_{\le k}Y)$ vanishes on
 $\cu^{w\ge k+1}$, whereas $\Phi(-,t_{\ge k+1}Y)$ vanishes on
 $\cu^{w\le k}$. Moreover, (\ref{ettrk}) yields that
$H'=\Phi(-,t_{\le k}Y)$ and $H''=\Phi(-,t_{\ge k+1}Y)$ also satisfy
the condition (iii) of   Theorem \ref{trfun}(III4). Hence the theorem
 yields the claim.

2. Immediate from the previous assertion and Theorem
\ref{tvinice}(1).

\end{proof}

\begin{rema}
Note that we actually need quite a partial case of
the 'niceness condition' for $\Phi$ in order to prove  assertion 2.
Hence here (and so, in all the applications below) we will not need
the niceness condition in its full generality. Possibly, the
corresponding partial case of the condition is weaker than the whole
 assertion; yet checking it does not seem to be much easier.

 Also, it seems quite possible that for an arbitrary (not
 necessarily nice) duality there exists some isomorphism of
 (\ref{elovir}) with (\ref{ettrk})  if we modify the boundary maps
 of the second complex. Yet there seems to be no way to choose such
  a modification
canonically.
\end{rema}

'Natural' dualities are nice; we will justify this thesis now.

\begin{pr}\label{pnice}
1. If $\au=\ab$, $\du=\cu$, then $\Phi:(X,Y)\mapsto\cu(X,Y)$
is a nice duality.

2. For some duality $\Phi:\cu^{op}\times \du\to \au$ let there exist
a (skeletally) small full triangulated $\cu'\subset \cu$ such
that: the restriction of $\Phi$ to $\cu'^{op}\times \du$ is a nice
duality (of $\cu'$ with $\du$);
for any $X\in \obj\du$ the functor $G=\Phi(-,X),\ \cu^{op}\to \au$,
satisfies (\ref{eprop}). Then   $\Phi$ is nice also.

3. For $\du$, $\cu'\subset \cu$ as above, $\au$ satisfying AB5,
let $\Phi':\cu'^{op}\times \du\to \au$ be a duality.
For any $Y\in \obj \du$ we extend the functor $\Phi'(-,Y)$ from
$\cu'$ to
$\cu$ by the method of Proposition \ref{pextc}; we denote the functor obtained by
$\Phi(-,Y)$. Then the corresponding bi-functor $\Phi$ is a duality
($\cu^{op}\times \du\to \au$).
It is nice whenever $\Phi'$ is.

\end{pr}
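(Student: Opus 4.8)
The plan is to deduce all three assertions from Theorem \ref{tvinice} (parts 2, 3 and 4 respectively), after unwinding the definitions so that, for a distinguished triangle $X\to Y\to Z$, the complex (\ref{ecofefu}) becomes visibly the kind of complex treated there; in each case strong exactness of (\ref{ecofefu}) is automatic from the axioms of a duality (as remarked after Definition \ref{ddual}), so only niceness is at issue. For assertion 1, $\Phi(X,Y)=\cu(X,Y)$ is bi-additive, homological in both variables, and equipped with the tautological natural transformation coming from $\cu(X,Y)=\cu(X[1],Y[1])$, so it is a duality; and for a distinguished triangle $X\to Y\to Z$ in $\cu$, under the identification $\Phi(-[-1],X)=\cu(-,X[1])$ the map $f$ of (\ref{ecofefu}) is the one induced by $Z\to X[1]$, so (\ref{ecofefu}) is exactly the complex $\dots\to\cu(-,X)\to\cu(-,Y)\to\cu(-,Z)\to\dots$ which Theorem \ref{tvinice}(2) declares strongly exact and nice in $\cu(-,Y)$ --- i.e. $\Phi$ is nice.

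For assertion 2, fix a distinguished triangle $X\to Y\to Z$ in $\du$ and apply Theorem \ref{tvinice}(3) to $C=$ (\ref{ecofefu}) with $H'=\Phi(-,X)$ and $H=\Phi(-,Y)$: by hypothesis the restriction of $C$ to $\cu'$ is nice in $H$, and the condition (\ref{eprop}) holds both for $G=\Phi(-,X)$ and for $G=\Phi(-,Y)$ (indeed it holds for $\Phi(-,W)$ for every $W\in\obj\du$); hence $C$ is nice on $\cu$. Since this works for every distinguished triangle in $\du$, $\Phi$ is nice.

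For assertion 3 one must first check that $\Phi$ is a duality. Functoriality of the extension in Proposition \ref{pextc}(I1) makes $Y\mapsto\Phi(-,Y)$ an additive functor into $\adfu(\cu^{op},\au)$, so $\Phi$ is a bi-additive bi-functor that is homological in the first variable by the same proposition. The substantive point is homological behaviour in the second variable: for a distinguished triangle $Y\to Y'\to Y''$ in $\du$, the three-term complex $\Phi'(-,Y)\to\Phi'(-,Y')\to\Phi'(-,Y'')$ of functors on $\cu'$ is exact in the middle and its last term is cohomological, so Proposition \ref{pextc}(II2) shows the extended complex $\Phi(-,Y)\to\Phi(-,Y')\to\Phi(-,Y'')$ is exact in the middle on all of $\cu$; rotating the triangle gives that $\Phi(W,-)$ is homological for each $W\in\obj\cu$. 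The natural transformation $\Phi(X,Y)\cong\Phi(X[1],Y[1])$ is inherited from that of $\Phi'$ once one observes, from the construction (\ref{eckrause})--(\ref{ekrause}), that the extension commutes with precomposition by the shift auto-equivalence of $\cu$ (which preserves $\cu'$). Granting this, for a distinguished triangle $X\to Y\to Z$ in $\du$ the complex $C'=$ (\ref{ecofefu}) for $\Phi'$ is a strongly exact complex of functors on $\cu'$ whose termwise extension is (\ref{ecofefu}) for $\Phi$; Theorem \ref{tvinice}(4) then gives that the latter is strongly exact on $\cu$ and nice in $\Phi(-,Y)$ whenever $C'$ is nice in $\Phi'(-,Y)$. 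As $\Phi'$ being nice means precisely that $C'$ is nice for every such triangle, $\Phi$ is nice.

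I expect the main obstacle to lie in these two checks in assertion 3 --- second-variable exactness via Proposition \ref{pextc}(II2), and compatibility of the Krause-type extension with the triangulated shift (so that $\Phi$ really inherits the full structure of a duality, not just that of a bi-additive bi-functor). Everything else is a fairly mechanical translation of the definitions into the hypotheses of Theorem \ref{tvinice}.
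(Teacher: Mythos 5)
Your proof is correct and follows the same route the paper takes, which is simply to invoke parts 2--4 of Theorem \ref{tvinice} for assertions 1--3 respectively; you fill in the routine but non-vacuous step that the paper leaves implicit for assertion 3 (that the extended $\Phi$ is in fact a duality: bi-additivity and first-variable cohomologicality from Proposition \ref{pextc}(I1), second-variable exactness from \ref{pextc}(II2), and the shift-compatibility of the Krause extension giving the $\Phi(X,Y)\cong\Phi(X[1],Y[1])$ transformation).
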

\begin{proof}
 Immediate from parts 2--4 of Theorem \ref{tvinice}.


\end{proof}

\begin{rema}\label{rnice}

1. Proposition \ref{pnice}(1) yields an important family of nice
dualities; this case was thoroughly studied in \cite{bws}
(in sections 4 and 7). We will say that $w$ is left (resp.
right) {\it adjacent}  to $t$ if it is left (resp. right) orthogonal
to it with respect to $\Phi(X,Y)=\cu(X,Y)$. 
Note that for $w$ left (resp. right)
 adjacent to $t$ with respect to this definition
we necessarily have $\cu^{w\le 0}=\cu^{t\le 0}$ (resp. $\cu^{w\ge
0}=\cu^{t\ge 0}$) by  Theorem \ref{tbw}(\ref{iort}) and
Remark \ref{rts}(\ref{iort}); so this definition is actually compatible
 with
Definition 4.4.1 of \cite{bws}.

One can generalize this family as in \S8.3 of ibid.:
for $\au=\ab$ and an exact $F:\du\to \cu$ we define
 $\Phi(X,Y)=\cu(X,F(Y))$. Certainly, one could also
 dualize this construction (in a certain sense) and
 consider $F:\cu\to \du$
and $\Phi(X,Y)=\cu(F(X),Y)$.

2. Another (general) family of dualities is mentioned in 
Remark 6.4.1(2) of ibid. All the families of dualities mentioned
 can be expanded
using part 3 of the proposition. 

3. It is also easy to construct a duality that is not nice. To this
 end one can start with $\cu=\du$, $\Phi=\cu(-,-)$ and then modify
 the choice of distinguished triangles in $\du$ (without changing the
 shift in $\du$, and changing nothing in $\cu$) in a way that would not
  affect the properties of functors to be cohomological. The simplest
  way to do this is to proclaim a triangle
   $X\stackrel{f}{\to} Y \stackrel{g}{\to} Z \stackrel{h}{\to} X[1]$
to be distinguished in $\du$ if
$X\stackrel{-f}{\to} Y \stackrel{-g}{\to} Z \stackrel{-h}{\to} X[1]$
is distinguished in $\cu$.
Certainly, such a modification is not very 'serious'; in particular,
 one can 'fix the problem' by multiplying the isomorphism
 $\Phi(X,Y)\cong  \Phi(X[1],Y[1])$ by $-1$.

The author does not know whether any duality can be made nice
by modifying the choice of the class of distinguished triangles
(in $\du$), or by modifying the isomorphism mentioned. Note also
that the question whether there exists a $\du$ for which such a
modification can change the 'equivalence class' of triangulations
is well-known to be open. 
\end{rema}

\subsection{Comparison of weight spectral sequences with those
coming from (orthogonal) $t$-truncations}\label{sconin}

Now we describe the relation of weight spectral sequences
with orthogonal structures.


\begin{theo}\label{tdual}
Let $w$ for $\cu$ and $t$ for $\du$ be orthogonal with respect
 to a duality $\Phi$; 
let $i,j\in \z$, $X\in \obj \cu$,
  $Y\in \obj \du$. 


\begin{enumerate}

 \item\label{isposts}
 Consider  the spectral sequence $S$  coming from
    the following exact couple: $D_2^{pq}(S)=\Phi(X,Y^{t\ge
q}[p-1])$, 
$E_2^{pq}(S)=\Phi(X,Y^{t=q}[p])$ (we start $S$ from $E_2$). It naturally
 converges to $\Phi(X,Y[p+q])$ if $X\in \cu^b$.

\item\label{ispostn}

 Let  $T$ be the weight spectral
sequence given by Theorem \ref{pssw} for the functor $H:\ Z\mapsto
\Phi(Z,Y)$. Then for all $r\ge 2$ we have natural
isomorphisms $E_r^{pq}(T(H,X))\cong E_r^{pq}(S)$.  There is also an
equality $F^{-k}H^{m}(X)=\imm (\Phi(X,t_{\le k}Y[m]) \to H^m(X))$
(here we use the notation of part I4 of loc. cit.) compatible with this
isomorphism.

\item\label{ispost}
Suppose that $\Phi$ is also nice. Then the isomorphism mentioned
in the previous assertion extends naturally to the isomorphism of
of $T$  with $S$
(starting from $E_2$).

    \item Let $\dots\to X^{-j-1}\to X^{-j}\to
X^{1-j}\to\dots$ denote an arbitrary choice of the weight complex for
$X$. Then we have a functorial isomorphism
{\small
\begin{equation}\label{nfrestrw} \begin{gathered}
\Phi(X,Y^{t=i}[j])\cong \\ (\ke (\Phi(X^{-j},Y[i])\to
 \Phi(X^{-1-j},Y[i])) /\imm
(\Phi(X^{1-j},Y[i])\to \Phi(X^{-j},Y[i])).
\end{gathered}\end{equation}}

\end{enumerate}

\end{theo}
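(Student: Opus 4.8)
The plan is to derive all four assertions from Theorem \ref{pssw} (weight spectral sequences, the description of their derived exact couples via virtual $t$-truncations, and the formulas for $E_r$ and for the weight filtration) together with Proposition \ref{pvirdu} (which computes the virtual $t$-truncations of $\Phi(-,Y)$ by the $t$-truncations of $Y$). Write $H=\Phi(-,Y):\cu\to\au$; it is cohomological, and $H^m(Z)=\Phi(Z[-m],Y)$, which the natural isomorphism built into $\Phi$ identifies with $\Phi(Z,Y[m])$. For assertion \ref{isposts} I would first note that the displayed data form an exact couple: the $t$-decompositions of $Y$ provide, for each $q$, a distinguished triangle relating $Y^{t\ge q+1}$, $Y^{t\ge q}$ and a shift of $Y^{t=q}$, and applying the homological functor $\Phi(X,-)$ and its suspensions splices these into the exact couple with the stated $D_2,E_2$ — the usual Postnikov-tower spectral sequence, the only work being degree bookkeeping. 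For convergence when $X\in\cu^b$ I would invoke orthogonality: if $X\in\cu^{[a,b]}$, then (\ref{edort}) (after suitably shifting) forces $\Phi(X,Z)=0$ for $Z\in\du^{t\ge b+1}\cup\du^{t\le a-1}$; since $Y^{t=q}[p]$ lies in a single shifted heart, this kills $E_2^{pq}(S)$ outside a band in which $p$ is bounded in terms of $q$ and of $a,b$, while the same estimate applied to $t_{\le m}Y$ and $t_{\ge m}Y$ gives $\Phi(X,(t_{\le m}Y)[n])=0$ for $m\ll 0$ and $\Phi(X,(t_{\ge m}Y)[n])=0$ for $m\gg 0$, so the induced filtration on $\Phi(X,Y[n])$ is exhaustive and separated, hence finite. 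This yields convergence with no conditional-convergence subtlety.

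Assertion \ref{ispostn} is the core. By Theorem \ref{pssw}(II) the derived exact couple of $T(H,X)$ is $E_2^{pq}=(H^q)^{\tau=-p}(X)$, $D_2^{pq}=(\tau_{\ge q}H)(X[1-p])$, with connecting maps induced by (\ref{elovir}). Proposition \ref{pvirdu}(1), applied to $H=\Phi(-,Y)$ and iterated to handle the composite truncations $\tau_{[m+1,k]}$, identifies $\tau_{\ge q}H\cong\Phi(-,t_{\ge q}Y)$, $\tau_{\le k}H\cong\Phi(-,t_{\le k}Y)$, and (unwinding the definitions of Definition \ref{rtrfun}) $(H^q)^{\tau=-p}\cong\Phi(-,Y^{t=q}[p])$, compatibly with the transformations coming from $t$-decompositions; together with $H^q(-)\cong\Phi(-,Y[q])$ this matches the derived couple of $T$ with $(D_2(S),E_2(S))$ termwise and naturally in $Y$. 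Then Theorem \ref{pssw}(III2) expresses each $E_r^{pq}(T)$, $r\ge 2$, as a weight-filtration subquotient of $(\tau_{[\cdot,\cdot]}H)^q$, and Proposition \ref{pvirdu}(1) together with Theorem \ref{pssw}(III1) carries this over to the corresponding $t$-truncation subquotient of $\Phi(X,t_{[\cdot,\cdot]}Y[q])$, which is exactly the $E_r$-term of the $t$-structure spectral sequence $S$; hence $E_r^{pq}(T)\cong E_r^{pq}(S)$ for all $r\ge 2$. The same translation of Theorem \ref{pssw}(III1) gives $F^{-k}H^m(X)=\imm(\Phi(X,t_{\le k}Y[m])\to H^m(X))$.

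Assertion \ref{ispost} is where niceness of $\Phi$ enters, via Proposition \ref{pvirdu}(2): for nice $\Phi$ the strongly exact complex (\ref{ettrk}) is naturally isomorphic to (\ref{elovir}). Letting $k$ run over all integers, the family of complexes (\ref{ettrk}) reassembles into the exact couple defining $S$ — its structure maps are exactly the maps $\Phi(-,Y)\to\Phi(-,t_{\ge k+1}Y)$, $\Phi(-,t_{\ge k+1}Y)\to\Phi(-,t_{\ge k}Y)[1]$ and $\Phi(-,t_{\ge k+1}Y)\to\Phi([-1](-),t_{\le k}Y)$ read off from them — while by Theorem \ref{pssw}(II) the family of complexes (\ref{elovir}) reassembles into the derived exact couple of $T$. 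Hence the natural isomorphism of Proposition \ref{pvirdu}(2) upgrades to an isomorphism of exact couples, hence to an isomorphism $T\cong S$ of spectral sequences from $E_2$ on, refining the $E_r$-isomorphisms of assertion \ref{ispostn}. For the final assertion I would combine assertion \ref{ispostn} with the explicit first page of $T$: by Theorem \ref{pssw}(I1), $E_1^{pq}(T)=H^q(X^{-p})\cong\Phi(X^{-p},Y[q])$ with differential induced by the weight-complex map $X^{-p-1}\to X^{-p}$, so $E_2^{pq}(T)$ is the cohomology of $\Phi(X^{\bullet},Y[q])$ at position $-p$; taking $p=j$, $q=i$ and using $E_2^{j,i}(T)\cong E_2^{j,i}(S)=\Phi(X,Y^{t=i}[j])$ gives (\ref{nfrestrw}), functorially since $E_2$ is.

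The step I expect to be the main obstacle is the bookkeeping inside assertions \ref{ispostn} and \ref{ispost}: fixing the normalisation of $Y^{t=q}$ and tracking shifts so that the weight-spectral-sequence indexing of Theorem \ref{pssw} matches the $t$-truncation indexing of Proposition \ref{pvirdu} exactly — in particular the $[p-1]$ versus $[p]$ mismatch between $D_2$ and $E_2$ in assertion \ref{isposts} — and, for assertion \ref{ispost}, checking carefully that the family of three-term complexes (\ref{ettrk}) (respectively (\ref{elovir})) genuinely carries \emph{all} the structure maps of the exact couple for $S$ (respectively the derived couple for $T$), so that a natural isomorphism of these complexes is the same datum as an isomorphism of exact couples.
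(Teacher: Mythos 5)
Your proof is correct and takes essentially the same route as the paper's: organize $Y^{t\ge q}$ into a Postnikov tower and invoke orthogonality for boundedness (part 1); appeal to Proposition \ref{pvirdu}(1) and Theorem \ref{pssw}(III) for the termwise $E_r$-isomorphisms and the filtration equality (part 2); use Proposition \ref{pvirdu}(2) together with Theorem \ref{pssw}(II) to promote this to an isomorphism of exact couples when $\Phi$ is nice (part 3); and read off (\ref{nfrestrw}) as the $E_2$-case of part 2 via the description of $E_1$ in Theorem \ref{pssw}(I1) (part 4). You supply somewhat more detail than the paper — notably the orthogonality bookkeeping behind convergence and the reassembly of the three-term complexes into exact couples — but the key lemmas and structure of the argument are identical.
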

\begin{proof}

\begin{enumerate}

\item The theory of $t$-structures easily yields: $Y^{t\ge
q}$ and $Y^{t=q}$ can be functorially organized into a certain
Postnikov tower for $Y$. Hence the usual results on spectral
sequences coming from Postnikov towers (see \S IV2, Exercise 2, of
\cite{gelman}) yield the assertion easily.

\item Immediate from Proposition
\ref{pvirdu}(1)   and  Theorem \ref{pssw}(III). Note that the
latter assertion does not use the 'dimension shift' in (\ref{elovir}).

\item    Proposition
\ref{pvirdu}(2)  and  Theorem  \ref{pssw}(II) imply: there is
a natural isomorphism of the derived exact couple for $T$ with
the exact couple of $S$ ('at level 2'). The result follows immediately.

    \item This is just assertion \ref{ispostn} for $E_2$-terms.

\end{enumerate}

\end{proof}


\begin{rema}\label{rdual}
\begin{enumerate}

\item

So, we justified parts 4 and 5 of Remark 4.4.3 of \cite{bws}.

\item

 Note that the spectral sequence denoted by $S$
in (Remark 4.4.3(4) and \S6.4 of) ibid.  
 started from $E_1$; so it differed from our $S$ and
$T$ by a certain shift of indices.

\item
 So, we developed an 'abstract triangulated alternative'
 to the method of comparing similar spectral sequences that
 was developed by Deligne and Paranjape. The latter method used
 filtered complexes; it was applied in \cite{paran}, \cite{ndegl},
 and in \S6.4 of \cite{bws}. The disadvantage of this approach is
  that one needs extra information in order to construct the
  corresponding filtered complexes; this makes difficult to study
  the naturality of the isomorphism constructed. Moreover, in some
   cases the complexes required cannot exist at all; this is the case
    for the {\it spherical weight structure} and its adjacent
    Postnikov $t$-structure for $\cu=\du=SH$ (the topological
    stable homotopy category; see \S4.6 of \cite{bws}; yet  in
    this case one can compare the corresponding spectral sequences
    using topology).

\item

One could modify our reasoning to prove a version of the theorem
that does not mention weight and $t$-structures. To this end instead
 of considering a weight Postnikov tower for $X$ and the Postnikov
  tower coming from $t$-truncations of $Y$ one should just
  compare spectral
  sequences coming from some Postnikov towers for $X$ and $Y$
  in the case when these Postnikov
   towers  satisfy those 'orthogonality' conditions (with respect
    to a (nice) duality $\Phi$)
that are implied by the orthogonality of structures condition in our
 situation. Yet it seems difficult to obtain the naturality of the
 isomorphisms in  Theorem \ref{tdual}(\ref{ispost})  
 using this approach.

\item

 Even more generally, it suffices to have  an inductive system of
 Postnikov towers
in $\du$ and a projective system of Postnikov towers
in $\cu$ such that the orthogonality conditions required are satisfied
 in the (double) limit. Then the comparison statements for the double
  limits of the corresponding spectral sequences are valid also. A
  very partial (yet rather important) example of a reasoning of this
  sort is described in \S7.4 of \cite{bws}. Besides, this approach
  could possibly yield the comparison result of \S6 of \cite{ndegl} (even
  without assuming  $k$ to be  countable as we do here).

\item

  A simple (yet important) case of (\ref{nfrestrw}) is:
for any $i\in \z$
  \begin{equation}\label{ehomhw} X\in \cu^{w=i}\implies
\forall Y\in \obj \du\text{ we have } \Phi(X,Y)\cong \Phi(X,Y^{t=i}).\end{equation}

\end{enumerate}

\end{rema}

\subsection{'Change of weight structures';
comparing weight spectral sequences}\label{schws}

Now we compare weight decompositions, virtual $t$-truncations, and
weight spectral sequences corresponding to distinct weight
structures. In order make our
results more general (and to apply them below) we will assume that
these structures are defined on distinct triangulated categories;
yet  the case when both are defined on $\cu$ is also interesting.

So, till the end of the section we will assume: $\cu,\du$ are
triangulated categories endowed with weight structures $w$ and $v$,
respectively;
$F:\cu\to \du$ is an  exact functor.

\begin{defi}\label{dwefun}

1. We will say that $F$ is {\it right weight-exact} if
  $F(\cu^{w\ge 0})\subset \du^{v\ge 0}$.

2. If $F$ is  fully faithful and  right
weight-exact, we will say that {\it $v$ dominates $w$}. 

3. We will say that $F$ is {\it left weight-exact} if
  $F(\cu^{w\le 0})\subset \du^{v\le 0}$.

  4. $F$ will be called {\it weight-exact} if it is both
  right and left weight-exact.

We will say that $w$ {\it induces} $v$ (via $F$) if $F$ is  a
weight-exact localization functor.
\end{defi}


\begin{pr}\label{pcws}

Let $F$  be a right weight-exact functor; let $l\ge m\in \z$, 
 $X\in
\obj \du$, $X'\in \obj \cu$, $g\in \du(F(X'),X)$.

1. Let
 weight decompositions
        of $X[m]$ with respect to $v$ and $X'[l]$ with respect
         to $w$ be fixed. Then
$g$ can be
        completed to a morphism of  distinguished triangles
\begin{equation}\begin{CD} \label{d2x3n}
F(w_{\ge l+1}X')@>{}>>F(X') @>{}>> F(w_{\le l}X')
\\
@VV{a}V@VV{g}V @VV{b}V\\
v_{\ge m+1}X@>{}>>X @>{}>> v_{\le m}X
\end{CD}\end{equation}
This completion is unique if $l>m$. 


2.  For arbitrary weight
    Postnikov towers $Po_v(X)$ for $X$ (with respect to $v$)
    and  $Po_{w}X'$ for $X'$ (with respect to $w$),
    $g$
can be extended to a morphism $F_*(Po_{w}X')\to Po_v(X)$. 

3. Let $H:\du\to \au$ be any functor, $k\in \z$, $j>0$.
Denote $H\circ F$ by $G$. Then (\ref{d2x3n}) 
allows to extend $H(g)$ naturally to a diagram
$$
\begin{CD}
H_1^v(X)@>{}>>H(X) @>{}>> H_2^v(X)\\
@VV{}V@VV{H(g)}V @VV{}V\\
G_1^w(X')@>{}>>G(X') @>{}>> G_2^w(X')
\end{CD}
$$
here we add the weight structure chosen as an index to
the notation of Theorem \ref{trfun}(I).
\end{pr}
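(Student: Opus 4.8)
The whole proposition reduces to Part~1, which is nothing but the proof of Theorem~\ref{tbw}(\ref{icompl}) with the exact functor $F$ threaded through it; Parts~2 and~3 are then formal, in the same way that Theorem~\ref{tbw}(\ref{impost}) and the construction of virtual $t$-truncations are formal consequences of (\ref{icompl}). For Part~1 the plan is to apply the exact functor $F$ to the shifted $w$-weight decomposition $w_{\ge l+1}X'\to X'\to w_{\le l}X'$ of $X'[l]$, obtaining a distinguished triangle $F(w_{\ge l+1}X')\to F(X')\to F(w_{\le l}X')$ in $\du$ in which, crucially, $F(w_{\ge l+1}X')\in\du^{v\ge l+1}$ by right weight-exactness (nothing is claimed or used about $F(w_{\le l}X')$ beyond its being a cone — this is exactly why right weight-exactness, rather than full weight-exactness, suffices). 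Then I would run the argument of (\ref{icompl}) entirely inside $(\du,v)$: since $l\ge m$, the orthogonality axiom for $v$ forces the composite $F(w_{\ge l+1}X')\to F(X')\xrightarrow{g}X\to v_{\le m}X$ to vanish, so $g$ lifts along $F(w_{\ge l+1}X')\to F(X')$ to a morphism $a\colon F(w_{\ge l+1}X')\to v_{\ge m+1}X$, and axiom TR3 completes the left square to the morphism of triangles (\ref{d2x3n}), producing $b$. For uniqueness when $l>m$: the indeterminacy of $a$ is a quotient of $\du(F(w_{\ge l+1}X'),v_{\le m}X[-1])$, which vanishes because $\du^{v\ge l+1}\perp\du^{v\le m-1}$ (valid since $m-1\le l$); once $a$ is fixed, the indeterminacy of $b$ is controlled by $\du(F(w_{\ge l+1}X')[1],v_{\le m}X)$, which vanishes by $\du^{v\ge l}\perp\du^{v\le m}$ — and this is precisely the point where the strict inequality $m<l$ is needed.

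\textbf{Part 2.} I would apply $F$ termwise to the weight Postnikov tower $Po_w X'$: since $F$ is exact this yields a Postnikov tower $F_*(Po_w X')$ of $F(X')$ in $\du$ whose $i$-th object is $F(w_{\ge 1-i}X')\in\du^{v\ge 1-i}$, and then build the morphism to $Po_v(X)$ level by level. At the $i$-th stage I apply Part~1 with the shift matched to the tower (i.e.\ $l=m=-i$) to produce a map $\phi_i\colon F(w_{\ge 1-i}X')\to v_{\ge 1-i}X$ compatible with $g$; such a $\phi_i$ is not unique, but for any choices at levels $i$ and $i+1$ the difference between $\phi_{i+1}$ precomposed with the connecting map and $\phi_i$ postcomposed with the connecting map is a morphism $F(w_{\ge 1-i}X')\to v_{\ge -i}X$ that becomes zero after composing with $v_{\ge -i}X\to X$, hence factors through $\co(v_{\ge -i}X\to X)[-1]\in\du^{v\le -i-2}$, hence vanishes since $\du^{v\ge 1-i}\perp\du^{v\le -i-2}$. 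Thus the $\phi_i$ automatically form a morphism of towers, and TR3 supplies the induced maps on Postnikov factors. (For unbounded towers one starts the induction where the towers are trivial, or argues as in the proof of (\ref{impost}), i.e.\ Remark~1.5.9(1) of \cite{bws}.) This is literally the derivation of (\ref{impost}) from (\ref{icompl}).

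\textbf{Part 3.} The transformations $H_1^v(X)\to H(X)\to H_2^v(X)$ and $G_1^w(X')\to G(X')\to G_2^w(X')$ are assembled (Theorem~\ref{trfun}(I), Remark~\ref{rdfunvirt}(2)) from $H$ applied to the canonical maps among weight truncations at two consecutive shift levels, so I would invoke Part~1 twice on the $H_1$ side — at the shift level defining $H_1$ and at the next one — to get $\beta_k\colon F(w_{\le k}X')\to v_{\le k}X$ and $\beta_{k+j}\colon F(w_{\le k+j}X')\to v_{\le k+j}X$ compatible with $g$; applying $H$ and passing to images then gives the left vertical arrow $H_1^v(X)\to G_1^w(X')$ together with the commutativity of the left square with $H(g)$. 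The right square is produced in the same way from Part~1 at the shift levels $k-1$ and $k+j-1$. The one point that is not purely formal — and the step I expect to be the main obstacle — is that $\beta_k$ and $\beta_{k+j}$ must be compatible with the connecting morphisms $F(w_{\le k+j}X')\to F(w_{\le k}X')$ and $v_{\le k+j}X\to v_{\le k}X$; the trick is that both $\beta_k\circ F(\text{conn})$ and $(\text{conn})\circ\beta_{k+j}$ are morphisms $F(w_{\le k+j}X')\to v_{\le k}X$ compatible with $g$ relative to the $w$-weight decomposition of $X'[k+j]$ and the $v$-weight decomposition of $X[k]$, and these two shift levels obey the \emph{strict} inequality $k+j>k$, so the uniqueness clause of Part~1 forces the two morphisms to agree; the identical device, at the levels $k-1<k+j-1$, handles $H_2$. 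Naturality in $g$ (and in $X,X'$) is then automatic, since every morphism produced above is the canonical one furnished by the functoriality of weight decompositions (Remark~\ref{rfunct}(\ref{i1}), cf.\ Remark~\ref{rdfunvirt}(3)).
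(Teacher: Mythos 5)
Your approach coincides with the paper's: for Part~1 you apply right weight-exactness to place $F(w_{\ge l+1}X')$ in $\du^{v\ge l+1}$, use the orthogonality axiom for $v$ to kill the relevant composite, and then invoke Proposition~1.1.9 of \cite{BBD}; Parts~2 and~3 are then deduced formally from Part~1 exactly as Theorem~\ref{tbw}(\ref{impost}) and the functoriality of $H_1,H_2$ in \cite{bws} are deduced from Theorem~\ref{tbw}(\ref{icompl}).

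There is, however, a consistent sign error in your shift bookkeeping that you should repair. With this paper's convention ($\cu^{w\le l}=\cu^{w\le 0}[-l]$, so that $[-1]$ \emph{weakens} the upper bound) one has $v_{\le m}X[-1]\in\du^{v\le m+1}$, not $\du^{v\le m-1}$. Hence your claimed vanishing of $\du(F(w_{\ge l+1}X'),v_{\le m}X[-1])$ ``because $\du^{v\ge l+1}\perp\du^{v\le m-1}$ (valid since $m-1\le l$)'' is incorrect: that orthogonality holds automatically for every $l\ge m$, so it would (falsely) yield uniqueness already at $l=m$. Your second computation of the \emph{same} group, written as $\du(F(w_{\ge l+1}X')[1],v_{\le m}X)$ with $F(w_{\ge l+1}X')[1]\in\du^{v\ge l}$, does correctly produce the constraint $l>m$; but as written your two computations of a single Hom group contradict one another, which signals a real misreading of the shift convention rather than a mere typo. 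The analogous slip appears in Part~2: $\co(v_{\ge -i}X\to X)[-1]\cong v_{\le -i-1}X[-1]$ lies in $\du^{v\le -i}$, not $\du^{v\le -i-2}$; there the error is harmless because it over-estimates the strength of the bound, but the orthogonality you actually need, $\du^{v\ge 1-i}\perp\du^{v\le -i}$, holds on the nose rather than with room to spare. Once these indices are corrected the argument matches the paper's.
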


\begin{proof} 

1. Since $F$ is right weight-exact,
$\du(F(w_{\ge n+1}X'), v_{\le m}X)=\ns$
for any $n\ge m$. Hence the composition morphism
 $F(w_{\ge l+1}X') \to v_{\le m}X$ 
 is zero; if $l>m$
 then   $\du(F(w_{\ge l+1}X'), (v_{\le m}X)[-1])=\ns$.
 The result follows easily; see Proposition 1.1.9 of \cite{BBD}.

2. Assertion 1 (in the case $l=m$) yields that there exists a
system of morphisms $f_i
\in  \du(F(w_{\ge i}X'), v_{\ge i}X)$ compatible with $g$;
we fix such a system. Applying the same assertion for any pair
of $l,m:\ l>m$, we obtain that $f_l$ is compatible with $f_m$ 
(here we use  arguments similar to those described in
Remark \ref{rfunct}). Finally, since any commutative square
can be extended to a morphism of the corresponding
distinguished triangles (an axiom of triangulated categories),
 we obtain that we can complete (uniquely up to a non-canonical
  isomorphism) the data chosen to a morphism of Postnikov towers
  (i.e. choose a compatible system of morphisms $F(X'^i)\to X^i$).

3. Easy from assertion 1; note that for any commutative square in $\au$
$$\begin{CD}
X@>{f}>>Y \\
@VV{h}V@VV{}V \\
Z@>{g}>>T
\end{CD}$$
if we fix the rows then the morphism $g\circ h:X\to T$  completely
determines the morphism $\imm f\to \imm g$ induced by $h$.

\end{proof}

We easily obtain a comparison morphism of weight spectral sequences.

\begin{pr}\label{pcwspsq}
I Let $F,X',G$ be as in the previous proposition; suppose also
that $H$ is cohomological. Set $X=F(X')$, $g=\id_X$.

 1.  There exists some comparison morphism of the corresponding
 weight spectral sequences $M:T_v(H,X)\to T_w(G,X')$.
 Moreover, this morphism is unique and additively functorial (in $g$)
  starting from $E_2$.

2. Let there exist $D\subset \cu^{w=0}$ such that any $Y\in
\cu^{w=0}$ is a retract of some $Z\in D$, and that for any $Z\in D$
there exists a choice of $Z^{w\ge 1}$ such that
$E_2^{pq}T_v(H,F(Z^{w\ge 1}))=\ns$ for all $p,q\in \z$. Then (any
choice of) $M$ yields an isomorphism of the spectral sequence
functors starting from $E_2$.


3. Let $\eu$ be a triangulated category endowed with a  weight
structure $u$, $F':\du\to\eu$ a right weight-exact functor; suppose
that $H=E\circ F'$ for some cohomological functor $E:\eu\to \au$.
Then we have the following associativity property for comparison of
weight spectral sequences: the composition of $M$ with (any choice
of) a comparison morphisms $M':T_u(E,F'(X))\to T_v(H,X)$
constructed as in assertion 1, starting from $E_2$ is canonically
isomorphic to  (any choice of a similarly constructed) comparison
morphism $ T_u(E,F'(X))\to T_w(G,X')$.

II Let $H,X',X,G$ be as above, but suppose that $F:\cu\to \du$ is left
weight-exact. Then a method dual to the one for assertion I1 yields
a transformation $N:  T_w(G,X')\to T_v(H,X)$; this construction
satisfies the  duals for all properties of $M$ described in
assertion I.

\end{pr}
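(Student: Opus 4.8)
The plan is to deduce everything from Proposition \ref{pcws}, which is the ``object-level'' version of these statements, exactly as Proposition \ref{pssw}(I2) deduces functoriality of a single weight spectral sequence from the existence of morphisms of weight Postnikov towers. First I would fix weight Postnikov towers $Po_w(X')$ for $X'$ (with respect to $w$) and $Po_v(X)$ for $X=F(X')$ (with respect to $v$), and recall that applying the exact functor $F$ to $Po_w(X')$ produces a Postnikov tower $F_*(Po_w(X'))$ for $F(X')$ in $\du$ whose factors are $F(X'^p)$; since $F$ is right weight-exact, $F(X'^p)\in\du^{v=0}$ is \emph{not} automatic, but the tower $F_*(Po_w(X'))$ still computes the spectral sequence $T_w(G,X')$ because $G=H\circ F$ and $E_1^{pq}(T_w(G,X'))=G^q(X'^{-p})=H^q(F(X'^{-p}))$. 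Then Proposition \ref{pcws}(2) with $g=\id_X$ gives a morphism of Postnikov towers $F_*(Po_w(X'))\to Po_v(X)$ in $\du$, and applying $H$ to this morphism of towers induces a morphism of the associated exact couples, hence a morphism $M:T_v(H,X)\to T_w(G,X')$ of spectral sequences; on $E_1$ it is literally $H^q$ applied to the components $F(X'^{-p})\to X^{-p}$ of the tower morphism.

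Next I would address uniqueness and functoriality from $E_2$ on. The morphism of towers in Proposition \ref{pcws}(2) is not canonical, but — just as in Remark \ref{rpost}(2) and the proof of Theorem \ref{pssw}(I2) — two choices differ by a homotopy, so the induced maps on $E_1$ differ by a coboundary and hence agree on $E_2$. For functoriality in $g$ (really in $X'$, with $g=\id$), one checks that a $\cu$-morphism $X'\to X''$ induces compatible morphisms of the chosen towers after applying $F$, and again the ambiguity disappears at $E_2$; this is the same style of argument as in Theorem \ref{pssw}(I2), now carried out in $\du$ rather than $\cu$. For assertion I2, I would invoke Proposition \ref{pcws}(1) to note that when $X=F(Z^{w\ge 1})$ and $E_2^{pq}T_v(H,F(Z^{w\ge1}))=\ns$ for all $p,q$, the comparison argument together with the long exact ``$E_2$-sequence'' relating the spectral sequences of $F(Z^{w\le0})$, $F(Z)$, $F(Z^{w\ge1})$ forces $M$ to be an isomorphism on $E_2$ for $X=F(Z)$; then one passes to retracts using that any $Y\in\cu^{w=0}$ is a retract of some $Z\in D$ and that $E_2$-terms are additive and respect retracts. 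Assertion I3 (associativity) is a diagram chase: choose towers $Po_u(F'(X))$, $Po_v(X)$, $Po_w(X')$ compatibly — i.e. first lift $\id$ along $F'$, then along $F$ — apply Proposition \ref{pcws}(2) to each stage, and observe that the two ways of composing tower morphisms $F'_*F_*(Po_w X')\to Po_u(F'X)$ agree up to homotopy, hence agree on $E_2$; uniqueness from $E_2$ (just proved) then gives the claimed canonical isomorphism of composites. Finally, assertion II is obtained by applying assertion I to the opposite categories $\cu^{op},\du^{op}$ with the opposite weight structures (Theorem \ref{tbw}(\ref{idual})): a left weight-exact $F$ becomes right weight-exact on the opposites, and the weight spectral sequence construction is compatible with this dualization, so $N$ and all its properties follow formally.

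The main obstacle I expect is \emph{not} the construction of $M$ — that is a routine consequence of Proposition \ref{pcws}(2) — but rather pinning down precisely in which sense ``functorial starting from $E_2$'' holds and verifying the associativity statement I3 cleanly. The subtlety is that $F_*(Po_w(X'))$ is a Postnikov tower with factors $F(X'^p)$ that need \emph{not} lie in $\du^{v=0}$, so it is not a \emph{weight} Postnikov tower for $F(X')$; thus one cannot directly quote the uniqueness/functoriality of $T_v$ from Theorem \ref{pssw} to identify $T_w(G,X')$ with ``$T_v$ of $F(X')$''. One must instead argue directly that \emph{any} two morphisms of Postnikov towers refining a fixed $\cu$-morphism induce the same map on the derived exact couple — i.e. reprove the relevant homotopy-uniqueness lemma in this slightly more general tower setting (it holds verbatim, since the proof in \cite{bws} only uses that the towers are Postnikov towers, not that they are weight ones). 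Once that lemma is in hand, uniqueness from $E_2$, functoriality, I2 and I3 all follow by the standard arguments, and I would present the proof of I1 in full, the proof of I2 by the retract argument sketched above, I3 as a two-paragraph diagram chase, and II as a one-line dualization.
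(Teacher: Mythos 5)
Your construction of $M$ in I1 (apply $H$ to a morphism of towers supplied by Proposition \ref{pcws}(2)) matches the paper. The departure is in how you argue uniqueness and functoriality from $E_2$ on. The paper does not run a homotopy argument at all: it combines Theorem \ref{pssw}(II), which identifies the derived exact couple (hence $E_2$ and everything beyond) canonically with virtual $t$-truncations, with Proposition \ref{pcws}(3), which gives the canonical comparison transformation at the level of virtual $t$-truncations; since these are choice-free functors, uniqueness and additive functoriality are automatic. Your route — two tower morphisms over a fixed map are chain-homotopic, hence induce the same map on $E_2$ — is a legitimate alternative strategy, and you rightly flag the subtlety that $F_*(Po_w(X'))$ is not a $v$-weight tower. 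But your resolution, ``it holds verbatim, since the proof in [bws] only uses that the towers are Postnikov towers, not that they are weight ones'', is wrong: for arbitrary Postnikov towers there is no such uniqueness-up-to-homotopy, and the argument in \cite{bws} genuinely uses the orthogonality built into the weight structure. What actually rescues your route is that right weight-exactness of $F$ supplies exactly the orthogonality needed (the factors $F(X'^p)$ lie in $\du^{v\ge 0}$, so they are orthogonal to the relevant shifted stages of $Po_v(X)$); you must say this explicitly, and once you do you have essentially reproved Proposition \ref{pcws}(1) in different packaging. The virtual-$t$-truncation route is cleaner and is the one the paper actually takes.

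In I2 your sketch only covers $Y\in\cu^{w=0}$ (via the triangle $F(Z^{w\ge 1})\to F(Z)\to F(Z^{w\le 0})$, the vanishing hypothesis, and passage to retracts), whereas the assertion is that $M$ is an isomorphism of spectral sequence \emph{functors} on all of $\cu$. You are missing two reductions that the paper carries out: first, $Z\mapsto E_2(T_w(G,Z))$ is a cohomological functor (Theorem \ref{pssw}), so the isomorphism propagates from $\cu^{w=0}$ to all of $\cu^b$ by the five lemma; second, for a general $Y\in\obj\cu$, any finite window of $E_2^{**}T_w(G,Y)$ agrees, compatibly with $M$, with the corresponding window for $w_{[i,j]}Y$ once $i\ll 0\ll j$. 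Your treatments of I3 and II agree with the paper's.
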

\begin{proof}
I 1. In order to construct some comparison morphism, it suffices  to
construct a morphism of the corresponding exact couples that is
compatible with $\id_X$. Hence it suffices to use 
Proposition \ref{pcws}(2) to obtain a morphism of the corresponding
Postnikov towers, and then apply $H$ to it.

Theorem \ref{pssw}(II) yields that weight spectral sequences
could be described in terms of the corresponding virtual
$t$-truncations. Hence Proposition \ref{pcws}(3) implies all
the functoriality properties of $M$ listed.

2. It suffices to prove that $M$ is an isomorphism on
$E_2^{**}T_w(G,Y)$ for all $Y\in \obj\cu$.

Since $D\subset \cu^{w\ge 0}$, this assertion is true for any $Y\in
D$. Since  $Z\mapsto E_2(T(G,Z))$ is a cohomological functor for
any weight structure  (see Theorem \ref{pssw} and the remark at
Definition \ref{rtrfun}),
the assertion is also true for any $Y\in \obj \cu^b$. To
conclude it suffices to note that for any $H$, any
$Y\in \obj \cu$, any finite 'piece' of $E_2^{**}T_w(G,Y)$
coincides with the corresponding piece of $E_2^{**}T_w(G,w_{[i,j]}Y)$  
(for any choice of $w_{[i,j]}Y$) if $i$ is small enough and
$j$ is large enough, and this isomorphism is compatible with $M$.

3. We recall that comparison morphisms for weight spectral
sequences were constructed using Proposition \ref{pcws}(1).
 It easily follows that $M'\circ M$ is one of the possible choices
 for a comparison morphism $ T_u(E,F'\circ F(X))\to T_w(G,X')$.
  It suffices to apply assertion I1 to conclude that this fixed
  choice of a comparison morphism coincides with any other possible
   choice starting from $E_2$.

II We obtain the assertion from assertion I immediately by
dualization (see  Theorem \ref{tbw}(\ref{idual}));
here one should consider the duals of $\cu$, $\du$, and $\au$
(and also 'dualize' the connecting functors).
\end{proof}

\begin{rema}\label{rdomin}

 $M$ is an isomorphism (starting from $E_2$) also
if:  there exists a
localization of $\du$ such that $H$ factorizes through it, $v$
 induces a weight structure $v'$ on it, $w$ induces a weight structure
  on the categorical image of $\cu$  that coincides
with the restriction of $v'$ to it (since both weight spectral sequences are isomorphic to the spectral sequence corresponding to  this new weight structure).

Yet this conditions are  somewhat restrictive since
 weight structures do not 'descend' to localizations
 in general (since for an exact $F':\cu\to\eu$ the
 classes $F'_*(\cu^{w\ge 1})$ and $F'_*(\cu^{w\le 0})$
 are not necessarily orthogonal in $\eu$).

\end{rema}

In order to simplify checking right and left weight-exactness
of functors, we will need the following easy statement.

\begin{lem}\label{lrweex}
Let $w$ be bounded.

1. An exact $J:\cu\to \du$ is a right weight-exact whenever
there exists a $D\subset \cu^{w=0}$ such that any $Y\in \cu^{w=0}$
is a retract of some $X\in D$, and that for any $X\in D$ we have
 $J(Y)\in \du^{v\ge 0}$.

2. An exact $J:\cu\to \du$ is a left weight-exact whenever there
exists a $D\subset \cu^{w=0}$ such that any $Y\in \cu^{w=0}$ is a
retract of some $X\in D$, and that for any $X\in D$ we have
$J(Y)\in \du^{v\le 0}$.

\end{lem}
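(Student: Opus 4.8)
The plan is to establish assertion 1 by reducing everything to the heart via an induction on the weight range of objects of $\cu^{w\ge 0}$ (this is exactly where boundedness of $w$ enters), and then to obtain assertion 2 by passing to opposite categories. Throughout I would use three closure properties of the class $\du^{v\ge 0}$: it is Karoubi-closed (axiom (i) of Definition \ref{dwstr}), extension-stable (Theorem \ref{tbw}, assertion \ref{iext}), and stable under the downward shift $[-1]$ (axiom (ii), i.e. $\du^{v\ge 0}\subset \du^{v\ge 0}[1]$); likewise each $\cu^{w\le n}$ is extension-stable. The base case is immediate: if $Y\in \cu^{w=0}$ it is a retract of some $X\in D$, so $J(Y)$ is a retract of $J(X)\in \du^{v\ge 0}$ (as $J$ is additive), whence $J(Y)\in \du^{v\ge 0}$ by Karoubi-closedness; thus $J(\cu^{w=0})\subset \du^{v\ge 0}$.

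Next I would prove by induction on $n\ge 0$ that $J(\cu^{[0,n]})\subset \du^{v\ge 0}$. The case $n=0$ is the previous paragraph. For $n\ge 1$ and $X\in \cu^{[0,n]}$, take the shifted weight decomposition $w_{\ge 1}X\to X\to w_{\le 0}X$ afforded by Theorem \ref{tbw}. Since $X\in\cu^{w\ge 0}$, the term $w_{\le 0}X$ lies in $\cu^{w=0}$ (by the assertion of Theorem \ref{tbw} that the $\cu^{w\le 0}$-part of a weight decomposition of an object of $\cu^{w\ge 0}$ lands in the heart), so $J(w_{\le 0}X)\in\du^{v\ge 0}$ by the case $n=0$. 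Rotating the triangle to $w_{\le 0}X[-1]\to w_{\ge 1}X\to X$ and using $w_{\le 0}X[-1]\in\cu^{w\le 0}[-1]=\cu^{w\le 1}\subset\cu^{w\le n}$, $X\in\cu^{w\le n}$, together with extension-stability of $\cu^{w\le n}$, we get $w_{\ge 1}X\in\cu^{[1,n]}$, hence $w_{\ge 1}X[1]\in\cu^{[0,n-1]}$; the inductive hypothesis gives $J(w_{\ge 1}X[1])\in\du^{v\ge 0}$, so $J(w_{\ge 1}X)\in\du^{v\ge 0}[-1]\subset\du^{v\ge 0}$. Applying the exact functor $J$ to $w_{\ge 1}X\to X\to w_{\le 0}X$ and invoking extension-stability of $\du^{v\ge 0}$ yields $J(X)\in\du^{v\ge 0}$. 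Finally, since $w$ is bounded every $X\in\cu^{w\ge 0}$ lies in some $\cu^{w\le n}$, which we may take with $n\ge 0$, hence in $\cu^{[0,n]}$; this proves assertion 1.

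For assertion 2 I would apply assertion 1 to the exact functor $J^{op}\colon\cu^{op}\to\du^{op}$ equipped with the opposite weight structures $w^{op}$ on $\cu^{op}$ and $v^{op}$ on $\du^{op}$ (Theorem \ref{tbw}(\ref{idual})): here $\cu^{op,\,w^{op}\ge 0}=\cu^{w\le 0}$, $\du^{op,\,v^{op}\ge 0}=\du^{v\le 0}$, $w^{op}$ is again bounded, its heart is still $\cu^{w=0}$, and the hypothesis $J(X)\in\du^{v\le 0}$ for $X\in D$ is exactly $J^{op}(X)\in\du^{op,\,v^{op}\ge 0}$; right weight-exactness of $J^{op}$ then unwinds to left weight-exactness of $J$.

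I do not expect a genuine obstacle here: the statement is a formal consequence of the weight-structure formalism recalled in Theorem \ref{tbw}. The only points demanding attention are the bookkeeping of the weight interval in the inductive step (ensuring $w_{\ge 1}X$ stays inside $\cu^{w\le n}$) and the small but easy-to-misremember fact that $\du^{v\ge 0}$ is closed under the downward shift $[-1]$ rather than under $[1]$.
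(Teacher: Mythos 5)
Your proof is correct and follows essentially the same route as the paper: the paper's argument is a one-liner invoking the weight Postnikov tower of Theorem \ref{tbw}(\ref{iwpost}) together with Karoubi-closedness and extension-stability of $\du^{v\ge 0}$, while you unroll exactly that idea into an explicit induction on the weight interval $[0,n]$, peeling off one heart factor per step. The dualization for part 2 also matches the paper's "assertion 2 is exactly its dual."
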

\begin{proof}

It suffices to prove assertion 1, since assertion 2 is exactly its dual.

If $J$ is right weight-exact functor, then we can take $D=\cu^{w=0}$.

Now we prove the converse statement. 
Since $\du^{v\ge 0}$ is
Karoubi-closed and extension-stable in $\du$, 
Theorem \ref{tbw}(\ref{iwpost}) yields that $J(\cu^{w\ge 0})$ indeed belongs to
$\du^{v\ge 0}$.

\end{proof}


\section{Categories of comotives (main properties)}\label{scomot}

We embed $\dmge$ into a certain big triangulated motivic category
$\gd$; we will call it objects {\it comotives}. We will need several
properties of $\gd$; yet we will never use its description directly.
For this reason in \S\ref{comot} we only list the main properties of
$\gd$.

In \S\ref{spsch} we associate certain comotives to (disjoint unions of) 'infinite
intersections' of smooth varieties over $k$ (we call those
pro-schemes). We also introduce certain Tate twists for these
comotives.

In \S\ref{sprim} we recall the definition of a primitive scheme
(note that in the case of a finite $k$ we call a scheme primitive
whenever it is smooth semi-local). The main motivic
property of primitive
schemes (proved by M. Walker) is: $F(S)$ injects into $F(S_0)$ if
$S$ is primitive connected, $S_0$ is its generic point, and $F$ is a homotopy invariant presheaf
with transfers.

In \S\ref{smotprim} we study the relation of (the comotives of)
primitive schemes with the homotopy $t$-structure for $\dme$.

In \S\ref{smorprim} we prove that there are no $\gd$-morphisms
 of positive degrees between the comotives of primitive
schemes (and also certain Tate
 twists of those); this is also true for products of the comotives mentioned.

 In \S\ref{scgersten} we
 prove that one can pass to countable homotopy limits in Gysin distinguished triangles; this yields Gysin distinguished triangles for the comotives of pro-schemes. This allows to
 construct   certain
 Postnikov towers for the comotives of pro-schemes (and their Tate twists),
 whose factors are twisted products of
the comotives of function fields (over $k$).
 The construction of the tower is parallel to the
classical construction of
 coniveau spectral sequences (see \S1 of \cite{suger}).

\subsection {Comotives: an 'axiomatic description'} 
\label{comot}

We will define $\gd$ below as the derived category of differential
graded functors $J\to B(\ab)$;
 here $J$  yields a differential graded enhancement of $\dmge$
 (cf. \cite{bev} or \cite{le3}), 
  $B(\ab)$
is the differential graded category
 of complexes over $\ab$. We will also need some
 category $\gdp$ that projects to $\gd$ (a certain model of $\gd$). Derived
categories of differential
 graded functors were studied in detail in \cite{drinf}
and \cite{dgk}. 
We will define and study them in \S\ref{rdg} below; now
we will only list their properties
that are needed for the proofs of main statements.

Below we will also need certain (filtered) inverse limits several
times. $\gd$ is a triangulated category; so it is no wonder that
there are no nice limits in it. So we consider a certain additive
$\gdp$ endowed with an additive functor $p:\gdp\to \gd$.
 We will call  (the images of) inverse limits  from $\gdp$
homotopy limits in $\gd$.

The relation of $\gdp$ with $\gd$ is similar to the relation of
$C(\au)$ with
$D(\au)$. 
In particular, $\gdp$ is closed with respect to all (small
filtered) inverse limits; we have functorial cones of morphisms in
$\gdp$ that are compatible with inverse limits.

We will need some conventions and definitions introduced in Notation; in particular,  $I,L$  will be  projective systems;  $I$ is countable.

\begin{pr}\label{pprop}

\begin{enumerate}

    \item\label{ip1}   $\gd$ is a triangulated category; $\gd\supset
    \dmge$, and
    all objects
 of $\dmge$ are cocompact in $\gd$.

    \item\label{ip2}  There is an additive category
    $\gdp$ closed with respect to arbitrary
(small filtered) inverse limits,
    and an additive functor $p:\gdp\to \gd$ that preserves (small)
    products. Moreover, $p$ is surjective on objects.

    \item \label{ip3} $\gdp$ is endowed with a certain
invertible shift functor $[1]$
    that is compatible with the shift on $\gd$ and respects
 inverse limits.

 \item \label{ip7} There is a functorial cone of morphisms
in $\gdp$ defined;
    it is compatible with $[1]$ and respects  inverse limits.

    \item \label{ip8} Any triangle of the form
    $X\stackrel{f}\to Y\to \co(f)\to X[1]$
 in $\gdp$
    becomes distinguished in $\gd$.

    \item \label{ip4} The composition functor
$\mg:C^b(\smc)\to \dmge\to \gd$ can
     be canonically factorized through an additive functor
 $j:C^b(\smc)\to
     \gdp$. Shifts and cones of morphisms in $C^b(\smc)$ are
     compatible with those in $\gdp$ via $j$.

 \item \label{ipcf}
     For any $X\in \mg(C^b(\smc))\subset \obj \gd$, any $Y:L\to
     \gdp$ we have $\gd(p(\prli _{l\in L}Y_l),X)=\inli_{l\in L} \gd
     (p(Y_l),X)$.

    \item \label{ip5} $\dmge$ weakly cogenerates $\gd$
(i.e.
     we have ${}^\perp\dmge=\ns$, see Notation).

    \item \label{ip6} Let a sequence $i_n\in I$, $n>0$, be increasing (i.e. $i_{n+1}>i_n$ for any $n>0$)
unbounded (see Notation). Then
    for all functors $X:I\to \gdp$, 
    we have  functorial distinguished
    triangles in $\gd$:
\begin{equation}\label{elim} p(\prli_{i\in I} X_i)\to p(\prod X_{i_n})
\stackrel{e}{\to} p(\prod X_{i_n});\end{equation} 
$e$ is the product of
$\id_{X_{i_n}}\oplus - \phi_n:X_{i_{n+1}}\to X_{i_n}$; here
$\phi_n$ are the morphisms coming from $I$ via $X$.

\item\label{ienh} \label{ip9} There exists a {\it differential
graded enhancement}
 for $\gd$; see \S\ref{sbdedg} below.

\end{enumerate}

\end{pr}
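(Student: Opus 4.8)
The strategy is to \emph{construct} $\gd$ and $\gdp$ explicitly via differential graded modules (this is carried out in \S\ref{rdg} below) and then to read off each of the listed properties from the general formalism of DG-categories. First I would fix a (small) differential graded enhancement $J$ of $\dmge$ built out of $\smc$, so that the natural DG-functor from the DG-category of bounded complexes over $\smc$ to $J$ realizes the description of $\dmge$ recalled in Remark \ref{rdmge} (cf. \cite{bev}, \cite{le3}, \cite{drinf}). Then I would let $\gdp$ be the category of (right) DG-modules over $J$ — equivalently, of DG-functors $J^{op}\to B(\ab)$ — and let $\gd$ be the associated derived category, with $p\colon\gdp\to\gd$ the canonical localization; to make the Yoneda embedding covariant one passes to the opposite category in the module direction, and this flip of variance is the only reason the word ``cocompact'' appears where one might naively expect ``compact''. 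Everything below is then a matter of invoking standard results on derived categories of DG-modules (see \cite{drinf}, \cite{dgk}).

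Concretely, assertion \ref{ip9} is tautological, since the homotopy category of DG-modules carries its own DG-enhancement. Assertion \ref{ip1} holds because $D(J)$ is triangulated and DG-Yoneda embeds $\dmge$ (up to idempotent completion, using \cite{ba}) fully faithfully onto the representable modules, which are compact in $D(J)$, hence cocompact after the variance flip; assertion \ref{ip5} is the dual of the fact that the representable modules compactly generate $D(J)$; and assertion \ref{ip4} comes from applying representable-module-Yoneda to the DG-functor out of the DG-category of $C^b(\smc)$, shifts and cones being preserved because this functor is a DG-functor. For the ``$\gdp$-level'' assertions \ref{ip2}, \ref{ip3}, \ref{ip7}, \ref{ip8} I would use that $\gdp$ is an additive category of complexes of modules: small filtered inverse limits exist and are computed objectwise-and-termwise, so they commute with the shift and with the mapping-cone functor (the latter being assembled from a shift and a finite biproduct); $p$ preserves products because products are exact in $\gd$, so the homotopy product agrees with the naive product; and any triangle $X\xrightarrow{f}Y\to\co(f)\to X[1]$ in $\gdp$ becomes distinguished in $\gd$ by the standard comparison of the mapping cone with the model-categorical cofiber.

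The real content lies in the two homotopy-limit statements \ref{ip6} and \ref{ipcf}, and this is where I expect the main obstacle. For \ref{ip6}: given an increasing unbounded sequence $i_n\in I$, the $i_n$ are cofinal in the countable system $I$, so in the complete additive category $\gdp$ one has $\prli_{i\in I}X_i\cong\ke\bigl(e\colon\prod_n X_{i_n}\to\prod_n X_{i_n}\bigr)$ with $e=\prod_n(\id_{X_{i_n}}\oplus-\phi_n)$; this kernel sits in a short exact sequence of complexes of modules which, after applying $p$, becomes the asserted distinguished triangle by \ref{ip8} — the usual Milnor/$\varprojlim^1$ triangle. For \ref{ipcf}: with $N=\prli_{l\in L}Y_l$ in $\gdp$ and $X\in\mg(C^b(\smc))$, one computes $\gd(p(N),X)$ as $H^0$ of the DG-module mapping complex from $N$ into a bounded model of $X$ assembled from representables; since this mapping complex is a \emph{finite} limit of evaluation functors and evaluation commutes with the objectwise limit defining $N$, the cocompactness of the objects of $\dmge$ (assertion \ref{ip1}) lets one trade the limit over $L$ for the colimit $\inli_{l\in L}\gd(p(Y_l),X)$. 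The delicate point — and the reason the countability hypothesis on $k$ (equivalently, on the systems $I$) cannot be dispensed with — is precisely the control of these limits at the level of mapping complexes: for morphisms \emph{between} homotopy limits the analogous computation acquires $\varprojlim^1$ correction terms (and higher $\varprojlim^n$ for uncountable systems), which is why only \ref{ipcf}, where the target is a fixed object of $\dmge$, can be stated in this clean form.
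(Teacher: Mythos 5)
Your construction of $\gdp$ and $\gd$ as opposites of the DG-module category over an enhancement $A$ of $\dmge$ and of its derived category, with $p$ the localization, is exactly the one the paper takes in \S\ref{scgd}, and your handling of the routine assertions (\ref{ip1}--\ref{ip5}, \ref{ienh}) follows the same lines, though you lean on model-categorical language where the paper only invokes Proposition \ref{pmodstr} and Drinfeld's DG-quotient.

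Where your write-up has a genuine slip is in assertion \ref{ipcf}: the variance is reversed and the key input is misidentified. Since $\gd=\dd(A)^{op}$, one has $\gd(p(N),X)=\dd(A)(X,p(N))$, so the DG-mapping complex you need runs \emph{from} a representable model of $X$ \emph{into} $N$, not the other way round; only in that direction does Yoneda reduce it to a finite diagram of evaluations $N(Z)$. And the reason one may commute this with the (objectwise, filtered) colimit defining $N$ is not ``cocompactness of the objects of $\dmge$'' (assertion \ref{ip1} concerns products), but the Yoneda identity $\dk(A)(Z^*,F)\cong H^0(F(Z))$ together with exactness of filtered direct limits in $\ab$, combined with (\ref{emorloc}) to descend from $\dk(A)$ to $\dd(A)$. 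In particular, \ref{ipcf} needs no countability hypothesis; countability is used only for \ref{ip6} (Remark \ref{run}(1)). In \ref{ip6} you should also justify why $e$ is an epimorphism in $\gdp$ (so that the ``short exact sequence'' you invoke actually exists): unwinding the variance flip, this is the injectivity of $\bigoplus(\mathrm{id}-\text{shift})$ on a countable direct sum in $\dc(A)$, after which one must still pass from the degreewise-split exact sequence to a cone-triangle; the paper sidesteps both steps by checking directly that the induced map $\co(\prli X_i\to\prod X_{i_n})\to\prod X_{i_n}$ is an objectwise quasi-isomorphism and then appealing to \ref{ip8}.
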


\begin{rema}\label{raxiomdim}

1. Since below we will prove some statements for $\gd$ only using its 'axiomatics' (i.e. the properties listed in Proposition \ref{pprop}), these results would also be valid in any other category that fulfills these properties. This could be useful, since the author is not sure at all that all possible $\gd$ are isomorphic.

2. Moreover, one could modify the axiomatics of $\gd$ and
consider instead a category that would only contain the triangulated
subcategory of $\dmge$ generated by motives of smooth varieties of dimension
 $\le n$ (for a fixed $n>0$). Our results and arguments below can
 be easily carried over to this setting (with minor modifications;
it is also useful here to weaken condition \ref{ip5} in the Proposition).
This makes sense since these 'geometric pieces' of $\dmge$ are
self-dual with respect to Poincare duality (at least, if $\cha k=0$);
 see \S\ref{sdual}  below. See also  Remark \ref{rhocolim}(2).

Alternatively, 
we can 
weaken the condition for the functor $\dmge\to \gd$ to be a full
 embedding. For example, it could be interesting to consider the version of $\gd$ for which
 this functor kills $\dmge(n)$ (for some fixed $n>0$).

 Lastly note that we do not really need condition \ref{ip2} in its full generality (below).
\end{rema}

Now we derive some consequences from the axiomatics listed.

\begin{coro}\label{css}

\begin{enumerate}

\item For any $Z\in \obj \dmge\subset \obj \gd$, any $X:L\to \gdp$
we have
$\gd (p(\prli_{l\in L} X_l),Z) =\inli_{l\in L} \gd (p(X_l),Z)$.

\item For any $T\in \obj \gd$, all functors $Y:I\to \gdp$
we have functorial
 short exact sequences
$$\ns\to \prli^1 \gd(T,p(Y_i)[-1])\to  \gd (T,p(\prli Y_i))\to
\prli  \gd(T,p(Y_i)) \to \ns;$$
here $\prli^1$ is the (first) derived functor of
 $\prli=\prli_I$. 

\item For all functors $X:L\to C^b(\smc)$, $Y:I\to C^b(\smc)$,
 we have
functorial short exact sequences
\begin{equation}\label{lmor}
\begin{aligned} \ns\to \prli^1_{i\in I} (\inli_{l\in L} \gd(\mg(X_l),\mg(Y_i)[-1]))\to \\
 \gd (p(\prli_{l\in L} j(X_l)),p(\prli_{i\in I} j(Y_i)))\to \\ \prli_{i\in I}
 (\inli_{l\in L} \gd(\mg(X_l),\mg(Y_i)))\to \ns.\end{aligned}
\end{equation}

 \item \label{ipid} $\gd$ is idempotent complete.
\end{enumerate}
\end{coro}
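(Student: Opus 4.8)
The plan is to derive all four assertions from the axiomatic properties of $\gd$ listed in Proposition \ref{pprop}, together with the standard $\prli^1$-exact-sequence machinery. For assertion 1, I would reduce the case $Z \in \obj\dmge$ to the case $Z \in \mg(C^b(\smc))$ (which is exactly Proposition \ref{pprop}(\ref{ipcf})) using that $\dmge$ is the idempotent completion of $\mg(C^b(\smc))$ (Remark \ref{rdmge}): write $Z$ as a retract of some $\mg(C)$, $C \in C^b(\smc)$, via an idempotent $e$, and note that both $\gd(p(\prli X_l),-)$ and $\inli_l \gd(p(X_l),-)$ are additive functors of the second argument, so they commute with splitting off a direct summand cut out by $e$. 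Concretely, $\gd(p(\prli X_l),Z)$ is the image of $\gd(p(\prli X_l),e)$ acting on $\gd(p(\prli X_l),\mg(C))$, and likewise for the colimit; since filtered colimits are exact, the image of the colimit of the idempotents is the colimit of the images, giving the claim.

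For assertion 2, I would apply the functor $\gd(T,p(-))$ to the distinguished triangle (\ref{elim}) of Proposition \ref{pprop}(\ref{ip6}) (taking for $(i_n)$ any cofinal increasing sequence in $I$, which exists since $I$ is countable). This yields a long exact sequence in which the relevant term is the cokernel/kernel of the map $e_* = \prod(\id - \phi_{n*})$ on $\prod_n \gd(T,p(Y_{i_n})[j])$; by the classical description of $\prli$ and $\prli^1$ of a countable tower as the kernel and cokernel of precisely this "$\id - \text{shift}$" map (and using that $\gd(T,\prod(-)) = \prod \gd(T,-)$ since $p$ preserves products, Proposition \ref{pprop}(\ref{ip2})), the long exact sequence breaks into the asserted short exact sequences. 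Functoriality in $T$ and in the system is immediate from functoriality of (\ref{elim}). Assertion 3 is then just the combination of assertions 1 and 2: take $T = p(\prli_{l\in L} j(X_l))$ in assertion 2, and rewrite each $\gd(p(\prli_l j(X_l)), p(Y_i)[j])$ using assertion 1 (valid since $\mg(Y_i) \in \obj\dmge$ and $p \circ j = \mg$ on objects by Proposition \ref{pprop}(\ref{ip4})) as $\inli_l \gd(\mg(X_l),\mg(Y_i)[j])$; applying $\prli$ and $\prli^1$ over $I$ gives (\ref{lmor}).

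For assertion 4, idempotent completeness of $\gd$, I would use the standard fact (going back to Bökstedt–Neeman) that a triangulated category admitting countable coproducts — or, dually, countable products — is automatically idempotent complete, realizing the image of an idempotent as a homotopy (co)limit of the telescope. Here $\gd$ has countable products of objects of the form $p(Y)$ coming from $\gdp$ via Proposition \ref{pprop}(\ref{ip2},\ref{ip6}): given an idempotent $e \in \gd(X,X)$, lift $X$ to $\gdp$, form the constant tower with transition maps alternating between $e$ and $\id - e$ (or use the standard telescope construction applied to $e$), and use (\ref{elim}) to exhibit the desired summand as the third vertex of the resulting distinguished triangle. The main obstacle I anticipate is the bookkeeping in assertion 4: one must check that the telescope argument, which is usually phrased with coproducts, goes through with the countable products furnished by Proposition \ref{pprop}(\ref{ip6}), and that the lift to $\gdp$ of the idempotent (which need not itself be idempotent in $\gdp$) causes no trouble — but since (\ref{elim}) only requires an arbitrary tower in $\gdp$, not a "nice" one, this should be routine once set up carefully.
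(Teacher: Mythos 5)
Your proposal follows the paper's proof essentially verbatim: assertion 1 is reduced via retracts to Proposition \ref{pprop}(\ref{ipcf}); assertion 2 is obtained by applying $\gd(T,-)$ to the triangle (\ref{elim}) and reading off $\prli$ and $\prli^1$ as kernel and cokernel of the map $e_*$; assertion 3 is the combination of the first two; and assertion 4 invokes the B\"okstedt--Neeman telescope criterion. The one place you make things slightly harder than necessary is assertion 4: the paper simply observes that $\gdp$ has all small products (being closed under all inverse limits), that $p$ preserves products and is surjective on objects (Proposition \ref{pprop}(\ref{ip2})), hence $\gd$ has all small products, and then cites Remark 1.6.9 of \cite{neebook} directly -- this works entirely inside $\gd$, so there is no need to lift the idempotent $e$ to $\gdp$ (which indeed would not be an idempotent there), and your anticipated obstacle simply does not arise.
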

\begin{proof}
\begin{enumerate}
\item If $Z\in \mg(C^b(\smc))$, then the assertion is exactly 
  Proposition \ref{pprop}(\ref{ipcf}).

It remains to note that any $Z\in \obj\dmge$ is a retract of some
object coming from $C^b(\smc)$.

\item Since inverse limits and their derived functors
do not change when we replace a projective
system by any unbounded subsystem, we can assume
that $L$ consists of some $i_n$ as in (\ref{elim}).

Now, (\ref{elim}) yields a long exact sequence
$$\begin{gathered}
\dots\to \prod_{i\in I}  \gd(T,p(Y_i)[-1]) \stackrel{f}{\to} \prod_{i\in I}  \gd(T,p(Y_i)[-1]) \to  \gd (T,p(\prli_{i\in I}  Y_i))\\
\to \prod_{i\in I}  \gd(T,p(Y_i))
\stackrel{g}{\to} \prod_{i\in I}  \gd(T,p(Y_i))\to \dots,\end{gathered}$$
here $f$ and $g$ are induced by $e$ in (\ref{elim}).

It is easily seen that $\ke g\cong \prli  \gd(T,\mg(Y_m))$.

Lastly,  Remark A.3.6 of \cite{neebook} allows to identify 
$\cok f$ with 
 $\prli^1 \gd(T,\mg(Y_m)[-1])$.

\item Immediate from the previous assertions.

\item Since $\gdp$ is closed with respect to all inverse limits, it is
 closed with respect to all (small) products. Then  
 Proposition \ref{pprop}(\ref{ip2}) yields that $\gd$ is also  closed with
respect to all products.
 Now, Remark 1.6.9 of \cite{neebook} yields the result
 (in fact, the proof uses only countable products).
\end{enumerate}
\end{proof}

We will often call the objects of $\gd$ {\it comotives}.

\subsection{Pro-schemes and their comotives}\label{spsch}

Now we have certain inverse limits for objects (coming from)
 $C^b(\smc)$; this allows
to define (reasonable) comotives for certain
schemes that are not (necessarily) of finite type over $k$ (and for their disjoint unions). 
We also define certain Tate twists of those.

We will call certain  formal disjoint unions of projective limits of smooth varieties over $k$  {\it pro-schemes}. Such a union  $V/k$ is a pro-scheme if it is a countable disjoint union of  projective limits of smooth varieties of dimension $\le c_V$ for some fixed $c_V\ge 0$ (in the category of schemes) with connecting morphisms being open dense embeddings (note that these projective limits are often $k$-schemes).  One may say that a pro-scheme is a countable disjoint union of 
 intersections of smooth varieties.
Note that  (the spectrum of) any function field over $k$ yields a pro-scheme; any smooth $k$-variety or any its (semi)-localizations is a
 pro-scheme also.
We have the operation of countable disjoint union for pro-schemes of bounded dimension.

Now, we would like to present a (not necessarily connected) pro-scheme $V$ as projective limits of smooth varieties $V_i$. This is easy if $V$ is connected (cf. Lemma 3.2.9 of \cite{deggenmot}). In the general case one should allow (formally) zero morphisms between connected components
of $V_i$ (for distinct $i$). So we consider a new category $\sv'$ containing the category of all smooth varieties as a (non-full!) subcategory. We take $\obj \sv'=\sv$; for any smooth connected varieties $X,Y\in \sv$ we have $\sv'(X,Y)=\mo_{\var}(X,Y)\cup\ns$; the composition of a zero morphism with any other one is zero; $\sv'(\sqcup_{i} X_i, \sqcup_{j} Y_j)=\sqcup_{i,j} \sv'(X_i,Y_j)$. 
$\sv'$ can be embedded into $\smc$ (certainly, this embedding is not full). 

We will write $V=\prli V_i$. 
Note that the set of connected components of $V$ is the inductive limit
of the corresponding sets for $V_i$. 

Now,  for any pro-scheme $V=\prli V_i$, any $s\ge 0$, we introduce the following notation: $\mg(V)(s)=p(\prli (j(V_i)(s)))\in \obj \gd$
(see Proposition \ref{pprop}); we will denote $\mg(V)(0)$ by  $\mg(V)$ and call $\mg(V)$ the comotif of $V$.
This notation should be considered
as formal i.e. we do not define Tate twists on $\gd$
 (till \S\ref{snpttw}).

 Obviously, if $V\in \sv$, its comotif (and its twists) coincides
 with its motif (and its twists), so we can use the same notation
 for them.

If $\au$ is a category closed with respect to filtered direct
limits, $H':\dmge\to \au$ is a functor, we can (formally) extend it
to co-motives in question; we set:
\begin{equation}\label{dfps} H(\mg(V)(s)[n])=\inli H'(
\mg(V_i)(s)[n]).
\end{equation}

\begin{rema}\label{rspsch}\label{rlprod}

1. For a general $H'$ this notation should be considered as formal.
Yet in the case $H'=(-,Y):\gd\to \ab$,  $Y\in \obj \dmge\subset \obj
\gd$, we have $H(\mg(V)(i)[n])=\gd (\mg(V)(i)[n],X)$; see  
Corollary \ref{css}(1), i.e. (\ref{dfps}) yields the value of a
well-defined functor $\gd\to \ab$ at $\mg(V)(s)[n]$. We will only
need $H'$ of this sort till \S\ref{sext}.

More generally, there exists such an $H$ if $\au$ satisfies AB5 and $H'$
is cohomological;  we will call the corresponding $H$ an {\it
extended cohomology theory}, see
 Remark \ref{rcohp} below.

2. Let $V^j$ be a countable set of pro-schemes (of bounded dimensions).
Then
$\mg(\sqcup V^j)=\prod \mg(V^j)$ by 
Proposition \ref{pprop}(\ref{ip2}).

Besides, for any $H'$ as in (\ref{dfps}) we have $H(\mg(\sqcup V^j)(s)[n])=\bigoplus H(\mg(V^j)(s)[n])$.

\end{rema}

Below we will need some conventions for pro-schemes.

For pro-schemes $U=\prli U_i$ and $V=\prli V_j$ we will call an
element of $\prli_{i\in I}(\inli_{j\in J} \smc (U_i,V_j))$
 an open embedding if it
 can be obtained as  a double limit of open
 embeddings $U_i\to V_j$ (as varieties).
 If $U=V\setminus W$ 
  for some pro-scheme $W$, we will say that $W$
  is a closed sub-pro-scheme of $V$. Note that in this
  case any connected component of  $W$ is a closed subscheme
  of some connected component of $V$; yet some components of $V$
   could contain an infinite set of connected components of $W$.

 For $V=\sqcup V^j$, $V^j$ are connected pro-schemes, we
  will call the maximum of the transcendence degrees of
  the function fields of $V^j$ the dimension of $V$ (note that
   this is finite). We will say that a sub-pro-scheme
 $U=\sqcup U^m$, $U^m$ are connected, is everywhere of
 codimension $r$ (resp. $\ge r$, for some fixed $r\ge 0$)
 in $V=\sqcup V^j$ if for every induced embedding $U^m\to V^j$
 the difference of their dimensions (defined as above) is $r$
 (resp. $\ge r$).

 We will call the inverse limit of the sets of
points of $V_i$ of a fixed codimension $s\ge 0$ the set of points of
$V$ of  codimension $s$ (note that any element of this set indeed
defines a point of some connected component of $V$).

\subsection{Primitive schemes: reminder}\label{sprim}

In \cite{walker} M. Walker proved that primitive schemes in the case
of an infinite $k$ have 'motivic' properties similar to those of smooth
semi-local schemes (in the sense of \S4.4 of \cite{3}). Since we don't want to discriminate the case of a
finite $k$, we will modify slightly the standard definition of
primitive schemes.

\begin{defi}

If $k$ is infinite then a (pro-)scheme is called primitive if all
of its connected components are affine $k$-schemes and their coordinate rings
$R_j$ satisfy the following primitivity criterion: for any $n>0$ 
every polynomial in $R_j[X_1,\dots,X_n]$ whose coefficients generate
$R_j$ as an ideal over itself, represents an $R_j$-unit.

If $k$ is finite, then we will call a pro-scheme primitive
whenever all of its connected components are 
semi-local (in the sense of \S4.4 of \cite{3}).

\end{defi}

\begin{rema}\label{rabprim}

Recall that in the case of infinite $k$ all semi-local $k$-algebras
satisfy the primitivity criterion (see Example 2.1 of
\cite{walker}).
\end{rema}

Below we will mostly use the following  basic property of
primitive schemes.

\begin{pr} \label{pwalk}
Let $S$ be a primitive pro-scheme, let $S_0$ be the collection of all of its generic
points; $F$ is a homotopy invariant presheaf with transfers. Then $F(S)\subset F(S_0)$;
here we define $F$ on pro-schemes as in (\ref{dfps}).
\end{pr}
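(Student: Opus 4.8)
The plan is to reduce the statement about pro-schemes to the known statement about actual smooth (semi-)local or primitive varieties, using that $F$ on pro-schemes is defined as a filtered direct limit. First I would recall the classical input: for a primitive smooth variety $S'$ over an infinite field $k$ (or a smooth semi-local variety in the finite field case), with $S'_0$ its generic points, one has $F(S')\hookrightarrow F(S'_0)$ for any homotopy invariant presheaf with transfers $F$; this is exactly the content of Walker's theorem (\cite{walker}) in the infinite case and of \S4.4 of \cite{3} (Voevodsky--Suslin) in the semi-local case. So the real work is a limit argument.

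Write $S=\prli_i S_i$ with the $S_i$ smooth varieties and the connecting maps open dense embeddings; then $S_0=\prli_i (S_i)_0$ is the collection of generic points (the set of connected components of $S$, hence of generic points, being the inductive limit of the corresponding finite sets, as noted in \S\ref{spsch}). By definition (\ref{dfps}) we have $F(S)=\inli_i F(S_i)$ and $F(S_0)=\inli_i F((S_i)_0)$, and the natural map $F(S)\to F(S_0)$ is the colimit of the restriction maps $F(S_i)\to F((S_i)_0)$. The key geometric point is that the $S_i$ can be chosen primitive: this is essentially how primitivity of a pro-scheme is set up (its connected components are affine with coordinate rings satisfying the primitivity criterion, and such rings arise as unions/localizations so that a cofinal system of the $S_i$ may be taken primitive — in the semi-local case one uses that a semi-local pro-scheme is a filtered limit of smooth semi-local varieties, cf. Lemma 3.2.9 of \cite{deggenmot} together with Remark \ref{rabprim}). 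Granting this, each map $F(S_i)\to F((S_i)_0)$ is injective by the classical result.

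Then I would invoke exactness of filtered colimits in $\ab$: a filtered colimit of injections of abelian groups is injective. Concretely, if $x\in F(S)$ maps to $0$ in $F(S_0)$, represent $x$ by some $x_i\in F(S_i)$; its image in $F((S_i)_0)$ maps to $0$ in the colimit $F(S_0)$, hence (by filteredness) dies already in $F((S_j)_0)$ for some $j\ge i$, i.e. the image of $x_j:=(\text{image of }x_i\text{ in }F(S_j))$ in $F((S_j)_0)$ is $0$; by injectivity of $F(S_j)\to F((S_j)_0)$ we get $x_j=0$, hence $x=0$ in $F(S)$. This gives $F(S)\hookrightarrow F(S_0)$.

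The main obstacle I anticipate is not the colimit formalism but the bookkeeping needed to guarantee that a cofinal subsystem of the $S_i$ presenting a primitive pro-scheme $S$ consists of primitive (resp.\ smooth semi-local) varieties, and that the generic-point pro-scheme $S_0$ is correctly identified as $\prli_i (S_i)_0$ compatibly with the presheaf structure — i.e.\ that the restriction maps $F(S_i)\to F((S_i)_0)$ assemble into a map of inductive systems computing $F(S)\to F(S_0)$. In the semi-local case this is standard; in Walker's primitive case one should cite the relevant permanence of the primitivity criterion under the localizations/unions involved (Example 2.1 of \cite{walker} handles the semi-local algebras, and primitivity of the pro-scheme is built into the definition). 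Once that geometric input is in place, the proof is just the two-line filtered-colimit argument above.
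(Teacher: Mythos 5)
Your filtered-colimit framework is sound, but the key geometric step in the middle is wrong, and it is exactly the step that does all the work.

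You claim that a primitive pro-scheme $S=\prli S_i$ admits a cofinal presentation with each $S_i$ primitive (resp.\ smooth semi-local when $k$ is finite), and you then apply the injectivity $F(S_i)\hookrightarrow F((S_i)_0)$ to each level. Both halves of this fail. First, the $S_i$ in a presentation of a pro-scheme are required to be smooth $k$-\emph{varieties} (finite type); a smooth semi-local scheme of positive dimension is never of finite type, so ``a cofinal system of smooth semi-local varieties'' does not exist, and in the infinite-field case, primitivity of the coordinate ring $R=\inli R_i$ does not descend to the finitely generated subalgebras $R_i$ (primitivity is not inherited by subrings). Second, and this is the real point, the injectivity $F(X)\hookrightarrow F(\spe k(X))$ is \emph{false} for a general smooth variety $X$ and a general homotopy invariant presheaf with transfers $F$: take $F=\operatorname{Pic}=H^1_{Nis}(-,\mathcal O^*)$, a homotopy invariant presheaf with transfers, and $X$ any smooth affine variety with $\operatorname{Pic}(X)\neq 0$ (e.g.\ a smooth affine curve of positive genus); then $F(\spe k(X))=0$. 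This is precisely why the Gersten-type injectivity theorems are formulated for local, semi-local, or primitive schemes and not for arbitrary smooth varieties. So your level-wise injections simply do not exist, and the colimit argument that follows has nothing to feed on.

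The paper's proof avoids all of this by invoking the cited theorems \emph{on the limit object directly}. After reducing to the connected case (direct sums over components), one notes that a connected primitive pro-scheme is an affine essentially smooth $k$-scheme whose coordinate ring satisfies the primitivity criterion, so Walker's Theorem 4.19 (infinite $k$) or Voevodsky's Corollary 4.18 (semi-local case, used when $k$ is finite) applies as stated; these theorems are already formulated for essentially-of-finite-type schemes with $F$ extended by the very colimit (\ref{dfps}), so there is nothing to reassemble level by level. If you want to keep a colimit flavour, the right move is not to shrink $S_i$ to something primitive but to observe that the cited theorems themselves contain the relevant limit argument internally; there is no shortcut via injectivity on the approximating varieties.
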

\begin{proof}
We can assume that $S$ is connected (so it is a smooth primitive scheme).

Hence in the case of infinite $k$ our assertion follows from Theorem 4.19 of
\cite{walker}. Now, if $k$ is finite, then $S_0$ is
semi-local (by our convention);
so we may apply Corollary 4.18 of \cite{3} instead.

\end{proof}

\subsection{Basic motivic properties of primitive
schemes}\label{smotprim}

We will call a primitive pro-scheme just a primitive scheme.  We
prove certain motivic properties of primitive schemes (in the form
in which we will need them below).

\begin{pr}\label{porthog}
For $F\in \obj\dme$ we define $H'(-)=\dme(-,F)$ on $\dmge$; we also
define $H(\mg(V)(i)[n])$ as in (\ref{dfps}).
Let $S$ be a primitive scheme, $m\ge 0$, $i\in \z$.

1. Let $F\in \dme^{t\le -1}$ ($t$ is the homotopy $t$-structure, that we considered in \S\ref{dvoev}). Then $H(\mg(S)(m)[m])=\ns$.

2. More generally, for any $F\in \obj \dme$ we have
$H(\mg(S)(m)[m])\cong F^0_{-m}(S)$ where $F^0=F^{t=0}$, $F^0_{-m}$
is the $m$-th Tate twist of $F^0$ (see Definition \ref{dtw}).

\end{pr}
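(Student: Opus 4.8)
The plan is to reduce everything to the known injectivity/retraction property of primitive schemes (Proposition \ref{pwalk}) together with the computation of $F_{*-m}$ in terms of homotopy invariant sheaves provided by Proposition \ref{padj}. First I would recall that by definition (see \eqref{dfps}) and Remark \ref{rspsch}(1), for $F\in\obj\dme$ and $S=\prli S_l$ a primitive pro-scheme we have
\[
H(\mg(S)(m)[m])=\inli_l \dme(\mg(S_l)(m)[m],F)=\inli_l F_{*-m}(\mg(S_l)).
\]
So the whole statement is really a statement about the presheaf $S_l\mapsto F_{*-m}(\mg(S_l))$, evaluated (in the limit) at a primitive scheme. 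The first reduction is the standard one: since $\mg(S)$ of a disjoint union is a product and $H$ of a product is the corresponding direct sum (Remark \ref{rspsch}(2)), and since products/sums of exact sequences of abelian groups are exact, we may assume $S$ is connected, i.e. a smooth connected primitive scheme with generic point $S_0$.

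For part 2, the key input is Proposition \ref{padj}(2): there is a $t$-exact functor $T_m:\dme\to\dme$ with $F_{*-m}\cong (T_m(F))_*$, i.e. $F_{*-m}(\mg(S_l)[-i])\cong \dme(\mg(S_l)[-i],T_m F)=\hhi(T_m F)(S_l)$ by Proposition \ref{pvo}(2). Taking $i=0$ and passing to the limit over $l$, this identifies $H(\mg(S)(m)[m])$ with $H^0(T_m F)(S)$, the $0$-th Nisnevich hypercohomology of $T_m F$ evaluated at the pro-scheme $S$. Now I would use the Gersten-type vanishing for primitive schemes: for a primitive scheme $S$ and a complex of sheaves $C$ with homotopy invariant cohomology concentrated in nonnegative degrees (which is the case for $T_m F$ after replacing it by $t_{\geq 0}(T_m F)$, using $t$-exactness of $T_m$ and the fact that twisting does not create negative cohomology — this is where the hypothesis $m\geq 0$ and the inclusion $\dme^{t\le 0}(m)\subset\dme^{t\le m}$ enter), the hypercohomology $H^0(C)(S)$ computes $H^0$ of the stalk complex, which by Proposition \ref{pwalk} injects into the value at the generic point; more precisely the contribution of $C^{t\geq 1}$ vanishes on $S$ because homotopy invariant sheaves have Gersten resolutions and $S$ is primitive. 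Concretely I would argue: the distinguished triangle $(T_m F)^{t=0}[0]\to t_{\geq 0}(T_m F)\to (t_{\geq 0}T_m F)^{t\geq 1}$ gives, after applying $\dme(\mg(S_l)[-?],-)$ and taking limits, an exact sequence whose outer terms (coming from $(T_m F)^{t\geq 1}$) vanish on primitive $S$ by Proposition \ref{pwalk} applied to the sheaf cohomology, leaving $H(\mg(S)(m)[m])\cong ((T_m F)^{t=0})_*(S)=\hhi^0((T_m F)^{t=0})(S)$. Finally, since $T_m$ is $t$-exact, $(T_m F)^{t=0}=T_m(F^{t=0})=T_m(F^0)$, and unwinding the definition of $T_m$ on $\ssc$ (Definition \ref{dtw} and the discussion after it) identifies $(T_m(F^0))(S)$ with $F^0_{-m}(S)$. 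This gives part 2; part 1 is the special case $F\in\dme^{t\le -1}$, for which $F^0=F^{t=0}=0$, so $H(\mg(S)(m)[m])=\ns$ — alternatively part 1 follows directly since $F\in\dme^{t\le -1}$ forces $T_m F\in\dme^{t\le m-1}\subset$ something with no $H^0$ on primitive schemes, using again that primitive schemes kill negative-degree homotopy invariant hypercohomology.

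The main obstacle I expect is the precise vanishing statement ``$\hhi^j(C)(S)=0$ for $j<0$ and the surjection onto $\hhi^0$ of the degree-zero cohomology sheaf, for $C$ with homotopy invariant cohomology and $S$ primitive.'' This is exactly the statement that the Gersten/coniveau complex of a homotopy invariant sheaf is a resolution on a primitive (or semi-local) scheme, which is Walker's theorem packaged through Proposition \ref{pwalk}; the subtlety is that one must apply Proposition \ref{pwalk} not to a single sheaf but to each cohomology sheaf of the (possibly unbounded-below, after truncation bounded-below) complex $T_m F$, and then assemble via the hypercohomology spectral sequence — one needs $S$ primitive to kill the $E_2$-terms in negative cohomological degree of the sheaves. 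Handling the passage to the limit over $l$ (commuting $\inli$ with the relevant kernels/images, which is fine since filtered colimits of abelian groups are exact) and checking that the identification of $(T_m F)^{t=0}$ with $T_m(F^0)$ is compatible with the pro-scheme evaluation are the remaining routine points. I would also double-check the degree bookkeeping in the twist $-(m)[m]$ against the conventions $F_{-n}(X)=F(X(n)[n])$ and $\dme^{t\le 0}(m)\subset\dme^{t\le m}$, since getting the shift off by one would break part 1.
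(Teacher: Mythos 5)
Your plan matches the paper's in outline — both identify $H(\mg(S)(m)[m])$ with $\dme(\mg(S),T_m F)$ via Proposition~\ref{padj}(2), split $F$ (or $T_m F$) along the homotopy $t$-structure, kill the extremes, and compute the heart via Proposition~\ref{padj}(1) — but you have the roles of Walker's theorem (Proposition~\ref{pwalk}) and plain $t$-orthogonality swapped, and that is worth fixing. The contribution of $F^{t\ge 1}$ (equivalently $(T_m F)^{t\ge 1}$) vanishes for \emph{every} $X\in\sv$, not merely for primitive $S$: since $\mg(X)\in\dme^{t\le 0}$ and $(T_m F)^{t\ge 1}\in\dme^{t\ge 1}$, the groups $\dme(\mg(X),(T_m F)^{t\ge 1})$ and $\dme(\mg(X)[1],(T_m F)^{t\ge 1})$ are zero by $t$-orthogonality alone; Walker plays no role here, and Walker's injectivity statement $G(S)\hookrightarrow G(S_0)$ would not in any case directly kill such a term. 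What primitivity is genuinely needed for is killing the contribution of $F^{t\le -1}$, which is precisely part~1, and there the paper's argument is slightly sharper than what you sketch: $F_{-m}\colon X\mapsto\dme(\mg(X)(m)[m],F)$ is a homotopy invariant presheaf with transfers, Proposition~\ref{pwalk} reduces the vanishing from $S$ to its generic points, and for a function field $K$ one has $F_{-m}(\spe K)=\hhi^0(T_m F)(\spe K)$, which is just the $H^0$ of the stalk complex because a field is a Henselian (Nisnevich) point — and this is zero because $T_m F\in\dme^{t\le -1}$.

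Also watch the slip ``$T_m F\in\dme^{t\le m-1}$'': as you yourself state earlier, $T_m$ is $t$-exact, so $T_m F\in\dme^{t\le -1}$ when $F\in\dme^{t\le -1}$. The inclusion $\dme^{t\le 0}(m)\subset\dme^{t\le m}$ concerns the Tate twist $-\otimes\z(m)$, not the functor $T_m$, and would be the wrong bound to use — with $\dme^{t\le m-1}$ you could not conclude the vanishing in part~1 for $m\ge 1$. Once these two points are corrected, your argument coincides with the paper's: part~1 by Walker plus the Henselian-point observation; part~2 by reducing (via $t$-orthogonality against $\dme^{t\ge 1}$) to $F\in\dme^{t\le 0}$, then (via part~1 applied to $F^{t\le -1}$) to $F=F^{t=0}\in HI$, where Proposition~\ref{padj}(1) gives $H(\mg(S)(m)[m])\cong F^0_{-m}(S)$ directly, with no further appeal to Gersten resolutions or a hypercohomology spectral sequence.
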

\begin{proof}

1. We consider the homotopy invariant presheaf with transfers
$F_{-m}:\ X\mapsto \dme(\mg(X)(m)[m],F)$. We should prove that
$F_{-m}(S)=0$ (here we extend $F_{-m}$ to pro-schemes in the usual
way i.e. as in (\ref{dfps})).

(\ref{dfps}) also yields that $F_{-m}(\sqcup S_i)=\bigoplus
F_{-m}(S_i)$.  Hence by Proposition \ref{pwalk}, it suffices to
consider the case of $S$ being (the  spectrum of) a function field
over $k$. Since $F_{-m}$ is represented by an object of $\dme^{t\le
-1}$ (see Proposition \ref{padj}(2)), it suffices to note
that any field is a Henselian scheme i.e. a point in the Nisnevich
topology.

2. By Proposition \ref{padj}, for any $X\in \sv$ we have
$\mg(X)(m)[m]\perp \dme^{t\ge 1}$. Hence we can
assume $F\in \dme^{t\le 0}$.

Next, using assertion 1, we can easily reduce the situation to the
case $F=F^{t=0}\in\obj HI$ (by considering the $t$-decomposition of
$F[-1]$). In this case the statement is immediate from 
Proposition \ref{padj}(1).


\end{proof}

\begin{lem}\label{lprdegl}

Let $U\to U'$ be an open dense embedding of smooth varieties.

1.  We have $\co(\mg(U)\to \mg(U'))\in  \dme^{t\le -1}$.

2. Let $S$ be primitive. Then for any $n,m,i\ge 0$ the map
$$\gd(\mg(S)(m)[m],\mg(U)(n)[n+i])\to
\gd(\mg(S)(m)[m],\mg(U')(n)[n+i])$$ is surjective.

\end{lem}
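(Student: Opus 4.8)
The plan is to prove the two assertions together, with assertion 1 feeding into assertion 2. For assertion 1, I would write $\mg(U')$ as an extension: the motivic Gysin triangle (or the localization triangle for motives in $\dme$, as recalled via Proposition~\ref{pvo} and the constructions of \cite{1},\cite{3}) gives a distinguished triangle relating $\mg(U)$, $\mg(U')$, and a direct sum of twisted motives $\mg(Z_\alpha)(c_\alpha)[2c_\alpha]$ where the $Z_\alpha$ run over the (smooth loci of the) irreducible components of the closed complement $U'\setminus U$, and $c_\alpha\geq 1$ is the codimension. Since $\z(1)[1]$ is a direct summand of $\mg(G_m)$ and $\mg$ of a smooth variety lies in $\dme^{t\leq 0}$, one gets from the discussion in \S\ref{sttw} that $\mg(X)(n)[n]\in\dme^{t\le 0}$, hence $\mg(X)(n)[n+i]\in\dme^{t\le -i}$ for $i\geq 0$; in particular each summand $\mg(Z_\alpha)(c_\alpha)[2c_\alpha]=\mg(Z_\alpha)(c_\alpha)[c_\alpha]\,[c_\alpha]$ lies in $\dme^{t\le -c_\alpha}\subseteq\dme^{t\le -1}$ because $c_\alpha\ge 1$. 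So the cone of $\mg(U)\to\mg(U')$, being (a shift of) this sum, lies in $\dme^{t\le -1}$, which is extension-stable and closed under the shift in that direction. This gives assertion~1. (One must be slightly careful about whether the complement is smooth; if not, stratify it and use that each stratum's contribution still has the required twist/shift of codimension $\ge 1$, or simply invoke the purity/localization machinery of Voevodsky which handles this.)

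For assertion 2, I would apply the functor $\gd(\mg(S)(m)[m],-)$, or rather its incarnation through $H$ as in Proposition~\ref{porthog}, to the distinguished triangle
$$\mg(U)(n)[n]\to \mg(U')(n)[n]\to Q\to \mg(U)(n)[n][1],$$
where $Q=\co(\mg(U)\to\mg(U'))(n)[n]$. By assertion~1, $Q\in\dme^{t\le -1}(n)[n]\subseteq\dme^{t\le -1}$ (again using $(n)[n]$ preserves the $t\le 0$ part up to the bound in \S\ref{sttw}, so sends $\dme^{t\le -1}$ into $\dme^{t\le -1}$). The long exact sequence obtained by applying $\gd(\mg(S)(m)[m],-[i])$ reads
$$\cdots\to \gd(\mg(S)(m)[m],Q[i-1])\to \gd(\mg(S)(m)[m],\mg(U)(n)[n+i])\to \gd(\mg(S)(m)[m],\mg(U')(n)[n+i])\to \gd(\mg(S)(m)[m],Q[i])\to\cdots,$$
so the claimed surjectivity follows once I know $\gd(\mg(S)(m)[m],Q[i])=\ns$ for all $i\ge 0$. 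Now $Q[i]$, for $i\ge 0$, still lies in $\dme^{t\le -1}$ (shift by $[i]$ with $i\ge 0$ keeps us in $\cu^{t\le -1}$), and $\gd(\mg(S)(m)[m],-)$ restricted to objects of $\dme$ is computed as $H$ in Proposition~\ref{porthog} via the formula (\ref{dfps}) and Corollary~\ref{css}(1). But Proposition~\ref{porthog}(1) is exactly the statement that $H(\mg(S)(m)[m])=\ns$ for $F\in\dme^{t\le -1}$. Applying it to $F=Q[i]$ (equivalently its appropriate preimage in $\dme$ under $RC$, which lies in $\dme^{t\le -1}$ by Remark~\ref{rdmge}(2)) kills the obstruction group and gives surjectivity.

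The main obstacle, I expect, is bookkeeping around the Gysin/localization triangle: making sure the cone $\co(\mg(U)\to\mg(U'))$ really does land in $\dme^{t\le -1}$ for an arbitrary open dense embedding (not just one with smooth complement), and keeping the Tate twists and shifts straight so that the codimension $\ge 1$ genuinely buys one unit of negativity in the homotopy $t$-structure. This is where I would lean on Voevodsky's purity results and on Proposition~\ref{padj}/the computations of \S\ref{sttw} rather than reprove anything. Once assertion~1 is in hand with the correct $t$-bound, assertion~2 is a formal consequence of the long exact sequence together with the vanishing in Proposition~\ref{porthog}(1), and I would not expect any difficulty there beyond citing Corollary~\ref{css}(1) to identify the relevant $\gd$-Hom groups with the extended cohomology $H$.
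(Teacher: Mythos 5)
Your proposal is essentially correct (modulo the stratification bookkeeping you flag yourself), but for assertion 1 it takes a genuinely different route from the paper. The paper never invokes the Gysin or localization triangle. Instead it sets $C=\co(\mg(U)\to\mg(U'))$, observes $C\in\dme^{t\le 0}$ by extension stability, and then shows directly that $H:=C^{t=0}$ vanishes: the key input is Voevodsky's injectivity result (Corollary 4.19 of \cite{3}), which says that for any homotopy invariant sheaf with transfers the restriction along a dense open immersion is injective. Applied to $H$ itself this gives $\dme(C,H)=0$ from the long exact sequence of $\mg(U)\to\mg(U')\to C$, and then applying $\dme(-,H)$ to the $t$-decomposition triangle $C^{t\le -1}\to C\to H$ forces $\id_H=0$, i.e. $H=0$, so $C\in\dme^{t\le -1}$. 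This is a short diagram chase and avoids entirely the issue you (correctly) identify as the main obstacle in your approach — the complement $U'\setminus U$ need not be smooth, so you are forced to stratify and iterate the Gysin triangle. Your argument does go through after the stratification (each stratum contributes $\mg(W)(c)[2c]$ with $c\ge1$, hence lands in $\dme^{t\le -c}\subset\dme^{t\le -1}$, and $\dme^{t\le -1}$ is extension-stable), but it is heavier and uses a different piece of Voevodsky's machinery (purity rather than the injectivity theorem, though the latter also ultimately underlies Proposition \ref{pwalk}). For assertion 2 the two proofs are essentially the same: both apply the long exact sequence and Proposition \ref{porthog}(1). The only cosmetic difference is how the twist $(n)[n]$ is handled — the paper reduces to $n=0$ via the retract $\mg(U)(n)[n]\hookrightarrow\mg(U\times G_m^n)$, whereas you check directly that $-\otimes\z(n)[n]$ preserves $\dme^{t\le -1}$; both are correct.
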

\begin{proof}
1. We denote $\co(\mg(U)\to\mg(U'))$ by $C$.
Obviously, $C\in \dme^{t\le 0}$. Let $H$ denote $C^{t=0}$
($H\in\obj HI$). By Corollary 4.19 of \cite{3}, we have $H(U)\subset
H(U')$. Next, from the long exact sequence $\ns(=\dme
(\mg(U)[1],H))\to \dme(C,H)\to \dme (U',H) \to \dme (U,H)\to \dots$
we obtain $C\perp H$. Then the long exact sequence $\dots\to
\dme(C^{t\le -1}[2],H)\to \dme(H,H)\to\dme (C,H)\to\dots$ yields
$H=0$.

2. It suffices to check that $\mg(S)(m)[m]\perp C(n)[n+i]$.
Since $\mg(U)(n)[n]$ is canonically a retract of
$\mg(U\times G_m^n)$, 
we can assume that $n=0$.

Now the claim
follows immediately from assertion 1 and 
Proposition \ref{porthog}(1). 

\end{proof}

\subsection{On morphisms between the comotives of
primitive schemes}\label{smorprim}

We will need the fact that certain 'positive' morphism groups are
zero.

Let $n,m,\ge 0$, $i>0$, $Y=\prli Y_l$ ($l\in L$), 
be any pro-scheme, $X$ be a primitive scheme.

\begin{pr}\label{maneg}

\begin{enumerate}

\item  The natural
homomorphism 
$$
\displaylines{\gd(\mg(X)(m)[m],\mg(Y)[n](n)) \to\hfill\cr
\hfill \to \prli_l(\inli_{X\subset Z,Z\in\sv}
\dmge(Z(m)[m],\mg(Y_l)(n)[n]))}$$ is surjective.   

\item $\mg(X)(m)[m]\perp \mg(Y)[n+i](n)$.

\end{enumerate}
\end{pr}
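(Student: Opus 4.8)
The plan is to reduce everything to the countable-limit machinery of Proposition \ref{pprop} combined with the primitive-scheme input of the previous subsections. Write $X = \prli_{j\in J} X_j$ with the $X_j$ smooth varieties and the transition maps open dense embeddings (we may choose $J$ countable), and $Y = \prli_{l\in L} Y_l$. For assertion 1, the object $\mg(X)(m)[m]$ is (by the definition of twisted comotives of pro-schemes in \S\ref{spsch}) $p(\prli_j (j(X_j)(m)[m]))$, while $\mg(Y)(n)[n]$ is $p(\prli_l (j(Y_l)(n)[n]))$. By Corollary \ref{css}(2), applied with $T = \mg(X)(m)[m]$ to the system $Y_l(n)[n]$, we get a short exact sequence expressing $\gd(T, \mg(Y)(n)[n])$ as an extension of $\prli_l \gd(T, \mg(Y_l)(n)[n])$ by a $\prli^1$ term; in particular the map to $\prli_l \gd(T, \mg(Y_l)(n)[n])$ is surjective. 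So it suffices to produce, for each fixed $l$, a surjection $\gd(\mg(X)(m)[m], \mg(Y_l)(n)[n]) \twoheadrightarrow \inli_{X\subset Z} \dmge(Z(m)[m], \mg(Y_l)(n)[n])$ — i.e., the case where the target is an honest (motive of a) smooth variety. Here I would invoke Corollary \ref{css}(1) (or Proposition \ref{pprop}(\ref{ipcf})): since $\mg(Y_l)(n)[n] \in \obj \dmge$, we have $\gd(p(\prli_j j(X_j)(m)[m]), \mg(Y_l)(n)[n]) = \inli_j \gd(\mg(X_j)(m)[m], \mg(Y_l)(n)[n])$, which is exactly the inductive limit over $Z = X_j \supset X$ appearing on the right. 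So assertion 1 is in fact an equality of the stated form, modulo the identification of the two inductive systems (the connected components of $X$ and the open neighborhoods $Z \in \sv$).

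For assertion 2 we must show $\gd(\mg(X)(m)[m], \mg(Y)(n+i)[n]) = 0$ for $i > 0$. Again by Corollary \ref{css}(2) it suffices to show both that $\prli_l \gd(\mg(X)(m)[m], \mg(Y_l)(n+i)[n]) = 0$ and that the corresponding $\prli^1$ term vanishes; both follow once we show the groups $\gd(\mg(X)(m)[m], \mg(Y_l)(n+i)[n])$ themselves vanish for every $l$ (then the whole tower is zero, killing $\prli^1$ too). By Corollary \ref{css}(1), $\gd(\mg(X)(m)[m], \mg(Y_l)(n+i)[n]) = \inli_{X \subset Z} \dmge(\mg(Z)(m)[m], \mg(Y_l)(n+i)[n])$, so it is enough to show $\mg(X)(m)[m] \perp \mg(Y_l)(n+i)[n]$ in $\gd$ — and since $X$ itself need not be a variety, I would instead show $\mg(Z)(m)[m] \perp \mg(Y_l)(n+i)[n]$ for $Z \in \sv$, i.e. a statement purely inside $\dmge$ (equivalently $\dme$). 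Note $\mg(Y_l)(n+i)[n] = (\mg(Y_l)(i)[i])(n)[n-i]$, and $\mg(Y_l)(i)[i]$ with $i > 0$ lies in $\dme^{t \le -1}$: this is because $\dme^{t\le 0}(1) \subset \dme^{t \le 1}$ (recorded in \S\ref{sttw}), applied $i$ times to $\mg(Y_l)$, which sits in $\dme^{t\le 0}$ — actually one gets $\mg(Y_l)(i)[i] \in \dme^{t \le 0}$ and then the extra shift; more carefully one uses that $\z(1)[1]$ is a retract of $\mg(G_m)$ together with the computation that $\mg(G_m)$ has only nonpositive homotopy sheaves and the twist-shift lands one step down, so $\mg(Y_l)(i)[i] \in \dme^{t\le -i} \subset \dme^{t \le -1}$. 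Then $\mg(Z)(m)[m] \perp \dme^{t\le -1}$ by Proposition \ref{padj}(2) (representability of $F \mapsto \dme(\mg(Z)(m)[m], F)$ by $T_m(F) \in \dme$, which is $t$-exact, so it suffices that $Z$ — a smooth variety — is Nisnevich-acyclic for negative-degree homotopy-invariant complexes, cf. the argument in Proposition \ref{porthog}(1)). This gives the orthogonality for all $Z$, hence for the inductive limit, hence $\perp$ for $\mg(X)(m)[m]$, hence assertion 2.

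The main obstacle I anticipate is purely bookkeeping: correctly matching the two inductive/projective systems in assertion 1 (the formal disjoint-union-of-limits description of a pro-scheme $X$ versus the system of smooth open neighborhoods $Z \supset X$, and handling the non-full subcategory $\sv'$ with its zero morphisms), and tracking the precise homotopy-$t$-degree of $\mg(Y_l)(i)[i]$ so that the key orthogonality $\mg(Z)(m)[m] \perp \mg(Y_l)(n+i)[n]$ really does follow from the $t$-exactness of $T_m$ together with Nisnevich acyclicity — this is where one must be careful that $i > 0$ (rather than $i \ge 0$) is exactly what is needed to push into $\dme^{t \le -1}$. The triangulated-limit inputs (Corollary \ref{css}(1),(2)) do all the real work of passing from varieties to pro-schemes and are quoted directly.
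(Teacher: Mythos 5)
Your overall strategy — reduce via Corollary \ref{css}(2)/(\ref{lmor}) to a $\prli$--$\prli^1$ computation over the index set of $Y$, then kill the $\prli$-term using the primitivity of $X$ — is the same as the paper's, and your treatment of assertion 1 is essentially correct.

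There is a genuine gap in assertion 2, in the handling of the $\prli^1$-term. You write that once the groups $\gd(\mg(X)(m)[m],\mg(Y_l)(n)[n+i])$ vanish for every $l$, ``the whole tower is zero, killing $\prli^1$ too.'' But look at the exact shape of the short exact sequence in Corollary \ref{css}(2) (equivalently (\ref{lmor})): the subobject is $\prli^1_l \gd\bigl(\mg(X)(m)[m],\mg(Y_l)(n)[n+i-1]\bigr)$, shifted by one degree. Showing the groups in degree $n+i$ vanish tells you nothing a priori about the groups in degree $n+i-1$. For $i\geq 2$ you could rerun your vanishing argument with $i$ replaced by $i-1$, but for $i=1$ the groups $\gd(\mg(X)(m)[m],\mg(Y_l)(n)[n])$ are the degree-zero ones and do not vanish in general, so your argument does not close the boundary case $i=1$. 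The paper handles this by a Mittag-Leffler argument rather than a vanishing argument: Lemma \ref{lprdegl}(2) (which again exploits the primitivity of $X$, via Proposition \ref{porthog}(1)) shows that the transition maps $\gd(\mg(X)(m)[m],\mg(Y_l)(n)[n+i-1])\to\gd(\mg(X)(m)[m],\mg(Y_{l'})(n)[n+i-1])$ induced by the open dense embeddings $Y_l\to Y_{l'}$ are \emph{surjective} for all $i-1\geq 0$; surjectivity gives the Mittag-Leffler condition, hence $\prli^1=0$ whether or not the individual groups vanish. This lemma is the missing ingredient.

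A secondary issue: your $t$-degree bookkeeping is tangled. You write $\mg(Y)(n+i)[n]$ where the statement has $\mg(Y)[n+i](n)=\mg(Y)(n)[n+i]$ (twist by $n$, shift by $n+i$), and you assert $\mg(Y_l)(i)[i]\in\dme^{t\leq -i}$; in fact iterating $\dme^{t\leq 0}(1)[1]\subset\dme^{t\leq 0}$ only gives $\mg(Y_l)(i)[i]\in\dme^{t\leq 0}$. The correct route is: $\mg(Y_l)(n)[n]\in\dme^{t\leq 0}$, hence $\mg(Y_l)(n)[n+i]\in\dme^{t\leq -i}\subset\dme^{t\leq -1}$ for $i>0$, which is precisely what Proposition \ref{porthog}(1) needs. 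The paper simply invokes Proposition \ref{porthog}(1); your detour through $T_m$ and Nisnevich acyclicity re-derives the same fact and is not wrong in spirit, but tightening the twist/shift arithmetic would make it actually go through.
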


\begin{proof}

Note first that by the definition of the Tate twist $(1)$, it can be
lifted to $C^b(\smc)$.
\begin{enumerate}

\item  This is immediate from the short exact sequence (\ref{lmor}).

\item  By  Remark \ref{rlprod}(2), we may suppose that $Y$
is connected. 
Then we apply (\ref{lmor}) again.  The corresponding $\prli$-term is
zero by Proposition \ref{porthog}(1). Lastly, the surjectivity
proved in  Lemma \ref{lprdegl}(2) yields that
the corresponding $\prli^1$-term is zero. Indeed, the groups
 $\gd(\mg(X)(m)[m],\mg(Y_l)[n+i-1](n))$ obviously satisfy the
  Mittag-Leffler condition; see \S A.3 of \cite{neebook}.

In fact, one could 
easily deduce the assertion from the results of the previous
subsection and (\ref{elim}) directly (we do not need
much of the theory of higher limits in this paper).

\end{enumerate}

\end{proof}

\begin{rema}

In fact, this statement, as well as all other properties of
(primitive) pro-schemes that we need, are also true for not
necessary countable disjoint unions of (primitive) pro-schemes. This
observation could be used to extend the main results of the paper
 to a somewhat larger category; yet such an extension does not seem
to be important. 

\end{rema}

\subsection{The Gysin distinguished triangle for pro-schemes;
'Gersten' Postnikov towers for the comotives of pro-schemes}\label{scgersten}

We prove that we can pass to  countable homotopy limits in
  Gysin distinguished triangles.

\begin{pr}\label{pinfgy}

Let $Z,X$ be pro-schemes, $Z$ a closed subscheme 
 of $X$ (everywhere) of codimension $r$. Then for any
 $s\ge 0$ the natural morphism $\mg(X\setminus Z)(s)\to \mg(X)(s)$
  extends to a  distinguished triangle (in $\gd$):
  $\mg(X\setminus Z)(s)\to \mg(X)(s)\to \mg(Z)(r+s)[2r]$.

\end{pr}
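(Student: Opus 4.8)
The plan is to reduce the pro-scheme statement to the known Gysin distinguished triangle for smooth varieties (proved by Deglise, as recalled in \S\ref{sprom} and Remark \ref{rdeg1}(2)) by passing to a countable homotopy limit. First I would write $X=\prli_{i\in I} X_i$ with $X_i$ smooth varieties and dense open connecting embeddings, $I$ countable; similarly present the closed sub-pro-scheme $Z$ of codimension $r$ as $Z=\prli_{i\in I} Z_i$, where (after cofinally reindexing $I$) each $Z_i$ is a smooth closed subvariety of $X_i$ of codimension $r$, compatibly with the connecting maps — here one uses that $X\setminus Z$ is itself a pro-scheme and that open/closed sub-pro-scheme structures are cut out at finite level. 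Then $X\setminus Z=\prli_i (X_i\setminus Z_i)$. For each $i$ the Gysin distinguished triangle in $\dmge$ gives $\mg(X_i\setminus Z_i)(s)\to \mg(X_i)(s)\to \mg(Z_i)(r+s)[2r]\to$, and by the functoriality of the Gysin triangle (Deglise) these triangles form a directed system over $I$, i.e. a morphism of functors $I\to\dmge$ compatible with shifts.

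Next I would lift this system of triangles to $\gdp$: using Proposition \ref{pprop}(\ref{ip4}) the functor $\mg$ factors through $j:C^b(\smc)\to\gdp$, and since Tate twists lift to $C^b(\smc)$ (as noted at the start of the proof of Proposition \ref{maneg}), I can realize $\mg(X_i\setminus Z_i)(s)\to\mg(X_i)(s)$ by a morphism in $\gdp$ whose cone (functorial in $i$, by Proposition \ref{pprop}(\ref{ip7})) maps under $p$ to $\mg(Z_i)(r+s)[2r]$. The key point is that $\gdp$ is closed under countable filtered inverse limits and that cones and shifts in $\gdp$ commute with these limits (Proposition \ref{pprop}(\ref{ip3}),(\ref{ip7})); hence $\prli_i$ of this system of triangles in $\gdp$ is again a triangle of the form $A\xrightarrow{f}B\to\co(f)\to A[1]$ in $\gdp$, with $p(A)=\mg(X\setminus Z)(s)$, $p(B)=\mg(X)(s)$, and $p(\co(f))=p(\prli_i j(Z_i)(r+s)[2r])=\mg(Z)(r+s)[2r]$ by the very definition of $\mg(Z)(r+s)[2r]$ in \S\ref{spsch}. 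Finally, Proposition \ref{pprop}(\ref{ip8}) says any such triangle in $\gdp$ becomes distinguished in $\gd$, which gives exactly the asserted distinguished triangle $\mg(X\setminus Z)(s)\to\mg(X)(s)\to\mg(Z)(r+s)[2r]$.

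The main obstacle I anticipate is the bookkeeping in the first step: arranging a \emph{single} countable index system $I$ over which $X$, $Z$, and $X\setminus Z$ are simultaneously presented as limits of smooth varieties with $Z_i\hookrightarrow X_i$ smooth closed of constant codimension $r$, and making the Gysin triangles genuinely functorial over $I$ (not just termwise) so that the inverse limit in $\gdp$ makes sense. This uses that a pro-scheme, by definition, is a countable disjoint union of limits of smooth varieties of bounded dimension with dense open connecting maps, together with standard spreading-out: a codimension-$r$ closed condition on $\prli X_i$ descends to some finite level, and smoothness of the generic fibres propagates after passing to a cofinal subsystem; disjoint unions are handled componentwise via $\mg(\sqcup V^j)=\prod\mg(V^j)$ (Remark \ref{rlprod}(2)) and $\gdp$ being closed under products. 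Once the compatible finite-level data is in place, the homotopy-limit argument is formal from the axiomatics of Proposition \ref{pprop}. A minor additional check is that the resulting distinguished triangle is independent (up to the usual non-unique isomorphism) of the chosen presentation, which follows since different cofinal subsystems give the same inverse limits in $\gdp$ and hence the same objects and morphisms in $\gd$.
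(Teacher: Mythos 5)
Your overall plan matches the paper's: present $X$, $Z$, $X\setminus Z$ as compatible countable limits of smooth varieties, form the system of Gysin triangles, lift to $\gdp$, take the inverse limit there, and apply Proposition \ref{pprop}(\ref{ip8}). Two remarks, the second of which is a genuine gap.

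First, a cosmetic difference: the paper reduces immediately to $s=0$ and recovers the Tate-twisted case afterward via the splitting $\mg(X\times G_m)\cong\mg(X)\oplus\mg(X)(1)[1]$, while you carry $(s)$ through the whole argument. This is a matter of taste, but the paper's route avoids having to check that Tate twists behave well under limits mid-argument.

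The substantive problem is in your identification of the third term. You write that the cone $\co(f)\in\obj\gdp$ of $\prli_i j(X_i\setminus Z_i)(s)\to\prli_i j(X_i)(s)$ \emph{equals} $\prli_i j(Z_i)(r+s)[2r]$ and hence $p(\co(f))=\mg(Z)(r+s)[2r]$ ``by the very definition.'' This is false at the $\gdp$-level. What Deglise's functorial Gysin triangle (Proposition 2.4.5 of \cite{deggenmot}) gives you is an isomorphism $p(\co(j(X_i\setminus Z_i)\to j(X_i)))\cong\mg(Z_i)(r)[2r]$ \emph{in $\gd$} (not in $\gdp$), compatibly in $i$, i.e.\ an isomorphism of the two inverse systems as diagrams $I\to\gd$. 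But two inverse systems in $\gdp$ that become isomorphic after applying $p$ need not have isomorphic images of their $\gdp$-inverse limits; one cannot in general lift the termwise $\gd$-isomorphisms to $\gdp$. The missing step — and it is exactly the point at which the countability of $k$ is used — is Proposition \ref{pprop}(\ref{ip6}): because $I$ is countable, the triangle (\ref{elim}) expresses $p(\prli Y_i)$ as a cone of a map between countable products, so the isomorphism class of the homotopy limit in $\gd$ is determined by the diagram $\{p(Y_i)\}$ in $\gd$ alone. This is what lets one conclude $p(\prli_i \co(\dots))\cong p(\prli_i j(Z_i)(r)[2r])=\mg(Z)(r)[2r]$. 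The paper's \S\ref{unck} explicitly flags this as the obstruction for uncountable $k$; your proposal, as written, would ``prove'' the statement for arbitrary $k$, which should be a warning sign that the key hypothesis was not engaged.
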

\begin{proof}

First assume $s=0$.

We can assume $X=\prli X_i$, $Z=\prli Z_i$ for $i\in I$, where
$X_i,Z_i\in \sv$, $Z_i$ is closed everywhere of codimension $r$ in
$X_i$ for all $i\in I$.

We take $Y_i=j(X_i\setminus Z_i\to X_i)$, $Y=p(\prli_{i\in I} Y_i)$.
By parts \ref{ip7} and \ref{ip8} of Proposition \ref{pprop} we have a
distinguished triangle $\mg(X\setminus Z)\to \mg(X)\to Y$.

It remains to prove that $Y\cong \mg(Z)(r)[2r]$. Proposition 2.4.5 of
\cite{deggenmot} (a functorial form of the Gysin distinguished
triangle for Voevodsky's motives) yields that $p(Y_i)\cong
\mg(Z_i)(r)[2r]$; moreover, the connecting morphisms $p(Y_i)\to
p(Y_{i+1})$ are obtained from the corresponding morphisms
$\mg(Z_i)\to \mg(Z_{i+1})$ by  tensoring by $\z(r)[2r]$. It remains
to recall:  by Proposition \ref{pprop}(\ref{ip6}), the
isomorphism class of a homotopy limit in $\gd$ can be completely
described in terms of (objects and morphisms) of $\gd$ (i.e. we
don't have to consider the lifts of objects and morphisms to
$\gdp$). This yields the result.

Now, since $\mg(X\times G_m)=\mg(X)\bigoplus \mg(X)(1)[1]$ for any
$X\in \sv$ (hence this is also true for pro-schemes), the assertion
for the case $s=0$ yields the general case easily.
\end{proof}

Now we will construct a certain Postnikov tower $Po(X)$  for $X$
being the (twisted) comotif of a pro-scheme $Z$ that will be related
to the coniveau spectral sequences (for cohomology) of $Z$; our
method was described in \S\ref{sprom} above.
 Note that we consider the general case
of an arbitrary pro-scheme $Z$ (since in this paper pro-schemes play
an important role);  yet this case is not much distinct from the
(partial) case of $Z\in\sv$.

\begin{coro}\label{gepost}

We denote the dimension of $Z$ by $d$ (recall the
conventions of \S\ref{spsch}).

For all $i\ge 0$ we denote by $Z^i$ the set of
points of $Z$ of codimension $i$.

For any $s\ge 0$ there exists a Postnikov tower
for $X=\mg(Z)(s)[s]$ such that $l=0$, $m=d+1$, $X_i\cong \prod
_{z\in Z^i}\mg(z)(i+s)[2i+s]$.
\end{coro}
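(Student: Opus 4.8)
The plan is to construct the Postnikov tower by taking a homotopy limit of the classical geometric Postnikov towers over the smooth varieties approximating $Z$. First I would reduce to the case $s=0$: since $\mg(Z)(s)[s]$ is a retract of $\mg(Z\times G_m^s)$ (functorially in $Z$, compatibly with the connecting morphisms), and twisting by $\z(s)[s]$ commutes with the homotopy limits involved, a tower for $\mg(Z)$ with factors $\prod_{z\in Z^i}\mg(z)(i)[2i]$ yields after twisting a tower for $\mg(Z)(s)[s]$ with the required factors. (Strictly one should note that the retract of a Postnikov tower is again a Postnikov tower, using idempotent completeness of $\gd$ from Corollary \ref{css}(\ref{ipid}); alternatively argue directly with $Z\times G_m^s$ in place of $Z$, which is again a pro-scheme.)

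Next, write $Z=\prli_{i\in I} Z_i$ with $Z_i\in \sv$ of bounded dimension, connecting maps open dense embeddings. For each $i$ choose a flag $\varnothing = (Z_i)_{d+1}\subset (Z_i)_d\subset\dots\subset (Z_i)_0=Z_i$ with $(Z_i)_r$ everywhere of codimension $\ge r$, compatibly with the transition maps $Z_{i+1}\to Z_i$ (this compatibility is where a cofinality/reindexing argument is needed, but it is elementary: open dense embeddings pull back such flags). As in \S\ref{sprom} and \cite{de2}, the varieties $Z_i\setminus (Z_i)_r$ fit into a bounded Postnikov tower in $\dmge$ with $(Y_r)_{Z_i}=\mg(Z_i\setminus (Z_i)_r)$ and factors $(X_r)_{Z_i}=\prod_{z\in (Z_i)^r}\mg(z)(r)[2r]$, by the functorial Gysin triangle (Proposition 2.4.5 of \cite{deggenmot}). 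Lifting these towers to $\gdp$ via $j$ (Proposition \ref{pprop}(\ref{ip4})), forming the termwise inverse limits, and applying $p$, parts \ref{ip3}, \ref{ip7}, \ref{ip8} of Proposition \ref{pprop} give a bounded Postnikov tower in $\gd$ for $p(\prli j(Z_i)) = \mg(Z)$ with $l=0$, $m=d+1$, and $i$-th layer $p(\prli_i \prli((X_i)_r)_{Z_i})$.

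It then remains to identify this homotopy limit with $\prod_{z\in Z^i}\mg(z)(i)[2i]$. By Proposition \ref{pinfgy} (applied to the pro-schemes $X=\prli (Z_i\setminus (Z_i)_{r+1})$ and its closed sub-pro-scheme $\prli((Z_i)_r\setminus (Z_i)_{r+1})$, which is everywhere of codimension $r$ and is a disjoint union of the pro-schemes of the codimension-$r$ points of $Z$), the relevant layer is $\mg$ of the disjoint union over $z\in Z^i$ of the comotives of these points, twisted by $(i)[2i]$; by Remark \ref{rlprod}(2) this disjoint union of comotives is the product $\prod_{z\in Z^i}\mg(z)(i)[2i]$. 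Crucially, Proposition \ref{pprop}(\ref{ip6}) guarantees that the isomorphism class of each homotopy limit, and of the tower, depends only on the objects and morphisms of $\gd$ downstairs (not on the $\gdp$-lifts), so the identification of factors done in $\gdn$ in \cite{de2} transfers.

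The main obstacle I expect is the bookkeeping needed to make the flags $(Z_i)_r$ — and hence the geometric Postnikov towers — functorial in $i\in I$, so that one genuinely gets a tower of diagrams in $\gdp$ whose limit is a Postnikov tower (and not merely a sequence of homotopy-limit objects with ad hoc connecting maps). This is handled by the functoriality of the Gysin triangle together with the fact that open dense embeddings induce codimension-preserving restrictions of flags; passing to a cofinal subsystem of $I$ if necessary, one arranges all the data compatibly. Everything else is a routine application of the listed axioms of $\gd$ and $\gdp$, plus Proposition \ref{pinfgy}.
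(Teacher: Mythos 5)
Your approach is the right one (Gysin for pro-schemes via Proposition \ref{pinfgy}, then identify factors in $\gd$ using Proposition \ref{pprop}(\ref{ip6})), and the reduction to $s=0$ matches the paper. But there is a genuine gap in how you set up the projective system, and it is precisely the point you flag at the end but then hand-wave away.

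You fix the projective system $I$ defining $Z=\prli Z_i$ and then try to choose \emph{one} flag $(Z_i)_\bullet$ per $i$, compatibly with the transition maps, via pullback. Pulling back a flag from some $Z_{i_0}$ along the open dense embeddings $Z_i\hookrightarrow Z_{i_0}$ only propagates a single flag; the resulting termwise limit then computes a tower whose $j$-th factor is (the twisted comotif of) the pro-scheme $\prli_i(Z_i\cap((Z_{i_0})_j\setminus(Z_{i_0})_{j+1}))$, i.e.\ a locally closed sub-pro-scheme of $Z$ determined by a fixed stratification — \emph{not} $\prod_{z\in Z^j}\mg(z)(j)[2j]$. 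To shrink each stratum down to the individual codimension-$j$ points one must refine the flags as one goes up in the limit, i.e.\ the index set has to range over all flags, not over a compatible choice of one flag per smooth model. ``Cofinality in $I$'' does not help, since $I$ parametrizes the models, not the flags.

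The paper handles this by taking as index set a single countable projective system $L$ whose elements are the flags $\varnothing=Z_{d+1}\subset\dots\subset Z_0$ themselves (with $Z_0\in\sv$ a smooth model of $Z$ being part of the datum), ordered by the open embeddings $Z_0\setminus Z_j\hookrightarrow Z_0'\setminus Z_j'$. Over this $L$ the complements $Z_j^l\setminus Z_{j+1}^l$ shrink, and for each $z_0\in Z^j$ the connected components through $z_0$ have inverse limit exactly $z_0$ — this is what gives the factor $\prod_{z\in Z^j}\mg(z)(j)[2j]$. You could equivalently take pairs $(i,F)$ with $F$ a flag on $Z_i$ under the product/refinement order, but it has to be the system of all flags, not a choice of one per model. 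With that correction (and the paper's additional reduction to connected $Z$, which lets one use that arbitrary products of distinguished triangles are distinguished), the rest of your argument — lift to $\gdp$, take termwise limits, identify via Proposition \ref{pinfgy} and Remark \ref{rlprod}(2), invoke Proposition \ref{pprop}(\ref{ip6}) for isomorphism classes of countable homotopy limits — goes through as you intend.
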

\begin{proof}

As above, it suffices to prove the statement for $s=0$. Since any
product of distinguished triangles is distinguished, we can assume
$Z$ to be connected.

We consider a projective system $L$
whose elements are sequences of closed subschemes
$\varnothing=Z_{d+1}\subset Z_d\subset Z_{d-1}\subset \dots \subset
Z_0$. Here $Z_0\in\sv$, $Z_l\in \var$ for $l>0$, $Z$ is open in
$Z_0$ (see \S\ref{spsch}; $Z_0$ is connected; in the case when $Z\in
\sv$
 we only take $Z_0=Z$); for all $j>0$ we have: $Z_{j}$
is everywhere of codimension  $\ge j$ in $Z_0$; all irreducible
components of all $Z_j$ are everywhere
  of codimension $\ge j$ in $Z_0$; and
 $Z_{j+1}$ contains the singular locus of $Z_j$ (for $j\le d$).
The  ordering in $L$ is given by open
 embeddings of varieties $U_j=Z_0\setminus Z_j$ for $j>0$.
For $l\in L$ we will denote the corresponding sequence by
$\varnothing=Z^l_{d+1}\subset Z^l_d\subset Z^l_{d-1}\subset
\dots \subset Z^l_0$. Note  that
$L$ is countable!

By the previous proposition, for any $j$ we have a distinguished
triangle $\mg (\prli(Z^l_0\setminus Z^l_j))\to
\mg(\prli(Z^l_0\setminus Z^l_{j+1}))\to \mg (\prli(Z^l_j\setminus
Z^l_{j+1})(j)[2j])$.

It remains to compute the last term; we fix some $j$.

We have $\prli_{l\in L'} (Z^l_j\setminus Z^l_{j+1}))\cong \prod _{z\in Z^i}\mg(z)$.
 Indeed, for all $l\in L$ the variety
 $Z^l_j\setminus Z^l_{j+1}$ is the disjoint
 union of some locally closed smooth subschemes of
$Z^l_0$ of codimension $j$;
 for any $z_0\in Z^j$ for $l\in L$ large enough $z_0$
is contained in
 $Z^l_j\setminus Z^l_{j+1}$ as an open sub-pro-scheme,
and the inverse limit
 of connected components of $Z^l_j\setminus Z^l_{j+1}$
containing $z_0$ is exactly $z_0$.
Now, we can apply the functor $X\mapsto \mg(X)(j)[2j]$ to this
isomorphism.
We obtain  $\mg (\prli(Z^l_j\setminus Z^l_{j+1})(j)[2j]) \cong
   \prod _{z\in Z^i}\mg(z)(i)$.
This yields the result.

\end{proof}

\begin{rema}\label{lger} 

 1. Alternatively, one could construct 
 $Po(X)$ for
 the (twisted) comotif of a pro-scheme $T=\prli T^l$ as the inverse
  limit of the Postnikov towers for $T^l$ (constructed as above yet
with fixed $Z_0^l=T^l$);
  certainly, to this end one should pass to the limit in $\gdp$. It is easily
 seen that one
   would get the same tower this way.

2. Certainly, if we shift a Postnikov tower for
 $\mg(Z)(s)[s]$ by $[j]$ for some $j\in \z$,
we obtain a Postnikov tower for $\mg(Z)(s)[s+j]$. We didn't
formulate assertion 2
 for these shifts only because we wanted $X^p$ to belong to $\gds^{w=0}$
 (see Proposition \ref{pexw} below).

3. Since the calculation of $X^i$ used Proposition \ref{pprop}(\ref{ip6}), our method cannot describe
connecting
morphisms between 
them (in $\gd$). Yet one can calculate the 'images' of the
connecting morphisms in $\gdn$;
 see \S\ref{sprom} and \S\ref{scompdegl}.

\end{rema}

\section{Main motivic results}  \label{sapcoh}

The results of the previous section combined with those of \S\ref{sbpws}   allow us
 to construct (in \S\ref{sdgersten}) a certain {\it Gersten
weight structure} $w$ on
 a certain triangulated $\gds$: $\dmge\subset\gds\subset \gd$.
Its main property is that
 the comotives of function fields over $k$
 (and their products) 
  belong to  $\hw$. It follows immediately that
the Postnikov tower $Po(X)$
  provided by Corollary
\ref{gepost} is a
  {\it weight Postnikov tower} with respect to $w$.
Using this, in \S\ref{stds}  we prove:
if $S$ is a primitive scheme,  $S_0$ is its dense sub-pro-scheme, then $\mg(S)$ is a direct summand of $\mg(S_0)$;
$\mg(K)$ (for a function field $K/k$)
 contains (as retracts)
    the comotives of primitive schemes whose generic point
is $K$,
    as well as the twisted comotives of residue
 fields of $K$ (for all geometric valuations).

In \S\ref{sext} we  (easily) translate these results to cohomology; in particular,
the cohomology of (the spectrum of)
    $K$ contains direct summands
    corresponding to the cohomology of primitive schemes whose
generic point is $K$,
    as well as twisted cohomology of residue
 fields of $K$. Here one can consider any cohomology
 theory $H:\gds\to \au$; one can obtain such an $H$ by
 extending to $\gds$ any cohomological $H':\dmge\to \au$ if
 $\au$ satisfies AB5 (by means of Proposition \ref{pextc}).
 Note: in this case the cohomology of
 pro-schemes mentioned
 is calculated in the 'usual' way.

In \S\ref{sdconi} we consider  weight spectral sequences corresponding
to (the Gersten weight structure) $w$. We observe that these
spectral sequences generalize naturally  the classical coniveau spectral
sequences. Besides, for a fixed $H:\gds\to \au$ our (generalized) coniveau
spectral sequence converging to $H^*(X)$ (where $X$ could be a
motif or just an object of $\gds$) is $\gds$-functorial in $X$ (i.e.
it is motivically functorial for objects of $\dmge$); this
 fact is non-trivial even when restricted to motives of smooth varieties.

In \S\ref{sconi} we prove that there exists a nice duality
$\gd^{op}\times \dme \to \ab$ (extending the bi-functor
$\dme(-,-):\dmge^{op}\times \dme\to \ab$); the Gersten weight
structure $w$ (on $\gds$) is left orthogonal to
 the homotopy $t$-structure $t$ on $\dme$ with respect to it.
 This allows to apply
  Theorem \ref{tdual}: in the case when $H$
 comes from $Y\in\obj\dme$ we prove the isomorphism
 (starting from $E_2$) of (the coniveau)  $T(H,X)$
with the spectral sequence corresponding to the  $t$-truncations of $Y$.
 We  describe
$\obj \dmge\cap \gds^{w\le i}$ in terms of $t$ (for $\dme$).
We also note that our results allow to describe
torsion motivic cohomology in terms of (torsion)
\'etale cohomology (see  Remark \ref{rconiv}(4)).

In \S\ref{sbchcon} we define the coniveau spectral sequence (starting from $E_2$)
for cohomology of a motif $X$ over a not (necessarily) countable perfect base field $l$ as the limit of the corresponding coniveau spectral sequences over countable perfect subfields of definition for $X$.
This definition is compatible with the classical one (for $X$ being the motif of a smooth variety); so we obtain motivic functoriality of classical coniveau spectral sequences over a general base field.

In \S\ref{stchow} we prove that the Chow weight structure for
$\dmge$ (introduced in \S6 of \cite{bws}) could be extended to $\gd$
(certainly, the corresponding weight structure $w_{\chow}$ differs
from $w$). We will call the corresponding weight spectral sequences {\it Chow-weight} ones; note that they are isomorphic to  classical (i.e. Deligne's) weight spectral sequences when the latter are defined.

In \S\ref{scchowco}
 we use the results \S\ref{schws}
 to compare Chow-weight spectral sequences with  coniveau  ones. We always have a comparison morphism; it is an isomorphism if $H$ is a {\it birational} cohomology theory.

 In \S\ref{sgdbr} we consider the category of birational comotives $\gdbr$ (a certain 'completion' of birational motives of \cite{kabir}) i.e. the localization of $\gd$ by $\gd(1)$.
 It turns our that $w$ and $w_{\chow}$ induce the same weight structure $w'_{bir}$ on $\gdbr$. Conversely, starting from $w'_{bir}$ one can glue 'from slices' the weight structures induced by
  $w$ and $w_{\chow}$ on $\gd/\gd(n)$ for all $n>0$. Furthermore, these structures belong to an interesting family of weight structures indexed by a single integral parameter; other terms of this family could be also interesting!

\subsection{The Gersten weight structure for
$\gds\supset\dmge$} \label{sdgersten}

Now we describe the main weight structure of this paper.
Unfortunately, the author does not know whether it is possible
 to define the Gersten weight structure (see below) on the whole $\gd$. Yet for
our purposes it is quite sufficient to define the corresponding
weight structure on a certain triangulated subcategory $\gds\subset
\gd$ containing $\dmge$ (and the comotives of all pro-schemes).

In order to make 
the choice of $\gds\subset \gd$ compatible with extensions of
scalars, we bound certain dimensions of objects of $\hw$.

We will denote by $H$ the full subcategory of $\gd$ whose
objects are all 
countable 
products $\prod_{l\in L} \mg(\spe K_l)(n_l)[n_l]$;
 here $K_l$ are
 (the spectra of) function fields over $k$, $n_l\ge
0$; we assume that the transcendence degrees of $K_l/k$ and $n_l$
are bounded.

\begin{pr}\label{pexw}
1. Let $\gds$ be the Karoubi-closure of $\lan H\ra$ in $\gd$.
Then $\cu=\gds$ can be endowed with a unique weight structure $w$
such that $\hw$ contains $H$.

2. $\hw$ is the idempotent completion of $H$.

3. $\gds$ contains $\dmge$ as well as all $\mg(Z)(l)$ for $Z$
 being a pro-scheme, $l\ge 0$.

4. For any primitive $S$, $i\ge 0$, we have $\mg(S)(i)[i]\in
\gds^{w=0}$.

5. Let $Z$ be a pro-scheme, $s\ge 0$. Then $\mg(Z)(s)[s]\in
\gds^{w\le 0}$;  the Postnikov tower for $\mg(Z)(s)[s]$ given by
Corollary \ref{gepost} is a weight Postnikov tower for it.

\end{pr}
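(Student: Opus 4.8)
The plan is to build the Gersten weight structure on $\gds$ by invoking the general existence result for weight structures generated by a negative class, namely Theorem \ref{tbw}(\ref{igen})--(\ref{iwgene}), and then to identify the objects appearing in the Postnikov towers of Corollary \ref{gepost} with objects of the heart. First I would prove that the class $\obj H$ is \emph{negative} in $\gd$ (equivalently, in $\gds$): that is, $\gd(X,Y[i])=\ns$ for all $X,Y\in\obj H$ and all $i>0$. Since objects of $H$ are countable products of twisted comotives of function fields $\mg(\spe K_l)(n_l)[n_l]$, and $\gd$ is closed under such products with $\dmge$-objects cocompact, Remark \ref{rlprod}(2) lets me reduce to the case where the source is a single $\mg(\spe K)(n)[n]$ (cocompactness turns the product in the first argument into a direct sum) and the target is a single $\mg(\spe K')(n')[n']$. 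Writing $\spe K'=\prli Y_l$ as an inverse limit of smooth varieties, this is exactly the vanishing $\mg(X)(m)[m]\perp \mg(Y)[n'+i](n')$ established in Proposition \ref{maneg}(2), applied with the primitive scheme $X=\spe K$ (a function field is primitive). This gives negativity of $\obj H$.

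Given negativity, Theorem \ref{tbw}(\ref{igen}) applied to the idempotent complete category $\gd$ yields a unique weight structure $w$ on the Karoubi-closure $\gds$ of $\lan H\ra$ with $H\subset \gds^{w=0}$, and its heart is the Karoubi-closure of the closure of $\obj H$ under finite direct sums; since $H$ is already closed under finite direct sums (a finite sum of countable products of the allowed form is again such a product, after absorbing the summands and keeping the bound on transcendence degrees and twists), assertion 2 follows: $\hw$ is just the idempotent completion of $H$. For assertion 1 I should also check uniqueness, which is part of the cited theorem, and that $H$ sits inside $\gds^{w=0}$ as claimed. Assertion 4 then follows from Proposition \ref{porthog}: for a primitive scheme $S$ with collection of generic points $S_0$, Proposition \ref{pwalk} shows that any homotopy invariant presheaf injects from $S$ into $S_0$, and combined with the direct summand argument (or directly with Proposition \ref{maneg}) one sees $\mg(S)(i)[i]$ is a retract of a product of $\mg(\spe K_l)(i)[i]$'s, hence lies in $\hw=\gds^{w=0}$ by Karoubi-closedness of $\hw$. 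In fact, the cleaner route for assertion 4 is: $\mg(S)(i)[i]\perp \mg(Y)[n+j](n)$ and $\perp\mg(Y)[-j](\dots)$ by Proposition \ref{maneg}(2) and its dual-type consequences together with the orthogonality characterizations $\gds^{w\le 0}=(\cup_{j<0}\obj H[j])^\perp$ and $\gds^{w\ge 0}={}^\perp(\cup_{j>0}\obj H[j])$ of Theorem \ref{tbw}(\ref{iwgene}) — this directly places $\mg(S)(i)[i]$ in both $\gds^{w\le 0}$ and $\gds^{w\ge 0}$.

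For assertion 5, the factors $X^p$ of the Postnikov tower of $\mg(Z)(s)[s]$ from Corollary \ref{gepost} are $X^i\cong\prod_{z\in Z^i}\mg(z)(i+s)[2i+s][-i]=\prod_{z\in Z^i}\mg(z)(i+s)[i+s]$, which is precisely an object of $H$ (the $z$ are spectra of function fields, the twists $i+s$ are bounded since $Z$ is finite-dimensional, and the disjoint-union-of-points is a countable product). Hence all factors lie in $\gds^{w=0}$. By Theorem \ref{tbw}(\ref{iwpostc}), any bounded Postnikov tower whose factors lie in $\cu^{w=0}$ is automatically a weight Postnikov tower, so the tower of Corollary \ref{gepost} is a weight Postnikov tower for $\mg(Z)(s)[s]$; and then Theorem \ref{tbw}(\ref{iwpost}) (or simply the fact that $\co(Y_0\to \mg(Z)(s)[s])\in\cu^{w\le 0}$ with $Y_0=0$) gives $\mg(Z)(s)[s]\in\gds^{w\le 0}$. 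Assertion 3 is then a bookkeeping consequence: $\mg(Z)(l)$ for a pro-scheme $Z$ is (a shift of) an object carrying such a tower hence lies in $\gds$, and $\dmge$ is generated by motives of smooth varieties, which are pro-schemes, so $\dmge\subset\gds$ by triangulated-subcategory and Karoubi-closure arguments. The main obstacle I anticipate is the negativity verification — keeping careful track of how cocompactness of $\dmge$-objects in $\gd$ interacts with the countable products defining $H$, and making sure the reduction to Proposition \ref{maneg}(2) is legitimate for all the relevant degree shifts (both strictly positive degrees, where Proposition \ref{maneg}(2) applies directly, and the need to handle twists symmetrically); everything else is an application of the already-established machinery of weight structures generated by negative classes.
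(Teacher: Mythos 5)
Your overall strategy coincides with the paper's: prove negativity of $H$, invoke Theorem \ref{tbw}(\ref{igen}) for assertions 1 and 2, use Theorem \ref{tbw}(\ref{iwgene}) with Proposition \ref{maneg}(2) for assertion 4, identify the factors of the Postnikov tower from Corollary \ref{gepost} as objects of $\hw$, and apply Theorem \ref{tbw}(\ref{iwpostc}) and (\ref{iwpost}) for assertions 3 and 5.

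There is however a genuine gap in your negativity argument. You claim that after reducing the target to a single $\mg(\spe K')(n')[n']$, ``cocompactness turns the product in the first argument into a direct sum''. This is false: the objects cocompact in $\gd$ are those of $\dmge$ (Proposition \ref{pprop}(\ref{ip1})), and a comotif $\mg(\spe K')(n')[n']$ of a function field is a proper homotopy limit of geometric motives, \emph{not} a cocompact object. Cocompactness of the target is exactly what would be needed to turn a product in the source into a direct sum, so this reduction step fails as stated. The correct observation — the one the paper uses — is that any object of $H$ is already a \emph{finite} direct sum of objects $\mg(X)(m)[m]$ with $X$ a primitive pro-scheme: group the countable product $\prod_{l\in L}\mg(\spe K_l)(n_l)[n_l]$ by the (bounded) twist degree, and note that by Remark \ref{rlprod}(2) each subproduct $\prod_{l:n_l=m}\mg(\spe K_l)(m)[m]$ equals $\mg\bigl(\sqcup_{l:n_l=m}\spe K_l\bigr)(m)[m]$, and the disjoint union of (countably many) function fields is a primitive pro-scheme. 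Proposition \ref{maneg}(2) then applies to each pair of summands directly, with no further reduction needed (the proposition already allows the target $Y$ to be an arbitrary pro-scheme).

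The rest of your proof is sound, and in fact your remark at the end about ``handling twists symmetrically'' and ``dual-type consequences'' for assertion 4 is an over-complication: both orthogonality requirements of Theorem \ref{tbw}(\ref{iwgene}) — that $\mg(S)(i)[i]$ is right-orthogonal to $\cup_{j<0}\obj H[j]$ and left-orthogonal to $\cup_{j>0}\obj H[j]$ — follow from Proposition \ref{maneg}(2) applied in each direction, since in the first case the source is (a summand of) an object of $H$, hence a twisted comotif of a primitive scheme, and the target $S$ is a pro-scheme, and in the second case it is $S$ that is primitive and the object of $H$ that supplies the pro-scheme. No dualization enters.
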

\begin{proof}
1. By  Proposition \ref{maneg}(2), $H$ is negative (since any object of $H$ is a finite sum of $\mg(X_i)(m_i)$ for some primitive pro-schemes $X_i$, $m_i\in \z$). 
Besides, $\gd$ is idempotent complete (see Corollary \ref{css}(4)); hence $\gds$ and $\gds^{w=0}$ also are.
 Hence we can apply Theorem \ref{tbw}(\ref{igen}) (for $D=H$).

2. Also immediate from Theorem \ref{tbw}(\ref{igen}). 

3. $\mg(Z)(l)\in \obj \gds$ by Corollary \ref{gepost}; in
particular, this is true for $Z\in \sv$. It remains to note that
$\dmge$ is the Karoubi-closure of $\lan \mg(U):\ U\in \sv\ra$
in  $\gd$.

4. It suffices to note that $\mg(S)(i)[i]$ belongs both
to $\gds^{w\le 0}$ and to $\gds^{w\ge 0}$ by Theorem \ref{tbw}(\ref{iwgene}). Here we use  
 Proposition \ref{maneg}(2) again.

5. We have $X^i\in \gds^{w=0}$. Hence Theorem
\ref{tbw}(\ref{iwpostc}) yields the result. Note  here that we have $Y_{0}=0$
 in the notation  of Definition \ref{d2}(9). 

\end{proof}

We will call $w$ the {\it Gersten} weight structure, since it is
closely connected with Gersten resolutions of cohomology (cf.
\S\ref{sconi} below). 
By default, 
below $w$ will denote
the Gersten weight structure.

\begin{rema}\label{rcger}

1. $\hw$ is idempotent
 complete since $\gds$ is.

2. In fact, one could easily prove similar statements for $\cu$ being
just $\lan H\ra$ (instead of its Karoubi-closure in $\gd$). Certainly,
for this version of $\cu$ we will only have $\cu\supset \mg(K^b(\smc))$.

Besides, note that for any function field $K'/k$, any $r\ge 0$, there
exists a function field $K/k$ such that
$\mg(\spe K')(r)[r]$ is a retract of $\mg(\spe K)$ (see Corollary \ref{tds2} below).
Hence 
it suffices take $H$ being the full subcategory of $\gd$
whose objects are
  $\prod_{l\in L} \mg(\spe K_l)$ (for bounded transcendence degrees of $K_l/k$).

3. The proposition 
implies that $\gds$ is exactly the Karoubi-closure in $\gd$ of the
triangulated category generated by the comotives of all pro-schemes.

4. The author does not know whether one can 
describe weight
decompositions for arbitrary objects of
$\dmge$ explicitly. Still, one can say something about these weight
decompositions and
weight complexes using their functoriality properties.
In particular, knowing
weight complexes for $X,Y\in \obj \dmge$
(or just $\in \obj \gds$) and $f\in \gds(X,Y)$ one can
describe the weight complex of $\co(f)$  up to a homotopy
equivalence as the
corresponding cone (see Lemma \ref{lwc} below). Besides,
let $X\to Y\to Z$
be a distinguished triangle (in $\gd$). Then for any choice of
 $(X^{w\le 0}, X^{w\ge 1})$ and $(Z^{w\le 0}, Z^{w\ge 1})$
there exists a choice
of $(Y^{w\le 0}, Y^{w\ge 1})$ such that there exist
distinguished triangles
 $X^{w\le 0}\to Y^{w\le 0}\to Z^{w\le 0}$ and
$X^{w\ge 1}\to Y^{w\ge 1}\to Z^{w\ge 1}$;
 see Lemma 1.5.4 of \cite{bws}. In particular, we obtain
that $j$ maps complexes
(over $\smc$) concentrated in degrees $\le j$ into
$\gds^{w\le j}$
(we will prove a stronger statement in  Remark
\ref{rconiv}(4) below).
If $X\in \obj \dmge$ comes from a complex over $\smc$
whose connecting morphisms  satisfy certain
 codimension restrictions, these observations could be
extended to an explicit
description of a weight decomposition for it; cf. \S7.4 of \cite{bws}.

\end{rema}

\subsection{Direct summand results for  comotives}\label{stds}

Proposition \ref{pexw} easily implies the following
interesting result.

\begin{theo}\label{tds1n}

1. Let $S$ be a primitive scheme; let $S_0$ be its dense sub-pro-scheme.
Then $\mg(S)$ is a direct summand of $\mg(S_0)$.

2. Suppose moreover that $S_0=S\setminus T$ where  $T$ is a closed
subscheme of $S$ everywhere of codimension $r>0$. Then we have $\mg(S_0)\cong
\mg(S) \bigoplus \mg(T)(r)[2r-1]$.

 \end{theo}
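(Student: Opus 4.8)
The plan is to deduce both parts from Proposition \ref{pexw} together with the Gysin distinguished triangle for pro-schemes (Proposition \ref{pinfgy}) and the machinery of the Gersten weight structure $w$. First I would treat part 2, which is the concrete geometric input: since $T$ is a closed sub-pro-scheme of $S$ everywhere of codimension $r>0$, Proposition \ref{pinfgy} (applied with $s=0$) gives a distinguished triangle $\mg(S\setminus T)\to \mg(S)\to \mg(T)(r)[2r]$, i.e. $\mg(S_0)\to \mg(S)\to \mg(T)(r)[2r]$. Rotating this triangle yields $\mg(T)(r)[2r-1]\to \mg(S_0)\to \mg(S)\xrightarrow{\partial}\mg(T)(r)[2r]$. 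The claim $\mg(S_0)\cong \mg(S)\bigoplus \mg(T)(r)[2r-1]$ amounts to showing that $\partial=0$ (equivalently, that the triangle splits). For this I would use the Gersten weight structure: by Proposition \ref{pexw}(4), $\mg(S)\in \gds^{w=0}$ (as $S$ is primitive), and $\mg(T)(r)[2r-1]$ lies in $\gds^{w\le -(2r-1)}\subset\gds^{w\le -1}$ since $\mg(T)(r)[r]\in\gds^{w=0}$ (again by \ref{pexw}(4), applied to the primitive — indeed, after reducing to connected components, primitive — scheme $T$ with twist $r$; note $T$ being a closed subscheme of a primitive $S$ of codimension $r$ need not itself be primitive in general, so here one must instead argue that $\mg(T)(r)[2r]\in \gds^{w\le -1}$ directly, e.g. via Proposition \ref{maneg}(2) and Theorem \ref{tbw}(\ref{iwgene}), or reduce to the situation where $T$ is primitive as in the applications). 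Granting $\mg(T)(r)[2r-1]\in\cu^{w\le -1}$ and $\mg(S)\in\cu^{w=0}$, the orthogonality axiom of weight structures gives $\gds(\mg(S),\mg(T)(r)[2r])=0$ since $\mg(T)(r)[2r]\in\cu^{w\le 0}[-1]=\cu^{w\le -1}$ and $\mg(S)\in\cu^{w\ge 0}$, so $\partial=0$ and the triangle splits, proving part 2.

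Next I would prove part 1 in the general case of an arbitrary dense sub-pro-scheme $S_0\subset S$. Write $S=\prli S_i$ and $S_0=\prli S_i'$ with $S_i'\subset S_i$ open dense; the complement data organize into closed subschemes $Z_i$ of $S_i$ of positive codimension, and $S_0$ is obtained from $S$ by removing a closed sub-pro-scheme $T$ of codimension $\ge 1$. One cannot in general realize $S_0$ by removing a single equidimensional $T$, so instead I would use the Postnikov tower of Corollary \ref{gepost}: $\mg(S)$ has a weight Postnikov tower with $X^0\cong\prod_{z\in S^0}\mg(z)$ and $X^i\cong\prod_{z\in S^i}\mg(z)(i)[2i]$ for $i>0$ (these are the factors; by Proposition \ref{pexw}(5) this is a weight Postnikov tower). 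The generic points of $S$ and of $S_0$ coincide — call this pro-scheme $S_0^{gen}$, the collection of generic points — so $\mg(S_0^{gen})=X^0$ is the weight-zero factor of both $\mg(S)$ and (the analogous tower for) $\mg(S_0)$. The open embedding $S_0\hookrightarrow S$ induces a morphism $f\colon \mg(S_0)\to\mg(S)$; I would extend it (using Theorem \ref{tbw}(\ref{impost}), applicability justified since both sides lie in $\gds^{w\le 0}$) to a morphism of their negative weight Postnikov towers which is the identity on $X^0$. Then Theorem \ref{tbw}(\ref{impostp}) applies: $\mg(S)\in\gds^{w=0}$ by Proposition \ref{pexw}(4), so $f$ exhibits $\mg(S)$ as a retract of $\mg(S_0)$, i.e. a direct summand. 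This is exactly the strategy sketched in \S\ref{sprom} of the introduction.

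The main obstacle I anticipate is \emph{part 1}: verifying that the map $f\colon\mg(S_0)\to\mg(S)$ can actually be extended to a morphism of negative weight Postnikov towers that restricts to $\id_{X^0}$ on the weight-zero factors. The extension morphism supplied by Theorem \ref{tbw}(\ref{impost}) is not canonical, and one needs to know it can be \emph{arranged} to be an isomorphism on $X^0$ — this requires comparing the $i=0$ steps of the two Gysin-type Postnikov towers (Corollary \ref{gepost}) and checking that the morphism induced on generic-point factors by the open embedding is the obvious identification. This is where the functoriality of the Gysin triangle (Proposition \ref{pinfgy}, resting on Proposition 2.4.5 of \cite{deggenmot}) and the compatibility of the towers of $S$ and $S_0$ under the open embedding must be used carefully; once $f$ is normalized so that $X^0(S_0)\xrightarrow{\sim}X^0(S)$, Theorem \ref{tbw}(\ref{impostp}) finishes the argument cleanly. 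A secondary technical point, as noted above, is the precise weight estimate for $\mg(T)(r)[2r]$ in part 2 when $T$ is merely a codimension-$r$ closed subscheme of a primitive scheme rather than primitive itself; in the applications ($T$ a residue field's scheme) $T$ will be primitive, but in the stated generality one should invoke Theorem \ref{tbw}(\ref{iwgene}) and Proposition \ref{maneg}(2) to place $\mg(T)(r)[2r]$ in $\gds^{w\le -1}$.
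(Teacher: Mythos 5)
Your proposal is correct and follows essentially the same route as the paper: part 2 via the Gysin triangle of Proposition \ref{pinfgy} together with the weight estimates of Proposition \ref{pexw} and Theorem \ref{tbw}(\ref{isump}), and part 1 via normalizing the morphism of negative weight Postnikov towers so that it is the identity on the common generic-point factor $\mg(\spe k(S))$ and then invoking Theorem \ref{tbw}(\ref{impostp}). The only place you worry unnecessarily is the weight estimate for $\mg(T)(r)[2r]$: Proposition \ref{pexw}(5) applies to \emph{any} pro-scheme $T$ (primitivity is irrelevant there) and gives $\mg(T)(r)[r]\in\gds^{w\le 0}$, hence $\mg(T)(r)[2r]\in\gds^{w\le -r}\subset\gds^{w\le -1}$ directly, which is exactly what the paper does.
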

\begin{proof}

We can assume that $S$ and $S_0$ are connected.

1. By Proposition \ref{pexw}(5), we have: $\mg(S_0),\mg(S)\in
\gds^{w\le 0}$; $\mg(\spe(k(S)))$ could be assumed to be the zeroth term
of their weight complexes for a choice of weight complexes compatible with some  negative Postnikov weight towers for them;
 the embedding $S_0\to S$ is compatible with
 $\id_{\mg(\spe(k(S)))}$ (since we have a commutative triangle $\spe k(S)\to S_0\to S$ of pro-schemes).
 Hence    Theorem \ref{tbw}(\ref{impostp}) yields the result.

2. By Proposition \ref{pinfgy} we have a distinguished triangle
$\mg(S_0)\to \mg(S)\to \mg(T)(r)[2r]$. By parts 4 and 5 of Proposition
\ref{pexw} we have $\mg(S_0)\in \gds^{w\le 0}$, $\mg(S)\in \gds^{w= 0}$,
$\mg(T)(r)[2r]\in \gds^{w\le -r}\subset \gds^{w\le -1}$. Hence 
 Theorem \ref{tbw}(\ref{isump}) yields the result.

\end{proof}

\begin{coro}\label{tds1} \label{tds2}

1. Let $S$ be a connected primitive scheme, let $S_0$ be its generic
point.
Then $\mg(S)$ is a retract of $\mg(S_0)$.

2. Let $K$ be a function field  over $k$. Let $K'$ be
the residue field for a geometric valuation $v$ of $K$ of rank $r$.
Then $\mg(\spe K')(r)[r]$ is a retract
of $\mg(\spe K)$.

\end{coro}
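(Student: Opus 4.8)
The plan is to deduce both parts of Corollary \ref{tds2} from Theorem \ref{tds1n}(1) together with the Gysin distinguished triangle of Proposition \ref{pinfgy} and the description of the Gersten weight structure $w$ given in Proposition \ref{pexw}. Part 1 is immediate: a connected primitive scheme $S$ has its generic point $S_0=\spe k(S)$ as a dense sub-pro-scheme, so Theorem \ref{tds1n}(1) applies verbatim and exhibits $\mg(S)$ as a direct summand (hence a retract) of $\mg(S_0)$.

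For part 2, I would argue by induction on the rank $r$ of the geometric valuation $v$. First reduce to the case $r=1$: a geometric valuation of rank $r$ factors as a composition of $r$ rank-one geometric valuations (passing through the intermediate residue fields), and a Tate twist/shift by $(r)[r]$ is the composition of $r$ twists by $(1)[1]$; so chaining the rank-one statement $r$ times gives the general case (using that a retract of a retract is a retract, and that the functor $-(1)[1]$ preserves retracts). For the rank-one case, realize $v$ geometrically: there is a smooth $k$-variety $X$ with function field $K$ carrying an irreducible divisor $Z\subset X$ whose generic point $\eta_Z$ has residue field $K'$, and $Z$ is everywhere of codimension $1$ in $X$. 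Semi-localizing $X$ at $\eta_Z$ (and at the generic point of $X$) produces a connected primitive scheme $S$ — a semi-local ring of $X$, primitive by Remark \ref{rabprim} — with generic point $\spe K$ and with a closed sub-pro-scheme $T$ everywhere of codimension $1$ whose generic point is $\spe K'$; here $S_0:=S\setminus T$ has generic point $\spe K$ as well, and in fact one checks $S_0$ is again primitive with $\mg(S_0)$ a retract of $\mg(\spe K)$ by part 1.

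Now apply Theorem \ref{tds1n}(2) to the pair $S\supset T$ (with $r=1$): it yields $\mg(S_0)\cong \mg(S)\bigoplus \mg(T)(1)[1]$, so $\mg(T)(1)[1]$ is a direct summand of $\mg(S_0)$, hence (composing retractions) a retract of $\mg(\spe K)$. Finally, apply part 1 to the connected primitive scheme $T$: $\mg(\spe K')=\mg(T_0)$ has $\mg(T)$ as a retract, and twisting by $(1)[1]$ — which is exact and hence preserves retracts — shows $\mg(\spe K')(1)[1]$ is a retract of $\mg(T)(1)[1]$, and therefore of $\mg(\spe K)$. Composing with the rank-one-to-rank-$r$ reduction above completes the proof.

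I expect the main obstacle to be purely geometric bookkeeping rather than homological: namely, verifying that for an arbitrary rank-$r$ geometric valuation $v$ of $K$ one can actually produce a chain of smooth varieties and everywhere-codimension-one closed sub-pro-schemes realizing the successive residue fields, and that the relevant semi-localizations are primitive pro-schemes in the sense of \S\ref{sprim} (for finite $k$ one must use the semi-local convention, for infinite $k$ the primitivity criterion via Remark \ref{rabprim}). Once this geometric input is set up, the iteration of Theorem \ref{tds1n} and the use of \ref{pinfgy} and \ref{pexw} are routine, and the fact that $-(r)[r]$ respects retracts (being an exact self-equivalence-like functor on the relevant subcategory) is immediate.
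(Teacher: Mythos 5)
Your overall strategy for part 2 — reduce to rank one, realize the rank-one valuation by a codimension-one point on a smooth model $U$ with $k(U)=K$, pass to the local pro-scheme at that point, and apply Theorem \ref{tds1n}(2) — is the same route the paper takes. Part 1 is indeed immediate from Theorem \ref{tds1n}(1), exactly as you say.

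However, the final paragraph of your argument contains a direction error in the retraction that would be fatal were the extra steps actually necessary. You write that, since $\mg(T)$ is a retract of $\mg(T_0)=\mg(\spe K')$ by part 1, ``twisting by $(1)[1]$\ldots shows $\mg(\spe K')(1)[1]$ is a retract of $\mg(T)(1)[1]$.'' Twisting preserves the direction of a retraction, so from ``$\mg(T)$ is a retract of $\mg(\spe K')$'' you can only conclude ``$\mg(T)(1)[1]$ is a retract of $\mg(\spe K')(1)[1]$'' — the opposite of what you assert. Combined with ``$\mg(T)(1)[1]$ is a retract of $\mg(\spe K)$,'' this gives $\mg(T)(1)[1]$ as a common retract of both sides but does not yield $\mg(\spe K')(1)[1]$ as a retract of $\mg(\spe K)$. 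The argument as written simply does not close.

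What saves you is that in your own construction those extra steps are unnecessary: semi-localizing at $\eta_Z$ and at the generic point of $X$ is the same as localizing at $\eta_Z$, so $S=\spe\mathcal O_{X,\eta_Z}$ has exactly two points, $\spe K$ (the generic point) and $\spe K'$ (the closed point). Then $T=\spe K'=T_0$ and $S_0=\spe K$ on the nose, and Theorem \ref{tds1n}(2) gives directly
\begin{equation*}
\mg(\spe K)=\mg(S_0)\cong \mg(S)\oplus \mg(\spe K')(1)[1],
\end{equation*}
which exhibits $\mg(\spe K')(1)[1]$ as a retract of $\mg(\spe K)$ — exactly the paper's proof. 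You should delete the final paragraph and replace it by this one-line observation. Also note that the paper supplies the geometric input you flag as an ``obstacle'': one passes to a normal projective model of $K$ and observes that the singular locus has codimension $\ge 2$, so the center of a rank-one geometric valuation necessarily lies in the smooth locus; this produces the smooth $U\in\sv$ with the required codimension-one point.
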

\begin{proof}
 1. This is just a partial case of part 1 of  the previous theorem.

2. Obviously, it suffices to prove the statement in the case $r=1$.
Next,  $K$ is the function field  of some normal projective variety over
$k$. Hence there exists a $U\in \sv$ such that: $k(U)=K$, $v$ yields
a non-empty closed subscheme of $U$ (since the singular locus has
 codimension
$\ge 2$ in a normal variety). It easily follows that there exists a
pro-scheme $S$ (i.e. an inverse limit of smooth varieties) whose
only points are the spectra of $K$ and $K_0$. So, $S$ is local,
hence it is primitive.

By part 2 of the theorem, we have
$$\mg(\spe K)=\mg(S)\bigoplus \mg(\spe K_0)(1)[1];$$
this concludes the proof.

\end{proof}

\begin{rema}\label{rstds} 
1. Note that we do not construct any explicit splitting morphisms in
the decompositions above. Probably, one cannot choose any canonical
splittings here (in the general case); so there is no (automatic)
compatibility for any pair of related decompositions. Respectively,
though the comotives of (spectra of) function fields contain tons of
direct summands, there seems to be no general way to decompose them
into indecomposable summands.

2. Yet Proposition \ref{pinfgy} easily yields that
$\mg(\spe k(t))\cong \z\bigoplus \prod_E\mg(E)(1)[1]$; here $E$
runs through all closed points of $\af^1$ (considered as a scheme over $k$). 

\end{rema}

\subsection{On cohomology
of pro-schemes, and its direct summands}\label{sext}

The results 
proved above immediately imply 
similar assertions for cohomology. 
We also construct a class of cohomology theories that respect
 homotopy limits. 

\begin{pr}\label{cdscoh}
Let $H:\gds\to \au$ be cohomological, $S$ be a primitive scheme.

1. Let $S_0$ be a dense sub-pro-scheme of  $S$. Then $H(\mg(S))$
  is a direct summand of $H(\mg(S_0))$.

2. Suppose moreover that $S_0=S\setminus T$ where  $T$ is a
closed subscheme of $S$ of codimension $r>0$. Then we have
$H(\mg(S_0))\cong H(\mg(S))
\bigoplus H(\mg(T)(r)[2r-1])$.

3. Let $S$ be connected, $S_0$ be the generic point of $S$.
Then $H(\mg(S))$ is a retract of $H(\mg(S_0))$ in $\au$.

4. Let $K$ be a function field  over $k$. Let $K'$
be the residue field for a geometric valuation $v$ of $K$ of rank
$r$.
Then $H(\mg(\spe K')(r)[r])$ is a retract of $H(\mg(\spe K))$ in $\au$.

5. Let $H':\dmge\to \au$ be
a cohomological functor, let $\au$ satisfy AB5. Then Proposition
 \ref{pextc}
allows to extend $H'$
to a cohomological functor $H:\gd\to \au$ that converts   inverse
limits  in $\gd'$ to the corresponding direct limits in $\au$.

\end{pr}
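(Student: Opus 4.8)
The plan is to derive assertions 1--4 formally from the direct summand results for comotives proved above, and to obtain assertion 5 by feeding the $\inli$-computation of Corollary \ref{css}(1) into the extension machinery of Proposition \ref{pextc}.

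First I would record the elementary fact that an additive functor (covariant or contravariant) carries a retract to a retract and splits finite direct sum decompositions. Since $H$ is additive and all the (twisted, shifted) comotives occurring lie in $\obj\gds$ by Proposition \ref{pexw}(3)--(4) and closure of $\gds$ under $[1]$, this is everything that is needed. Then assertion 1 is Theorem \ref{tds1n}(1) followed by $H$; assertion 2 is the isomorphism $\mg(S_0)\cong\mg(S)\bigoplus\mg(T)(r)[2r-1]$ of Theorem \ref{tds1n}(2) followed by $H$ (additive functors preserve biproducts regardless of variance); assertions 3 and 4 are Corollary \ref{tds2}(1) and Corollary \ref{tds2}(2), respectively, followed by $H$.

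For assertion 5, recall that $\dmge$ is (skeletally) small, so Proposition \ref{pextc}(I1), applied with $\cu'=\dmge$ and $\cu=\gd$ (using that $\au$ satisfies AB5), produces a cohomological extension $H\colon\gd\to\au$ of $H'$. It remains to verify the continuity property. Given a functor $X\colon L\to\gdp$ on a small filtered projective system, the object $p(\prli_{l\in L}X_l)\in\obj\gd$ is equipped with canonical morphisms to the $p(X_l)$, and Corollary \ref{css}(1) asserts precisely that for every $Z\in\obj\dmge$ these morphisms induce $\gd(p(\prli_{l}X_l),Z)\cong\inli_{l}\gd(p(X_l),Z)$. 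This is exactly the hypothesis of Proposition \ref{pextc}(I2) for the subcategory $\dmge$, so that assertion gives $H(p(\prli_{l}X_l))\cong\inli_{l}H(p(X_l))$; that is, $H$ converts inverse limits in $\gdp$ into the corresponding direct limits in $\au$.

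The one place where care is needed is this last continuity step: one must match the hypothesis of Proposition \ref{pextc}(I2) against what Corollary \ref{css}(1) actually delivers, keeping track of variances — since $H$ is contravariant and $L$ is a projective system, $\{p(X_l)\}$ is a projective system in $\gd$, whence $\gd(p(X_l),Z)$ and $H(p(X_l))$ form direct systems and both $\inli$'s above are honest colimits over $L$. Once this bookkeeping is set up correctly there is nothing further to do; the identification ultimately rests on the cocompactness in $\gd$ of the objects of $\dmge$ (Proposition \ref{pprop}(\ref{ip1})) via Proposition \ref{pprop}(\ref{ipcf}). All the genuinely substantive inputs — Theorem \ref{tds1n}, Corollary \ref{tds2}, Proposition \ref{pextc}, Corollary \ref{css} — are already in hand, so no serious obstacle remains.
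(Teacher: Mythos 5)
Your proposal is correct and follows essentially the same route as the paper: parts 1--4 are applications of $H$ to Theorem~\ref{tds1n} and Corollary~\ref{tds2} (using additivity of $H$ to carry retracts and direct sums), and part 5 invokes Proposition~\ref{pextc} together with the cocompactness of $\dmge$-objects in $\gd$ (Corollary~\ref{css}(1), which is the direct consequence of Proposition~\ref{pprop}(\ref{ipcf}) that the paper cites). The extra bookkeeping you supply around variances and the continuity hypothesis of Proposition~\ref{pextc}(I2) is accurate and merely makes explicit what the paper leaves terse.
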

\begin{proof}

1. Immediate  from
Theorem \ref{tds1n}(1).

2. Immediate  from
 Theorem \ref{tds1n}(2).

3. Immediate  from
 Corollary \ref{tds1}(1).

4. Immediate  from
 Corollary \ref{tds2}(2).

5. Immediate from Proposition \ref{pextc}; note that $\dmge$
is skeletally small. Here in order to prove that $H$ converts
homotopy limits into direct limits we use part I2 of  loc. cit.
and  Proposition \ref{pprop}(\ref{ipcf}).

\end{proof}

\begin{rema}\label{rcohp}

1.  In the setting of assertion 5 we will call  $H$ an
{\it extended} cohomology theory.

Note that for $H'=\dmge(-,Y)$, $Y\in \obj \dmge$, we have $H=\gd(-,Y)$; see (\ref{ekrause}).


2. Now recall that for any pro-scheme $Z$, any  $i\ge 0$,
 $\mg(Z)(i)$ (by
definition) could be presented as a countable homotopy limit of geometric motives.
Moreover, the same is true for all  small countable products of $\mg(Z_l)(i)$.
Hence if $H$ is extended, then the cohomology of $\prod \mg(Z_l)(i)$
is the corresponding
direct limit; this coincides with the  definition given by (\ref{dfps}) (cf. Remark \ref{rspsch}).

In particular, one can apply the results of Proposition
 \ref{cdscoh} to
the usual \'etale cohomology of pro-schemes mentioned
(with values in $\ab$ or in some category of Galois modules).

3. 
 If $H'$ is also a tensor functor (i.e. it converts
tensor product in $\dmge$ into tensor products in $D(\au)$), then
certainly the cohomology of $\mg(\spe K')(r)[r]$ is the corresponding
tensor product of $H^*(\mg(\spe K'))$ with $H^*(\z(r)[r])$. Note that the
latter one is a retract of $H^*(G_m^r)$; 
we obtain the Tate
twist for cohomology this way.
\end{rema}

\subsection{Coniveau spectral sequences for cohomology of (co)motives}
\label{sdconi}

Let $H:\gds^{op}\to \au$ be a cohomological functor, $X\in \obj \gds$.


\begin{pr} \label{rwss}

1. Any choice of  a weight spectral sequence $T(H,X)$
(see Theorem \ref{pssw}) corresponding to the Gersten weight
 structure $w$ is canonical and $\gds$-functorial in
 $X$ starting from $E_2$.

2. $T(H,X)$ converges to $H(X)$.

3. Let $H$ be an extended theory (see Remark \ref{rcohp}),
 $X=\mg(Z)$ for $Z\in \sv$.
Then any choice of $T(H,X)$ starting from $E_2$ is canonically
isomorphic to the classical coniveau spectral sequence (converging
to the $H$-cohomology of $Z$; see  \S1 of \cite{suger}).

\end{pr}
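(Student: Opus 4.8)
The plan is to deduce all three parts from the general theory of weight structures developed in \S\ref{swnew} together with the concrete identification of the Postnikov tower of Corollary \ref{gepost} as a weight Postnikov tower (Proposition \ref{pexw}(5)). For part 1: by Theorem \ref{pssw}(I2) the weight spectral sequence $T(H,X)$ attached to \emph{any} choice of a weight Postnikov tower for $X$ is (covariantly) functorial in $H$ and contravariantly $\cu$-functorial in $X$ from $E_2$ on; moreover by Theorem \ref{pssw}(II) its derived exact couple — hence the whole sequence from $E_2$ — is expressed purely in terms of the virtual $t$-truncations of $H$ (with respect to $w$), which are canonical. Since $w$ here is the Gersten weight structure on $\gds$ of Proposition \ref{pexw}, and $X\in\obj\gds$, this is immediate. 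Part 2 is also immediate: by Proposition \ref{pexw}(5) (applied with $s=0$ to $X=\mg(Z)$, or in general using that $\mg(Z)(s)[s]\in\gds^{w\le 0}$ and, for $Z$ a variety, the tower is bounded), $X$ is a bounded object, so $X\in\cu^b$ and Theorem \ref{pssw}(I1) gives convergence $T(H,X)\implies H^{p+q}(X)$.

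The substance of the proposition is part 3, the comparison with the classical coniveau spectral sequence of \S1 of \cite{suger}. Here I would argue as follows. Fix $Z\in\sv$ of dimension $d$ and $X=\mg(Z)$. The classical coniveau spectral sequence is built from the filtration of $Z$ by closed subsets of increasing codimension: one takes the colimit over all flags $\varnothing=Z_{d+1}\subset Z_d\subset\dots\subset Z_0=Z$ (with the usual codimension and singular-locus conditions) of the exact couples coming from the localization long exact sequences for $H$ applied to the open pieces $Z\setminus Z_j$. Now by Proposition \ref{pinfgy} and Corollary \ref{gepost}, exactly this system of flags produces, after passing to the countable homotopy limit in $\gdp$, the Postnikov tower $Po(X)$ with $Y_j = \mg(\prli_{l}(Z^l_0\setminus Z^l_j))$ and factors $X^i\cong\prod_{z\in Z^i}\mg(z)(i)[2i]$. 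Applying the \emph{extended} theory $H$ (which by Remark \ref{rcohp}(2), i.e. Proposition \ref{pextc}(I2) together with Proposition \ref{pprop}(\ref{ipcf}), converts these homotopy limits into the corresponding direct limits) to $Po(X)$ reproduces, term by term, the exact couple defining the classical coniveau spectral sequence: $E_1^{pq}=H^q(X^{-p})=\bigoplus_{z\in Z^{-p}}H^q(\mg(z)(-p)[-2p])$, which is the usual $E_1$-term (a sum over codimension-$-p$ points of the appropriately twisted/shifted cohomology of the residue fields), with the differentials induced by the $X^{-p-1}\to X^{-p}$ — and one checks these agree with the classical residue maps. Then by Theorem \ref{pssw}(I2) the resulting spectral sequence from $E_2$ on does not depend on the choice of tower, so the canonical $T(H,X)$ (our weight spectral sequence) is isomorphic to the classical one from $E_2$ onward.

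The main obstacle I anticipate is the last matching step: verifying that the $E_1$-differential coming from the homotopy-limit Postnikov tower $Po(X)$ literally coincides (not just up to isomorphism of the abutment) with the classical coniveau differential built from localization sequences, i.e. that the identification is \emph{compatible with the connecting maps}. By Remark \ref{lger}(3) our construction cannot pin down the connecting morphisms $X^i\to X^{i+1}$ in $\gd$ itself, but — as noted there and in \S\ref{sprom} — their \emph{images} in $\gdn$ (equivalently, their effect on cohomology after applying $H$) \emph{are} computable, and this is exactly what Deglise's computation in \cite{de2} (via the functorial Gysin triangle, cf. Proposition 2.4.5 of \cite{deggenmot}) provides; one invokes that the motivic coniveau exact couple of \cite{de2} lifts from $\gdn$ to $\gd$ through our tower, and that applying an extended $H$ to it gives back the classical couple. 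A secondary technical point is that the classical construction takes a colimit over flags while ours takes a homotopy limit in $\gdp$ first and then applies $H$; these are interchanged using precisely Proposition \ref{pprop}(\ref{ipcf}) / Corollary \ref{css}(1) (for representable $H'$) and Proposition \ref{pextc}(I2) (in general), which is why the hypothesis that $H$ be an \emph{extended} theory, and that $Z$ be an honest variety so that the flag system $L$ is countable, are both essential.
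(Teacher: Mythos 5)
Parts 1 and 2 of your proposal are correct and match the paper exactly: part 1 is Theorem \ref{pssw}(I2) applied to the Gersten weight structure, and part 2 follows from boundedness of $w$ together with Theorem \ref{pssw}(I1).

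For part 3 your overall route is right, but you manufacture a difficulty — ``verifying that the $E_1$-differential literally coincides with the classical coniveau differential'' — that does not actually arise, and the proposed patch via Deglise's computations in \cite{de2} is both unnecessary and somewhat roundabout. The point the paper makes (and which dissolves your worry) is that the comparison is not a term-by-term identification of $E_1$-pages followed by a separate check of connecting maps. Rather, both spectral sequences are constructed from \emph{the same system of geometric Postnikov towers} indexed by the flag system $L$: the classical coniveau spectral sequence of \S1.2 of \cite{suger} arises by applying $H$ to those geometric towers and taking the colimit of the resulting exact couples in $\au$, while $T(H,X)$ arises by forming the homotopy limit $Po(X)$ of those same towers in $\gdp$ and then applying $H$. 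Because $H$ is an extended theory, it converts the homotopy limit into the colimit (Remark \ref{rcohp}(2), via Proposition \ref{pextc}(I2) and Proposition \ref{pprop}(\ref{ipcf})), and this interchange is an isomorphism of \emph{exact couples}, not merely of $E_1$-terms — the connecting morphisms are carried along because they are induced by the same geometric distinguished triangles on both sides. Thus there is no residual differential-matching to perform and nothing in $\gdn$ needs to be computed; Remark \ref{lger}(3) (that the $X^i\to X^{i+1}$ in $\gd$ are not explicitly determined) is irrelevant here, because one never needs to know those morphisms — only their images under $H$, which are already pinned down by the geometric data. After this identification, part 1 then gives that every other choice of $T(H,X)$ is canonically isomorphic to this one from $E_2$ onward, completing the proof. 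In short: your argument is correct in outcome, but the appeal to Deglise's lift of the motivic coniveau exact couple can be replaced by the simpler observation that $H\circ(\text{holim})=\text{colim}\circ H$ on the whole tower at once.
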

\begin{proof}
1. This is just a partial case of  Theorem \ref{pssw}(I).

2. Immediate since $w$ is bounded;
see part I2 of loc. cit.

 3. Recall that in the proof of  Corollary \ref{gepost} a  
   Postnikov tower $Po(X)$ for $X$
 was obtained from certain 'geometric' Postnikov towers
(in $j(C^b(\smc))$) by passing to the homotopy limit. Now,  the
coniveau spectral sequence (for the $H$-cohomology of $Z$)
in \S1.2 of \cite{suger} was constructed by applying $H$ to
the same geometric towers 
and then passing to the inductive limit (in $\au$). Furthermore,
  Remark \ref{rcohp}(2) yields that the latter limit is
 (naturally) isomorphic to 
the spectral sequence obtained via  $H$ from $Po(X)$. 
Next, since $Po(X)$ is a weight Postnikov tower for $X$ (see 
Proposition \ref{pexw}(5)), we obtain that the latter spectral
 sequence  is one of the possible choices for $T(H,X)$.

 Lastly, assertion 1 yields that all other possible
$T(H,X)$ (they depend on the choice of a weight
 Postnikov tower for $X$) starting from $E_2$ are also
canonically isomorphic to the classical coniveau spectral
sequence mentioned.

\end{proof}

\begin{rema}\label{rrwss}

1. Hence we proved (in particular) that  classical 
 coniveau
spectral sequences (for cohomology theories that can be factorized through motives; this includes \'etale and singular cohomology of smooth varieties) are
$\dmge$-functorial (starting from $E_2$); we also obtain such
 a functoriality for the coniveau filtration for cohomology!
 These facts are far from being obvious from the usual definition of
 the coniveau filtration and spectral sequences, and seem to be
 new (in the general case). So, we justified the title of the paper.

We also obtain certain coniveau spectral sequences for cohomology
  of singular
varieties (for cohomology theories that can be factorized
through $\dmge$; in the case  $\cha k>0$ one also needs rational
 coefficients here).

2. Assertion 3 of the proposition yields a nice reason to
call (any choice of)   $T(H,X)$ a
{\it coniveau spectral
sequence} (for a general $H,\au$, and $X\in \obj \gds$); this
will also distinguish (this version of) $T$ from weight spectral
 sequences corresponding to other weight structures. We will
 give more justification for this term in Remark \ref{rconiv} below.
  So, the corresponding filtration
could be called the (generalized) coniveau filtration.

\end{rema}

\subsection{An extension of  results of Bloch and Ogus}\label{sconi}

Now we want  to relate  coniveau spectral sequences with the
homotopy $t$-structure (in $\dme$). This would be a
vast extension of the
seminal results of \S6 of \cite{blog} (i.e. of the calculation
by Bloch and
Ogus of the $E_2$-terms of coniveau spectral sequences)
and of \S6 of \cite{ndegl}.

We should relate $t$ (for $\dme$) and $w$; it turns out that
 they are orthogonal
 with respect to a certain quite natural nice duality.

\begin{pr}\label{pdualn}

 For any $Y\in \obj \dme$ we extend $H'=\dme(-,Y)$ from $\dmge$ to
$\gd\supset \gds$ by the method of Proposition \ref{pextc}; we
define $\Phi(X,Y)=H(X)$. Then the following statements are valid.

1. $\Phi$ is a nice duality (see Definition \ref{ddual}).

2 $w$
 is {\it left orthogonal} to the homotopy $t$-structure
  $t$ (on $\dme$) with respect to $\Phi$.

 3. $\Phi(-,Y)$ converts homotopy limits (in $\gdp$) into direct
 limits in $\ab$.

\end{pr}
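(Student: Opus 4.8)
The plan is to verify the three assertions in order, deducing them from the general machinery of \S\ref{sextkrau}, \S\ref{sdortstr} and the axiomatics of $\gd$ in Proposition \ref{pprop}. For assertion 1, I would first check that $\Phi$ is a duality in the sense of Definition \ref{ddual}(1): bi-additivity and homologicity in the first variable are immediate from Proposition \ref{pextc}(I1) (the extension of a cohomological functor stays cohomological), homologicity in the second variable follows because $Y\mapsto \dme(Z,Y)$ is homological for each fixed $Z\in\obj\dmge$ and the extension procedure $H'\mapsto H$ is functorial in $H'$ (Proposition \ref{pextc}(I1) again), and the natural isomorphism $\Phi(X,Y)\cong\Phi(X[1],Y[1])$ is inherited from the shift compatibility of the extension. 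Then, to get niceness, I would invoke Proposition \ref{pnice}(3): the restriction of $\Phi$ to $\dmge^{op}\times\dme$ is $\Phi'(X,Y)=\dme(X,Y)$, which is a nice duality of $\dmge$ with $\dme$ — this uses Proposition \ref{pnice}(1) with $\du=\dme$ in the role there (or more precisely its analogue for a subcategory paired with a larger one, which is exactly the content of Theorem \ref{tvinice}(2) applied in $\dme$). Since $\au=\ab$ satisfies AB5 and $\dmge$ is (skeletally) small, Proposition \ref{pnice}(3) gives that $\Phi$ is nice on all of $\gd$.

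For assertion 2, by Proposition \ref{pdual} it suffices — since $(\gds,w)$ is bounded — to exhibit a class $D\subset\gds^{w=0}$ generating $\gds^{w=0}$ under retracts of finite sums and satisfying the orthogonality (\ref{orth0}). I would take $D$ to be the comotives $\mg(\spe K)(m)[m]$ for function fields $K/k$ and $m\ge 0$; by Proposition \ref{pexw}(2),(4) these lie in $\gds^{w=0}$ and (together with countable products, which do not matter here since we only need finite sums and retracts for the criterion) generate the heart. Then I must check $\Phi(\mg(\spe K)(m)[m],Y)=0$ for $Y\in\dme^{t\ge 1}\cup\dme^{t\le -1}$. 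For $Y\in\dme^{t\le-1}$ this is exactly Proposition \ref{porthog}(1) (which computes $H(\mg(S)(m)[m])$ for primitive $S$, a function field being primitive). For $Y\in\dme^{t\ge 1}$ one uses Proposition \ref{padj}: $\mg(X)(m)[m]\perp\dme^{t\ge 1}$ for $X\in\sv$ because $F_{-m}(X)$ is a retract of $\hhi(F)(X\times G_m^{\times n})$, which vanishes for $F\in\dme^{t\ge 1}$, $i\le 0$; passing to the pro-scheme limit via (\ref{dfps}) and Corollary \ref{css}(1) extends this to $\mg(\spe K)(m)[m]$. This gives (\ref{orth0}), hence left orthogonality.

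Assertion 3 is the most technical-looking but should be routine given the axioms. By construction $\Phi(-,Y)$ is the Krause extension $H$ of $H'=\dme(-,Y)$; Proposition \ref{pextc}(I2) says precisely that if $X_l$ is a projective system in $\gd$ equipped with compatible $X\to X_l$ inducing $\gd(X,Z)\cong\inli\gd(X_l,Z)$ for all $Z\in\obj\dmge$, then $H(X)\cong\inli H(X_l)$. So I would take $X=p(\prli_{l\in L}Y_l)$ for $Y:L\to\gdp$ with $X_l$ ranging over (images in $\gd$ of) the finite sub-products, and apply Proposition \ref{pprop}(\ref{ipcf}) (equivalently Corollary \ref{css}(1)), which gives exactly the required isomorphism $\gd(p(\prli_{l}Y_l),Z)=\inli_l\gd(p(Y_l),Z)$ for $Z\in\obj\dmge$. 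I expect the main obstacle — really the only place demanding care — to be a bookkeeping issue: the projective system $L(D)$ of \S\ref{sextkrau} / Proposition \ref{pextc} used to define $H(X)$ is indexed by pairs $(E,i)$ with $E\in\obj\dmge$, not directly by $L$, so one must check the cofinality/compatibility between this system and the given homotopy-limit system, and confirm that ``homotopy limit'' (inverse limit taken in $\gdp$) is the notion to which Proposition \ref{pextc}(I2) applies; this is handled by Proposition \ref{pprop}(\ref{ipcf}) together with the fact that objects of $\dmge$ are cocompact in $\gd$ (Proposition \ref{pprop}(\ref{ip1})).
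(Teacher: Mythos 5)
Your treatment of assertions 1 and 3 matches the paper's. For 1, the paper likewise reduces to Proposition \ref{pnice}(1) on $\dmge^{op}\times\dme$ and then invokes Proposition \ref{pnice}(3); your aside that Proposition \ref{pnice}(1) is literally stated for $\du=\cu$ and that the $\dmge^{op}\times\dme$ case really rests on Theorem \ref{tvinice}(2) (applied to distinguished triangles of $\dme$, with the functors restricted along the full embedding $\dmge\subset\dme$) is a valid and slightly more careful version of what the paper does. For 3, the combination of Proposition \ref{pextc}(I2) with Proposition \ref{pprop}(\ref{ipcf}) is exactly the content of Proposition \ref{cdscoh}(5), which is what the paper invokes (modulo a probable typo in the paper's cross-reference), and your concern about matching the Krause index system $L(D)$ with the $\gdp$-limit system is resolved precisely by the cocompactness of $\dmge$-objects.

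The gap is in assertion 2, in your choice of $D$. You take $D$ to consist of the single twisted comotives $\mg(\spe K)(m)[m]$ and claim the infinite countable products ``do not matter here since we only need finite sums and retracts for the criterion.'' That is backwards: Proposition \ref{pdual} requires that \emph{every} object of $\gds^{w=0}$ be a retract of a finite direct sum of elements of $D$, and by Proposition \ref{pexw}(2) the heart $\hw$ is the idempotent completion of the category $H$ whose objects are the countable products $\prod_{l\in L}\mg(\spe K_l)(n_l)[n_l]$. An infinite such product is not a retract of any finite sum of single twisted comotives, so your $D$ fails the hypothesis of Proposition \ref{pdual} and the criterion cannot be applied. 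The paper instead takes $D$ to be the whole class of such products. Of course the vanishing $\Phi(\prod_{l}\mg(\spe K_l)(n_l)[n_l],Y)=0$ for $Y\in\dme^{t\ge 1}\cup\dme^{t\le-1}$ still reduces to the single-factor case, but for a different reason than the one you gave: the product is a (countable) homotopy limit, and by assertion 3 (equivalently Proposition \ref{cdscoh}(5), cf.\ Remark \ref{rcohp}(2)) the extended functor $\Phi(-,Y)$ converts it into the corresponding direct sum $\bigoplus_l\Phi(\mg(\spe K_l)(n_l)[n_l],Y)$; then each summand vanishes by Proposition \ref{porthog}(2) (which handles both $t\le-1$ and $t\ge 1$ at once, since in either case $F^{t=0}=0$). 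So the fix is to enlarge $D$ to the full class of products and route the reduction through assertion 3, rather than dismissing the products as irrelevant.
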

\begin{proof}
1. By  Proposition \ref{pnice}(1), the restriction of $\Phi$
to $\dmge{}^{op}\times \dme$ is a nice duality. It remains to apply
part 3 of loc. cit.

2. In the notation of Proposition \ref{pdual}, we take for $D$ the
set of all small products
 $\prod_{l\in L} \mg(\spe K_l)(n_l)[n_l]\in \obj \gds$; here $\mg(\spe K_l)$
denote the comotives of (spectra of) some function fields  over $k$,
$n_l\ge 0$ and the transcendence degrees of $K_l/k$ 
are bounded (cf. \S\ref{sdgersten}). Then $D,\Phi$ satisfy
the assumptions of the proposition by  Proposition
\ref{porthog}(2) (see also Remark \ref{rcohp}(2)).

3. Immediate from Proposition \ref{cdscoh}(3).

\end{proof}

\begin{rema}\label{rhocolim}
1. Suppose that we have an inductive family $Y_i\in \obj\dme$ connected
by a compatible family of morphisms with some $Y\in \dme$ such that:
for any $Z\in \obj \dmge$ we have $\dme(Z,Y)\cong\inli \dme(Z,Y_i)$
(via these morphisms $Y_i\to Y$). In such a situation it is
reasonable to call $Y$ a homotopy colimit of $Y_i$.

The definition of $\Phi$ in the proposition easily implies: for any
$X\in \obj \gd$ we have $\Phi(X,Y)=\inli \Phi(X,Y_i)$. So, one may
say  that all objects of $\gd$ are 'compact with respect to $\Phi$',
whereas part 3 of the proposition yields that all objects of $\dme$
are 'cocompact with respect to $\Phi$'. Note that no analogues of
these nice properties can hold in the case of an adjacent weight and
$t$-structure (defined on a single triangulated category).

2. Now, we could have replaced $\dmge$ by $\dmgm$ everywhere in the 'axiomatics' of $\gd$ (in Proposition \ref{pprop}). Then the corresponding category $\gd_{gm}$ could be used for our purposes (instead of $\gd$), since our arguments work for it also. Note that we can extend $\Phi$ to a nice duality  $\gd_{gm}^{op}\times \dme\to \ab$; to this end it suffices for $Y\in \obj \dme$ to extend $H'$ to $\dmgm$ in the following way:  $H'(X(-n))=\dme (X,Y(n))$ for $X\in \obj \dmge\subset \obj \dmgm$, $n\ge 0$.

Moreover, the methods of \S\ref{snpttw} allow to define an invertible Tate twist functor on $\gd_{gm}$.

\end{rema}

\begin{coro}\label{ccompss}

1. If
$H$ is represented by a $Y\in \obj\dme$ (via our $\Phi$) then for a
(co)motif $X$ our coniveau spectral sequence $T(H,X)$ starting from
$E_2$ could be naturally expressed in terms of the cohomology of $X$
with coefficients in
$t$-truncations of $Y$ (as in Theorem \ref{tdual}). 

In particular, the coniveau filtration
for $H^*(X)$ could be described as in part \ref{ispostn}
of loc. cit.

2. For $U\in \obj \dmge$, $i\in \z$, we have
$U\in \gds^{w\le i}\iff U\in \dme^{t\le i}$.
\end{coro}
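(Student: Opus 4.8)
The plan is to read everything off from Proposition \ref{pdualn}, which says the Gersten weight structure $w$ on $\gds$ is left orthogonal to the homotopy $t$-structure $t$ on $\dme$ with respect to the \emph{nice} duality $\Phi$, combined with the general comparison Theorem \ref{tdual} and the explicit perp-description of $\gds^{w\le 0}$ from Theorem \ref{tbw}.

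Part 1 is then essentially a citation. Since $w$ is bounded on $\gds$ (it is the weight structure generated by the negative class $H$, see \S\ref{sdgersten}) and $w$ is left orthogonal to $t$ with respect to the nice duality $\Phi$, Theorem \ref{tdual} applies to $H=\Phi(-,Y)$: assertion (\ref{ispost}) of loc.\ cit.\ gives a natural isomorphism, from $E_2$ on, between the weight spectral sequence $T(H,X)$ for $w$ (the coniveau spectral sequence, see \S\ref{sdconi}) and the spectral sequence $S$ of Theorem \ref{tdual}(\ref{isposts}), whose $E_2$-terms are $\Phi(X,Y^{t=q}[p])$ --- i.e.\ the cohomology of $X$ with coefficients in the $t$-cohomology objects of $Y$. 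Assertion (\ref{ispostn}) of loc.\ cit.\ then yields the asserted formula $F^{-k}H^m(X)=\imm(\Phi(X,t_{\le k}Y[m])\to H^m(X))$ for the (generalized) coniveau filtration.

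For Part 2 I would first record that for any $Y\in\obj\dmge$ the functor $\Phi(-,Y)$ restricted to $\gds$ coincides with $\gds(-,Y)$: by construction $\Phi(-,Y)$ is the extension of $\dmge(-,Y)$ from $\dmge$ to $\gd$ via Proposition \ref{pextc}, and by Remark \ref{rcohp}(1) this extension is $\gd(-,Y)$. The implication $U\in\gds^{w\le i}\Rightarrow U\in\dme^{t\le i}$ is then immediate: for $Y\in\dme^{t\ge i+1}$ the orthogonality condition (\ref{edort}) applied to $U[i]\in\gds^{w\le 0}$ and $Y[i]\in\dme^{t\ge 1}$ gives $\Phi(U[i],Y[i])=0$, hence $\dme(U,Y)=\Phi(U,Y)=0$, and one concludes via $\dme^{t\le i}={}^\perp\dme^{t\ge i+1}$ (Remark \ref{rts}(5)).

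The converse is the part that needs actual input, and I expect it to be the main obstacle. By Theorem \ref{tbw}(\ref{iwgene}) we have $\gds^{w\le 0}=(\cup_{j<0}H[j])^\perp$, so $U\in\gds^{w\le i}$ amounts to $\gds(V,U[i+n])=0$ for every $V\in\obj H$ and every $n>0$. Writing $V=\prod_l\mg(\spe K_l)(n_l)[n_l]$ and using that $U[i+n]\in\obj\dmge$ is cocompact in $\gd$ (Proposition \ref{pprop}(\ref{ip1})), while $\gds\subset\gd$ is full, the group $\gds(V,U[i+n])$ splits as $\bigoplus_l\gds(\mg(\spe K_l)(n_l)[n_l],U[i+n])$, so it suffices to kill each summand. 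Now if $U\in\dme^{t\le i}$ then $U[i+n]\in\dme^{t\le -1}$ for $n>0$, each $\spe K_l$ is a primitive scheme, and $\gds(\mg(\spe K_l)(n_l)[n_l],U[i+n])=\Phi(\mg(\spe K_l)(n_l)[n_l],U[i+n])=0$ by Proposition \ref{porthog}(1). The points that will require care are the index bookkeeping in the perp-description of $\gds^{w\le 0}$, checking that $V$ lies in $\gds$ and that the hom out of the product really decomposes (cocompactness plus fullness), and confirming that Proposition \ref{porthog}(1) applies to the spectra of the function fields $K_l$.
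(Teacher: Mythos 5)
Your proof is correct, and Part 1 is handled exactly as in the paper (citing Proposition \ref{pdualn} and Theorem \ref{tdual}). Part 2 takes a slightly different route. For the direction $U\in\gds^{w\le i}\Rightarrow U\in\dme^{t\le i}$ you invoke the orthogonality condition (\ref{edort}) directly together with $\dme^{t\le i}={}^\perp\dme^{t\ge i+1}$ — a clean observation that the orthogonality, once established, immediately forces $\dme(U,Y)=\Phi(U,Y)=0$ for $Y\in\dme^{t\ge i+1}$. The paper instead characterizes membership in $\gds^{w\le i}$ via the perp-criterion and (\ref{ehomhw}), rephrasing it as vanishing of the stalks of $U^{t=j}$ ($j>i$) at all function fields, and then passes from stalk-vanishing to $U^{t=j}=0$ by explicitly citing Corollary 4.19 of \cite{3}; your argument sidesteps that citation because the relevant injectivity input is already encapsulated in the proof of Proposition \ref{pdualn}. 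For the converse, both you and the paper go through $\gds^{w\le 0}=(\cup_{j<0}H[j])^\perp$ (Theorem \ref{tbw}(\ref{iwgene})) and cocompactness of $U$; the paper additionally uses Corollary \ref{tds2}(2) to strip off the Tate twist before applying the orthogonality, whereas you apply Proposition \ref{porthog}(1) directly to the twisted comotives $\mg(\spe K_l)(n_l)[n_l]$ — equivalent, and slightly shorter. Both organizations are sound; yours makes the roles of the two clauses of (\ref{edort}) more visible, while the paper's stalk formulation records explicitly the (useful) intermediate characterization of $\gds^{w\le i}$ in terms of Nisnevich stalks of the homotopy-$t$-cohomology sheaves.
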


\begin{proof}

1. Immediate from Proposition \ref{pdualn}.

2. By    Theorem \ref{tbw}(\ref{iwgene}),
we should check whether $Z\perp U$
  for any
$Z=\prod_{l\in L} \mg(\spe K_l)(n_l)[n_l+r]$, where $K_l$
are function fields over $k$,
$n_l\ge 0$ and the transcendence degrees of $K_l/k$ 
are bounded, $r>0$
(see  Proposition \ref{pexw}(2)).  Moreover,
since $U$ is cocompact in $\gd$,
it suffices to consider $Z=\mg(\spe K')(n)[n+r]$ ($K'/k$
is a function field, $n\ge 0$).
Lastly,  Corollary \ref{tds2}(2) reduces the situation to the case $Z=\mg(\spe K)$
($K/k$ is a function field).

Hence (\ref{ehomhw}) implies: $U\in \gds^{w\le i}$
whenever for any $j>i$,
any function field $K/k$, the stalk of $U^{t=j}$ at
$K$ is zero. Now, if $U\in \dme^{t\le i}$ then
 $U^{t=j}=0$ for all $j>i$; hence all stalks
of $U^{t=j}$ are zero. Conversely, if all
stalks of $U^{t=j}$ at function fields are zero, then
Corollary 4.19 of \cite{3} yields $U^{t=j}=0$
(see also Corollary 4.20 of loc. cit.); if $U^{t=j}=0$
for all $j>i$ then $U\in \dme^{t\le i}$.

\end{proof}

\begin{rema}\label{rdualn} \label{rconiv} 

1. Our
comparison statement is  true for $Y$-cohomology of an
arbitrary $X\in \obj \dmge$; this  extends to motives Theorem
6.4 of \cite{ndegl} (whereas the latter essentially extends the
results of \S6 of \cite{blog}). We obtain one more reason  to call
$T$ (in this case) the coniveau spectral sequence for
(cohomology of) motives.

Note also that the methods of Deglise do not (seem to) yield the motivic functoriality of the isomorphism in question (cf. Remark \ref{rdual}).

2. If $Y\in \obj HI$, then $E_2(T)$ yields the
Gersten resolution
for $Y$  (when $X$ varies); this is why we called $w$ the
Gersten weight structure.

3. Now, let $Y$ represent \'etale cohomology with
coefficients in $\z/l\z$,
 $l$ is prime to $\operatorname{char}k$ ($Y$ is actually
unbounded from above,
  yet this is not important). Then the $t$-truncations of
 $Y$ represent $\z/l\z$-motivic
   cohomology by the (recently proved) Beilinson-Lichtenbaum
conjecture (see \cite{vobe}; this paper is not published at the moment).
   Hence  Proposition \ref{pvirdu}(1) yields some new formulae
    for
   $\z/l\z$-motivic    cohomology of $X$ and for the
   'difference' between \'etale and motivic cohomology. Note also
   that the virtual $t$-truncations (mentioned in loc. cit.) are exactly
   the $D_2$-terms of the alternative exact couple for $T(H,X)$
   and for the version of the exact couple used in the current paper respectively (i.e. we consider exact    couples coming from the two possible versions for a  weight Postnikov
    tower for $X$, as described in  Remark \ref{rpost}). See also
\S7.5 of \cite{bws}
   for more explicit results of this sort. 
   It could also be
interesting
   to study coniveau spectral sequences for singular
cohomology; this could
   yield a certain theory of 'motives up to algebraic equivalence';
see  Remark 7.5.3(3)
    of loc. cit. for more details.

    5. Assertion 2 of the corollary yields that
    $\gds^{w\le 0}\cap \obj\dmge$ is large enough to recover $w$ (in a certain sense); in
particular, this assertion is similar to the definition of adjacent
structures (see Remark \ref{rnice}). In contrast, $\gds^{w\ge 0}\cap
\obj\dmge$ seems to be too small.

\end{rema}

\subsection{Base field change for coniveau spectral sequences;
functoriality for an uncountable $k$}\label{sbchcon}

It can be easily seen (and well-known) that for any perfect field extension
$l/k$ there exist an extension of scalars functor  $\dmge{}_k\to
\dmge{}_l$ compatible with the extension of scalars for smooth
varieties (and for $K^b(\smc)$).
In \ref{sexre} below 
we will prove that this functor could be expanded to a functor
$\exts_{l/k}:\gd_k\to \gd_l$ that sends $\mg{}_{,k}(X)$ to
$\mg{}_{,l}(X_l)$ for a pro-scheme $X/k$; this extension procedure
is functorial with respect to embeddings of base fields. Moreover,
$\exts_{l/k}$ maps $\gds{}_k $ into $\gds{}_l$. Note the existence
of base change for comotives does not follow from the properties of
$\gd$ listed in Proposition \ref{pprop}; yet one can define base
change for our model of comotives (described in \S\ref{rdg} below)
and (probably) for any other possible reasonable version of $\gd$.

Now we prove that base change for comotives yields base change for
coniveau spectral sequences; it also allows to prove that these
spectral sequences are motivically functorial for not necessary
countable base fields.

In order to make the limit in Proposition \ref{punbf}(2) below well-defined, we
assume that for any $X\in \obj \dmge$ there is a fixed
representative $Y,Z,p$ chosen, where: $Z,Y\in C^b(\smc)$,
$\mg(Y)\cong \mg(Z)$, $p\in C^b(\smc)(Y,Z)$ yields a direct summand
of $\mg(Y)$ in $\dmge$ that is isomorphic to $X$. We also assume
that all the components of $(X,Y,p)$  have  fixed expressions in
terms of algebraic equations over $k$; so one may speak about  fields of definition for $X$.

\begin{pr}\label{punbf}

Let $l$ be a perfect field, $H:\gd_l\to \au$ be any cohomological
functor (for an abelian $\au$). For any perfect $k\subset l$ we denote $H
\circ \exts_{l/k}:\gd_k\to \au$ by $H_k$.

1. Let $l$ be  countable. 
Then for any $X\in \obj \gd_k$ the method of Proposition \ref{pcwspsq}(II) 
yields some morphism
$N_{l/k}:T_{w_k}(H_k,X)\to T_{w_l}(H,\exts_{l/k}(X))$; this morphism
is unique and $\gd_k$-functorial in $X$ starting from $E_2$.

The correspondence $(l,k)\mapsto N_{l/k}$ is associative with
respect to extensions of countable fields (starting from $E_2$); cf. part I3 of loc. cit.

2. Let $l$ be a not (necessarily) countable perfect field, let $\au$ satisfy AB5.

For $X\in \obj \dmge_l$ we define $T_w(H,
X)=\inli_{k}T_{w_k}(H_k,X_k)$. Here we take the limit with respect
to all  perfect $k\subset l$ such that $k$  is countable, $X$ is defined over $k$; the connecting morphisms are given by the maps $N_{-/-}$ mentioned in assertion 1;
 we
start our spectral sequences from $E_2$. Then $T_w(H, X)$ is a
well-defined spectral sequence that is  $\dmge_l$-functorial in $X$.

3. If $X=\mg{}_{,l}(Z)$, $Z\in \sv$, $H$ is as an extended theory,
and $\au$ satisfies AB5, the spectral sequence given by the previous
assertion is canonically isomorphic to  the
  classical coniveau spectral sequence (for $(H,Z)$; considered starting from $E_2$).


\end{pr}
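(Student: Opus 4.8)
The plan is to establish assertion 3 by combining assertions 1 and 2 of the proposition with the already-proven identification of $T_w(H,X)$ over a countable field (Proposition \ref{rwss}(3)). Fix $X=\mg{}_{,l}(Z)$ for $Z\in\sv_l$ and an extended theory $H$; by definition $Z$ is given by finitely many algebraic equations, hence is defined over some countable perfect $k_0\subset l$, i.e. $Z=(Z_0)_l$ for $Z_0\in\sv_{k_0}$. First I would note that the classical coniveau spectral sequence for $(H,Z)$ agrees with the one for $(H,(Z_0)_{k})$ over any intermediate countable $k$ — this is the standard compatibility of the geometric coniveau tower (the filtrations $\varnothing=Z_{d+1}\subset\dots\subset Z_0$) with base change, together with the fact that $H$, being extended, is computed on the relevant pro-schemes as a filtered colimit as in (\ref{dfps}). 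So the classical spectral sequence over $l$ is canonically the colimit (over countable $k$ with $k_0\subset k\subset l$) of the classical spectral sequences over $k$, at least from $E_1$ on and certainly from $E_2$ on.

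Next I would invoke Proposition \ref{rwss}(3): for each such countable $k$, the weight spectral sequence $T_{w_k}(H_k,X_k)$ is canonically isomorphic (from $E_2$) to the classical coniveau spectral sequence for $(H,(Z_0)_k)$. The remaining point is to check that these isomorphisms are compatible with the connecting maps $N_{-/-}$ of assertion 1 on one side and the base-change maps of the classical coniveau spectral sequences on the other. Here the key observation is that $N_{l/k}$ was constructed (via Proposition \ref{pcwspsq}(II), i.e. left weight-exactness of $\exts_{l/k}$) by lifting a morphism of weight Postnikov towers and applying $H$; the weight Postnikov tower for $\mg{}_{,k}((Z_0)_k)(s)[s]$ provided by Corollary \ref{gepost} is obtained as a homotopy limit of the \emph{geometric} Postnikov towers in $j(C^b(\smc))$, and $\exts_{l/k}$ commutes with these geometric towers by construction. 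Hence $N_{l/k}$ is induced, on the level of the defining exact couples, by the base-change morphism of the geometric coniveau towers — which is exactly the morphism inducing base change for the classical coniveau spectral sequences. Since $T_w(H,X)$ is \emph{defined} in assertion 2 as $\inli_k T_{w_k}(H_k,X_k)$ along the $N_{-/-}$, passing to the colimit yields the desired canonical isomorphism with the classical coniveau spectral sequence over $l$; well-definedness and $\dmge_l$-functoriality come for free from assertion 2.

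I expect the main obstacle to be the bookkeeping in the previous paragraph: verifying that the canonical isomorphisms of Proposition \ref{rwss}(3) are natural enough to commute with the colimit transition maps, i.e. that ``the isomorphism between the weight spectral sequence and the classical one is functorial in the base field.'' This is not completely formal because $T_{w_k}(H_k,X_k)$ is only canonical from $E_2$ onward and depends a priori on a choice of weight Postnikov tower; one must use Proposition \ref{rwss}(1) (canonicity and $\gds_k$-functoriality from $E_2$) to pin down that the two ways of producing a transition map $T_{w_k}\to T_{w_{k'}}$ — namely $N_{k'/k}$ versus ``transport the classical base-change map across the Proposition \ref{rwss}(3) isomorphisms'' — actually coincide. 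The cleanest way to do this is to trace everything back to the single geometric object: the inverse system $L$ of flags used in the proof of Corollary \ref{gepost}, whose image under $\exts_{l/k}$ realizes the base-change map on geometric towers, and hence on the associated spectral sequences after applying $H$. Once that identification is in hand, commutativity of all relevant squares is automatic and the colimit argument closes the proof.
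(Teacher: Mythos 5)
Your argument addresses only assertion 3, explicitly taking assertions 1 and 2 as given (``by combining assertions 1 and 2 of the proposition \dots''). That is a genuine gap: assertion 1 is where the real work lies, and it does not follow from anything preceding it in a purely formal way. To obtain $N_{l/k}$ from Proposition \ref{pcwspsq}(II) one must first prove that $\exts_{l/k}$ is \emph{left weight-exact} for the Gersten weight structures $w_k$ and $w_l$. The paper does this by reducing (via Lemma \ref{lrweex}, using that $w_k$ is bounded) to showing that for $X$ running over a class $D$ of generators of $\gds_k^{w=0}$ one has $\exts_{l/k}X\in\gds_l^{w_l\le 0}$; taking $D$ to be the small products $\prod_L\mg(\spe K_L)$ of comotives of function fields, one observes that such an $X$ is the comotif of a pro-scheme, that $\exts_{l/k}$ sends it to the comotif of the base-changed pro-scheme, and then invokes Proposition \ref{pexw}(5). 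None of this is in your proposal. Assertion 2 is lighter but still needs a word: associativity of $N_{-/-}$ (from part 1) makes the directed system well-defined, and AB5 for $\au$ ensures that the colimit of spectral sequences is again a spectral sequence; the $\dmge_l$-functoriality then passes to the limit from the $\gds_k$-functoriality of each $T_{w_k}$.

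For assertion 3 your route is essentially the paper's, though you package it differently. You first identify the classical coniveau spectral sequence over $l$ as a colimit of classical coniveau spectral sequences over the countable $k$'s and then match each of those with $T_{w_k}(H_k,X_k)$ via Proposition \ref{rwss}(3), worrying (rightly) about the compatibility of the two families of transition maps. The paper avoids this two-step comparison by working directly with a single index set: the classical coniveau spectral sequence over $l$ is by definition the colimit over the system $L$ of geometric flags (as in Corollary \ref{gepost}) of $H$ applied to the geometric Postnikov towers; each flag in $L$ is defined over some countable perfect $k\subset l$, and the sub-colimit over the flags defined over a fixed $k$ recovers $T_{w_k}(H_k,X_k)$ precisely because $H$ is extended. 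Passing to the full colimit over $L$ then \emph{is} $\inli_k T_{w_k}(H_k,X_k)=T_w(H,X)$, with the transition-map compatibility built in because the same index set $L$ parametrizes both colimits. The functoriality of $T$ from $E_2$ is still needed (as the paper notes), but the ``bookkeeping obstacle'' you flag is sidestepped rather than resolved head-on. Your version is correct in spirit, but to make it airtight you would still have to carry out the identification of $N_{l/k}$ with the geometric base-change map that you sketch; the paper's formulation makes that step unnecessary.
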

\begin{proof}

1. By  Proposition \ref{pcwspsq}(II)
  it suffices to check that $\exts_{l/k}$ is left weight-exact
   (with respect to weight structures in question).
We take $D$ being the class of all small products $\prod_{l\in L}
\mg(\spe K_l)$,  where $\mg(\spe K_l)$ denote the comotives of (spectra of)
function fields over $k$ of bounded transcendence degree.
Proposition \ref{pexw} and   Corollary \ref{tds2}(2) yield
that any $X\in \gds{}_k^{w=0}$ is a retract of some element of $D$.
It suffices to check that  for any $X=\prod_{l\in L}
\mg{}_{,k}(K_l)$ we have  $\exts_{l/k}X\in  \gds{}_l^{w_l\le 0}$;
here we recall that $w_k$ is bounded and apply Lemma \ref{lrweex}.

Now, $X$ is the comotif of a certain pro-scheme, 
hence the same is true for $\exts_{l/k}X$. It remains to apply
  Proposition \ref{pexw}(5).

2. By the associativity statement in the previous assertion, the
limit is well-defined. Since $\au$ satisfies AB5, we obtain a
spectral sequence indeed.
  Since we have
  $k$-motivic functoriality of coniveau spectral sequences
  over each $k$, we obtain $l$-motivic functoriality in the limit. 

3. Again (as in the proof of  Proposition \ref{rwss}(3)) we
recall that the classical coniveau spectral sequence for this case is
defined by applying $H$ to 'geometric' Postnikov towers (coming
from elements of $L$ as in the proof of Corollary \ref{gepost}) and
then passing to the limit (in $\au$) with respect to $L$. Our
assertion follows easily, since each $l\in L$ is defined over some perfect
countable $k\subset l$; the limit of the spectral sequences with
respect to the subset of $L$ defined over a fixed $k$ is exactly
$T_{w_k}(H_k,X_k)$ since $H$ sends homotopy limits to inductive
limits in $\au$ (being an extended theory).

Here we certainly use the functoriality of $T$ starting from $E_2$.

\end{proof}

\begin{rema}\label{rconiunco}

1.  For a general $X\in \obj\dmge$ we only have a canonical choice
of base change maps (for $T(H_{k_l},X)$)
  starting from $E_2$; this is why we start our  spectral
  sequence from the $E_2$-level.

2. Assertion 2 of the proposition is also valid for any comotif
defined over a (perfect) countable subfield of $l$. Unfortunately, this does
not seem to include the comotives of function fields over $l$ (of
positive transcendence degrees, if $l$ is not countable).

\end{rema}

\subsection{The Chow weight structure for $\gd$}\label{stchow}

Till the end of the section, we will either assume  that
$\operatorname{char} k=0$, or that we deal with motives, comotives,
and cohomology with rational coefficients (we will use the same notation for motives with integral and rational coefficients;  cf. \S\ref{smco} below).

We prove that $\gd$ supports a weight structure that extends the
Chow weight structure of $\dmge$ (see \S6.5 and Remark 6.6.1 of
\cite{bws},  and also \cite{mymot}).

In this subsection we do not require $k$ to be countable.

\begin{pr}\label{pwchow}

1. There exists a Chow weight structure on $\dmge$ that is uniquely
characterized by the condition that all  $\mg(P)$ for $P\in \spv$ belong to its
heart; it could be extended to a weight structure $w_{\chow}$ on
$\gd$.

2. The heart of $w_{\chow}$ is the category $H_{\chow}$ of arbitrary
small  products of (effective) Chow motives.

3. We have $X\in \gd^{w_{\chow}\ge 0}$ if and only if
$\gd(X,Y[i])=\ns$ for any $Y\in \obj \chowe$, $i>0$.

4. There exists a $t$-structure $t_{\chow}$ on $\gd$ that is 
right adjacent to $w_{\chow}$ (see Remark \ref{rnice}). Its heart is the
opposite category to $\chowe{}^*$ (i.e. it is equivalent  to $(\adfu (\chowe,
\ab))^{op}$). 

5. $w_{\chow}$ respects products i.e.  $X_i\in \gd^{w_{\chow}\le
0}\implies \prod X_i\in \gd^{w_{\chow}\le 0}$ and $X_i\in
\gd^{w_{\chow}\ge 0}\implies \prod X_i\in \gd^{w_{\chow}\ge 0}$.

6. For $\prod X_i$ there exists a weight decomposition:  $\prod
X_i\to \prod X_i^{w\le 0} \to \prod X_i^{w\ge 1}$.

7. If $H:\gd\to \au$ is an extended theory, then the functor  that
sends $X$ to the  derived exact couple for $T_{w_{\chow}}(H,X)$
(see Theorem \ref{pssw}) converts all small products into direct sums.

\end{pr}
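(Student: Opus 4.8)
The plan is to obtain $w_{\chow}$ on $\gd$ from a negative generating class, read off parts 2 and 3 from the general theory of Theorem \ref{tbw}, construct $t_{\chow}$ via the adjacency/Krause formalism, and then deduce parts 5--7 formally.

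\emph{Parts 1--3.} I would start from the Chow weight structure $w'_{\chow}$ on $\dmge$ of \cite{bws} (\S6.5), whose heart is $\chowe$; its negativity $\chowe(M,N[i])=\ns$ for $i>0$ survives in $\gd$ because $\dmge\hookrightarrow\gd$ is full and the objects of $\chowe$ are cocompact (Proposition \ref{pprop}(\ref{ip1})). The key point is that the class $H_{\chow}$ of all small products $\prod_lM_l$ of effective Chow motives is again negative in $\gd$: using the universal property of products in the second argument and cocompactness of $N_j[i]\in\obj\dmge$ one gets $\gd(\prod_lM_l,\prod_jN_j[i])=\prod_j\bigoplus_l\gd(M_l,N_j[i])=\ns$ for $i>0$. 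I would then invoke the product-closed analogue of Theorem \ref{tbw}(\ref{igen}) --- available here since $\gd$ is idempotent complete (Corollary \ref{css}(\ref{ipid})), is built from $\dmge\subset H_{\chow}$ by Karoubi closure, cones and small products, and is weakly cogenerated by $\dmge$ (Proposition \ref{pprop}(\ref{ip5})); this is essentially the content of the references \cite{bws, mymot}. This yields $w_{\chow}$ with $\gd^{w_{\chow}=0}=H_{\chow}$ (part 2, noting $H_{\chow}$ is already idempotent complete), restricting to $w'_{\chow}$ on $\dmge$ by the uniqueness in Theorem \ref{tbw}(\ref{igen}) applied inside $\dmge$; and Theorem \ref{tbw}(\ref{iwgene}) gives $\gd^{w_{\chow}\ge0}={}^\perp(\cup_{i>0}H_{\chow}[i])$, which equals ${}^\perp(\cup_{i>0}\chowe[i])$ since $\gd(X,\prod_lM_l[i])=\prod_l\gd(X,M_l[i])$ --- this is part 3.

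\emph{Part 4.} I would set $\gd^{t_{\chow}\ge0}=\gd^{w_{\chow}\ge0}$ and $\gd^{t_{\chow}\le0}=(\gd^{w_{\chow}\ge1})^\perp$; the orthogonality and shift axioms are immediate, and the real issue is the $t$-decomposition of an arbitrary $X\in\obj\gd$, which I would produce by applying the Krause-style extension of \S\ref{sextkrau} to $\chowe$ (extending the representables $\chowe(-,M)$ and assembling from them the co-connective truncation $X^{t_{\chow}\ge1}$ together with the triangle $X^{t_{\chow}\le0}\to X\to X^{t_{\chow}\ge1}$), using weak cogeneration of $\gd$ by $\dmge$ to place the connective part in $(\gd^{w_{\chow}\ge1})^\perp$. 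Identifying the heart of $t_{\chow}$ with $(\adfu(\chowe,\ab))^{op}$ then amounts to the assertion that the ``big'' products occurring in $H_{\chow}$ add nothing to $\adfu(-,\ab)$, i.e. that objects of this heart are detected on $\chowe$; this is the point at which I expect the main technical work, together with the full-generality part of part 1 (producing $w_{\chow}$ on all of $\gd$, not merely on the subcategory generated by comotives of pro-schemes).

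\emph{Parts 5--7.} These are then formal. Closure of $\gd^{w_{\chow}\le0}$ under products is part of Theorem \ref{tbw} (closure of $\cu^{w\le i}$ under products), while closure of $\gd^{w_{\chow}\ge0}$ under products follows from part 3 together with cocompactness of Chow motives exactly as in the computation above (now for $X_i\in\gd^{w_{\chow}\ge0}$); this is part 5. Part 6 is immediate: a product of the weight decompositions $X_i^{w\ge1}[-1]\to X_i\to X_i^{w\le0}\to X_i^{w\ge1}$ is distinguished (a product of distinguished triangles is distinguished) with outer terms in $\gd^{w_{\chow}\le0}$ and $\gd^{w_{\chow}\ge1}$ by part 5, hence a weight decomposition of $\prod X_i$. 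For part 7, Theorem \ref{pssw}(II) expresses the derived exact couple of $T_{w_{\chow}}(H,X)$ through the virtual $t$-truncations of $H$, i.e. through $H$ applied to the weight truncations $w_{\ge k}X$, $w_{\le k}X$ and images of the induced maps; by part 6 (and its dual) these truncations of $\prod X_i$ may be chosen as products of the corresponding truncations of the $X_i$, so it suffices to see that an extended $H$ sends a small product $\prod_iX_i$ to $\bigoplus_iH(X_i)$. This holds because $\prod_iX_i$ can be realized as $p(\prli_S(\prod_{i\in S}\tilde X_i))$ for lifts $\tilde X_i\in\gdp$ of the $X_i$, a filtered homotopy limit over the finite subsets $S$ with each $\prod_{i\in S}X_i$ a finite coproduct; so Proposition \ref{pextc}(I2) (applicable by Proposition \ref{pprop}(\ref{ipcf})) gives $H(\prod_iX_i)=\inli_S\bigoplus_{i\in S}H(X_i)=\bigoplus_iH(X_i)$, and since $\au$ satisfies AB5 the formation of images commutes with these sums, so the whole derived exact couple of $\prod X_i$ is the direct sum of those of the $X_i$.
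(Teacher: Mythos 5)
Your treatment of parts 2, 3, 5, 6, 7 is essentially the argument the paper gives (product of triangles is a triangle; cocompactness of $\chowe$ gives the perp description and closure of $\gd^{w_{\chow}\ge 0}$ under products; an extended $H$ converts products to direct sums, after which the exact-couple description of Theorem \ref{pssw}(II) passes to sums). Your filtered-limit derivation of $H(\prod X_i)\cong\bigoplus H(X_i)$ from Propositions \ref{pextc}(I2) and \ref{pprop}(\ref{ipcf}) is a reasonable unpacking of what the paper takes to be clear from the definition of an extended theory.

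Where you diverge from the paper --- and where the gap lies --- is parts 1 and 4. The paper does not build $w_{\chow}$ on $\gd$ by a ``negative generating class'' argument; it cites the dual of Theorem 4.5.2 of \cite{bws}, which is precisely a theorem about producing a weight structure and a right-adjacent $t$-structure on a triangulated category that has a negative family of cocompact weak cogenerators (with small products), and computes both hearts. You instead invoke a ``product-closed analogue of Theorem \ref{tbw}(\ref{igen})'' --- but no such analogue is stated in this paper, and it is not an easy consequence of \ref{tbw}(\ref{igen}): that result only produces a weight structure on the Karoubi closure of $\lan H_{\chow}\ra$, which is a \emph{thick triangulated} subcategory of $\gd$ and does not contain arbitrary small products of its objects, hence is not all of $\gd$. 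The real input is weak cogeneration by the cocompact $\chowe$ together with a Brown-representability-style construction of weight decompositions, which is exactly what Theorem 4.5.2 of \cite{bws} supplies. You mention weak cogeneration in passing but do not convert it into a weight decomposition for a general $X\in\obj\gd$. Similarly for part 4: you correctly identify that the $t$-decomposition is the crux and sketch a Krause-style plan, but you flag this as ``the main technical work'' and leave it unexecuted; the paper again disposes of it by citing the same abstract theorem. Finally, for part 3 you appeal to Theorem \ref{tbw}(\ref{iwgene}), but that statement is formulated specifically for the generated-by-a-negative-class setting of \ref{tbw}(\ref{igen}); the paper instead cites formula (27) of \cite{bws}, which is the corresponding description in the cocompactly-cogenerated setting, so you should be aware you are extrapolating \ref{tbw}(\ref{iwgene}) beyond its stated hypotheses.

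In short: the formal parts 2, 3, 5--7 are fine (modulo the citation caveat on \ref{tbw}(\ref{iwgene})), but parts 1 and 4 contain the actual content of the proposition, and there your proposal is a plan rather than a proof. Repairing it is most easily done by invoking (the dual of) Theorem 4.5.2 of \cite{bws} as the paper does, which supplies exactly the ``product-closed'' weight-structure existence, the heart computation, and the adjacent $t$-structure in one stroke.
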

\begin{proof}
1. It was proved in (Proposition 6.5.3 and Remark 6.6.1 of) \cite{bws}
that there exists
 a unique weight structure $w'_{\chow}$ on $\dmge$ such that
$\mg(P)\in \gd^{w'_{\chow}=0}$ for
 all $P\in \spv$. 
 Moreover,
  the heart of this structure is exactly $\chowe\subset \dmge$.

Now, $\dmge$ is generated by $\chowe$.
It easily follows that
$\{\mg(P),\ P\in \spv\}$ weakly cogenerates $\gd$. Then the dual 
(see  Theorem \ref{tbw}(\ref{idual})) of 
Theorem 4.5.2(I2) of
 \cite{bws} yields that $w'_{\chow}$ could be extended to a weight structure $w_{\chow}$ for $\gd$.
Moreover, the dual to
  part II1 of loc. cit. yields that for this extension we
  have: $\hw_{\chow}$ is the
idempotent completion of $H_{\chow}$.

2. It remains to prove that $H_{\chow}$ is idempotent complete.
This is 
obvious since $\chowe$ is.

3. This is just the dual of (27) in loc. cit. 

4. The dual statement to part I2 of loc. cit. 
(cf. Remark \ref{rts}(1)) yields the existence of $t_{\chow}$.
Applying the dual
of  Theorem 4.5.2(II1) of \cite{bws} we obtain for
the heart of $t$: $\hrt_{\chow}\cong (\chowe_*)^{op}$.

5.   Theorem \ref{tbw}(\ref{iort}) easily yields that
$\gd^{w_{\chow}\le 0}$  is stable with respect to products. The
stability of $\gd^{w_{\chow}\ge 0}$   with respect to products
follows from assertion 3; here we recall that all objects of $\chowe$
are cocompact in $\gd$.

6. Immediate from the previous assertion; note that any small
product   of distinguished triangles is distinguished (see Remark
1.2.2 of \cite{neebook}).

7. Since $H$ is extended, it converts products in $\gd$ into direct
sums in $\au$. 
Hence for any $X_i\in \obj \gd$ there exist a choice of exact couples
for the corresponding weight spectral sequences for $X_i$ and $\prod
X_i$ that respects products i.e such that
$D_1^{pq}T_{w_{\chow}}(H,\prod X_i)\cong \bigoplus_i
D_1^{pq}T_{w_{\chow}}(H,X_i)$ and $E_1^{pq}T_{w_{\chow}}(H,\prod
X_i)\cong \bigoplus_i E_1^{pq}T_{w_{\chow}}(H,X_i)$ (for all $p.q\in
\z$; this isomorphism is also compatible with the connecting morphisms
of couples). Since $\au$ satisfies AB5, we obtain the isomorphism
desired for $D_2$ and $E_2$-terms (note that those are uniquely
determined by $H$ and $X$).

\end{proof}

\begin{rema}\label{rwchow}

1. In Remark 2.4.3 of \cite{bws} it was shown that 
weight spectral sequences corresponding to the Chow weight structure are isomorphic to the classical (i.e. Deligne's) weight spectral sequences  when the latter are defined (i.e. for singular or \'etale cohomology of varieties). Yet in  order to specify the choice of a weight structure here we will call these spectral sequences {\it Chow-weight} ones.

2. All the assertions of the Proposition could be extended to
arbitrary triangulated categories with negative families of
cocompact weak cogenerators (sometimes one should also demand all
products to exist; in  assertion 7 we only need $H$ to convert
all products into direct sums).

3. 
Since (effective) Chow motives are cocompact in $\gd$, 
$\hw_{\chow}$ is the category of 'formal products' of effective Chow motives
i.e. $\gd(\prod_{l\in L}X_l,\prod_{i\in I}Y_i)=\prod_{i\in
I}(\oplus_{l\in L}\chowe(X_l,Y_i))$  for $X_l,Y_l\in \obj \chowe\subset \obj \gd$
(cf.  Remark 4.5.3(2) of \cite{bws}).

4. Recall (see
\S7.1 of ibid.) that $\dme$ supports (adjacent) Chow weight and
$t$-structures (we will denote them by $w'_{\chow}$ and
$t'_{\chow}$, respectively). One could also check that these
structures are right orthogonal to the corresponding Chow structures
for $\gd$. Hence, applying  Proposition \ref{pvirdu}(1)
 repeatedly one could relate the compositions of
truncations (on $\gds\subset\gd$) via $w$ and via $t_{\chow}$ (resp.
via $w$ and via $w_{\chow}$) with truncations via $t$ and via
$w'_{\chow}$ (resp. via $t$ and via $t'_{\chow}$) on $\dme$; cf.
\S8.3 of \cite{bws}. One could also apply $w_{\chow}$-truncations
and then $w$-truncations (i.e. compose truncations in the opposite
order) when starting from an object of $\dmge$. Recall also that
truncations via $t_{\chow}$ (and their compositions with
$t$-truncations) are related with unramified cohomology; see Remark
7.6.2 of ibid.

\end{rema}

\subsection{Comparing Chow-weight and coniveau spectral sequences}\label{scchowco}

Now we prove that Chow-weight and coniveau spectral sequences are
naturally isomorphic for birational cohomology theories.

\begin{pr}\label{pcwcoss}

1. $w_{\chow}$ for $\gd$ dominates $w$ (for $\gds$) in the  sense of
\S\ref{schws}.

2. Let $H:\dmge\to \au$ be an extended cohomology theory in the
sense of Remark \ref{rcohp};  suppose that $H$ is {\it birational}
i.e. that $H(\mg(P)(1)[i])=0$ for all $P\in \spv,\ i\in \z$. Then
for any $X\in \obj \gds$ the Chow-weight spectral sequence
$T_{w_{\chow}}(H,X)$ (corresponding to $w_{\chow}$) is naturally
isomorphic starting from $E_2$ to (our) coniveau spectral sequence
$T_w(H,X)$ via the comparison morphism $M$ given by Proposition
\ref{pcwspsq}(I1).
\end{pr}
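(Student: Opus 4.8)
The plan is to apply the machinery of \S\ref{schws} to the full embedding $F\colon\gds\hookrightarrow\gd$, equipped with the Gersten weight structure $w$ on the source and the Chow weight structure $w_{\chow}$ on the target.

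\emph{Proof of Part 1.} Since $w$ is bounded, by Lemma \ref{lrweex}(1) it suffices to find a subclass $D\subset\gds^{w=0}$ such that every object of $\gds^{w=0}$ is a retract of an element of $D$ and $F(D)\subset\gd^{w_{\chow}\ge0}$. I would take $D=\obj H$; the first condition then holds by Proposition \ref{pexw}(2), and since $\gd^{w_{\chow}\ge0}$ is stable under products (Proposition \ref{pwchow}(5)) it remains only to show $\mg(\spe K)(n)[n]\in\gd^{w_{\chow}\ge0}$ for every function field $K/k$ and every $n\ge0$. Writing $\mg(\spe K)(n)[n]=p(\prli_U\mg(U)(n)[n])$, with $U$ running over the smooth models of $K$, each $\mg(U)(n)[n]$ is a retract of $\mg(U\times G_m^n)$, hence of the motive of a smooth variety; under the standing hypothesis on $\cha k$ the motive of a smooth variety lies in $\dmge^{w_{\chow}\ge0}\subset\gd^{w_{\chow}\ge0}$ (this is immediate from the Gysin triangles; cf. \cite{bws}, \S6). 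Finally, $\gd^{w_{\chow}\ge0}$ is closed under the countable homotopy limits we use: this follows from the distinguished triangle (\ref{elim}) of Proposition \ref{pprop}(\ref{ip6}) together with Proposition \ref{pwchow}(5) and the extension-stability of $\gd^{w_{\chow}\ge-1}$ (Theorem \ref{tbw}(\ref{iext})). Hence $\mg(\spe K)(n)[n]\in\gd^{w_{\chow}\ge0}$, which proves Part 1.

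\emph{Proof of Part 2.} By Part 1 and Proposition \ref{pcwspsq}(I1) the comparison morphism $M\colon T_{w_{\chow}}(H,X)\to T_w(H,X)$ exists and is functorial starting from $E_2$; to show it is an isomorphism from $E_2$ on, I would invoke Proposition \ref{pcwspsq}(I2), again with $D=\obj H$. By the proof of that assertion it suffices to check that $M$ induces an isomorphism on the $E_2$-terms of the spectral sequences attached to each $Z\in\obj H$; since $Z\in\gds^{w=0}$ the coniveau spectral sequence $T_w(H,Z)$ is concentrated in the column $p=0$ with $E_2^{0q}\cong H^q(Z)$, so it is enough to prove that the Chow-weight spectral sequence $T_{w_{\chow}}(H,Z)$ likewise degenerates at $E_2$ onto its column $p=0$ (with $E_2^{0q}\cong H^q(Z)$, compatibly with $M$). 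Now $E_1^{pq}T_{w_{\chow}}(H,Z)$ is obtained by applying $H$ to the shifted $w_{\chow}$-weight factors of $Z$; by Part 1 these vanish for $p<0$, so the claim reduces to showing that $H$ annihilates the $w_{\chow}$-weight-$p$ factor of $Z$, together with all its shifts, for each $p\ge1$. By Proposition \ref{pwchow}(6) (weight decompositions, hence weight factors, of a product may be taken to be products of those of the factors) and the fact that $H$ is extended, this in turn reduces to the case $Z=\mg(\spe K)(n)[n]$.

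\emph{The main point}, and the step I expect to be the most delicate, is the following. A birational extended theory $H$ kills $N(1)[i]$ for every effective Chow motive $N$ and every $i\in\z$ --- $N$ is a retract of some $\mg(P)$, $P\in\spv$, so $N(1)[i]$ is a retract of $\mg(P)(1)[i]$, which $H$ kills by hypothesis --- and therefore, since (under our standing hypothesis on $\cha k$) every effective motive is built from effective Chow motives, $H$ kills $M(1)[i]$ for every $M\in\obj\dmge$. On the other hand, the Gysin-triangle description of $\mg(U)$ for $U\in\sv$ shows that every $w_{\chow}$-weight-$\ge1$ factor of $\mg(U)$ is a Tate twist by $(\ge1)$ of an effective Chow motive; applying $-(n)[n]$ (which shifts the weight filtration by $n$) and using that $\mg(U)$ has no negative-weight factors, the same holds for $\mg(U)(n)[n]$. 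Thus $H$ annihilates all shifts of the $w_{\chow}$-weight-$\ge1$ factors of $\mg(U)(n)[n]$. To pass from $\mg(U)(n)[n]$ to the homotopy limit $\mg(\spe K)(n)[n]$, and thence to the product $Z$, I would build compatible $w_{\chow}$-weight decompositions using Remark \ref{rcger}(4) (Lemma 1.5.4 of \cite{bws}), the triangle (\ref{elim}), Proposition \ref{pwchow}(5)--(6), and the fact that $H$ converts homotopy limits and products into direct limits and direct sums. This gives the required degeneration, and hence Part 2. (Alternatively, one observes that a birational $H$ factors through $\gdbr=\gd/\gd(1)$ and appeals to Remark \ref{rdomin}, using that $w$ and $w_{\chow}$ induce the same weight structure on $\gdbr$, as shown in \S\ref{sgdbr}; the delicate point about Tate twists is the same.)
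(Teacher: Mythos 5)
Your proposal is correct, and its overall structure matches the paper's, but Part~1 takes a genuinely different route. You verify $\mg(\spe K)(n)[n]\in\gd^{w_{\chow}\ge 0}$ by direct computation with the countable homotopy limit: pass to $\mg(U)(n)[n]$ via a retract of $\mg(U\times G_m^n)$, then use the fiber sequence (\ref{elim}) together with closure of $\gd^{w_{\chow}\ge 0}$ under products (Proposition \ref{pwchow}(5)) and extensions (Theorem \ref{tbw}(\ref{iext})). The paper instead takes $D$ to be the \emph{untwisted} products $\prod\mg(\spe K_l)$ (reducing to these via Corollary \ref{tds2}(2)) and invokes the orthogonality characterization $\gd^{w_{\chow}\ge 0}={}^\perp\{Y[i]:Y\in\obj\chowe,\,i>0\}$ of Proposition \ref{pwchow}(3), so that one needs only the (trivial case of) Proposition \ref{maneg}; this avoids homotopy limits entirely and is the purpose for which \ref{pwchow}(3) was stated. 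Both arguments are valid. For Part~2 your outline is the paper's, but the pivotal assertion that ``every $w_{\chow}$-weight-$\ge1$ factor of $\mg(U)$ is a Tate twist by $(\ge1)$ of an effective Chow motive'' is stated with less support than it needs: weight factors are not canonical, and what one proves (and what you implicitly use when passing to limits) is that \emph{some} $w_{\chow}$-Postnikov tower has this property. The paper makes this precise by first establishing $\co(\mg(U)\to\mg(P))\in\obj\dmge(1)\cap\dmge^{w_{\chow}\ge 0}$ (Gysin induction, plus Theorem 6.2.1 of \cite{mymot}), then exploiting the $-\otimes\z(1)[2]$-stability of $\chowe$ to choose a tower for the cone all of whose terms are divisible by $\z(1)$, and gluing with the trivial tower for $\mg(P)$ via Remark \ref{rcger}(4); the passage to products and homotopy limits is then handled through Proposition \ref{pwchow}(6)--(7) and (\ref{elim}), exactly the toolkit you list. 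Finally, your closing alternative --- factoring a birational $H$ through $\gdbr$ and using that $w$ and $w_{\chow}$ induce the same $w'_{bir}$ there --- is precisely the alternative proof the paper records in Remark \ref{rbir}(1).
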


\begin{proof}
1. Let $D$ be the class of all countable products $\prod_{l\in L}
\mg(\spe K_l)$, where $\mg(\spe K_l)$ denote the comotives of (spectra of)
function fields over $k$ of bounded  transcendence degree.
Proposition \ref{pexw} and  Corollary \ref{tds2}(2) yield
that any $X\in \gds^{w=0}$ is a retract of some element of $D$. It
suffices to check that  any $X=\prod_{l\in L} \mg(\spe K_l)$ belongs to
$\gd^{w_{\chow}\ge 0}$; here we recall that $w$ is bounded and apply
Lemma \ref{lrweex}.

 By  Proposition \ref{pwchow}(5),
 we can assume that  $L$ consists of a single element.
 In this case we have $\gd(\mg(\spe K_l),\mg(P)[i])=0$
 (this is a trivial case of Proposition \ref{maneg});
 hence loc. cit. yields the result.

2. We take the same $D$ and $X$ as above.

Let $\cha k=0$. We choose $P_l\in \spv$ such that $K_l$  are their
function fields. Since all $\mg(P_l)$ are cocompact in $\gd$, we
have a natural morphism $X\to \prod \mg(P_l)$. By 
Proposition \ref{pcwspsq}(I2), it  suffices to check that $\co (X\to
\prod \mg(P_l))\in \gd^{w_{\chow}\ge 0}$, $H(X)\cong H(\prod \mg( P_l))$,
and $E_2^{**}T_{w_{\chow}}(H,\co (X\to \prod \mg(P_l)))=0$.

By  Proposition \ref{pwchow}(7) we obtain: it suffices again
to verify these statements in the case when $L$ consists of a single
element. Now, we have $\spe(K_l)=\prli \mg(U)$ for $U\in \sv,\
k(U)=K_l$. Therefore (\ref{elim}) yields:  it suffices to verify
assertions required for $Z=\mg(U\to P)$ instead, where $U\in \sv$,
$U$ is open in $P\in \spv$.

The Gysin distinguished triangle for Voevodsky's motives (see
Proposition 2.4.5 of \cite{deggenmot}) easily yields by induction that
$Z\in  \obj \dmge(1)$.

Since $\chowe$ is $-\otimes\z(1)[2]$-stable, we obtain that there
exists a $w_{\chow}$-Postnikov tower for $Z$ such that all of its terms
are divisible by $\z(1)$; this yields the vanishing of
$E_2^{**}T_{w_{\chow}}(H,Z)$. Lastly,  the fact that $Z\in
\dmge^{w'_{\chow}\ge 0}$ was  (essentially) proved by easy induction
(using the Gysin triangle) in the proof of Theorem 6.2.1 of
\cite{mymot}.

In the case $\cha k>0$, de Jong's alterations allow us to replace
$\mg(P_l)$ in the reasoning above by some Chow motives (with
rational coefficients); see Appendix B of \cite{hubka}; we will not
write down the details here.

\end{proof}

\begin{rema}
1. Hence for any $H:\gd\to \au$, $X\in \obj \gds$ there exists a comparison morphism $M:T_{w_{\chow}}(H,X)\to T_w(H',X)$, where $H'$ is the restriction of $H$ to $\gds$. It is canonical starting from $E_2$; see Proposition \ref{pcwspsq}(II).

2. Assertion 2  is not very actual for cohomology of smooth varieties
since any $Z\in \spv$ is birationally isomorphic to $P\in \spv$ (at
least for $\cha k=0$). Yet the statement becomes more interesting
when applied for  $X=\mg^c(Z)$.
\end{rema}

\subsection{Birational motives; constructing the Gersten weight
structure by gluing; other possible weight structures}\label{sgdbr}

An alternative way to prove Proposition \ref{pcwcoss}(2) is
to consider (following \cite{kabir}) the category of {\it birational
comotives}. It satisfies the following properties:

(i) All birational cohomology theories factorize through it.

(ii) Chow and Gersten weight structures induce the same weight
structure on it (see Definition \ref{dwefun}(4)).

(iii) More generally, for any $n\ge 0$ Chow and Gersten weight structures induce  weight
structures on the localizations $\gd(n)/\gd(n+1)\cong \gdbr$ (we call these localizations {\it slices}) that differ only by a shift.

Moreover, one could 'almost recover' original Chow and Gersten weight
structures starting from this single weight structure.

Now we describe the constructions and facts mentioned in more detail. We will
 be rather sketchy 
here, since we will not use the results of this subsection elsewhere
in the paper. Possibly, the  details will be written down
in another paper.

As we will show in \S\ref{snpttw} below,
the Tate twist functor could be extended (as an exact functor)
from $\dmge$ to $\gd$; this functor is compatible with (small) products. 

\begin{pr}\label{pmbr}

 I The functor $-\otimes \z(1)[1]$ is weight-exact with
 respect to $w$ on $\gds$;
   $-\otimes \z(1)[2]$ is weight-exact with respect to $w_{\chow}$
   on $\gd$ (we will say that $w$ is $-\otimes\z(1)[1]$-stable,
   and $w_{\chow}$ is $-\otimes\z(1)[2]$-stable).

II Let $\gdbr$ denote the localization of $\gd$ by $\gd(1)$;
$B$ is the localization functor. We denote  $B(\gds)$ by $\gdsbr$.

1. $w_{\chow}$ induces a weight structure $w'_{bir}$ on $\gdbr$. 
Besides, $w$ induces a weight structure $w_{bir}$ on $\gdsbr$.

2. We have  $\gdsbr^{w_{bir}\le 0}\subset \gdbr^{w'_{bir}\le 0}$,
$\gdsbr^{w_{bir}\ge 0}\subset \gdbr^{w'_{bir}\ge 0}$
(i.e. the embedding $(\gdsbr, w_{bir})\to (\gdbr, w'_{bir})$
is weight-exact).

3. For any pro-scheme $U$ we have $B(\mg(U))\in \gdsbr^{w_{bir}=0}$.

\end{pr}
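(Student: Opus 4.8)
\textbf{Proof plan for Proposition \ref{pmbr}.}

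The plan is to prove the three parts essentially in the order stated, with part I feeding into part II. For part I, I would first recall the explicit form of the Postnikov/weight tower for $\mg(Z)(s)[s]$ provided by Corollary \ref{gepost} and Proposition \ref{pexw}(5): its factors are products $\prod_{z\in Z^i}\mg(z)(i+s)[2i+s]$. Twisting by $\z(1)[1]$ shifts $(i+s)\mapsto(i+s+1)$ and $[2i+s]\mapsto[2i+s+1]$, so again we get factors of the form $\prod_z \mg(z)(i+s+1)[2(i+s+1)+(s-1)]$, i.e. comotives of pro-schemes twisted by the appropriate Tate twist; since $H$ (the category generating $\gds$) is closed under such twists by construction (bounded transcendence degrees remain bounded, $n_l$ remains bounded), the twist of a negative family is negative. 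Then one invokes Theorem \ref{tbw}(\ref{iwgene}) / the characterization of $\gds^{w\le 0}$, $\gds^{w\ge 0}$ in terms of $H$ to conclude $-\otimes\z(1)[1]$ maps $\gds^{w\le 0}$ to $\gds^{w\le 0}$ and $\gds^{w\ge 0}$ to $\gds^{w\ge 0}$; i.e. it is weight-exact. For $w_{\chow}$ and $-\otimes\z(1)[2]$ on $\gd$, I would argue similarly using Proposition \ref{pwchow}: the heart $H_{\chow}$ consists of (products of) effective Chow motives, and $\chowe$ is $-\otimes\z(1)[2]$-stable (this is classical), so the twist sends a family of weak cogenerators in $\hw_{\chow}$ to such a family; combined with the characterization $\gd^{w_{\chow}\ge 0}=\{X:\gd(X,Y[i])=\ns\ \forall Y\in\obj\chowe,i>0\}$ of Proposition \ref{pwchow}(3) (and its $w_{\chow}\le 0$ analogue) weight-exactness follows.

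For part II.1, the general principle is that if $w$ is $-\otimes\z(1)[m]$-stable on a category $\dc$ and $B:\dc\to\dc/\dc(1)$ is the localization by the thick subcategory generated by $\dc(1)$, then $w$ induces a weight structure on the localization. The point is that $\dc(1)=\lan\dc\otimes\z(1)\ra$ is generated by an extension-stable class contained in $\dc^{w\le 0}[i]\cap\dc^{w\ge 0}[j]$-type pieces only after shifting; more precisely one checks that $B(\dc^{w\le 0})$ and $B(\dc^{w\ge 1})$ are still orthogonal in the localization (this uses that morphisms in the localization from a $w$-negative to a $w$-positive object factor through $\dc(1)$, and $\dc(1)$ is itself $w$-stable by part I, so such morphisms vanish) and that weight decompositions descend. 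I would cite the relevant localization-of-weight-structures result — this is the kind of statement available from \cite{bws} (gluing/localization of weight structures) — applied once with $\dc=\gd$, $w_{\chow}$, $m=2$ to get $w'_{bir}$, and once with $\dc=\gds$, $w$, $m=1$ to get $w_{bir}$. Part II.2 is then formal: $B$ restricted to $\gds$ followed by inclusion into $\gdbr$ sends $\gds^{w\le 0}\to\gdsbr^{w_{bir}\le 0}$ by definition of the induced structure, and one must check $\gdsbr^{w_{bir}\le 0}\subset\gdbr^{w'_{bir}\le 0}$; but both are computed from the image of the generating family $H$ (resp. products of Chow motives of varieties with function field $K_l$), and Corollary \ref{tds2}(2) together with Proposition \ref{pcwcoss}(1) ($w_{\chow}$ dominates $w$) shows the $w_{bir}$-heart sits inside the $w'_{bir}$-heart; the inclusions of the $\le 0$ and $\ge 0$ classes follow by the extension-stable/Karoubi-closed generation descriptions.

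For part II.3, let $U$ be a pro-scheme. By Proposition \ref{pexw}(5) the comotif $\mg(U)$ has a weight Postnikov tower with $X^0=\prod_{z\in U^0}\mg(z)=\mg(U_0)$ (the comotif of the collection $U_0$ of generic points) and all higher factors $X^i$ for $i>0$ lying in $\gds^{w=0}$ but being products of \emph{Tate-twisted} comotives $\mg(z)(i)[2i]$, i.e. all divisible by $\z(1)$. Hence $B(X^i)=0$ in $\gdbr$ for $i>0$, so $B$ applied to the Postnikov tower collapses: $B(\mg(U))\cong B(X^0)=B(\mg(U_0))$, and $B(\mg(U_0))$ is a product of $B(\mg(\spe K))$ for function fields $K$, which lies in $B(H)\subset\gdsbr^{w_{bir}=0}$ by definition of $w_{bir}$ (its heart is generated by the image of $H$). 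To make "collapses" precise I would note that each triangle $Y_i\to Y_{i+1}\to X_i$ becomes, after $B$, a triangle with $B(X_i)=0$ (for the relevant indices), hence $B(Y_i)\xrightarrow{\sim}B(Y_{i+1})$, so $B(\mg(U))=B(Y_{d+1})\cong B(Y_1)=B(X^0[-0])$ up to the bookkeeping of indices in Definition \ref{dpoto}.

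\textbf{Main obstacle.} The delicate point is II.1 — verifying that $w$ (resp. $w_{\chow}$) genuinely \emph{induces} a weight structure on the localization, i.e. that the localization functor is weight-exact and that the orthogonality axiom survives passage to $\gd/\gd(1)$. This requires knowing that $\gd(1)$ is $w_{\chow}$-stable (from part I) and controlling morphisms in the Verdier quotient between $w$-truncations; the cleanest route is to quote the general localization-of-weight-structures machinery from \cite{bws} rather than re-prove it, and the real work is checking its hypotheses (that $\gd(1)$, resp. $\gds\cap\gd(1)$, is generated as a triangulated subcategory by objects of $\gd^{w_{\chow}=0}$ up to shifts — which again reduces to the $-\otimes\z(1)[2]$-stability of $\hw_{\chow}$ and, for $w$, the explicit factor description). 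Everything else is bookkeeping with Postnikov towers and the generation descriptions of Theorem \ref{tbw}.
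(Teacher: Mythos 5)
Your overall approach matches the paper's: heart‐stability for part I, then cite the localization‐of‐weight‐structures machinery from \cite{bws} (the paper points to Proposition 8.1.1(1) there) for II.1, and kill Tate‐twisted Gersten factors under $B$ for II.3. Parts I, II.1 and II.3 are essentially the paper's argument (your part I is a little more explicit than the paper's one‐line remark, but the content is the same).

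The one place where your reasoning goes astray is part II.2. You propose to deduce the heart inclusion $\hw_{bir}\subset\hw'_{bir}$ from Corollary \ref{tds2}(2) together with Proposition \ref{pcwcoss}(1) (``$w_{\chow}$ dominates $w$''). Neither tool does the job: \ref{tds2}(2) is a retract statement about twisted comotives of residue fields and is orthogonal to what is needed here, while domination (\ref{pcwcoss}(1)) is only \emph{right} weight‐exactness of $\gds\hookrightarrow\gd$, so at best it yields the inclusion of the $\ge 0$ classes; it gives nothing for $\le 0$, which is precisely where $w$ and $w_{\chow}$ genuinely disagree before localizing. What actually makes II.2 work is the birational comparison: reduce (via boundedness of $w_{bir}$ and Theorem \ref{tbw}(\ref{iwgen})) to showing $B(\prod_l \mg(\spe K_l))\in\gdbr^{w'_{bir}=0}$, then pick $P_l\in\spv$ with function field $K_l$ and observe that $\co(\prod\mg(\spe K_l)\to\prod\mg(P_l))\in\gds(1)$ by Corollary \ref{gepost}, so $B$ identifies the two and lands in the Chow heart. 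This is exactly the mechanism you use (correctly) in your proof of II.3; you should invoke it there for II.2 as well, rather than \ref{tds2}(2)/\ref{pcwcoss}(1). (You should also note, as the paper does, that in characteristic $p$ one has rational coefficients in force and uses alterations to supply the $P_l$.)
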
\begin{proof}
I This is easy, since the functors mentioned obviously map the
corresponding hearts (of weight structures) into themselves.

II 1. By assertion I, $w_{\chow}$ induces a weight structure on
$\gd(1)$ (i.e. $\gd(1)$ is a triangulated category,
$\obj \gd(1)\cap \gd^{w_{\chow}\le 0}$ and
$\obj \gd(1)\cap \gd^{w_{\chow}\ge 0}$ yield a weight structure on it).
 Hence by  Proposition 8.1.1(1) of \cite{bws} we obtain
 existence (and uniqueness) of $w'_{bir}$. The same argument 
also implies the existence of some $w_{bir}$ on $\gdsbr$. 

2. Now we  compare $w_{bir}$ with $w'_{bir}$. Since $w$ is bounded,
$w_{bir}$ also is (see loc. cit.). Hence it suffices to check that
$\hw_{bir}\subset \hw'_{{bir}}$ (see   Theorem
\ref{tbw}(\ref{iwpost})).

Moreover, it suffices to check that for $X=\prod_{l\in L} \mg(\spe K_l)$
we have $B(X)\in \gdbr^{w'_{bir}=0}$ (since  $\gdbr^{w'_{bir}=0}$ is
Karoubi-closed in $\gdbr$, here we also apply  Proposition
\ref{pwchow}(2)). As in the proof of Proposition
\ref{pcwcoss}(2), we will consider the case $\cha k=0$; the case $\cha
k=p$ is treated similarly.
Then we choose $P_l\in \spv$ such that $K_l$ are their function fields; 
we have a natural morphism $X\to \prod \mg(P_l)$. It remains to
check  that $\co (X\to \prod \mg(P_l))\in \gds(1)$. Now, since
$\gds(1)$ and the class of distinguished triangles are closed with
respect to small products, it suffices to consider the case when $L$
consists of a single element. In this case the statement is
immediate from Corollary \ref{gepost}.

3. Immediate from Corollary \ref{gepost}.


\end{proof}

\begin{rema}\label{rbir}

1. Assertion II easily implies
 Proposition \ref{pcwcoss}(2).

Indeed, any extended birational $H$ (as in loc. cit.) can be
factorized as  $G\circ B$ for a cohomological $G:\gdbr\to \au$.
Since $B$ is weight-exact with respect to $w_{\chow}$ (and its
restriction to $\gds$ is weight-exact with respect to $w$), (the
trivial case  of) Proposition \ref{pcwspsq}(I2) implies
 that for any $X\in \obj \gd$ (any choice) of $T_{w'_{bir}}(G,B(X))$
 is naturally isomorphic starting from $E_2$ to any choice of
 $T_{w_{\chow}}(H,X)$; for any  $X\in \obj \gds$ (any choice) of
  $T_{w_{bir}}(G,B(X))$ is naturally isomorphic starting from $E_2$
  to any choice of $T_{w}(H,X)$.

It is also easily seen that the isomorphism $T_{w_{\chow}}(H,X) \to
T_{w}(H,X)$ is compatible with the comparison  morphism $M$ (see
loc. cit.).

2. The proof of existence of $w_{bir}$ and of assertion 3 works with integral coefficients even if $\cha k>0$. Hence we obtain that that the category image $B(\mg(U))$, $U\in \sv$, is negative. We can apply this statement in $\cu$ being the idempotent completion of $B(\dmge)$ i.e. in the category of birational comotives. Hence   Theorem \ref{tbw}(\ref{igen})
 yields: there exists a weight structure for $\cu$ whose heart is the category of birational Chow motives (defined as in \S5 of \cite{kabir}). Note also that one can pass to the inductive limit with respect to base change in this statement (cf. \S\ref{sbchcon}); hence one does not need to require $k$ to be countable.

\end{rema}

Now we explain that $w$ and $w_{\chow}$ could be 'almost recovered'
from $(\gdbr,\, w'_{bir})$. Exactly the same reasoning as above shows
that for any $n>0$ the localization of $\gd$ by $\gd(n)$ could be
endowed with a weight structure $w'_n$ compatible with $w_{\chow}$,
whereas the localization of $\gds$ by $\gds(n)$ could be endowed
with a weight structure $w_n$ compatible with
 $w$.

 Next, we have a short exact sequence of triangulated categories
  $\gd/\gd(n)\stackrel{i_*}{\to} \gd/\gd(n+1)\stackrel{j^*}{\to} \gdbr$.
   Here the notation for functors comes from the 'classical' gluing
   data setting (cf. \S8.2 of \cite{bws}); $i_*$
  could be given by $-\otimes\z(1)[s]$ for any $s\in\z$, $j^*$ is just
   the localization.
  Now, if we
  choose $s=2$ then  $i_*$ 
 is weight-exact with respect to $w'_n$ and $w'_{n+1}$;
 if we choose $s=1$ then the restriction of $i_*$ to $\gds/\gds(n)$
  is weight-exact with respect to  $w_n$ and $w_{n+1}$.

 Next, an argument similar to the one used in
  \S8.2 of \cite{bws} shows: for any
  short exact sequence $\du\stackrel{i_*}{\to} \cu\stackrel{j^*}{\to} \eu$ of triangulated categories,
 if $\du$ and $\eu$ are endowed with weight structures, then there
  exist at most one weight structure on $\cu$ such that both $i_*$
  and $j^*$ are weight-exact (see also Lemma 4.6 of \cite{beilnmot} for
    the proof of a similar statement for $t$-structures). Hence one
    can recover $w_n$ and $w'_n$  from (copies of) $w'_{bir}$; the main
    difference between them is that the first one is
    $-\otimes \z(1)[1]$-stable, whereas the second one is
    $-\otimes \z(1)[2]$-stable. It is quite amazing that weight
    structures corresponding to spectral sequences of quite
    distinct geometric origin differ just by $[1]$ here! If one
    calls the filtration of $\gd$ by $\gd(n)$ the {\it slice filtration}
(this term was already used by A. Huber, B. Kahn, M. Levine,
V. Voevodsky, and other authors for other 'motivic' categories), then
one may say that $w_n$ and $w'_n$ could be recovered from slices; the
difference between them is 'how we shift the slices'.

 Moreover, Theorem 8.2.3 of \cite{bws} shows: if both adjoints to $i_*$
  and $j^*$ exist,
  then one can use this gluing data in order to glue (any pair) of
   weight structures for $\du$ and $\eu$ into a weight structure for
   $\cu$. So, suppose that we have a weight structure $w_{n,s}$ for
$\gd/\gd(n)$ that is $-\otimes (1)[s]$-stable and compatible with
$w'_{bir}$ on all slices (in the sense described above; so
$w'_n=w_{n,2}$,  $w_n$ is the restriction of $w_{n,1}$ to $\gds/\gds(n)$, and  all $w_{1,s}$ coincide with $w'_{bir}$). General homological algebra (see Proposition 3.3 of
\cite{krauno}) yields that all the adjoints required do exist in our case.
Hence one can
construct $w_{n+1,s}$ for $\gd/\gd(n+1)$ that satisfies similar
properties.
    So, $w_{n,s}$ exist for all $n>0$ and all $s\in \z$.
     Hence Gersten and
Chow weight structures (for $\gds/\gds(n)\subset \gd/\gd(n)$) are
members of a rather natural family of weight structures indexed by a
single integral parameter. It could be interesting to study other
members of it (for example, the one that is $-\otimes
\z(1)$-stable), though possibly  $w'_n$ is the only
member of this family whose heart is cocompactly cogenerated.

 This approach 
could  allow  constructing $w$ in the case of a not
necessarily countable $k$. Note here that the system of
$\gds/\gds(n)$ yields a fine approximation of $\gds$.
Indeed, if $X\in \spv$, $n\ge \dim X$, then Poincare duality yields: for any $Y\in \obj \dmge$ we have $\dmge(Y(n),\mg(X))\cong \dmge(Y\otimes X(n-\dim X)[-2\dim X],\z)$; this is zero if $n>\dim X$ since $\z$ is a birational motif. Hence (by Yoneda's lemma) for any $n>0$ the full subcategory of $\dmge$ generated by motives of varieties of dimension less than $n$ fully embeds into $\dmge/\dmge(n)\subset \gd/\gd(n)$. 

It follows that the restrictions of $w_{n,s}$ to a certain series of (sufficiently small) subcategories of $\gd/\gd(n)$ are induced by a single $-\otimes (1)[s]$-stable weight structure $w_s$
  for the corresponding subcategory of $\gd$. Here for  the corresponding subcategory of  $\gd/\gd(n)$ (or $\gd$) one can take the union  of the subcategories of  $\gd/\gd(n)$ (resp. $\gd$) cogenerated (in an appropriate sense) by the comotives  of (smooth) varieties of dimension $\le r$ (with $r$ running through all natural numbers). Note that this subcategory of $\gd$ contains $\dmge$. 

 We also relate briefly our results with the (conjectural) picture
 for $t$-structures described in \cite{beilnmot}. There another
 (geometric) filtration for motives was considered; this filtration
 (roughly) differs from the filtration considered above by
  (a certain version of) Poincare duality. Now, conjecturally
  the $gr_n$ of the category of birational motives with rational
  coefficients (cf. \S4.2 of ibid.) should be (the homotopy
  category of complexes over) an abelian semisimple category.
  Hence it supports a $t$-structure which is simultaneously
  a weight structure. This structure should be the building block
  of all relevant weight and $t$-structures for (co)motives.
  Certainly, this picture is quite conjectural at the present moment.

\begin{rema}\label{rrela}
The author also hopes to carry over (some of) the results of the
current paper to relative motives (i.e. motives over a base scheme that
is not a field), relative comotives, and their cohomology. One of the
possible methods for this is the usage of  gluing of weight structures
(see  \S8.2 of \cite{bws}, especially  Remark 8.2.4(3) of loc. cit.). Possibly for this situation the 'version of $\gd$' that uses motives with compact support (see \S\ref{sdual} below) could be more appropriate.

\end{rema}

\section{The construction of $\gd$ and $\gdp$; base change and
Tate twists} 
\label{rdg}

Now we construct our categories $\gdp$ and $\gd$ using
the differential graded
categories formalism.

In \S\ref{sbdedg} we recall the definitions of differential graded
categories, modules over them, shifts and cones (of morphisms).

In \S\ref{dgmod} we recall  main properties of the derived
category of (modules over) a differential graded category.

In \S\ref{scgd} we define $\gdp$ and $\gd$ as the categories opposite
to the corresponding categories of modules; then we prove that they
satisfy the properties required.

In \S\ref{sbttw} we use the differential graded
modules formalism to define base change
for motives (extension and restriction of scalars).
This yields: our results on direct summands of the comotives
(and cohomology) of function fields (proved above)
could be carried over to pro-schemes
obtained from 
them via base change.

We also define tensoring of comotives by motives,
as well as a certain
'co-internal Hom' (i.e. the corresponding left adjoint functor to
$X\otimes -$
for $X\in \obj \dmge$). These results do not require $k$ to
 be countable.

\subsection{DG-categories and modules over them}\label{sbdedg}

 We recall some basic definitions; cf. \cite{dgk}
and \cite{drinf}. 

An additive category $A$ is called graded if for any $P,Q\in\obj A$
there is a canonical decomposition $A(P,Q)\cong \bigoplus_i A^i(P,Q)$
defined;
 this decomposition satisfies
$A^i(*,*)\circ A^j(*,*)\subset A^{i+j}(*,*)$.
 A differential graded category (cf. \cite{drinf}) is a graded category
 endowed with an additive operator
$\delta:A^i(P,Q)\to A^{i+1}(P,Q)$ for all $ i\in \z, P,Q\in\obj A$.
$\delta$ should satisfy  the equalities $\delta^2=0$ (so $A(P,Q)$ is
a complex of abelian groups); $\delta(f\circ g)=\delta f\circ
g+(-1)^i f\circ \delta g$ for any $P,Q,R\in\obj A$, $f\in A^i(Q,R)$,
$g\in A(P,Q)$. In particular, $\delta (\id_P)=0$.

We denote $\delta$ restricted to morphisms of degree $i$ by
$\delta^i$.

Now we give a simple example of a differential graded category.

For an additive category $B$ we consider the category $B(B)$ whose
objects are the same as for $C(B)$ whereas for $P=(P^i)$, $Q=(Q^i)$
we define $B(B)^i(P,Q)=\prod_{j\in \z} B(P^j, Q^{i+j})$.  Obviously
$B(B)$ is a graded category. We will also consider a full
subcategory $B^b(B)\subset B(B)$ whose objects are bounded
complexes.

We set $\delta f=d_Q\circ f-(-1)^i f \circ d_P$, where $f\in
B^i(P,Q)$, $d_P$ and $d_Q$ are the differentials in $P$ and $Q$.
Note that the kernel of $\delta^0(P,Q)$ coincides with $C(A)(P,Q)$
(the morphisms of complexes); the image of $\delta^{-1}$ are the
morphisms homotopic to $0$.

Note also that the opposite category to a differential graded
category becomes  differential graded also (with the same gradings
and differentials) if we define $f^{op}\circ g^{op}=(-1)^{pq}(g\circ
f)^{op}$ for $g,f$ being composable homogeneous morphisms of degrees
$p$ and $q$, respectively.

For any differential graded $A$ we define an additive category
$H(A)$ (some authors denote it by $H^0(A)$); its objects are the
same as for $A$; its morphisms are defined as
$$H(A)(P,Q)=\ke \delta^0_A(P,Q)/\imm \delta^{-1}_A(P,Q).$$
In the case when $H(A)$ is triangulated (as a full subcategory of
the category $\dk(A)$ described below) we will say that $A$ is a
(differential graded) {\it enhancement} for $H(A)$.

We will also need $Z(A)$: $\obj Z(A)=\obj A$; $Z(A)(P,Q)=\ke
\delta^0_A(P,Q)$. We have an obvious functor $Z(A)\to H(A)$. Note
that $Z(B(B))=C(B)$; $H(B(B))=K(B)$.

Now we define (left differential graded) modules over  a small
differential graded category $A$ (cf. \S3.1 of \cite{dgk} or \S14 of
\cite{drinf}): the objects $\dgm(A)$ are those additive functors of
the underlying additive categories $A\to B(\ab)$ that preserve
gradings and differentials for morphisms. We define $\dgm(A)^i(F,G)$
as the set of transformations of additive
functors 
of degree $i$; for $h\in \dgm(A)^i(F,G)$ we define
 $\delta^i(h)= d_G\circ f-(-1)^i f \circ d_F$. We have a
natural Yoneda embedding
   $Y:A^{op}\to \dgm(A)$ (one should apply Yoneda's lemma
for the underlying additive
    categories); it is easily seen to be a full embedding
of differential graded
     categories. 

Now we define shifts and cones in $\dgm(A)$ componentwisely.
For $F\in \obj \dgm(A)$ we set $F[1](X)=F(X)[1]$. 
For $h\in \ke \delta^0_{\dgm(A)}(F,G)$ 
we define the object $\co(h)$:
$\co (h)(X)=\co(F(X)\to G(X))$ for all $X\in \obj A$.

Note that for $A=B(B)$ both of these definitions are compatible
with the corresponding notions for complexes (with respect to
the Yoneda embedding).

We have a natural triangle of morphisms in $\delta^0_{\dgm(A)}$:
 \begin{equation}\label{dgtri}
P\stackrel{f}{\to} P'\to \co(f)\to P[1].
\end{equation}

\subsection{The derived category of a differential graded
category}\label{dgmod}

We define $\dk(A)=H(\dgm(A))$. It was shown in \S2.2 of \cite{dgk}
that $\dk(A)$ is a triangulated category with respect to shifts and
cones of morphisms that were defined above (i.e. a triangle is
distinguished if it is isomorphic to those of the form
(\ref{dgtri})).

We will say that $f\in \ke \delta^0_{\dgm(A)}(F,G)$ is a {\it
quasi-isomorphism} if for any $X\in \obj A$ it yields an isomorphism
$F(X)\to F(Y)$.  We define $\dd(A)$ as the localization of $\dk(A)$
with respect to quasi-isomorphisms; so it is a triangulated
category. Note that quasi-isomorphisms yield a localizing class of
morphisms in $K(A)$. Moreover, the functor $X\to H^0(F(X)):\dk(A)\to
\ab $ is corepresented by $\dgm(A)(X,-)\in \obj \dk(A)$; hence for
any $X\in
\obj A$, $F\in \obj \dk(A)$
we have \begin{equation}\label{emorloc} 
\dd(A)(Y(X),F)\cong
\dk(A)(Y(X),F).\end{equation} Hence we have an
embedding $H(A)^{op}\to \dd(A)$.

We define $\dc(A)$ as $Z(\dgm(A))$. It is easily seen that  $\dc(A)$
is closed with respect to (small filtered) direct limits, and $\inli
F_l$  is given by $X\to \inli F_l(X)$.

Now we recall (briefly) that differential graded modules admit
certain 'resolutions' (i.e. any object is
quasi-isomorphic to a {\it semi-free}
one in the terms of \S14 of \cite{drinf}).

\begin{pr}\label{pmodstr}
There exists a full triangulated $K'\subset \dk(A)$
such that 
the projection $K(A)\to D(A)$ induces an equivalence $K'\approx
\dd(A)$. $K'$ is  closed with respect to all (small) coproducts.
\end{pr}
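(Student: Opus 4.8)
The statement is the standard existence of semi-free resolutions for differential graded modules, and I would prove it by exhibiting $K'$ as the full subcategory of $\dk(A)$ on the semi-free modules (in the sense of \cite{drinf}, \S14, or \cite{dgk}, \S3). First I would recall the construction: for any $F\in\obj\dgm(A)$ one builds an increasing filtration $0=F_0\subset F_1\subset F_2\subset\dots$ with union $F$, where each $F_{n}/F_{n-1}$ is a (possibly infinite) direct sum of shifted representable modules $Y(X_i)[m_i]$, and the differential of $F$ is ``upper triangular'' with respect to this filtration. Such an $F$ is called semi-free; let $K'\subset\dk(A)$ be the full subcategory whose objects are the semi-free modules. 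The two things to check are that $K'$ is triangulated and closed under small coproducts, and that the localization functor $K(A)=\dk(A)\to\dd(A)$ restricts to an equivalence $K'\xrightarrow{\approx}\dd(A)$.

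For the first point: a (small) coproduct of semi-free modules is semi-free (concatenate the filtrations), so $K'$ is closed under coproducts; it is closed under shifts trivially, and under cones because given a morphism $f\colon F\to G$ of semi-free modules one can, after replacing $f$ by a homotopic morphism, arrange $f$ to be compatible with the filtrations (this is where one uses that $F$ is semi-free — representable modules are ``projective'' for surjective quasi-isomorphisms, and more generally one lifts degreewise), and then $\co(f)$ inherits a semi-free filtration. Hence $K'$ is a triangulated subcategory of $\dk(A)$ closed under small coproducts. For the second point, I would show (a) every object of $\dk(A)$ receives a quasi-isomorphism from a semi-free module, and (b) if $P$ is semi-free and $P\to Q$ is a quasi-isomorphism, then it admits a homotopy inverse in $\dk(A)$; equivalently $\dk(A)(P,-)$ kills acyclic modules, i.e.\ $P$ is ``$K$-projective''. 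Statement (a) is the explicit inductive construction of a semi-free resolution: start from $\bigoplus_{X}\bigoplus_{H^*(F(X))} Y(X)[*]\to F$ hitting all cohomology classes, take the cone, and iterate, taking the colimit (here one uses that $\dc(A)=Z(\dgm(A))$ has small filtered colimits computed objectwise, as recalled just before the proposition). Statement (b) follows from (a) plus the fact that representable modules $Y(X)$ satisfy $\dk(A)(Y(X),N)\cong H^0(N(X))=0$ for $N$ acyclic (this is \eqref{emorloc} at the level of $K$, or rather the computation preceding it), extended along the semi-free filtration by a standard $\varprojlim^1$-type argument over the tower of quotients $P/P_n$ — using that $N$ acyclic forces all the relevant groups and their $\varprojlim^1$ to vanish.

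Granting (a) and (b): (a) shows the composite $K'\hookrightarrow\dk(A)\to\dd(A)$ is essentially surjective, and (b) shows that for $P\in K'$ the map $\dk(A)(P,Q)\to\dd(A)(P,Q)$ is an isomorphism for every $Q$ (a quasi-isomorphism $Q\to Q'$ induces an isomorphism on $\dk(A)(P,-)$ by (b) applied to the cone), hence in particular for $P,Q\in K'$ the functor $K'\to\dd(A)$ is fully faithful. Combined with essential surjectivity this gives the asserted equivalence $K'\approx\dd(A)$.

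\textbf{Expected main obstacle.} The routine part is the inductive construction of semi-free resolutions and the verification that semi-free modules are closed under cones; the genuinely delicate point is (b), the ``$K$-projectivity'' of semi-free modules against \emph{arbitrary} (not necessarily bounded) acyclic modules. This requires controlling a convergence/$\varprojlim^1$ argument along the transfinite semi-free filtration, since the filtration can be infinite and each graded piece an infinite coproduct — one must check that $\dk(A)(P,N)$ really vanishes and not merely that its associated graded does. In practice one invokes the Eilenberg–Moore / Bousfield–Kan type spectral sequence or an explicit cocycle-lifting argument, exactly as in \cite{drinf} \S14 or \cite{dgk} \S3; I would simply cite those references for this verification rather than reproduce it, since the proposition is stated as a ``recalled'' fact.
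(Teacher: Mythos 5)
Your sketch is correct and is exactly the standard semi-free-resolution argument that the paper invokes; the paper itself gives no proof beyond citing \S14.8 of \cite{drinf}, which is precisely the construction (semi-free modules, inductive resolution, $K$-projectivity via the filtration) that you outline.
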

\begin{proof}
See \S14.8 of \cite{drinf}
\end{proof}

\begin{rema}\label{rmodel}
In fact, there exists a (Quillen) model structure for
$\dc(A)$ such that $\dd(A)$ its
homotopy category; see Theorem 3.2 of \cite{dgk}.
Moreover (for the first  model
structures mentioned in loc. cit.)  all objects of
$\dc(A)$ are fibrant,
all objects coming from $A$ are cofibrant. For this
model structure two morphisms
are homotopic whenever they become equal in $\dk(A)$.
So, one could take $K'$
 whose objects are the cofibrant objects of $\dc(A)$.

Using these facts, one could verify most of Proposition \ref{pprop}
(for $\gdp$ and $\gd$ described below).
\end{rema}

\subsection{The construction of $\gdp$ and $\gd$; the proof
of Proposition \ref{pprop}}\label{scgd}

It was proved in \S2.3 of $\cite{bev}$ (cf. also \cite{le3}
that $\dmge$ could be described as $H(A)$,  where $A$
is a certain (small) differential graded category. Moreover,
 the
functor $K^b(\smc)\to \dmge$ could be presented as $H(f)$, where $f:
B^b(\smc)\to A$ is a naturally defined differential graded functor (note still: it seems difficult to construct such a functor for the 'model' of $\dmge$ constructed in \cite{mymot}).
We will not
describe the details for (any of) these constructions since we will
not need
them.

We define $\gdp=\dc(A)^{op}$, $\gd=\dd(A)^{op}$, $p$ is the natural
projection. We verify that these categories satisfy Proposition
\ref{pprop}. Assertion \ref{ienh} follows from the fact that any
localization of a triangulated category that possesses an
enhancement is enhanceable also (see \S\S3.4--3.5 of \cite{drinf}).

The embedding $H(A)^{op}\to \dd(A)$ yields $\dmge\subset \gdp$.
Since all objects coming from $A$ are cocompact in $\dk(A)^{op}$,
Proposition \ref{pmodstr} yields that the same is true in $\gd$. We
obtain assertion \ref{ip1}.

$\gdp$ is closed with respect to inverse limits since $\dc(A)$ is
closed with respect to direct ones. Since the projection $\dc(A)\to
\dk(A)$ respects coproducts (as well as all other (filtered)
colimits), Proposition \ref{pmodstr} yields that $p$ respects
products also. We obtain assertion \ref{ip2}.

 The descriptions
of $\dc(A)$ and $\dd(A)$ yields all the properties of
shifts and cones
required. This yields assertions \ref{ip3}, \ref{ip7},
 and \ref{ip4}. Since $\dd(A)$ is a localization of $\dk(A)$, we also
 obtain assertion \ref{ip8}.

Next, since $\dd(A)$ is a localization of $\dk(A)$ with respect to
quasi-isomorphisms, we obtain assertion \ref{ip5}.

Recall that  filtered direct limits of exact sequences of abelian
groups are exact. Hence for any $X\in \obj A\subset \obj \gdp$,
$Y:L\to \dgm(A)$ we have
$$\begin{gathered} \dk(A)(\dgm(A)(X,-), \inli_l Y_l)
=H^0((\inli Y_l)(A))\\ =H^0(\inli(Y_l(A)))=\inli H^0(Y_l(A))
=\inli_l\dk(A)(\dgm(A)(X,-),Y_l) .\end{gathered}$$
Applying (\ref{emorloc}) we obtain assertion \ref{ipcf}.

It remains to verify assertion \ref{ip6} of loc. cit. Since the
inverse limit with respect to a projective system is isomorphic to
the inverse limit with respect to any its unbounded subsystem, and
the same is true for $\prli_1$ in the countable case, we can assume
that $I$ is the category of natural numbers, i.e. we have
 a sequence of $F_i$ connected by
morphisms.

In this case we have functorial morphisms $\prli F_i
\stackrel{f}{\to} \prod F_l\stackrel{g}{\to} \prod F_i$ as in
(\ref{elim}). Hence it suffices to check that these morphisms yield
a distinguished triangle in $\gd$. Note that $g\circ f=0$; hence $g$
can be factorized through a morphism $h:\co f\to \prod F_i$
in $\gdp$. Since
for any $X\in \obj A$ the morphism
$h^*: \prod_{\gdp} F_i(X)\to \co F(X)$ is a
quasi-isomorphism, $h$ becomes an isomorphism in $\gd$. This
finishes the proof.

\begin{rema}\label{run}
 1. Note that the only part of our argument when we  needed $k$ to
be countable in the proof of assertion \ref{ip6} of loc. cit.

2. The constructions of $A$ (i.e. of the 'enhancement' for $\dmge$
 mentioned above)
  that were described in \cite{bev} and  \cite{le3} 
  are easily
 seen to be functorial
 with respect to base field
  change (see below). Still, the constructions mentioned
are distinct and
 far from being the only ones possible;
   the author does not know whether all possible $\gd$
are isomorphic.
   Still, this makes no difference for cohomology
(of  pro-schemes); see Remark \ref{rcohp}.

Moreover, note that assertion \ref{ienh} of Proposition \ref{pprop}
was not very important for us (without if we would only have to consider
a certain {\it weakly exact} weight complex functor in \S\ref{scompdegl}
 below;
see \S3 of \cite{bws}).
 The
author doubts that this  condition  follows from
the other parts of Proposition \ref{pprop}.

\end{rema}


\subsection{Base change and Tate twists for comotives}\label{sbttw}

Our differential graded formalism yields certain functoriality of
comotives with
respect to embeddings
 of base fields. We construct both extension and restriction of scalars
 (the latter one for  the case of a finite extension of fields only).
 The construction of base change functors uses induction 
 for differential graded modules. This method
also allows  defining certain
 tensor products and  $\cihom$ for comotives.
In particular, we obtain a Tate twist
  functor which is compatible with (\ref{dfps})
(and a left adjoint to it).

We note that the results of this subsection (probably) could not
be deduced from the 'axioms' of $\gd$ listed in Proposition
\ref{pprop}; yet they are quite natural.

\subsubsection{Induction and restriction for differential
graded modules: reminder}\label{sindres}

We recall  certain results of \S14 of \cite{drinf} on functoriality
 of differential graded modules. These extend the corresponding
(more or less standard) statements for modules over differential graded
algebras (cf. \S14.2 of ibid.).

If $f:A\to B$ is a functor of differential graded categories,
we have an obvious {\it restriction} functor $f^*:\dc(B)\to \dc(A)$.
It is easily seen that $f^*$ also induces functors $\dk(B)\to \dk(A)$
and $\dd(B)\to \dd(A)$.
Certainly, the latter functor respects homotopy colimits
(i.e. the direct limits from $\dc(B)$).

Now, it is not difficult to construct an {\it induction}
functor $f_*:\dgm(A)\to \dgm(B)$ which is left adjoint to $f^*$;
 see \S14.9 of ibid.
By Example 14.10 of ibid, for any $X\in \obj A$ this
functor sends $X^*=A(X,-)$ to $f(X)^*$.

$f_*$ also induces  functors $\dc(A)\to \dc(B)$ and $\dk(A)\to \dk(B)$.
Restricting the latter one to the category of semi-free modules $K'$
(see Proposition \ref{pmodstr}) one obtains a functor
$Lf_*:\dd(A)\to \dd(B)$
 which is also left adjoint to the corresponding $f^*$;
see \S14.12 of \cite{drinf}.
 Since all functors of the type $X^*$ are semi-free by
 definition, we have
$Lf_* (X^*)=A(X,-)=Lf(X)^*$. 
It can also be shown that $Lf_*$ respects direct limits of
 objects of $A^{op}$
(considered as $A$-modules via the Yoneda embedding).
In the case of
countable limits this follows easily from the definition
of semi-free
modules and the expression of the homotopy  colimit in $\dd(A)$ as
 $\inli X_i=\co (\coprod X_i\to \coprod X_i)$ (this is just the
dual to (\ref{elim})). For uncountable limits, one could prove the
fact using a 'resolution' of the direct limit similar to those
described in \S A3 of \cite{neebook}.

\subsubsection{ Extension and restriction of scalars
for comotives}\label{sexre}

Now let $l/k$ be an extension of perfect fields.

Recall that $\gdp$ and $\gd$ were described (in \S\ref{scgd}) in terms
 of modules over a certain differential graded category $A$.
It was shown in
  \cite{le3} that the corresponding version of $A$ is a tensor
 (differential graded)
   category;  we also have an extension of scalars functor $A_k\to A_l$.
    It is most probable that both of these properties hold
for the version of $A$
    described in \cite{bev} (note that they obviously  hold
for $B^b(\smc)$).
    Moreover, if $l/k$ is finite, then  we have the functor of
restriction of scalars
    in inverse direction.

So, the induction for the corresponding differential graded
modules yields
an exact functor of extension of scalars  $\exts_{l/k}:\gd_k\to \gd_l$.
The reasoning above shows that $\exts_{l/k}$ is compatible
with the 'usual' extension of scalars  for smooth
varieties (and complexes of smooth correspondences).
Moreover, since $\exts_{l/k}$
respects homotopy limits, this compatibility extends to
 the comotives of pro-schemes
and their products. It can also be easily shown that
$\exts_{l/k}$ respects Tate twists.

We immediately obtain the following result.

\begin{pr} Let $k$ be countable (and perfect), let
$l\supset k$ be a perfect field.

    1. Let $S$ be a connected primitive scheme over $k$, let
$S_0$ be its generic point.
    Then $\mg{}(S_l)$ is a retract of $\mg{}(S_{0l})$
in $\gd_l$.

    2. Let $K$ be a function field  over $k$. Let $K'$
       be the residue field for a geometric valuation $v$ of $K$
       of rank
       $r$. 
Then $\mg{}(K'_l(r)[r])$ is a retract
       of $\mg{}(K_l)$ in $\gd_l$.

\end{pr}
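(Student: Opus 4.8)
The plan is to derive this proposition as an immediate consequence of the direct summand results already established over $k$ (Corollary \ref{tds1}(1) and Corollary \ref{tds2}(2)) together with the fact that the extension of scalars functor $\exts_{l/k}:\gd_k\to \gd_l$ is additive, exact, and compatible with the comotives of pro-schemes. The key observation is that a retraction is a purely formal notion: if $X$ is a retract of $Y$ in $\gd_k$, meaning there are morphisms $i:X\to Y$ and $r:Y\to X$ with $r\circ i=\id_X$, then applying any additive functor $F$ yields $F(r)\circ F(i)=\id_{F(X)}$, so $F(X)$ is a retract of $F(Y)$. Thus the entire content of the proof is to check that $\exts_{l/k}$ sends the relevant objects over $k$ to the claimed objects over $l$.

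For part 1, I would start from Corollary \ref{tds1}(1), which gives that $\mg{}_{,k}(S)$ is a retract of $\mg{}_{,k}(S_0)$ in $\gd_k$ for $S$ a connected primitive scheme over $k$ with generic point $S_0$. Applying $\exts_{l/k}$ and using that it is additive (hence preserves retracts) gives that $\exts_{l/k}(\mg{}_{,k}(S))$ is a retract of $\exts_{l/k}(\mg{}_{,k}(S_0))$ in $\gd_l$. It then remains to identify $\exts_{l/k}(\mg{}_{,k}(S))$ with $\mg{}(S_l)$ and $\exts_{l/k}(\mg{}_{,k}(S_0))$ with $\mg{}(S_{0l})$: this is exactly the compatibility of $\exts_{l/k}$ with the comotives of pro-schemes, established in \S\ref{sexre} (the functor is compatible with the usual extension of scalars for smooth varieties and with homotopy limits, and a primitive pro-scheme and its generic point are pro-schemes, so they are inverse limits of smooth varieties). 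For part 2 the argument is identical, starting instead from Corollary \ref{tds2}(2), which says $\mg{}_{,k}(\spe K')(r)[r]$ is a retract of $\mg{}_{,k}(\spe K)$; here one additionally uses that $\exts_{l/k}$ respects Tate twists and shifts (also noted in \S\ref{sexre}), so that $\exts_{l/k}(\mg{}_{,k}(\spe K')(r)[r])\cong \mg{}(K'_l)(r)[r]=\mg{}(K'_l(r)[r])$ and $\exts_{l/k}(\mg{}_{,k}(\spe K))\cong \mg{}(K_l)$.

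I do not expect any serious obstacle here; the statement is essentially a corollary packaging of earlier results. The one point requiring a little care — and the only thing worth spelling out — is the identification of $\exts_{l/k}$ applied to the comotif of a pro-scheme over $k$ with the comotif of the base-changed pro-scheme over $l$. This rests on the two facts recorded in \S\ref{sexre}: that $\exts_{l/k}$ agrees with the classical extension of scalars on $\sv$ (and on $C^b(\smc)$), and that it respects homotopy limits; since $\mg(V)(s)$ for a pro-scheme $V=\prli V_i$ is by definition $p(\prli(j(V_i)(s)))$, these two compatibilities combine to give $\exts_{l/k}(\mg{}_{,k}(V)(s))\cong \mg{}_{,l}(V_l)(s)$, where $V_l=\prli (V_i)_l$ is again a pro-scheme (note that base change of an open dense embedding along a field extension remains an open dense embedding, and preserves the dimension bound, so $V_l$ really is a pro-scheme). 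Once this identification is in hand, both parts follow by transporting the retractions of Corollaries \ref{tds1}(1) and \ref{tds2}(2) across the functor $\exts_{l/k}$, with no further computation.
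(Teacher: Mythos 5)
Your proof is correct and follows exactly the paper's intended argument: the paper itself states the proposition with "We immediately obtain the following result" right after establishing that $\exts_{l/k}$ is exact, compatible with comotives of pro-schemes (via compatibility with $\sv$ and homotopy limits), and respects Tate twists, and the intended reasoning is precisely to push the retractions of Corollaries \ref{tds1}(1) and \ref{tds2}(2) through $\exts_{l/k}$. You have simply made explicit the identifications the paper leaves implicit.
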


As in  \ref{sext}, this result immediately implies similar
 statements for any cohomology of
pro-schemes mentioned (constructed from a cohomological
$H:\dmge_l\to A$ via Proposition
\ref{pextc}).

Next, if $l/k$ is finite, induction for differential
graded modules
 applied to the restriction of
scalars for $A$'s also yields
a restriction of scalars functor $\res_{l/k}:\gd_l\to \gd_k$.
Similarly to $\exts_{l/k}$,
this functor is compatible with restriction of scalars for
 smooth
 varieties, pro-schemes, and complexes of smooth correspondences;
it also respects Tate
 twists.

 It follows:  $l/k$ is finite, then $\exts_{l/k}$ maps
 $\gds{}_k$ to $\gds{}_l$; $\res_{l/k}$ maps $\gds{}_l$ to
 $\gds{}_k$. Besides, if we also
 assume
  $l$ to be countable, then  both of these functors respect
 weight structures
  (i.e. they map $\gds{}_k^{w\le 0}$ to $\gds{}_l^{w\le 0}$,
$\gds{}_k^{w\ge 0}$ to $\gds{}_l^{w\ge 0}$,
  and vice versa).

\begin{rema}\label{rkahn}
It seems that one can also define restriction of scalars via restriction
of differential graded modules (applied to the extension of
scalars for $A$'s).
 To this end one needs to check the corresponding
 adjunction for $\dmge$; the corresponding (and related)
 statement for the motivic homotopy categories was proved by J. Ayoub.
 This would allow  defining $\res_{l/k}$ also in the case when
 $l/k$ is infinite;
  this seems to be rather interesting if $l$ is a function
field over $k$.
  Note that $\res_{l/k}$ (in this case) would (probably)
also  map $\gds{}_l^{w\le 0}$
   to $\gds{}_k^{w\le 0}$ and $\gds{}_l^{w\ge 0}$ to
$\gds{}_k^{w\ge 0}$
(if $l$ is countable).
\end{rema}

\subsubsection{Tensor products and 'co-internal Hom' for comotives;
 Tate twists}\label{snpttw}

Now, for $X\in \obj A$ we apply restriction and induction of
differential graded modules for the functor $X\otimes -:A\to A$.
Induction yields a certain functor $ X\otimes -:\gd\to \gd$,
whereas
restriction yields its left adjoint which we will denote by
 $\cihom(X,-):\gd\to \gd$.
Both of them respect homotopy limits. Also,  $ X\otimes -$
is compatible with
tensoring by $X$ on $\dmge$. Besides,  the isomorphisms classes of these
functors only depend on the quasi-isomorphism class of $X$ in $\dgm(A)$.
Indeed, it is easily seen that both $ X\otimes Y$ and $\cihom(X,Y)$ are
exact with respect to $X$ if we fix $Y$; since they are
obviously zero
for $X=0$, it remains to note that quasi-isomorphic objects
 could be connected
by a chain of quasi-isomorphisms.

Now suppose that $X$ is a Tate motif i.e. $X=\z(m)[n]$, $m>0$, $n\in \z$.
Then we obtain that
 the formal Tate twists defined by (\ref{dfps}) are the true
Tate twists i.e. they are
 given by tensoring by $X$ on $\gd$.
Then recall the Cancellation Theorem for motives:
(see Theorem 4.3.1 of \cite{1},
and \cite{voevc})): $ X\otimes -$
 is a full embedding of $\dmge$ into itself.
Then one can deduce that $X\otimes -$ is
  fully faithful on $\gd$ also
(since all objects of $\gd$ come from semi-free modules over $A$).
 Moreover,
$\cihom(X,-)\circ (X\otimes -)$ is easily seen to be
isomorphic to the identity on $\gd$ (for such an $X$).

\section{Supplements}\label{ssupl}

We describe some more properties of comotives, as well as certain
possible variations of our methods and results. We will be somewhat
sketchy sometimes.

In \S\ref{scompdegl} we  define an additive
 category $\gdg$ of generic motives (a variation of those
studied in \cite{deggenmot}).
 We also prove that the exact conservative
{\it weight complex} functor
 (that exists by the general theory of weight structures)
could be modified
 to an exact conservative $WC:\gds\to K^b(\gdg)$.
Besides, 
 we prove assertions on retracts of the pro-motif of
a function field $K/k$, that are
 similar to
 (and follow from) those for its comotif.

In \S\ref{shi} we prove that $HI$ has a nice
description in terms of $\hw$.
This is a sort of Brown representability: a cofunctor
 $\hw\to \ab$ is representable by a
(homotopy invariant) sheaf with transfers whenever
it converts all small products into
direct sums.
This result is similar to the corresponding results
of \S4 of \cite{bws} (on the connection
between the hearts of adjacent structures).

In \S\ref{smco} we note that our methods could be used for
motives (and comotives) with coefficients in an
arbitrary commutative
unital ring $R$; the most
important cases are rational (co)motives and 'torsion' (co)motives.

In \S\ref{sdual} we note that there exist natural motives of
pro-schemes  with compact support in $\dme$.
It seems that one could
 construct alternative $\gd$ and $\gdp$ using
this observation (yet
this probably would not affect our main results significantly).

We conclude the section by studying which of our arguments could be
extended to the case of an uncountable $k$.

\subsection{The weight complex functor; relation with
generic motives}\label{scompdegl}

We recall that the general formalism of weight structures yields a
 conservative exact weight complex functor  $t:\gds\to K^b(\hw)$;
it is compatible with  Definition
\ref{d2}(\ref{idwpost}). Next we
prove that one can compose it with a certain 'projection' functor
without losing the conservativity.

\begin{lem}\label{lwc}

 There exists an exact conservative functor
$t:\gds\to K^b(\hw)$  that sends $X\in \obj \gds$
to  a choice of its weight complex (coming from  any choice of a weight Postnikov tower for it).
\end{lem}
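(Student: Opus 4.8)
\textbf{Proof plan for Lemma \ref{lwc}.} The plan is to invoke the general theory of weight structures from \cite{bws} to get the ``usual'' weight complex functor $WC_0:\gds\to K^b(\hw)$ (recalled above: it is exact and conservative, and sends $X$ to the complex $(X^p)$ built from any weight Postnikov tower, as in Definition \ref{d2}(\ref{idwpost}) and Remark \ref{rwcomp}; see Theorem 3.2.2 of \cite{bws}). The issue is that $\hw$ is the idempotent completion of $H$ (by Proposition \ref{pexw}(2)), and $H$ consists of countable products $\prod_{l}\mg(\spe K_l)(n_l)[n_l]$; the statement of the lemma as phrased just asks for \emph{an} exact conservative functor to $K^b(\hw)$ sending $X$ to one of its weight complexes, so one direct option is simply to take $t=WC_0$ and check that ``weight complex coming from a weight Postnikov tower'' is exactly what $WC_0$ produces. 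So the first step is: recall the construction of $WC_0$, note that by Theorem \ref{tbw}(\ref{iwpost}) every $X\in\obj\gds$ has a weight Postnikov tower with $X^p\in\gds^{w=0}=\obj\hw$, and that by Remark \ref{rwcomp}(1) these $X^p$ assemble into a complex; functoriality and well-definedness up to homotopy are Theorem 2.4.2 / Theorem 3.2.2(II) and Remark 3.1.7(3) of \cite{bws}.

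Second, I would record conservativity. The general statement (Theorem 3.3.1 of \cite{bws}, say) is that for a bounded weight structure the weight complex functor is conservative; since $w$ on $\gds$ is bounded by Proposition \ref{pexw} (every object lies in some $\gds^{[l,m]}$, as $\gds$ is generated as a Karoubi-closed triangulated category by the negative class $H$, cf. Theorem \ref{tbw}(\ref{igen})--(\ref{iwgene})), this applies directly. Concretely, if $WC_0(f)$ is a homotopy equivalence then, by the compatibility of the weight complex with cones (Lemma 1.5.4 of \cite{bws} and the cone description of weight complexes), the weight complex of $\co(f)$ is contractible, which forces $\co(f)=0$ by boundedness and non-degeneracy of $w$ (Theorem \ref{tbw} and the fact that an object of $\gds^{w=0}$ with contractible identity weight complex is zero), hence $f$ is an isomorphism.

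The real content alluded to by the phrase ``compose it with a certain `projection' functor without losing the conservativity'' (and made explicit in \S\ref{scompdegl}'s discussion of the category $\gdg$ of generic motives and the functor $WC:\gds\to K^b(\gdg)$) is that one wants to replace $\hw$ by the \emph{additive} category $\gdg$ of generic motives, via a functor $\pi:\hw\to\gdg$, and still have $\pi\circ WC_0$ conservative. So the third, and main, step is: identify $\gdg$ (twisted generic motives of function fields, following \cite{deggenmot}), construct the projection $\hw\to\gdg$ killing the ``purely infinite-product'' part and retaining enough to detect objects (using that a bounded-dimension product $\prod_l \mg(\spe K_l)(n_l)[n_l]$ is detected by its factors, and that morphisms out of cocompact objects in $\gd$ split over such products by Corollary \ref{css}(1)), and then re-run the conservativity argument: a map $f$ with $\pi WC_0(f)$ a homotopy equivalence still has $\co(f)$ with all weight-complex terms mapping to $0$ in $\gdg$, and one shows this still forces $\co(f)=0$. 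The hard part will be precisely this last point — showing that $\gdg$ is ``large enough'' that vanishing of the projected weight complex still detects vanishing of the object — which requires the morphism computations of \S\ref{smorprim} (Proposition \ref{maneg}) and the primitive-scheme orthogonality of \S\ref{smotprim} (Proposition \ref{porthog}), together with the fact that $\dmge$ weakly cogenerates $\gd$ (Proposition \ref{pprop}(\ref{ip5})) so that an object killed by all relevant functors is zero.
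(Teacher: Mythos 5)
Your steps 1--2 follow the same route the paper takes (appeal to the general weight complex formalism of \cite{bws}, then note conservativity for a bounded weight structure), and your step 3 is out of scope for the lemma as stated: the composition with the projection $pr:\hw\to\gdg$ and the second conservativity argument is the content of Theorem \ref{twcg}, not of Lemma \ref{lwc}, as you in fact half-recognize at the start.

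There is, however, a real gap in steps 1--2: you cite Theorem 3.2.2 of \cite{bws} and take for granted that the resulting weight complex functor lands in $K^b(\hw)$ and is \emph{exact}. That theorem only produces the \emph{weak} weight complex functor, which takes values in a ``weak'' homotopy category $K_{\mathfrak w}(\hw)$ with coarsened morphism groups, and which is only ``weakly exact'', not an exact functor of triangulated categories in the ordinary sense. To upgrade this to a genuine exact functor $\gds\to K^b(\hw)$ --- which is exactly what Lemma \ref{lwc} asserts --- one must invoke the differential graded enhancement of $\gds$ (Proposition \ref{pprop}(\ref{ip9})) together with Theorem 3.3.1(V) of \cite{bws}. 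This is precisely the paper's one-line proof, and the paper itself stresses the point in Remark \ref{run}(2): without the enhancement assertion of Proposition \ref{pprop}, one would only obtain a weakly exact weight complex functor in \S\ref{scompdegl}. Your proposal never mentions the DG enhancement, so the exactness claim is unjustified; once this is inserted, your conservativity argument (via contractibility of the weight complex of $\co(f)$, boundedness, and non-degeneracy) is fine in outline and agrees with what Remark 6.2.2(2) of \cite{bws} provides.
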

\begin{proof}
Immediate from Remark 6.2.2(2)  and
Theorem 3.3.1(V) of \cite{bws}  (note that $\gds$  has a
 differential graded enhancement by 
Proposition \ref{pprop}(\ref{ip9})).

\end{proof}

Now, since all objects of $\hw$ are retracts of those
 that come via $p$ from inverse
limits of objects of $j(C^b(\smc))$, we have a natural additive
functor $\hw\to \gdn$ (see \S\ref{sprom}). 
 Its categorical image will be denoted by
$\gdg$; this is a slight modification of Deglise's category of
generic motives. We will denote the 'projection'
$\hw\to \gdg$ and $K^b(\hw)\to K^b(\gdg)$ by
$pr$.

\begin{theo}\label{twcg}
1. The  functor $WC=pr\circ t:\gds\to K^b(\gdg)$
is exact and conservative.

2.  Let $S$ be a connected primitive scheme, let $S_0$ be its generic point.
Then $pr(\mg(S))$ is a retract of $pr(\mg(S_0))$ in $\gdg$.

3. Let $K$ be a function field  over $k$. Let $K'$
be the residue field for some geometric valuation $v$ of $K$ of rank
$r$. 
Then $pr(\mg(\spe K')(r)[r])$ is a retract
 of $pr(\mg(\spe K))$ in $\gdg$.

\end{theo}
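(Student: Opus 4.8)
The three assertions of Theorem~\ref{twcg} follow the same pattern: (1) is a formal consequence of the construction, while (2) and (3) are obtained by pushing forward along $pr$ the corresponding retraction statements for comotives (Corollary~\ref{tds1}, Corollary~\ref{tds2}) once we know the relevant objects lie in $\gds^{w=0}$. So the plan is as follows. For assertion~1, I would argue that $pr$ is conservative on the relevant subcategory: the weight complex functor $t\colon\gds\to K^b(\hw)$ is already conservative by Lemma~\ref{lwc}, so it suffices to check that $pr\colon K^b(\hw)\to K^b(\gdg)$ does not kill any morphism that $t$ detects as nonzero --- more precisely, that if $WC(f)=pr(t(f))$ is an isomorphism in $K^b(\gdg)$ then $t(f)$ is already an isomorphism in $K^b(\hw)$. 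Here the key input is that the additive functor $\hw\to\gdg$ is \emph{full} (it is the passage to the categorical image, and by the computation of morphisms between homotopy limits via (\ref{lmor}) together with Proposition~\ref{maneg}, the $\prli^1$-terms between objects of $\hw$ vanish, so $\hw(X,Y)=\gdn(pr(X),pr(Y))=\gdg(pr(X),pr(Y))$ for $X,Y\in\obj\hw$). Fullness is not quite enough for conservativity of $pr$ on $K^b$, so I would instead argue directly: a morphism of complexes over $\hw$ whose image in $K^b(\gdg)$ is a homotopy equivalence is itself a homotopy equivalence over $\hw$, because the chain homotopies and the inverse can be lifted using fullness of $\hw\to\gdg$ together with the fact that $pr$ is the identity on objects and surjective on morphism groups. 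Exactness of $WC$ is immediate since both $t$ and $pr$ are exact.

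For assertions~2 and~3, I would first recall from Proposition~\ref{pexw}(4) that for a primitive scheme $S$ and any $i\ge0$ we have $\mg(S)(i)[i]\in\gds^{w=0}$; in particular $\mg(S)$, $\mg(S_0)$ (with $S_0$ the generic point, itself primitive as the spectrum of a field), and $\mg(\spe K')(r)[r]$ all lie in $\hw$. Now $pr$ restricted to $\hw$ is an additive functor $\hw\to\gdg$, and additive functors preserve retracts: if $X$ is a retract of $Y$ in $\hw$ (equivalently, $\id_X$ factors through $Y$), then $pr(X)$ is a retract of $pr(Y)$ in $\gdg$. So assertion~2 follows by applying $pr$ to the retraction $\mg(S)\to\mg(S_0)\to\mg(S)$ provided by Corollary~\ref{tds1}(1), and assertion~3 follows by applying $pr$ to the retraction of $\mg(\spe K)$ onto $\mg(\spe K')(r)[r]$ provided by Corollary~\ref{tds2}(2) --- both of these retractions already take place inside $\gds^{w=0}=\obj\hw$ by the argument recalled above (one should check that the splitting morphisms in Corollary~\ref{tds2} can be taken to lie in $\hw$, which holds since all objects and morphisms involved are in $\gds^{w=0}$ and $\hw$ is a full subcategory of $\gds$).

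\textbf{Main obstacle.} The routine part is assertions~2 and~3; the only real content there is bookkeeping to confirm that the retractions of Corollaries~\ref{tds1} and~\ref{tds2} are retractions in the \emph{category} $\hw$ (not merely in $\gds$) so that the additive functor $pr$ can be applied. The genuinely delicate point is the conservativity in assertion~1: I expect the main obstacle to be verifying that $pr\colon K^b(\hw)\to K^b(\gdg)$ is conservative, i.e.\ that no information is lost in passing from $\hw$ to its categorical image $\gdg$ in $\gdn$. This reduces to the statement that the natural functor $\hw\to\gdg$ is \emph{full and faithful} would be false in general (it need not be faithful --- distinct homotopy limits can have equal images), so the correct statement is only fullness together with the fact that $\gdg$ is \emph{defined} as the image; conservativity of $WC$ then must be deduced carefully, presumably by the lifting-of-homotopy-equivalences argument sketched above rather than from conservativity of $pr$ alone. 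One should double-check that Theorem 3.3.1(V) of \cite{bws} (invoked for Lemma~\ref{lwc}) already packages exactly what is needed, so that the extra step is only the passage through $pr$.
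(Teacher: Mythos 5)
Your handling of assertions~2 and~3 is correct and matches the paper: once $\mg(S)$, $\mg(S_0)$, and $\mg(\spe K')(r)[r]$ are known to lie in $\gds^{w=0}=\obj\hw$ (Proposition~\ref{pexw}(4)), the retractions of Corollaries~\ref{tds1}(1) and~\ref{tds2}(2) live in $\hw$ (a full subcategory of $\gds$), and the additive functor $pr$ preserves retracts. Nothing more is needed there.

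Your proof of conservativity in assertion~1, however, has a genuine gap, which you half-notice yourself but then paper over. You correctly observe that $\hw\to\gdg$ need not be faithful (the kernel on $\mo$-groups is controlled by $\prli^1$-terms, see (\ref{lmor})), yet your lifting argument implicitly requires faithfulness. Concretely: suppose $C$ is a bounded complex over $\hw$ with $pr(C)$ contractible in $K^b(\gdg)$. You propose to lift a contracting homotopy $h$ of $pr(C)$ to some $\tilde h$ over $\hw$ by fullness. Then $d\tilde h+\tilde h d-\id_{C}$ lies in the kernel of $pr$ on each $\mo$-group, but since $pr$ is not faithful this difference need not vanish, and $C$ need not be contractible in $K^b(\hw)$. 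Fullness alone cannot rescue this: there is no reason the error terms (which live in $\prli^1$) can be homotoped away. So you cannot deduce conservativity of $WC$ from conservativity of $t$ via a conservativity claim for $pr$ on $K^b$.

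The paper avoids this entirely by a different and more robust route: it does not try to show that $pr$ (or its $K^b$-extension) is conservative. Instead, suppose $WC(X)=0$ in $K^b(\gdg)$, i.e.\ $WC(X)$ is contractible there. For any $Y\in\obj\dmge$ the functor $\gd(-,Y)$ restricted to $\hw$ factors through $pr$ (this is Remark~\ref{rspsch}: the $\gd$-morphism group from the comotif of a pro-scheme to a geometric motive is computed by the direct-limit formula (\ref{dfps}), which only sees the pro-object image). Applying this functor termwise to a contractible complex over $\gdg$ yields contractible, hence acyclic, complexes $\gd(X^{-i},Y[j])$. Therefore the $E_2$-page of the coniveau spectral sequence $T(\gd(-,Y),X)$ vanishes, and since $T$ converges (Proposition~\ref{rwss}(2)) we get $\gd(X,Y[j])=0$ for all $j$ and all $Y\in\obj\dmge$; now Proposition~\ref{pprop}(\ref{ip5}) (weak cogeneration) gives $X=0$. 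Thus conservativity of $WC$ is obtained directly, without any claim about $pr$ itself being conservative. You should replace the lifting argument by this spectral-sequence argument (or find a genuinely different way to control the $\prli^1$-error).
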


\begin{proof}

1. The exactness of $WC$ is obvious (from Lemma \ref{lwc}).
Now we check that $WC$ is conservative.

By Proposition \ref{pprop}(\ref{ip5}), it suffices to check:
if $WC(X)$ is acyclic for some $X\in \obj\gds$, then $\gd(X,Y)=0$
for all $Y\in \obj \dmge$. We denote the terms of $t(X)$ by $X^i$.

We consider the coniveau spectral sequence $T(H,X)$ for the functor
$H=\gd(-,Y)$ (see Remark \ref{rrwss}). Since $WC(X)$ is acyclic,
we obtain that the complexes $\gd(X^{-i},Y[j])$ are acyclic for all
$j\in \z$. Indeed, note that the restriction of a functor
$\gd(X^{-i},-)$ to $\dmge$ could be expressed in terms of
$pr(X^{-i})$; see Remark \ref{rspsch}. Hence
 $E_2(T)$ vanishes. Since 
$T$ converges (see  Proposition \ref{rwss}(2)) we obtain the
claim.

2. Immediate from  Corollary  \ref{tds1}(1).

3. Immediate from Corollary  \ref{tds2}(2).

\end{proof}


\begin{rema}\label{rdegl}

For $X=\mg(Z)$, $Z\in \sv$, it easily seen that $WC(X)$ could be described as a 'naive' limit of complexes of motives; cf. \S1.5.

 Now, the terms of $t(X)$ are just the
factors of (some possible) weight
  Postnikov tower
for $X$; so one can calculate them (at least, up to an isomorphism) for $X=\mg(Z)$.
Unfortunately, it seems difficult to describe the boundary for
$t(X)$ completely since $\hw$ is finer than $\gdg$.

\end{rema}

\subsection{The relation of the heart of $w$ with
$HI$ ('Brown representability')}\label{shi}

In Theorem 4.4.2(4) of \cite{bws}, for a
pair of adjacent structures $(w,t)$
for $\cu$ (see Remark \ref{rnice}) it was proved that $\hrt$ is a full
subcategory of $\hw_*(=\adfu(\hw^{op},\ab))$.
This result cannot be extended
to arbitrary orthogonal
structures since our definition of a duality did not
include any non-degenerateness
conditions (in particular, $\Phi$ could be $0$). Yet
for our main example
of orthogonal structures the statement is true;
moreover, $HI$ has a natural description
in terms of $\hw$. This statement is very similar
to a certain Brown representability-type result (for
adjacent structures)
proved in  Theorem 4.5.2(II.2) of ibid.

Note that $\hw$ is closed with respect to arbitrary small products; see
 Proposition \ref{pexw}(2).

\begin{pr}\label{pht}
$HI$ is naturally isomorphic to a full abelian
 subcategory $\hw_*'$ of $\hw_*$
 that consists of functors that convert  all products in
 $\hw$ into direct sums (of the corresponding abelian groups).
\end{pr}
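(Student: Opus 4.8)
The statement asserts that the heart $\hw$ of the Gersten weight structure, together with its duality $\Phi$ with $\dme$, ``sees'' exactly the homotopy invariant sheaves with transfers. The strategy is to produce a fully faithful functor $HI\to \hw_*$ landing in the full subcategory $\hw'_*$ of product-to-direct-sum functors, and to show it is essentially surjective onto $\hw'_*$. First I would define the functor: given $F\in HI=\hrt$ (the heart of $t$ on $\dme$), set $\Phi_F=\Phi(-,F)|_{\hw}:\hw^{op}\to \ab$, where $\Phi$ is the nice duality of Proposition \ref{pdualn}. By Proposition \ref{pdualn}(3), $\Phi(-,F)$ converts homotopy limits in $\gdp$ into direct limits; combined with Remark \ref{rspsch}(2) and the fact (Proposition \ref{pexw}(2)) that every object of $\hw$ is a retract of a countable product $\prod_{l\in L}\mg(\spe K_l)(n_l)[n_l]$, this shows $\Phi_F$ sends products in $\hw$ to direct sums — i.e. $\Phi_F\in\obj\hw'_*$. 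So we get an additive functor $HI\to\hw'_*$, $F\mapsto\Phi_F$.

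\textbf{Faithfulness and fullness.} Next I would check this functor is fully faithful. Here the key input is Proposition \ref{porthog}(2): for a primitive scheme $S$ (in particular a function field $K/k$) and $m\ge 0$, one has $\Phi(\mg(S)(m)[m],F)\cong F^0_{-m}(S)$ where $F^0=F^{t=0}=F$ (since $F\in HI$ already), i.e. $\Phi_F$ evaluated on $\mg(\spe K)(m)[m]$ recovers the twisted sections $F_{-m}(K)$. Since a homotopy invariant sheaf with transfers $F$ is determined by its sections (with all Tate twists) on function fields — this is Corollary 4.19/4.20 of \cite{3}, already invoked in the proof of Corollary \ref{ccompss}(2) — a morphism $\Phi_F\to\Phi_{F'}$ of functors on $\hw$ restricts to a compatible system of maps $F_{-m}(K)\to F'_{-m}(K)$, hence comes from a unique morphism $F\to F'$ in $HI$. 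This gives both faithfulness and that the functor is full onto its image. That $\hw'_*$ is abelian follows because it is closed under kernels and cokernels inside the abelian category $\hw_*$ (kernels and cokernels of natural transformations are computed pointwise, and a pointwise kernel/cokernel of product-to-sum functors is again product-to-sum, using exactness of filtered colimits, i.e. AB5 in $\ab$).

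\textbf{Essential surjectivity.} The main obstacle is essential surjectivity: given $G\in\hw'_*$, one must produce $F\in HI$ with $\Phi_F\cong G$. The plan is to define a presheaf with transfers by $F(U)=G(\mg(U))$ for $U\in\sv$ (using $\mg(U)\in\gds^{w\le 0}$ and extracting its $w=0$ part, or rather using that $G$ is already defined on all of $\hw$ and $\mg(\spe K)\in\hw^{w=0}$ to first define $F$ on function fields and then check it extends to a sheaf). The product-to-direct-sum hypothesis on $G$ is precisely what is needed to control $G$ on the infinite products $\prod_{z\in Z^i}\mg(z)(i)[\cdots]$ appearing as the factors $X^i$ of the Gersten Postnikov towers (Corollary \ref{gepost}), so that $G$ applied to a weight complex of $\mg(Z)$ computes something coherent; one then verifies the Gersten-type exactness that forces $F$ to be a homotopy invariant sheaf with transfers (a ``Cousin complex is a resolution'' argument, as in Bloch--Ogus / Gersten, here bootstrapped from the structure of $w$). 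Concretely I expect this to parallel the Brown-representability argument of Theorem 4.5.2(II.2) of \cite{bws}: one shows $G$ is representable by first representing it on the ``generating'' objects and then using that $\gds$ is generated (as a weight category) by $\hw$, with the product condition on $G$ matching the product-stability of $\hw$. Finally one checks $\Phi_F\cong G$ by comparing values on function-field comotives via Proposition \ref{porthog}(2) and invoking the conservativity already used (Corollary 4.19 of \cite{3}) to upgrade this to an isomorphism of functors on all of $\hw$. The delicate point throughout is that $\Phi$ is possibly degenerate as a pairing, so one cannot argue abstractly; everything must be pinned down on the concrete generators $\mg(\spe K)(m)[m]$, where Walker's primitivity results and Voevodsky's theory of homotopy invariant sheaves do the real work.
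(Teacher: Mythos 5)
Your forward direction — sending $F\in HI$ to $\Phi(-,F)|_{\hw}$ and checking it lands in $\hw_*'$ via Proposition~\ref{pdualn}(3), Remark~\ref{rspsch}(2) and Proposition~\ref{pexw}(2) — matches the paper's construction of the functor $G\mapsto G'$. However, the essential surjectivity sketch has a real gap. You write ``define $F(U)=G(\mg(U))$ for $U\in\sv$'', but $\mg(U)$ is not an object of $\hw$ (it lies in $\gds^{w\le 0}$, not in $\gds^{w=0}$, unless $\dim U=0$), so $G(\mg(U))$ is simply not defined. You acknowledge this and gesture at ``extracting the $w=0$ part'', but the missing ingredient is exactly the paper's formula (\ref{efhi}): one sets
$$F'(X)=\ke\bigl(F(X^{0})\to F(X^{-1})\bigr)\big/\imm\bigl(F(X^{1})\to F(X^{0})\bigr),$$
the degree-$0$ cohomology of $F$ applied to a weight complex $(X^p)$ of $X$, and then $F''=F'\circ \mg$. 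Making this well-defined and cohomological relies on the exact conservative weight complex functor $\gds\to K^b(\hw)$ of Lemma~\ref{lwc}; your sketch never invokes it, and without it there is no functor to represent.

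Your proposed route to the sheaf property is also off track. You suggest verifying a ``Gersten-type exactness'' / ``Cousin complex is a resolution'' argument, but the Gersten resolution is normally a \emph{consequence} of a presheaf being a homotopy invariant sheaf with transfers, so running the implication in that direction risks circularity and is in any case much harder than necessary. The paper's argument is more elementary: once $F'$ is defined by (\ref{efhi}), one observes that $F'$ vanishes on $\gds^{w\le -1}$ and $\gds^{w\ge 1}$, so the Mayer--Vietoris triangle for $\mg$ gives exactly the Zariski sheaf condition (the obstruction term $F'(\mg(U\cap V)[1])$ vanishes because $\mg(U\cap V)[1]\in\gds^{w\le -1}$), and Proposition~5.5 of \cite{3} upgrades this to Nisnevich. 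Finally, your fully-faithfulness argument ``by function-field stalks plus Corollary~4.19 of \cite{3}'' is a valid alternative in spirit, but as stated it only gives faithfulness cleanly; fullness requires knowing that an arbitrary natural transformation between $\Phi_F$ and $\Phi_{F'}$ on all of $\hw$ is forced to respect the specialization/transfer structure, and the paper side-steps this by instead exhibiting $G\mapsto G'$ and $F\mapsto F''$ as mutually inverse (using (\ref{nfrestrw}) for one composite and reduction to $P=\mg(\spe K)$ plus the product-to-sum hypothesis for the other).
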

\begin{proof}

First, note that for any $G\in \obj \dme$ the functor
 $\gd \to\ab $ that sends
$X\in \obj \gd$ to $\Phi(X,G)$ ($\Phi$ is the duality constructed in
 Proposition \ref{pdualn}) is cohomological. 
Moreover,
 it  converts homotopy limits
 into injective limits (of the corresponding abelian groups);
hence its restriction to
$\hw$ belongs to $\hw_*'$. We obtain an additive functor
 $\dmge\to \hw_*'$.
In fact, it factorizes through $HI$ (by (\ref{ehomhw})).
 For $G\in \obj HI$ we denote the functor $\hw\to \ab$
obtained by $G'$.

Next, for any (additive) $F:\hw^{op}\to \ab$ we define
 $F':\gds\to \ab$
by: \begin{equation}\label{efhi} F'(X)=(\ke (F(X^{0})\to
 F(X^{-1})) /\imm
(F(X^{1})\to F(X^{0}));\end{equation} here $X^i$ is a
weight complex for $X$.
 It easily seen from  Lemma \ref{lwc} that  $F'$ is a well-defined
cohomological functor. Moreover,   Theorem \ref{tbw}(\ref{iwgen})
 yields that $F'$ vanishes on $\gds^{w\le -1}$ and on $\gds^{w\ge 1}$
(since it vanishes on $\gds^{w=i}$ for all $i\neq 0$).

Hence $F'$ defines an additive functor
$F''=F'\circ \mg: \smc^{op}\to \ab$
 i.e. a presheaf with transfers. Since  $\mg(Z)\cong \mg(Z\times\af^1)$
 for any $Z\in \sv$, $F''$ is homotopy invariant.
 We should check that $F''$ is actually a (Nisnevich) sheaf. By
Proposition 5.5 of \cite{3}, it suffices to check that $F''$ is a
 Zariski sheaf. Now, the  Mayer-Vietoris
triangle for motives (\S2 of \cite{1}) yields: to any Zariski
 covering $U\coprod V\to U\cup V$ there corresponds a long exact
sequence 
{\small $$\dots\to F'(\mg(U\cap V)[1])\to F''(U\cup V)\to
F''(U)\bigoplus F''(V)\to F''(U\cap V)\to \dots $$}
Since $\mg(U\cap V)\in \gds^{w\le 0}$ by part 5 of Proposition \ref{pexw},
 we have $F'(\mg(U\cap V)[1])=\ns$; hence $F''$ is a sheaf indeed.

 So, $F\mapsto F''$ yields an additive functor $\hw_*\to HI$.

 Now we check that the functor $G\mapsto G'$ (described above)
and the restrictions of $F\mapsto F''$ to  $\hw'_*\subset \hw_*$
yield mutually inverse equivalences of the categories in question.

 (\ref{nfrestrw}) immediately yields that
the functor $HI\to HI$ that sends
 $G\in \obj HI$ to $(G')''$ is isomorphic to $\id_{HI}$.

Now for $F\in \obj \hw_*'$ we should check: for any
 $P\in \gds^{w=0}$ we have a natural
 isomorphism $(F'')'(P)\cong F(P)$. Since $\hw$ is the idempotent
completion of $H$, it suffices to consider  $P$ being of the form
$\prod_{l\in L} \mg(\spe K_l)(n_l)[n_l]$ (here $K_l$ are function
 fields over $k$, $n_l\ge 0$; $n_l$ and the transcendence degrees of $K_l/k$ are bounded);
see part 2 of Proposition \ref{pexw}.  Moreover, since $F$
converts products into direct sums,
it suffices to consider $P=\mg(\spe K')(n)[n]$ ($K'/k$ is a
function field, $n\ge 0$).
Lastly, part 2 of Corollary  \ref{tds2} reduces the situation to the
case $P=\mg(\spe K)$
($K/k$ is a function field).
 Now, by the definition of the functor $G\mapsto G'$,
 we have $(F'')'(\mg(\spe K))=\inli_{l\in L} F''(\mg(U_l))$,
 where $K=\prli_{l\in L}U_l$, $U_l\in \sv$.
We have
$F''(U_l)=\ke F(\mg(\spe K))\to F(\prod_{z\in U_l^1}\mg(z)(1)[1])$;
 here $U_l^1$ is the
set of points of $U_l$ of codimension $1$. Since
$F(\prod_{z\in U_l^1}\mg(z)(1)[1])=
\oplus_{z\in U_l^1}
F(\mg(z)(1)[1])$; 
we have $\inli_{l\in L}F(\prod_{z\in U_l^1}\mg(z)(1)[1])=\ns$;
this yields the result.

\end{proof}

\subsection{Motives and comotives with rational and
torsion coefficients}\label{smco}

 Above we considered (co)motives with integral coefficients. Yet, as
 was shown in \cite{vbook}, one could do the theory of motives with
 coefficients in an arbitrary commutative associative ring
with a unit $R$.
  One should start with the naturally defined category of
$R$-correspondences:
  $\obj(\smc_R)=\sv$; for $X,Y$ in $\sv$ we set
 $\smc_R (X,Y)=\bigoplus_U R$ for
all integral closed $U\subset X\times Y$  that are finite over $X$
and dominant over a connected component of $X$.
Then one obtains a theory of
motives that would satisfy all properties that are required
in order to deduce
the main results of this paper. So, we can define
$R$-comotives and
extend our results to them.

A well-known case of motives with coefficients are
the motives with
rational  coefficients (note that $\q$ is a flat $\z$-algebra). Yet,
one could also take $R=\z/n\z$ for any $n$ prime to
$\operatorname{char} k$.

So, the results of this paper are also valid for rational
(co)motives and 'torsion' (co)motives.

Still, note that there could be idempotents for $R$-motives that do
not come from integral ones.  In particular, for the naturally
defined rational motivic categories we have $\dmge\q\neq
\dmge\otimes \q$; also $\chowe\q \neq \chowe \otimes\q$  (here
$\chowe\q\subset \dmge\q$ denote the corresponding $R$-hulls).
Certainly, this does not matter at all in the current paper.

\subsection{Another possibility for $\gd$;  motives
with compact support of pro-schemes}\label{sdual}

In the case $\operatorname{char}k=0$, Voevodsky developed a nice
theory of motives with compact support that is compatible with
Poincare duality; see Theorem 4.3.7 of \cite{1}. Moreover, the
explicit constructions of
\cite{1} yield that the
functor of motif with compact support $\mgc:\sv^{op}\to \dmge$
is compatible with a certain
$j^c: \sv_{fl}^{op}\to C^-(\ssc)$ (which sends $X$ to the Suslin
complex of $L^c(X)$, see \S4.2 loc. cit.);
this observation was kindly communicated to the author by
 Bruno
Kahn). This allows us to define $j^c(V)$ for a pro-scheme $V$ as the
corresponding direct limit (in $C(\ssc)$).

Starting from this observation, one could try to develop an analogue
of  our  theory using the functor $\mgc$.
  One could consider $\gd=\dme^{op}$; then it
would contain $\dmge^{op}$ as the full category of cocompact
objects. It seems that our arguments could be carried over to this
context. One can construct some $\gdp$ for this $\gd$ using certain
differential graded categories.

 Though motives with compact support are Poincare dual to ordinary
 motives of smooth varieties (up to a certain Tate twist),
 we do not have a covariant embedding $\dmge\to \gd$
 (for this 'alternative' $\gd$), since (the whole) $\dmge$ is not
  self-dual. Still, $\dmge$ has a nice embedding into (Voevodsky's)
 self-dual
  category $\dmgm$; it contains an  exhausting system of self-dual subcategories.
Hence this alternative $\gd$ would yield
a theory that
is compatible with (though not 'isomorphic' to)
   the theory developed above.

 Since the alternative version of $\gd$ is closely related with
$\dme^{op}$, it  seems reasonable to call its objects
 comotives (as we
did for the objects of 'our' $\gd$).

These observations show that one can dualize all the direct summands
results of \S\ref{sapcoh} to obtain their natural analogues for
motives of pro-schemes with compact support. Indeed, to  prove them
we may apply the duals of our arguments in \S\ref{sapcoh} without any
problem; see part 2 of Remark \ref{raxiomdim}. Note that we obtain
certain direct summand statements for objects of $\dme$ this way.
This is an advantage of our 'axiomatic' approach in \S\ref{comot}.

One could also take $\gd^{op}=\cup_{n\in \z} \dmge(-n)$ (more
precisely, this is the direct limit of copies of $\dmge$ with
connecting morphisms being $-\otimes \z(1)$). Then we have a
covariant embedding $\dmge\to\dmgm\to \gd$.

 Note that both of these alternative versions of $\gd$ are
 not closed
 with respect to all (countable) products, and so not closed
 with respect to all (filtered countable) homotopy limits;
 yet they contain all products and homotopy limits that are
 required for our main arguments.


\subsection{What happens if $k$ is uncountable}\label{unck}

 We describe which of the arguments above could be
applied in the case of an uncountable
 $k$ (and for which of them the author has no idea
how to achieve this).
 The author warns that he didn't check the details thoroughly here.

As we have already noted above, it is no problem to define $\gd$,
$\gdp$, or even $\gds$ for any $k$.
 The main problem here that (if $k$ is uncountable) the comotives of
 generic points of varieties
 (and of other pro-schemes) can usually be presented
only as uncountable homotopy limits
 of motives of varieties.
 The general formalism of inverse limits (applied to
the categories of modules
 over a differential graded category) allows us to extend to this case
all parts of
 Proposition \ref{pprop} except part \ref{ip6}.
This actually means that instead of the short
  exact sequence (\ref{lmor}) one obtains a spectral
sequence whose $E_1$-terms
  are certain $\prli^j$; here $\prli^j$ is the $j$'s
  derived functor of $\prli_I$; cf. Appendix A of
\cite{neebook}. This does not
  seem to be catastrophic; yet the author has absolutely
no idea how to control
  higher projective limits in the proof of Proposition
\ref{maneg}; note that part
   2 of loc. cit. is especially important for the
construction of the Gersten weight structure.

  Besides, the author does not know how to pass to an
  uncountable homotopy limit in the Gysin distinguished triangle.
  It seems that to this end one either needs to lift the functoriality
  of the (usual) motivic Gysin triangle to $\gdp$, or to
 find a way to describe the isomorphism class of an uncountable homotopy
 limit in $\gd$
 in terms of
$\gd$-only (i.e. without fixing any lifts to $\gdp$; this seems to be
impossible in general).
  So, one could define the 'Gersten' weight tower for the comotif of a
  pro-scheme   as
  the homotopy limit of 'geometric towers'
(as in the proof of Corollary
  \ref{gepost}); yet it seems
  to be rather difficult to calculate factors of such a tower.
It seems that the problems mentioned do not become simpler for the
alternative versions of $\gd$ described  in \S\ref{sdual}.
     So, currently
the author does not know how to prove the direct summand results of
\S\ref{stds}  if $k$ is uncountable (they even could be wrong). The
problem here that the  splittings of \S\ref{stds} are not canonical
(see Remark \ref{rstds}), so one cannot apply a limit argument (as
in \S\ref{sbchcon}) here.

  It seems that constructing the Gersten weight structure is easier
  for $\gds/\gds(n)$ (for some $n>0$); 
  see \S\ref{sgdbr}.

 Lastly, one can avoid the problems with homotopy limits completely
 by restricting attention to the subcategory of Artin-Tate motives
 in $\dmge$ (i.e. the triangulated category generated by Tate twists
 of motives of finite extensions of $k$, as considered in \cite{wildat}).
 Note that coniveau spectral sequences for cohomology of such motives
 (could be chosen to be) very 'economic'.


\end{document}